\documentclass[a4paper, 10pt, twoside]{article}

\usepackage{amsmath, amscd, amsfonts, amssymb, amsthm, latexsym, url, color, todonotes, bm, framed, rotating, enumerate} 
\usepackage{graphicx}
\usepackage[left=1in, right=1in, top=1.2in, bottom=1in, includefoot, headheight=13.6pt]{geometry}
\usepackage{mathtools}
\input{xy}
\xyoption{all}

\setlength{\marginparwidth}{2cm}
\setlength{\marginparsep}{1mm}

\usepackage[colorlinks,breaklinks=false, bookmarks=false]{hyperref}
\usepackage[figure,table]{hypcap}
\hypersetup{
	bookmarksnumbered,
	pdfstartview={FitH},
	citecolor={black},
	linkcolor={black},
	urlcolor={black},
	pdfpagemode={UseOutlines}
}
\makeatletter
\newcommand\org@hypertarget{}
\let\org@hypertarget\hypertarget
\renewcommand\hypertarget[2]{%
  \Hy@raisedlink{\org@hypertarget{#1}{}}#2%
} 
\makeatother

\setlength{\marginparwidth}{0.8in}

\newtheorem{theorem}{Theorem}[section]
\newtheorem{lemma}[theorem]{Lemma}
\newtheorem{corollary}[theorem]{Corollary}
\newtheorem{proposition}[theorem]{Proposition}

\theoremstyle{definition}
\newtheorem{definition}[theorem]{Definition}
\newtheorem{remark}[theorem]{Remark}

\newcommand{\xysquare}[8]{
\[\xymatrix{
#1 \ar@{#5}[r] \ar@{#6}[d] & #2 \ar@{#7}[d]\\
#3 \ar@{#8}[r] & #4
}\]
}

\DeclareMathOperator*{\projlimf}{``\varprojlim''}

\newcommand{\al}{\alpha}
\newcommand{\bb}{\mathbb}

\newcommand{\blob}{\bullet}
\newcommand{\bor}{\partial}

\newcommand{\comment}[1]{}

\newcommand{\ep}{\varepsilon}

\newcommand{\into}{\hookrightarrow}

\newcommand{\isoto}{\stackrel{\simeq}{\to}}

\newcommand{\mult}[1]{#1^{\!\times}}

\newcommand{\op}{\operatorname}
\newcommand{\pid}[1]{\langle #1 \rangle}
\renewcommand{\phi}{\varphi}
\newcommand{\quis}{\stackrel{\sim}{\to}}
\newcommand{\res}{\overline}
\newcommand{\roi}{\mathcal{O}}

\newcommand{\sub}[1]{{\mbox{\rm \scriptsize #1}}}

\newcommand{\To}{\longrightarrow}
\newcommand{\ul}[1]{\underline{#1}}

\newcommand{\xto}{\xrightarrow}

\newcommand{\lrho}{{\sigma}}

\renewcommand{\cal}{\mathcal}
\renewcommand{\hat}{\widehat}
\renewcommand{\frak}{\mathfrak}
\newcommand{\indlim}{\varinjlim}
\renewcommand{\tilde}{\widetilde}

\renewcommand{\ker}{\operatorname{Ker}}
\renewcommand{\projlim}{\varprojlim}

\DeclareMathOperator{\dlog}{dlog}

\DeclareMathOperator{\Frac}{Frac}

\DeclareMathOperator{\Spec}{Spec}

\DeclareMathOperator{\Spf}{Spf}

\newcommand{\CH}{C\!H}


\newcommand{\dotimes}{\otimes^{\bb L}}

\usepackage[bbgreekl]{mathbbol}
\DeclareSymbolFontAlphabet{\mathbbm}{bbold}

\newcommand{\cosimp}[3]{\xymatrix@1{#1 \ar@<.4ex>[r] \ar@<-.4ex>[r] & {\ }#2 \ar@<0.8ex>[r] \ar[r] \ar@<-.8ex>[r] & {\ } #3 \ar@<1.2ex>[r] \ar@<.4ex>[r] \ar@<-.4ex>[r] \ar@<-1.2ex>[r] & \cdots }}

\usepackage{fancyhdr}

\pagestyle{fancy}
\fancyhead{}
\fancyfoot[C]{\thepage}
\fancyhead[EL, OR]{}
\fancyhead[EC]{Morten L\"uders and Matthew Morrow}
\fancyhead[OC]{Milnor $K$-theory of $p$-adic rings}

\usepackage{sectsty}
\sectionfont{\Large\sc\centering}
\chapterfont{\large\sc\centering}
\chaptertitlefont{\LARGE\centering}
\partfont{\centering}

\begin{document}
\itemsep0pt

\title{Milnor $K$-theory of $p$-adic rings}

\author{Morten L\"uders and Matthew Morrow}

\date{}

\maketitle

\begin{abstract}
We study the mod $p^r$ Milnor $K$-groups of $p$-adically complete and $p$-henselian rings, establishing in particular a Nesterenko--Suslin style description in terms of the Milnor range of syntomic cohomology. In the case of smooth schemes over complete discrete valuation rings we  prove the mod $p^r$ Gersten conjecture for Milnor $K$-theory locally in the Nisnevich topology. In characteristic $p$ we show that the Bloch--Kato--Gabber theorem remains true for valuation rings, and for regular formal schemes in a pro sense.
\end{abstract}


\setcounter{section}{-1}
\section{Introduction}
In this article we study the mod $p$-power Milnor $K$-theory of $p$-adic rings by combining classical techniques of Bloch--Kato \cite{Bloch1986} with recent progress in $p$-adic motivic cohomology, notably the syntomic cohomology introduced by Bhatt, Scholze, and the second author \cite{BhattMorrowScholze2}.

Our fundamental results from which the others follow are the following ``Gersten injectivity'' and Bloch--Kato isomorphism:

\begin{theorem}\label{introduction_thm1}
Let $V$ be a complete discrete valuation ring of mixed characteristic, and $R$ a local, $p$-henselian, ind-smooth $V$-algebra with infinite residue field; let $j,r\ge 1$. Then
\begin{enumerate}
\item the canonical map $K_j^M(R)/p^r\to K_j^M(R[\tfrac1p])/p^r$ is injective,
\item and the Galois symbol $K_j^M(R[\tfrac1p])/p^r\to H^j_\sub{\'et}(R[\tfrac1p],\mu_{p^r}^{\otimes j})$ is an isomorphism.
\end{enumerate}
\end{theorem}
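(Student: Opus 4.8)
The plan is to deduce both statements from a comparison between mod $p^r$ Milnor $K$-theory and the top-degree piece of syntomic cohomology, together with a purity/localisation analysis along the divisor $\Spec(R/\pi)\subset\Spec R$, where $\pi$ is a uniformiser of $V$ (so that $R[\tfrac1p]=R[\tfrac1\pi]$, the reduced special fibre $\Spec(R/\pi)$ is regular and ind-smooth over the residue field $k$ of $V$, and $\Spec R[\tfrac1p]=\Spec R\setminus\Spec(R/\pi)$).

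\emph{Step 1: a Nesterenko--Suslin description in terms of syntomic cohomology.} Using the map $R^\times\to H^1(R\Gamma_{\mathrm{syn}}(R,\mathbb Z/p^r(1)))$ induced by $\mu_{p^r}$ and the cup product, one gets a symbol map $\lrho\colon K^M_j(R)/p^r\to H^j(R\Gamma_{\mathrm{syn}}(R,\mathbb Z/p^r(j)))$, and I would prove it is an isomorphism for all $j,r\ge1$. This is the technical heart, and the argument follows Bloch--Kato \cite{Bloch1986}: filter $K^M_j(R)/p^r$ by the subgroups $U^iK^M_j(R)$ generated by symbols whose first entry lies in $1+\pi^iR$, filter the target by the Nygaard filtration underlying the prismatic construction of syntomic cohomology in \cite{BhattMorrowScholze2}, show that $\lrho$ respects the filtrations, and identify the graded pieces on the two sides -- each of which is a group of absolute K\"ahler differentials, respectively $p$-typical de Rham--Witt forms, of $R/\pi$. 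The computation of $\gr^i K^M_j$ is essentially Bloch--Kato's (using that $R$ is ind-smooth over $V$ and that its residue field is infinite), the computation of $\gr^i$ of syntomic cohomology is read off from the $\mathbb A_{\mathrm{inf}}$/prismatic description, and the bottom step $\gr^0$ is handled, via reduction modulo $\pi$, by the characteristic-$p$ Bloch--Kato--Gabber isomorphism for the ind-smooth $k$-algebra $R/\pi$.

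\emph{Step 2: passage to the generic fibre.} The Fontaine--Messing--Kato comparison of syntomic cohomology with $p$-adic vanishing cycles -- in the form established through prismatic cohomology in \cite{BhattMorrowScholze2} and later work, and using that $(R,(\pi))$ is a $\pi$-henselian pair -- provides a natural map $H^j(R\Gamma_{\mathrm{syn}}(R,\mathbb Z/p^r(j)))\to H^j_{\sub{\'et}}(R[\tfrac1p],\mu_{p^r}^{\otimes j})$ which, by the $p$-adic form of absolute purity for the regular divisor $\Spec(R/\pi)$, is injective with image the unramified part, i.e.\ the kernel of the residue map $\partial\colon H^j_{\sub{\'et}}(R[\tfrac1p],\mu_{p^r}^{\otimes j})\to H^{j-1}(R/\pi,\nu^{j-1}_r)$ onto logarithmic de Rham--Witt cohomology of the special fibre. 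Since this map is compatible with symbol maps, the composite $K^M_j(R)/p^r\xrightarrow{\lrho}H^j(R\Gamma_{\mathrm{syn}}(R,\mathbb Z/p^r(j)))\hookrightarrow H^j_{\sub{\'et}}(R[\tfrac1p],\mu_{p^r}^{\otimes j})$ agrees with the composite of $K^M_j(R)/p^r\to K^M_j(R[\tfrac1p])/p^r$ with the Galois symbol; as the former is injective (Step 1 being an isomorphism), so is $K^M_j(R)/p^r\to K^M_j(R[\tfrac1p])/p^r$. This proves (i).

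\emph{Step 3: bijectivity of the Galois symbol.} For (ii) I would argue by induction on $j$. Since $R$ is local and $\Spec(R/\pi)$ is a regular prime divisor, one has $K^M_j(R[\tfrac1p])/p^r=\operatorname{im}(K^M_j(R)/p^r)+\{\pi\}\cdot\operatorname{im}(K^M_{j-1}(R)/p^r)$, together with a residue sequence
\[
K^M_j(R)/p^r\longrightarrow K^M_j(R[\tfrac1p])/p^r\xrightarrow{\ \partial\ }K^M_{j-1}(R/\pi)/p^r
\]
which is exact (a Gersten-type exactness for the local ring $R$ and its regular divisor, part of the package being proved). The Galois symbol sends the first summand onto the unramified part of $H^j_{\sub{\'et}}(R[\tfrac1p],\mu_{p^r}^{\otimes j})$ by Step 2, and it sends $\{\pi\}\cup\beta$, via $\partial$, to the image of $\beta$ under the characteristic-$p$ symbol map $K^M_{j-1}(R/\pi)/p^r\to\nu^{j-1}_r(R/\pi)$ -- surjective onto $H^{j-1}(R/\pi,\nu^{j-1}_r)$ by the mod $p^r$ Bloch--Kato--Gabber isomorphism for $R/\pi$ in weight $j-1$, which is available by the inductive hypothesis (the cases $j\le1$ being Kummer theory together with the elementary structure of $K^M_*$ of a discretely valued ring). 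A diagram chase against the two localisation sequences then yields surjectivity, and also injectivity: an element of the kernel of the Galois symbol has vanishing residue (the weight $j-1$ symbol being injective by induction), hence lifts to $K^M_j(R)/p^r$, on which the Galois symbol equals $\lrho$ and is injective by Step 1.

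\textbf{Expected main obstacle.} The crux is Step 1 -- carrying out Bloch--Kato's filtration argument for a ring that is only ind-smooth over the (possibly ramified) complete discrete valuation ring $V$ rather than a discrete valuation ring, and matching the graded quotients of $K^M_j(R)/p^r$ with those of syntomic cohomology inside the prismatic framework; keeping precise track of Tate twists, Frobenius divided powers, and the ramification-dependent length of the filtration (in particular the behaviour of $\gr^0$) is where the real difficulty lies. A secondary technical point, relevant in Step 3, is that $R[\tfrac1p]$ is not local, so one must either work with improved Milnor $K$-theory throughout or check that the comparisons used are insensitive to the distinction after reduction modulo $p^r$.
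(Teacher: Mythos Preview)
Your overall architecture---syntomic cohomology as intermediary, identification of $H^j(\bb Z_p(j)(R)/p^r)$ with the kernel of Kato's residue map on $H^j_\sub{\'et}(R[\tfrac1p],\mu_{p^r}^{\otimes j})$, then a localisation diagram chase---matches the paper closely, and Steps~2 and~3 are essentially how Proposition~\ref{prop_reduction2}, Lemma~\ref{lemma_localisation_in MilnorK}, and Corollary~\ref{corollary_BCM} are used. The divergence, and the gap, is entirely in Step~1.

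You propose to prove $K_j^M(R)/p^r\cong H^j(\bb Z_p(j)(R)/p^r)$ by matching the unit filtration on the left against ``the Nygaard filtration'' on the right. This is not how the paper proceeds, and I do not see how to make it work directly: the Nygaard filtration is a filtration on prismatic cohomology used to \emph{construct} $\bb Z_p(j)$, not a filtration on $H^j(\bb Z_p(j)(R)/p^r)$ whose graded pieces are visibly differential forms of $R/\pi R$ indexed by powers of $\pi$. The paper avoids this entirely. It first invokes the forthcoming Bhatt--Clausen--Mathew comparison (Theorem~\ref{theorem_BCM_LKE}) to identify $H^j(\bb Z_p(j)(R)/p^r)$ with the kernel of the residue map on \'etale cohomology, so that the filtration one must match against is Bloch--Kato's classical filtration on nearby cycles, not anything prismatic. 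It then uses two reduction steps you do not mention: a left Kan extension argument (Propositions~\ref{proposition_motivic_cohomol_LKE} and~\ref{proposition_LKE_Milnor}) to reduce from arbitrary $V$ to absolutely unramified $V$, and a norm/Galois-descent argument (Proposition~\ref{prop_reduction0}) to reduce from arbitrary $r$ to $r=1$. Only after these reductions is the filtration comparison carried out, and then only against the very short Bloch--Kato filtration in the unramified mod~$p$ case (Theorem~\ref{theorem_BK_unramified}).

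A second point you underestimate: you write that ``the computation of $\gr^i K_j^M$ is essentially Bloch--Kato's''. Bloch--Kato compute the graded pieces of the unit filtration on $K_j^M(F)/p^r$ for the discrete valuation \emph{field} $F$; here one needs the analogous computation for the \emph{ring} $R$, and the restriction of the unit filtration from $K_j^M(R[\tfrac1p])$ to $K_j^M(R)$ had not previously been identified. This is the content of \S\ref{subsection_K2}--\ref{subsection_differential _maps}: it requires substantial Dennis--Stein symbol manipulations and, when $p=2$, a modification of the naive unit filtration (the $V$-filtration of Definition~\ref{definition_Vfiltration}) to make it multiplicative and to show that Bloch--Kato's differential maps factor through the graded pieces of $K_j^M(R)/p$.
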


The hypothesis that $R$ has infinite residue field may be weakened to it having big enough residue field so that $\hat K_j^M(R)\to K_j^M(R)$ is an isomorphism, where the domain denotes the improved Milnor $K$-theory of Gabber and Kerz \cite{Kerz2010}. However, with no size hypotheses on the residue field of $R$ we prove exactness of the sequence \begin{equation}0\To \hat K_j^M(R)/p^r\To H^j_\sub{\'et}(R[\tfrac1p],\mu_{p^r}^{\otimes j})\To W_r\Omega_{R/\pi R,\sub{log}}^{j-1}\To 0,\label{introduction_eqn}\end{equation} where the map to the group of de Rham--Witt $\dlog$ forms of the special fibre of $R$ is Kato's residue map; in the case of big enough residue field this is equivalent to Theorem \ref{introduction_thm1}.

Combining Theorem \ref{introduction_thm1} with existing Gersten results in Milnor $K$-theory due to Kerz \cite{Kerz2009, Kerz2010} and in motivic cohomology due to Geisser \cite{Geisser2004}, we then establish the mod $p$-power Gersten conjecture Nisnevich locally on smooth $V$-schemes:

\begin{theorem}\label{intro_Gesten}
Let $V$ be a complete discrete valuation ring of mixed characteristic, $X$ a smooth $V$-scheme, and $R:=\roi_{X,x}^h$ the henselian local ring at any point $x\in X$. Then the Gersten complex \[0\To \hat K_j^M(R)/p^r\To K_j^M(\Frac R)/p^r\To \bigoplus_{x\in \Spec R^{(1)}}K_{j-1}^M(k(x))/p^r\To \bigoplus_{x\in \Spec R^{(2)}}K_{j-2}^M(k(x))/p^r\To\cdots\]
is exact, and consequently the Bloch--Quillen isomorphism $\CH^j(X)/p^r\cong H^j_\sub{Nis}(X,\hat{\cal K}^M_{j,X}/p^r)$ holds.
\end{theorem}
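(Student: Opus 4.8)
The plan is to deduce the theorem from Theorem~\ref{introduction_thm1} together with the Gersten conjecture for mod $p^r$ motivic cohomology over $V$ (Geisser~\cite{Geisser2004}), by reinterpreting the Milnor $K$-theory Gersten complex as a coniveau complex. First, the final ``consequently'' is formal: proving the first assertion at every point of $X$ is equivalent to exactness of the complex of Nisnevich sheaves
\[0\To\hat{\cal K}^M_{j,X}/p^r\To\bigoplus_{x\in X^{(0)}}(i_x)_*K_j^M(k(x))/p^r\To\bigoplus_{x\in X^{(1)}}(i_x)_*K_{j-1}^M(k(x))/p^r\To\cdots,\]
whose positive-degree terms are flasque; this exhibits a flasque resolution, so taking sections over $X$ gives $H^i_\sub{Nis}(X,\hat{\cal K}^M_{j,X}/p^r)\cong H^i$ of the Gersten complex of $X$, which in degree $i=j$ is the group of codimension-$j$ cycles modulo $p^r$ and modulo the image of $\bigoplus_{x\in X^{(j-1)}}k(x)^\times/p^r$, that is, $\CH^j(X)/p^r$. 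So fix $R=\roi_{X,x}^h$, a henselian local ind-smooth $V$-algebra which, being $V$-flat, is of characteristic $0$ or of mixed characteristic $(0,p)$; we must show its Gersten complex is exact in positive degrees with $H^0=\hat K_j^M(R)/p^r$.

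Next I would reinterpret this complex cohomologically. For a field $k$ there is an isomorphism $K_n^M(k)/p^r\cong H^n(k,\bb Z/p^r(n))$ with mod $p^r$ motivic cohomology, compatible with residue maps: in characteristic $0$ by Nesterenko--Suslin--Totaro together with $H^{n+1}(k,\bb Z(n))=0$, and in characteristic $p$ by Bloch--Kato--Gabber together with the Geisser--Levine identification $\bb Z/p^r(n)\simeq W_r\Omega^n_\sub{log}[-n]$. Applying this to every residue field $k(y)$, $y\in\Spec R$ (those with $\pi\in\p_y$ being of characteristic $p$, the others of characteristic $0$), the Gersten complex of $R$ is identified, term by term, with the Gersten resolution of the cohomology sheaf $\cal H^j(\bb Z/p^r(j))$ of mod $p^r$ motivic cohomology over $V$ --- its $i$-th term being $\bigoplus_{y\in(\Spec R)^{(i)}}H^{j-i}(k(y),\bb Z/p^r(j-i))$. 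By Geisser's Gersten conjecture for motivic cohomology of smooth $V$-schemes --- which assembles the characteristic-$0$ case (Bloch--Ogus--Gabber), the characteristic-$p$ special fibre (Geisser--Levine, Gros--Suslin), and their gluing over the Dedekind base --- this coniveau complex of sheaves really is a flasque resolution of $\cal H^j(\bb Z/p^r(j))$ on smooth $V$-schemes, hence also on $\Spec R$ since motivic cohomology commutes with filtered colimits of rings. As $R$ is henselian local, its higher Nisnevich cohomology vanishes, so the Gersten complex of $R$ is exact in positive degrees, with $H^0\cong\cal H^j(\bb Z/p^r(j))(\Spec R)=H^j(R,\bb Z/p^r(j))$, the mod $p^r$ motivic --- equivalently syntomic --- cohomology in the Milnor range.

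It remains to identify $H^j(R,\bb Z/p^r(j))$ with $\hat K_j^M(R)/p^r$, compatibly with the canonical map to $H^j(\Frac R,\bb Z/p^r(j))=K_j^M(\Frac R)/p^r$; this is where Theorem~\ref{introduction_thm1} enters. If $p\in R^\times$, then $x$ lies in the generic fibre, $R$ is ind-smooth over the characteristic-$0$ field $\Frac V$, and the identification is the classical one of $\hat{\cal K}^M_j/p^r$ with the motivic (equivalently \'etale) cohomology sheaf over a field (Kerz~\cite{Kerz2009,Kerz2010}, Bloch--Kato). If $p\in\m_R$, then both $\hat K_j^M(R)/p^r$ and $H^j(R,\bb Z/p^r(j))$ sit in a short exact sequence with the same right-hand entries and the same maps: for the former this is $(\ref{introduction_eqn})$, that is $0\to\hat K_j^M(R)/p^r\to H^j_\sub{\'et}(R[\tfrac1p],\mu_{p^r}^{\otimes j})\to W_r\Omega_{R/\pi R,\sub{log}}^{j-1}\to 0$ with Kato's residue on the right; for the latter it is the Beilinson--Lichtenbaum/syntomic description of $\bb Z/p^r(j)$ over $V$, which in degree $j$ presents $H^j(R,\bb Z/p^r(j))$ as the kernel of the same residue map $H^j_\sub{\'et}(R[\tfrac1p],\mu_{p^r}^{\otimes j})\to W_r\Omega_{R/\pi R,\sub{log}}^{j-1}$. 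The canonical map from Milnor $K$-theory to motivic cohomology respects these two presentations, so the five lemma makes it an isomorphism, and tracing the comparison isomorphisms it carries $\hat K_j^M(R)/p^r$ onto $H^0$ of the Gersten complex via the localization map, as needed.

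The genuinely hard input is Theorem~\ref{introduction_thm1} itself (with the sequence $(\ref{introduction_eqn})$), proved by the Bloch--Kato method together with syntomic cohomology, which I am assuming. Within the ``combining'' argument the delicate points are bookkeeping: one must check that the residue maps of the Gersten complex agree, under Nesterenko--Suslin and Geisser--Levine, with the boundary maps of the motivic coniveau complex --- in particular that the mixed-to-positive-characteristic boundary $K_j^M(\Frac R)/p^r\to K_{j-1}^M(\Frac(R/\pi R))/p^r$ along $\pi$ is Kato's residue on symbols --- and that Geisser's Gersten resolution for $\bb Z/p^r(j)$ over $V$ is available in exactly the form invoked: a flasque resolution of $\cal H^j(\bb Z/p^r(j))$ whose degree-zero sheaf computes $H^j(-,\bb Z/p^r(j))$, valid across the special fibre. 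I expect these compatibilities, rather than any new geometric difficulty, to be where the care goes.
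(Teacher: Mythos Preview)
Your approach is correct and uses the same essential ingredients as the paper (Geisser's motivic Gersten conjecture over $V$, Kerz's equicharacteristic result, and the exact sequence~(\ref{introduction_eqn})), but the organisation differs. The paper (Theorem~\ref{theorem_Gersten}) keeps the Milnor $K$-theory Gersten complex intact and splits it via the short exact sequence $0\to g_{j-1}(Z)[-1]\to g_j(X)\to g_j(X_\eta)\to 0$ along special/generic fibre; it applies Kerz over fields to identify the cohomology of each piece, and invokes Geisser only indirectly through an auxiliary Lemma~\ref{lemma_BL} (which shows $H^n_\sub{Zar}(X_\eta,\hat{\cal K}_j^M/p^r)$ vanishes for $n>0$ and equals $H^j_\sub{\'et}(R[\tfrac1p],\mu_{p^r}^{\otimes j})$ for $n=0$); the resulting four-term sequence is then compared directly to Theorem~\ref{theorem1}. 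You instead identify the whole Gersten complex term-by-term with the motivic coniveau complex and apply Geisser's result once, then match $H^0=H^j(R,\bb Z/p^r(j))$ with $\hat K_j^M(R)/p^r$ via the two descriptions as the kernel of Kato's residue. Your route is conceptually more streamlined but requires verifying that \emph{all} the boundary maps agree under Nesterenko--Suslin and Geisser--Levine (the bookkeeping you correctly flag at the end); the paper's route localises this to the single connecting map of the fibre sequence, at the cost of the extra lemma. Both arrive at the same place with the same external inputs.
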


A new, albeit simple, observation which plays a key role in our arguments is that Milnor $K$-theory is right exact when left Kan extending from smooth algebras; this is inspired by Bhatt--Lurie's observation that connective $K$-theory is left Kan extended in this fashion, and by the subsequent on-going introduction of left Kan extensions from smooth algebras into the theory of motivic cohomology \cite{ElmantoHoyoisKhanSosniloYakerson2019}. More precisely, given a base ring $k$ and letting $L^\sub{sm}K_j^M:k\op{-algs}^\sub{loc}\to D(\bb Z)$ be the left Kan extension of $K_j^M$ from local, essentially smooth $k$-algebras to all local $k$-algebras, then show in Proposition \ref{proposition_LKE_Milnor} that the counit map $H_0(L^\sub{sm}K_j^M(A))\to K_j^M(A)$ is an isomorphism on any local $k$-algebra $A$.

This left Kan extension observation also provides a new tool to control the Milnor $K$-theory of $\bb F_p$-algebras; in particular, we prove the following Bloch--Kato--Gabber style isomorphisms, which are already known to hold \cite{ClausenMathewMorrow2020, KellyMorrow2018a, Morrow_pro_GL2} if improved Milnor $K$-theory is replaced by algebraic $K$-theory:

\begin{theorem}\label{intro_charp}
Let $r,j\ge0$.
\begin{enumerate}
\item Let $A$ be a local, Cartier smooth $\bb F_p$-algebra (e.g., an essentially smooth, local algebra over a characteristic $p$ valuation ring). Then the dlog map $\hat K_j^M(A)/p^r\to W_r\Omega^j_{A,\sub{log}}$ is an isomorphism.
\item Let $A$ be an F-finite, regular, Noetherian, local $\bb F_p$-algebra, and $I\subseteq A$ any ideal. Then the dlog map induces an isomorphism of pro abelian groups $\{\hat K_j^M(A/I^s)/p^r\}_s\isoto\{W_r\Omega^j_{A/I^s,\sub{log}}\}_s$.
\end{enumerate}
\end{theorem}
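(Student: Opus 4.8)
The plan is to deduce both statements from the left Kan extension observation of Proposition~\ref{proposition_LKE_Milnor}, combined with the already-known analogues in which $\hat K_j^M(-)/p^r$ is replaced by its motivic (equivalently, syntomic) counterpart. The key preliminary fact is that on a \emph{local, essentially smooth} $\bb F_p$-algebra $B$ the dlog map $\dlog\colon \hat K_j^M(B)/p^r\to W_r\Omega^j_{B,\sub{log}}$ is an isomorphism: for $r=1$ this is the Bloch--Kato--Gabber theorem, and the general case is a standard consequence of the Geisser--Levine identification of mod $p^r$ motivic cohomology of smooth $\bb F_p$-schemes with logarithmic de Rham--Witt sheaves, together with the comparison of improved Milnor $K$-theory with motivic cohomology in the Milnor range \cite{Kerz2010}. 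Hence, viewing $\dlog$ as a natural transformation of functors on local essentially smooth $\bb F_p$-algebras, it becomes an equivalence after applying $L^\sub{sm}$. Now for an arbitrary local $\bb F_p$-algebra $A$, Proposition~\ref{proposition_LKE_Milnor} --- in the form for $\hat K_j^M(-)/p^r$, which follows from the stated version since $L^\sub{sm}$ is exact, so that $L^\sub{sm}(\hat K_j^M/p^r)\simeq\op{cofib}(p^r\colon L^\sub{sm}\hat K_j^M\to L^\sub{sm}\hat K_j^M)$ --- gives that the counit $H_0(L^\sub{sm}(\hat K_j^M/p^r)(A))\to \hat K_j^M(A)/p^r$ is an isomorphism. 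Chasing the resulting naturality square of counits identifies $\dlog$ on $A$ with the counit map $H_0\bigl(L^\sub{sm}(W_r\Omega^j_{-,\sub{log}})(A)\bigr)\to W_r\Omega^j_{A,\sub{log}}$, so both parts reduce to showing this map is an isomorphism, respectively a pro-isomorphism.

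For part (i), one must show that for a local Cartier smooth $\bb F_p$-algebra $A$ the group $W_r\Omega^j_{A,\sub{log}}$ --- equivalently, $H^j$ of the mod $p^r$ weight-$j$ syntomic ($p$-adic motivic) complex of $A$ --- is left Kan extended from local essentially smooth $\bb F_p$-algebras. Surjectivity of the counit is the easier half: $W_r\Omega^j_{A,\sub{log}}$ is generated by the symbols $\dlog[a_1]\cdots\dlog[a_j]$ with $a_i\in A^\times$, each of which is pulled back to $A$ along a map from a localised Laurent polynomial $\bb F_p$-algebra. Injectivity is the content, and this is where Cartier smoothness genuinely enters: it is exactly the statement one extracts from \cite{KellyMorrow2018a} (with \cite{ClausenMathewMorrow2020} supplying the continuity inputs), where the Geisser--Levine description --- and in particular the fact that mod $p^r$ syntomic cohomology is left Kan extended from smooth algebras --- is established for Cartier smooth rings. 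One finally checks that the isomorphism obtained is $\dlog$.

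For part (ii), note first that an F-finite, regular, Noetherian $\bb F_p$-algebra $A$ is geometrically regular over $\bb F_p$ (all finite extensions of $\bb F_p$ being separable, the relevant base changes are étale and preserve regularity), so by Popescu's theorem $A$ is a filtered colimit of smooth $\bb F_p$-algebras; in particular part (i) applies to $A$ itself, and pro-statements about $\{A/I^s\}_s$ reduce to infinitesimal thickenings inside smooth $\bb F_p$-schemes, where the F-finiteness hypothesis is used. Exactly as in part (i), Proposition~\ref{proposition_LKE_Milnor} identifies, levelwise in $s$, the pro-system $\{\hat K_j^M(A/I^s)/p^r\}_s$ with $\{H_0(L^\sub{sm}(W_r\Omega^j_{-,\sub{log}})(A/I^s))\}_s$, and $\dlog$ with the induced pro-map to $\{W_r\Omega^j_{A/I^s,\sub{log}}\}_s$. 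It therefore remains to see this pro-map is a pro-isomorphism, which is precisely the pro-Geisser--Levine theorem for the syntomic/motivic analogue proved in \cite{Morrow_pro_GL2} (again using \cite{ClausenMathewMorrow2020} for continuity).

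The main obstacle, common to both parts, is this comparison between the left Kan extension from smooth algebras of $W_r\Omega^j_{-,\sub{log}}$ and the honest logarithmic de Rham--Witt groups: an isomorphism for local Cartier smooth $A$ in part (i), a pro-isomorphism for $\{A/I^s\}_s$ in part (ii). It amounts to carefully matching the ``Milnor range'' piece of the cited algebraic $K$-theory / syntomic cohomology computations with $H_0$ of $L^\sub{sm}(\hat K_j^M/p^r)$, and in part (i) to working in the non-Noetherian generality of Cartier smooth rings, where Popescu's theorem is unavailable. By comparison, the surjectivity assertions (symbols generate), the pro-theoretic bookkeeping in (ii), and the passage from $K_j^M$ to $\hat K_j^M$ --- handled uniformly by the improved-Milnor version of Proposition~\ref{proposition_LKE_Milnor} --- are routine.
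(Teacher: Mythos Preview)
Your overall strategy matches the paper's: reduce both parts to showing that $W_r\Omega^j_{-,\sub{log}}$ is (pro-)left Kan extended from smooth algebras, by combining the classical Bloch--Kato--Gabber isomorphism on smooth local $\bb F_p$-algebras with the right exactness of Milnor $K$-theory (Proposition~\ref{proposition_LKE_Milnor}). However, there is a genuine gap in your treatment of the passage from $K_j^M$ to $\hat K_j^M$.

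You assert that Proposition~\ref{proposition_LKE_Milnor} holds for $\hat K_j^M(-)/p^r$ ``since $L^\sub{sm}$ is exact, so that $L^\sub{sm}(\hat K_j^M/p^r)\simeq\op{cofib}(p^r\colon L^\sub{sm}\hat K_j^M\to L^\sub{sm}\hat K_j^M)$''. Exactness of $L^\sub{sm}$ indeed gives this cofibre identification, but it says nothing about whether the counit $H_0(L^\sub{sm}\hat K_j^M(A))\to \hat K_j^M(A)$ is an isomorphism: the improved Milnor $K$-theory is a quotient of $K_j^M$ by relations having nothing to do with multiplication by $p^r$, and the proof of Proposition~\ref{proposition_LKE_Milnor} does not obviously adapt. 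Conversely, you cannot run the argument directly with $K_j^M$ either, because on essentially smooth local $\bb F_p$-algebras with \emph{small} residue field the dlog map $K_j^M(B)/p^r\to W_r\Omega^j_{B,\sub{log}}$ need not be injective, so the base case of the left Kan extension fails. The paper resolves this by first passing to a finite \'etale extension $A\subseteq A'$ of degree coprime to $p$ with $A'$ local and having big residue field; the norm map on $\hat K_j^M$ then shows $\hat K_j^M(A)/p^r\hookrightarrow K_j^M(A')/p^r$, and for $A'$ one has $K_j^M=\hat K_j^M$ throughout the simplicial resolution (each term of which has the same residue field as $A'$), so Proposition~\ref{proposition_LKE_Milnor} applies verbatim.

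A secondary point: you treat the assertions that $W_r\Omega^j_{-,\sub{log}}$ is left Kan extended from smooth (on Cartier smooth algebras) and pro-left Kan extended (on $\{A/I^s\}_s$) as black boxes from \cite{KellyMorrow2018a} and \cite{Morrow_pro_GL2}. The paper does not quite cite these as stated; rather it proves them (Proposition~\ref{proposition_LKE_Omegalog2} and Lemmas~\ref{lemma_pro_Illusie_sequence}--\ref{lemma_LKE_of_BZ}) by reducing to $r=1$ via Illusie-type short exact sequences, then using the fibre sequence $\Omega^j_\sub{log}\to[\Omega^j\xto{C^{-1}-1}\Omega^j/d\Omega^{j-1}]\to\tilde\nu(j)[-1]$ together with the facts that $\Omega^j$ and $B\Omega^j$ are (pro-)left Kan extended and that $\tilde\nu(j)$ is rigid. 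Your citations point in the right direction, but the argument is not literally contained in those references.
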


To state our final main result and outline the proofs, we briefly recall the syntomic cohomology theory $\bb Z_p(j)(A)$ introduced in \cite{BhattMorrowScholze2} for quasisyntomic rings and extended to arbitrary $p$-complete rings $A$ in \cite{AntieauMathewMorrowNikolaus}. This weight-$j$ syntomic cohomology $\bb Z_p(j)(A)$ is a $p$-complete complex which is expected to provide a general theory of $p$-adic \'etale motivic cohomology for $A$; consequently, by a Beilinson--Lichtenbaum principle, one expects the truncation $\tau^{\le j}\bb Z_p(j)(A)$ to share certain properties with Zariski motivic cohomology. As part of these expectations we prove the following relation to Milnor $K$-theory, thereby providing a $p$-adic analogue of the Nesterenko--Suslin isomorphism $K_j^M(k)\cong H^j_\sub{Mot}(k,\bb Z(j))$ for fields $k$ \cite{Suslin1989}.

\begin{theorem}\label{intro_NS}
For any local, $p$-complete ring $A$, there are natural isomorphisms \[\hat K_j^M(A)/p^r\isoto H^j(\bb Z_p(j)(A)/p^r)\] for all $r,j\ge0$.
\end{theorem}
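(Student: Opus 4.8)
The plan is to reduce the statement to the case of essentially smooth local algebras over a suitable base via the left Kan extension formalism, where the comparison with syntomic cohomology is already available from the work of Bhatt--Morrow--Scholze and Antieau--Mathew--Morrow--Nikolaus. First I would fix a base: since $A$ is $p$-complete, it is a $\bb Z_p$-algebra, and I would work with the left Kan extensions from (ind-)smooth $\bb Z_p$-algebras. On the Milnor $K$-theory side, Proposition \ref{proposition_LKE_Milnor} tells us that $H_0(L^\sub{sm}K_j^M(A))\xrightarrow{\sim} K_j^M(A)$, and combined with the infinite-residue-field/improved Milnor $K$-theory comparison one obtains that the improved version $\hat K_j^M$ is likewise computed by the same left Kan extension (after passing to a suitable cover adding variables to enlarge the residue field, a standard trick which commutes with $L^\sub{sm}$). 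So $\hat K_j^M(A)/p^r\cong H_0(L^\sub{sm}K_j^M(A)/p^r)$.

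Next I would identify the target. The key point is that $\tau^{\le j}\bb Z_p(j)(-)$, and in particular $H^j(\bb Z_p(j)(-)/p^r)$, is already known on $p$-complete essentially smooth $\bb Z_p$-algebras (indeed on all quasisyntomic rings) and behaves well under filtered colimits and $p$-completion. The strategy is to show that $A\mapsto H^j(\bb Z_p(j)(A)/p^r)$, restricted to local $p$-complete rings, agrees with the zeroth homology of the left Kan extension of its own restriction to smooth algebras — i.e. that it is itself "left Kan extended in degree $0$". For this one uses that syntomic cohomology of a $p$-complete ring is recovered, via quasisyntomic descent, from its values on (quasiregular) semiperfectoid rings, and that the relevant Nygaard-type filtration and the resulting low-degree cohomology are compatible with animation from polynomial/smooth algebras; the top cohomology $H^j(\bb Z_p(j)/p^r)$ of the weight-$j$ complex sits in the "Milnor range" where the Beilinson--Lichtenbaum-type description makes it accessible. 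Then on essentially smooth local $\bb F_p$- and $\bb Z_p$-algebras the dlog/symbol map $\hat K_j^M/p^r\to H^j(\bb Z_p(j)/p^r)$ is an isomorphism — over $\bb F_p$ this is (part (i) of) Theorem \ref{intro_charp} together with the identification of $\bb Z_p(j)/p$ with the logarithmic de Rham--Witt sheaf, and in mixed characteristic it follows from Theorem \ref{introduction_thm1}(ii) after identifying $H^j(\bb Z_p(j)(R)/p^r)$ with $H^j_\sub{\'et}(R[\tfrac1p],\mu_{p^r}^{\otimes j})$ in the appropriate range via the syntomic-étale comparison, and tracking Kato's residue exact sequence \eqref{introduction_eqn}.

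Finally, both sides being computed by $H_0$ of the left Kan extension (from smooth $\bb Z_p$-algebras) of functors that agree on smooth local algebras, the symbol map induces the desired isomorphism $\hat K_j^M(A)/p^r\xrightarrow{\sim} H^j(\bb Z_p(j)(A)/p^r)$ for every local $p$-complete $A$; naturality is automatic since everything is assembled from the natural dlog/symbol transformation. The main obstacle I anticipate is the second step: verifying that $H^j(\bb Z_p(j)(-)/p^r)$ is left Kan extended in degree $0$ from smooth algebras, i.e. that no higher derived-functor contributions obstruct the comparison of $H_0(L^\sub{sm})$ with the actual value. This requires a careful analysis of the Nygaard filtration on $\prism$-cohomology and its interaction with animation, controlling the connectivity of the fibre of the counit map $L^\sub{sm}(\tau^{\le j}\bb Z_p(j))(A)\to \tau^{\le j}\bb Z_p(j)(A)$ in the relevant degree; the $\bb F_p$-case (where one can use the conjugate filtration on de Rham--Witt complexes and Cartier smoothness) and the reduction mod $p$ are likely to be the technical heart, with the mixed-characteristic case then following by dévissage along powers of $p$ and the case $r=1$.
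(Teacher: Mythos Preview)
Your overall architecture matches the paper's: reduce to the ind-smooth case via left Kan extension, using Proposition~\ref{proposition_LKE_Milnor} on the Milnor side and the left Kan extension property of $\tau^{\le j}\bb Z_p(j)$ on the syntomic side, then invoke Theorem~\ref{introduction_thm1} (together with Corollary~\ref{corollary_BCM}) on the ind-smooth algebras. That much is exactly right.

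There is, however, a genuine gap in your treatment of the passage from $K_j^M$ to $\hat K_j^M$. Proposition~\ref{proposition_LKE_Milnor} gives $H_0(L^\sub{sm}K_j^M(A))\cong K_j^M(A)$, not $\hat K_j^M(A)$; your claim that ``the improved version $\hat K_j^M$ is likewise computed by the same left Kan extension (after passing to a suitable cover adding variables\dots)'' is not justified and is not how the paper proceeds. The left Kan extension argument only produces a map $K_j^M(A)/p^r\to H^j(\bb Z_p(j)(A)/p^r)$, and when $A$ has small residue field there is no a priori reason this is injective, nor that its kernel coincides with $\ker(K_j^M\to\hat K_j^M)$ mod $p^r$. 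The paper closes this gap differently: it constructs norm maps $N_{B/A}:\bb Z_p(j)(B)\to\bb Z_p(j)(A)$ for finite \'etale extensions (Lemma~\ref{lemma_norm_exists} and Corollary~\ref{corollary_injective}), takes a prime-to-$p$ finite \'etale extension $A\to A'$ making the residue field big, and then runs a diagram chase comparing the (known) isomorphism for $A'$ with the injection $H^j(\bb Z_p(j)(A)/p^r)\hookrightarrow H^j(\bb Z_p(j)(A')/p^r)$ and the containment $\hat K_j^M(A)\subseteq\hat K_j^M(A')=K_j^M(A')$. Surjectivity is handled separately via rigidity for a henselian surjection from an ind-smooth algebra. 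Your ``adding variables'' idea would at best reprove Remark~\ref{remark_improved} on the syntomic side, which amounts to the same norm input.

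A secondary point: you locate the main obstacle in showing that $H^j(\bb Z_p(j)(-)/p^r)$ is left Kan extended in degree~$0$, and propose to attack it via the Nygaard filtration. In the paper this is Proposition~\ref{proposition_motivic_cohomol_LKE}, and it is much softer than you suggest: since $\bb Z_p(j)(-)/p^r$ itself is left Kan extended, one only needs $\tau^{>j}(\bb Z_p(j)(-)/p^r)$ to be left Kan extended, and this follows immediately from the rigidity of $\tau^{>j}$ along henselian surjections \cite[Thm.~5.2]{AntieauMathewMorrowNikolaus}. No prismatic or Nygaard analysis is required; the hard work has already been absorbed into Theorem~\ref{introduction_thm1}.
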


We finish the introduction by sketching the proofs of the main theorems, ignoring completely the difficulties caused by possible small residue fields.

Theorems \ref{introduction_thm1} and \ref{intro_NS} are proved by a convoluted reduction to a special case which we treat by hand. Let $R$ be as in Theorem \ref{introduction_thm1}. First we use a forthcoming result of Bhatt--Clausen--Mathew identifying the syntomic cohomology $\bb Z_p(j)(R)$ with older approaches to $p$-adic \'etale motivic cohomology (see Theorem \ref{theorem_BCM_LKE} for details), a consequence of which is that the kernel of Kato's residue map in (\ref{introduction_eqn}) identifies with $H^j(\bb Z_p(j)(R)/p^r)$. In other words, for such $R$, Theorems \ref{introduction_thm1} and \ref{intro_NS} are equivalent. Independently, rigidity results of Antieau--Mathew--Nikolaus joint with the second author \cite{AntieauMathewMorrowNikolaus} imply that $\tau^{\le j}\bb Z_p(j)(-)$ is left Kan extended from smooth algebras in a suitable sense (see Proposition \ref{proposition_motivic_cohomol_LKE} for details); combined with the right exactness property of Milnor $K$-theory mentioned above, this reduces Theorem \ref{intro_NS} to the case in which the ring is ind-smooth over $\bb Z_p$. Putting together these two steps (and making a standard type of argument by adding roots of unity to reduce mod $p^r$ assertions to the mod $p$ case), it is finally enough to prove Theorem \ref{introduction_thm1} when $R$ is ind-smooth over $\bb Z_p$ and $r=1$.

The main difficulty is then to show that the the composition $K_j^M(R)/p\to K_j^M(R[\tfrac1p])\to H^j_\sub{\'et}(R[\tfrac1p],\mu_p^{\otimes j})$ is injective. Using Bloch--Kato's filtration on $p$-adic nearby cycles, this reduces when $p\neq 2$ to showing that their differential map $\Omega^{j-1}_{R/pR}\to H^j_\sub{\'et}(R[\tfrac1p],\mu_p^{\otimes j})$ factors through the aforementioned composition; we construct this factoring, which we suspect is known to experts but whose proof does not appear in the literature, using Dennis--Stein symbols. When $p=2$ (but still over base dvr $\bb Z_p$, so that $\tfrac{ep}{p-1}=2$) a further step of the Bloch--Kato filtration, corresponding to differential maps out of the cokernel of the Artin--Schreier maps $C^{-1}-1:\Omega^{j-\ep}_{R/pR}\to\Omega^{j-\ep}_{R/pR}/d\Omega^{j-\ep-1}_{R/pR}$ for $\ep=1,2$, must be analysed. Here the manipulations of the Dennis--Stein symbols are more involved, complicated further by the fact that the restriction of the unit filtration on $K_j^M(R[\tfrac1p])$ to $K_j^M(R)$ had not been previously identified; see \S\ref{subsection_K2}--\ref{subsection_differential _maps}.

\begin{remark}[Unit group filtrations]
Let $V$ be a complete discrete valuation ring of mixed characteristic, with uniformiser $\pi$ and field of fractions $K$; although the observations of this remark continue to hold for suitably smooth local $V$-algebras, notably as in Theorem \ref{introduction_thm1}, we restrict to $V$ itself for the sake of concreteness.

The $i^\sub{th}$ step $U^iK_j^M(K)$ of the {\em unit group filtration} on $K_j^M(K)$ is defined to be the subgroup generated by symbols $\{a_1,\dots,a_j\}$ where at least one of $a_1,\dots,a_j\in K^\times$ belongs to $1+\pi^i V$. The graded pieces of this filtration have been extensively studied, by Bloch--Kato \cite{Bloch1986}, Kurihara \cite{Kurihara1988}, Nakamura \cite{Nakamura2000}, and others; see \cite{Nakamura2000a} for a survey.

Filtering $K_j^M(V)$ in the analogous way (i.e., using $a_1,\dots,a_j\in V^\times$ with at least one belonging to $1+\pi^i V$) results in pathological behaviour: this filtration is not multiplicative and the canonical map $K_j^M(V)\to K_j^M(K)$ is not injective on graded pieces. The general goal of \S\ref{subsection_K2}--\ref{subsection_differential _maps} is to describe what we believe is the ``correct'' filtration on $K_j^M(V)$ (namely the restriction of the unit group filtration on $K_j^M(K)$, assuming injectivity of $K_j^M(V)\to K_j^M(K)$) in the special case that $V$ is absolutely unramified: in fact, the pathological behaviour in this case only occurs when $p=2$, in which case one must enlarge the second step of the filtration by including symbols $\{1+ap,1+bp\}$ where $a,b\in V$, or equivalently by adding certain Dennis--Stein symbols. See Definition \ref{definition_Vfiltration} and Remark \ref{remark_filtration}.

This suggests an alternative approach to our proof of the injectivity of $K_j^M(V)/p^r\to H^j_\sub{\'et}(K,\mu_{p^r}^{\otimes j})$. Firstly enlarge the pathological filtration on $K_j^M(V)$, probably by modifying it in degrees $i$ whenever $i$ is divisible by $p$ and in the range $1\le i\le \tfrac{ep}{p-1}$ (c.f., the Bloch--Kato description of the graded pieces in these degrees \cite[Corol.~1.4.1]{Bloch1986}, noting that the graded step of degree $\tfrac{ep}{p-1}$ might not vanish as we do not work \'etale locally). Secondly, reduce to the case $r=1$ and $V$ containing a primitive $p^\sub{th}$-root of unity. Thirdly, refine the arguments of \S\ref{subsection_differential _maps} to show that Bloch--Kato's differential maps describing the graded pieces of $H^j_\sub{\'et}(K,\mu_p^{\otimes j})$ \cite[Thm.~6.7]{Bloch1986} factor through the graded pieces of $K_j^M(V)/p$.
\end{remark}

We finish the sketches of the main theorems. Theorem \ref{intro_Gesten} follows from Theorem \ref{introduction_thm1} and known Gersten results in motivic and \'etale cohomology, so we refer the reader directly to \S\ref{ss_Gersten} for the details.

Theorem \ref{intro_charp} reduces via Theorem \ref{intro_NS} (which in characteristic $p$ is a consequence of the classical Bloch--Kato--Gabber theorem and our left Kan extension arguments) to describing $H^j(\bb Z(j)(-)/p^r)$ of valuation rings and of regular Noetherian rings modulo powers of an ideal. This in turn reduces to calculations in derived de Rham cohomology, which in the first case are due to Gabber--Ramero \cite[Thm.~6.5.8(ii) \& Corol.~6.5.21]{GabberRamero2003} and Gabber \cite[App.]{KerzStrunkTamme2018}, and in the second case may be found in work of the second author \cite{Morrow_pro_GL2}.

\subsection*{Acknowledgements}
We thank Bhargav Bhatt, Dustin Clausen, and Akhil Mathew for discussions about their work and Shuji Saito for related correspondence.

The first author is supported by the DFG Research Fellowship LU 2418/1-1. The second author was partly supported by the ANR JCJC project {\em P\'eriodes en G\'eom\'etrie Arithm\'etique et Motivique} (ANR-18-CE40-0017).

\section{Formulation of main theorems for \texorpdfstring{$p$}{p}-henselian, ind-smooth algebras}
In this section we state our main theorems concerning ind-smooth algebras over complete discrete valuation rings, and establish some relations between them as well as the key reductions to the special case which will then be proved in Section \ref{section_unramified_p}.

\subsection{Main theorems}\label{subsection_main_thms}
We adopt the usual definition of Milnor $K$-theory, even for non-local rings:

\begin{definition}[Milnor $K$-theory]
Let $R$ be a (always commutative) ring. We define the $j^\sub{th}$ Milnor K-group $K^M_j(R)$ to be the quotient of $(R^{\times})^{\otimes j}$ by the Steinberg relations, i.e. the subgroup of $(R^{\times})^{\otimes j}$ generated by elements of the form $a_1\otimes\cdots\otimes a_j$ where $a_l+a_k=1$ for some $1\leq l<k\leq j$. As usual, the image of $a_1\otimes\cdots\otimes a_j$ in $K^M_j(R)$ is denoted by $\{a_1,\dots,a_j\}$.
\end{definition}

For any ring $R$ and natural number $p$ invertible in $R$, we let \[h_p^j:K_j^M(R)/p\To H^j_\sub{\'et}(R,\mu_{p}^{\otimes j})\] denote Tate's cohomological symbol, also known as the Galois symbol or norm residue homomorphism. For a proof of the existence of the Galois symbol we refer to \cite{Tate1976} where it is proven for $R$ being a field. The proof in the above generality is analogous. 


\begin{remark}[Big residue fields and improved Milnor $K$-theory]\label{remark_improved}
Suppose in this remark that $R$ is a local ring of arbitrary residue characteristic, and consider the restriction of the Galois symbol for $R[\tfrac1p]$ to the Milnor $K$-theory of $R$, namely \[K_j^M(R)\To K_j^M(R[\tfrac1p])\xto{h_p^j} H^j_\sub{\'et}(R[\tfrac1p],\mu_{p}^{\otimes j}).\] This symbol factors through Gabber--Kerz' improved Milnor $K$-theory $\hat K_j^M(R)$ \cite{Kerz2010} (which we recall is a quotient of $K_j^M(R)$ by \cite[Thm.~13]{Kerz2010}). Indeed, letting $R(t)$ and $R(t_1,t_2)$ denote the rational function rings of \cite[Lem.~8]{Kerz2010} (which have infinite residue field), we have \[\hat K_j^M(R)=\op{ker}(K_j^M(R(t))\xto{i_{1*}-i_{2*}}K_j^M(R(t_1,t_2))\] by definition, and \[H^j_\sub{\'et}(R[\tfrac1p],\mu_p^{\otimes j})=\op{ker}(H^j_\sub{\'et}(R(t)[\tfrac1p],\mu_p^{\otimes j})\xto{i_{1*}-i_{2*}}H^j_\sub{\'et}(R(t_1,t_2)[\tfrac1p],\mu_p^{\otimes j}))\] by \cite[Prop.~9]{Kerz2010} since \'etale cohomology has transfers for finite \'etale extensions of rings; naturality of the Galois symbol now proves the claim.

Now let $S$ be a local, finite \'etale $R$-algebra. Then there exist norm maps $N:\hat K_*^M(S)\to \hat K_*^M(R)$ on improved Milnor $K$-theory \cite[\S1]{Kerz2010} (depending, for example, on a chosen presentation of $S(t)$ as a finite \'etale $R(t)$-algebra), and we expect that these are compatible with the transfer maps on \'etale cohomology in the sense that the following diagram commutes
\[\xymatrix{
\hat K_j^M(S)\ar[r]\ar[d]_N & H^j_\sub{\'et}(S[\tfrac1p],\mu_p^{\otimes j})\ar[d]^N\\
\hat K_j^M(R)\ar[r] & H^j_\sub{\'et}(R[\tfrac1p],\mu_p^{\otimes j}).
}\]
We will prove this compatibility in the special case of interest to us in Lemma \ref{lemma_norms_are_comp}. (Here is a possible method of proof in general, which in the case of fields can be found in \cite[pg.~237]{Gille2006}. Since the norm maps on improved Milnor $K$-theory are induced by those on the $K$-theory of $K_j^M(-(t))$ and $K_j^M(-(t_1,t_2))$, this compatibility immediately reduces to the case that $R$ has infinite residue field. Now the norm maps on for Milnor $K$-theory for local rings infinite residue field are defined in \cite[Def. 5.5]{Kerz2010} using an analogue of the Milnor--Bass--Tate sequence for local rings with infinite residue field. Similarly one should be able to prove an analogue of Faddeev's exact sequence for \'etale cohomology for local rings with infinite residue field to define the norm map in that setting. Then one checks the compatibility of the two sequences.)

We recall also that the canonical quotient map $K_j^M(R)\to \hat K_j^M(R)$ is an isomorphism if the residue field of $R$ has $>M_j$ elements \cite[Prop.~10(5)]{Kerz2010}, where $M_j$ is a certain bound independent of $R$ (and implicitly chosen to satisfy $1=M_1\le M_2\le M_3\le\cdots$). Since $j$ will be clear from the context, we will say in this case simply that $R$ has ``big residue field''.
\end{remark}

The following is the first formulation of our main result; by {\em $p$-henselian} we mean that the ring is henselian along the ideal generated by $p$.

\begin{theorem}\label{theorem_BK}
Let $V$ be a complete discrete valuation ring of mixed characteristic, and $R$ a local, $p$-henselian, ind-smooth $V$-algebra; let $j,r\ge 1$ and assume $R$ has big residue field. Then
\begin{enumerate}
\item the cohomological symbol \[h^j_{p^r}:K_j^M(R[\tfrac1p])/p^r\To H^j_\sub{\'et}(R[\tfrac1p],\mu_{p^r}^{\otimes j})\] is an isomorphism;
\item the canonical map \[K_j^M(R)/p^r\To K_j^M(R[\tfrac1p])/p^r\] is injective.
\end{enumerate}
\end{theorem}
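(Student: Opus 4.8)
The plan is to deduce both assertions simultaneously from the Nesterenko--Suslin isomorphism of Theorem~\ref{intro_NS} and the forthcoming comparison of syntomic with $p$-adic \'etale motivic cohomology. Since $R$ has big residue field, $\hat K_j^M(R)=K_j^M(R)$ by \cite[Prop.~10(5)]{Kerz2010}, so Theorem~\ref{intro_NS} reads $K_j^M(R)/p^r\isoto H^j(\bb Z_p(j)(R)/p^r)$ for our $R$. On the other hand, the Bhatt--Clausen--Mathew identification of $\bb Z_p(j)(R)$ with $p$-adic \'etale motivic cohomology (Theorem~\ref{theorem_BCM_LKE}) yields an exact sequence
\[0\To H^j(\bb Z_p(j)(R)/p^r)\To H^j_\sub{\'et}(R[\tfrac1p],\mu_{p^r}^{\otimes j})\xto{\partial}W_r\Omega^{j-1}_{R/\pi R,\sub{log}}\To 0,\]
with $\partial$ Kato's residue map: surjectivity of $\partial$ is classical and the identification $\ker\partial=H^j(\bb Z_p(j)(R)/p^r)$, compatible with the symbol maps out of $\hat K_j^M(R)/p^r$, is the content of the comparison. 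By Remark~\ref{remark_improved} the left-hand map of the display is then the factorization through $\hat K_j^M(R)=K_j^M(R)$ of the composite $K_j^M(R)/p^r\to K_j^M(R[\tfrac1p])/p^r\xto{h^j_{p^r}}H^j_\sub{\'et}(R[\tfrac1p],\mu_{p^r}^{\otimes j})$, so its injectivity gives (ii) at once. For (i) I would separately invoke the norm residue theorem over the characteristic-zero field $K=V[\tfrac1p]$: since $R[\tfrac1p]$ is a local ring of an ind-smooth $K$-scheme, the known Gersten resolutions in Milnor $K$-theory (\cite{Kerz2009,Kerz2010}) and in \'etale cohomology reduce the isomorphy of $h^j_{p^r}$ to the (Bloch--Kato) case of residue fields.

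So everything comes down to Theorem~\ref{intro_NS}, which I would prove by d\'evissage. First, the base ring: by Proposition~\ref{proposition_motivic_cohomol_LKE} (resting on the rigidity results of \cite{AntieauMathewMorrowNikolaus}) the truncation $\tau^{\le j}\bb Z_p(j)(-)$ is left Kan extended from smooth $\bb Z_p$-algebras, and by Proposition~\ref{proposition_LKE_Milnor} the counit $H_0(L^\sub{sm}K_j^M(A))\to K_j^M(A)$ is an isomorphism on every local ring, i.e.\ $K_j^M$ is right exact under left Kan extension from smooth algebras. Consequently $\hat K_j^M(-)/p^r$ and $H^j(\bb Z_p(j)(-)/p^r)$, as functors on local $p$-complete rings, are both (naive) left Kan extensions from smooth $\bb Z_p$-algebras of their restrictions, so the symbol map between them is an isomorphism on all such rings as soon as it is one on local ind-smooth $\bb Z_p$-algebras --- and there, small residue fields can be absorbed throughout via the $R(t)$-construction of \cite[Lem.~8]{Kerz2010}, letting us work with honest $K^M$ in place of $\hat K^M$. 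Second, the coefficients: a standard d\'evissage in $r$ --- adjoin a primitive $p^r$-th root of unity and induct on $r$ via the Bockstein sequences, using the compatibility of norms on improved Milnor $K$-theory with \'etale transfers (Lemma~\ref{lemma_norms_are_comp}) to control the finite \'etale extensions involved --- reduces to $r=1$. It therefore remains to prove that, for $R$ ind-smooth over $\bb Z_p$ with infinite residue field, the map $K_j^M(R)/p\to H^j_\sub{\'et}(R[\tfrac1p],\mu_p^{\otimes j})$ is injective with image $\ker\partial$ (by Theorem~\ref{theorem_BCM_LKE}, this is the $r=1$ case of Theorem~\ref{intro_NS} for such $R$).

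For this I would run the Bloch--Kato filtration argument of \cite[\S\S1, 6]{Bloch1986} on the $p$-adic nearby cycles of the generic fibre, whose graded pieces in the critical range $1\le i\le\tfrac{ep}{p-1}$ are described by differential forms of $R/pR$: surjectivity onto $\ker\partial$ follows from their computation and surjectivity of $\dlog$, and for injectivity one compares, by induction on $j$, the graded pieces of the unit-type filtration on $K_j^M(R)/p$ with those on $H^j_\sub{\'et}(R[\tfrac1p],\mu_p^{\otimes j})$. When $p$ is odd, $\tfrac{ep}{p-1}=\tfrac{p}{p-1}$ has integer part $1$, so the critical range is just $i=1$, and injectivity reduces to showing that Bloch--Kato's differential map $\Omega^{j-1}_{R/pR}\to H^j_\sub{\'et}(R[\tfrac1p],\mu_p^{\otimes j})$ factors through $K_j^M(R)/p$; I would construct this factoring by hand using Dennis--Stein symbols, realizing a form class such as $\bar a\,\dlog\bar b_1\wedge\cdots\wedge\dlog\bar b_{j-1}$ by the image of an explicit $R$-integral symbol built from a Dennis--Stein symbol $\langle pa,b_1\rangle$ and a Milnor symbol $\{b_2,\dots,b_{j-1}\}$, and checking the Steinberg relations directly. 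When $p=2$ (over $\bb Z_2$, so $\tfrac{ep}{p-1}=2$) the range is $i=1,2$, and the $i=2$ step forces an analysis of the differential maps out of the cokernels of $C^{-1}-1\colon\Omega^{j-\ep}_{R/2R}\to\Omega^{j-\ep}_{R/2R}/d\Omega^{j-\ep-1}_{R/2R}$ for $\ep=1,2$; this requires, besides considerably more delicate Dennis--Stein manipulations, first pinning down the restriction to $K_j^M(R)$ of the unit group filtration on $K_j^M(R[\tfrac12])$ --- the ``correct'' filtration of Definition~\ref{definition_Vfiltration} --- since the naive filtration on $K_j^M(R)$ is pathological and must be enlarged by symbols $\{1+2a,1+2b\}$.

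The main obstacle is exactly this $p=2$ analysis: identifying the correct filtration on $K_j^M(R)$ (the restriction of the unit filtration from the generic fibre, whose explicit shape was not previously known) and then carrying out the symbol computations needed to hit the extra graded piece attached to $\ep=2$. For odd $p$ the only real work is the construction and verification of the single factoring through $\Omega^{j-1}_{R/pR}$, which seems to be folklore but has not appeared in print; everything else is d\'evissage together with the left Kan extension formalism, the Bhatt--Clausen--Mathew comparison, and the classical Beilinson--Lichtenbaum theorem over characteristic-zero fields.
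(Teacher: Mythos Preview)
Your overall strategy is essentially the paper's, just reorganized: the same chain of reductions (left Kan extension to pass to ind-smooth $\bb Z_p$-algebras via Propositions~\ref{proposition_motivic_cohomol_LKE} and~\ref{proposition_LKE_Milnor}, d\'evissage to $r=1$ by adjoining roots of unity as in Proposition~\ref{prop_reduction0}, and then the Bloch--Kato filtration analysis with Dennis--Stein symbols) appears in both, and your description of the $p=2$ difficulties matches \S\ref{subsection_K2}--\ref{subsection_differential _maps} closely. The only difference in framing is that the paper first proves Theorem~\ref{theorem1} and then deduces Theorem~\ref{intro_NS} from it, whereas you run the argument with Theorem~\ref{intro_NS} as the intermediate target; the content is the same.

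There is, however, one genuine error in your treatment of part~(i). You write that ``$R[\tfrac1p]$ is a local ring of an ind-smooth $K$-scheme'' and propose to invoke Kerz' Gersten resolution there. But $R[\tfrac1p]$ is \emph{not} local: since $R$ is local and $p$-henselian, $p$ lies in its maximal ideal, and $\Spec R[\tfrac1p]$ is the generic fibre of $\Spec R$, which typically has many closed points (e.g.\ already for $R$ the $p$-henselisation of $\bb Z_p[t]_{(p,t)}$). Kerz' results for local rings therefore do not apply, and there is no obvious reason why $K_j^M(R[\tfrac1p])/p^r$ should agree with global sections of the Milnor $K$-sheaf on $\Spec R[\tfrac1p]$.

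Fortunately this separate argument is unnecessary. Once you have established the exact sequence
\[0\To K_j^M(R)/p^r\To H^j_\sub{\'et}(R[\tfrac1p],\mu_{p^r}^{\otimes j})\xto{\partial} W_r\Omega^{j-1}_{R/\pi R,\sub{log}}\To 0\]
(your first display, i.e.\ Theorem~\ref{theorem1} in the big residue field case), part~(i) follows by comparing it with the localisation sequence $0\to K_j^M(R)/p^r\to K_j^M(R[\tfrac1p])/p^r\xto{\partial} K_{j-1}^M(R/\pi R)/p^r\to 0$ of Lemma~\ref{lemma_localisation_in MilnorK} together with the Bloch--Kato--Gabber isomorphism $K_{j-1}^M(R/\pi R)/p^r\cong W_r\Omega^{j-1}_{R/\pi R,\sub{log}}$; the five lemma then forces $h^j_{p^r}$ to be an isomorphism. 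This is precisely the content of Proposition~\ref{prop_reduction2}, and it is also how the paper obtains (i) in the proof of Theorem~\ref{theorem_BK_unram} --- via comparison of exact sequences, not via any Gersten argument on the non-local ring $R[\tfrac1p]$.
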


\begin{remark}[$j=2$]\label{remark_j=2}
The arguments of Dennis--Stein \cite{Dennis1975} may be used to prove Theorem \ref{theorem_BK}(ii) in the  case $j=2$: take the exact sequence of \cite[Corol.~4.3]{Morrow_K2} (with $t$ being a uniformiser of $V$) mod $p^r$. This special case will actually be used in the course of our proof of the general case (see the end of the proof of Proposition \ref{prop_reduction0}).
\end{remark}

In order to avoid the assumption that $R$ has big residue field, we now reformulate Theorem \ref{theorem_BK} to avoid any reference to the Milnor $K$-theory of $R[\tfrac1p]$. So let $V$ and $R$ be as in Theorem \ref{theorem_BK} but drop the assumption that $R$ has big residue field. Letting $\frak m\subseteq V$ be the maximal ideal (notation which will be used throughout the paper), note that $\frak m R$ is a prime ideal of $R$ and the localisation $R_{\frak m R}$ is a discrete valuation ring (since it is a filtered colimit of dvrs, all of whose maximal ideals are generated by $\frak m$); let $F$ be the field of fractions of its henselisation $R_{\frak m R}^h$, and note that the residue field of $F$ is $\Frac(R/\frak mR)$. Then $F$ is a henselian discrete valuation field of mixed characteristic, and so the cohomological symbol $h_{p^r}^j:K_j^M(F)/p^r\to H^j_\sub{\'et}(F,\mu_{p^r}^{\otimes j})$ is an isomorphism by Bloch--Kato \cite[Thm.~5.12]{Bloch1986}. This may be used to define Kato's residue map \begin{equation}\bor:H^j_\sub{\'et}(R[\tfrac1p],\mu_{p^r}^{\otimes j})\To W_r\Omega_{R/\frak m R,\sub{log}}^{j-1}\label{eqn_Katos_residue}\end{equation} as usual:

\begin{lemma}\label{lemma_Katos_residue}
With notation as in the previous paragraph, the composition \[H^j_\sub{\'et}(R[\tfrac1p],\mu_{p^r}^{\otimes j})\To H^j_\sub{\'et}(F,\mu_{p^r}^{\otimes j})\stackrel{h_{p^r}^j}\cong K_j^M(F)/p^r\xto{\bor} K_{j-1}^M(\Frac(R/\frak m R))/p^r\xto{\dlog} W_r\Omega^{j-1}_{\Frac(R/\frak m R),\sub{log}}\] lands inside $W_r\Omega_{R/\frak m R,\sub{log}}^{j-1}$, where $\bor$ is the boundary map in Milnor $K$-theory for the discrete valuation field $F$.
\end{lemma}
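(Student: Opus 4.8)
The plan is to reduce to the case where $R$ is essentially smooth over $V$, to recognise the displayed composite as Kato's residue map along the special fibre, and then to deduce the stated integrality by a Gersten-type argument on the regular special fibre. For the reduction, write $R=\varinjlim_\lambda R_\lambda$ as a filtered colimit of smooth $V$-algebras and localise each $R_\lambda$ at the preimage of the maximal ideal of $R$, so that $R=\varinjlim_\lambda R_\lambda$ with each $R_\lambda$ a local, essentially smooth $V$-algebra. Since étale cohomology with $\mu_{p^r}^{\otimes\ast}$-coefficients, Milnor $K$-theory, and logarithmic de Rham--Witt forms all commute with filtered colimits of rings, and the formations of $F$, of $\Frac(R/\frak m R)$, and of the residue maps are compatible with such colimits, it suffices to treat $R$ essentially smooth over $V$. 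In that case $R$ is a regular local domain with $\Frac R=\Frac R[\tfrac1p]$, the ring $R/\frak m R$ is a regular local $\bb F_p$-algebra, $Y:=\Spec R/\frak m R$ is a closed subscheme of $\Spec R$ with open complement $\Spec R[\tfrac1p]$, and $R_{\frak m R}$ is the discrete valuation local ring of $\Spec R$ at the generic point $\eta$ of $Y$.

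By construction the displayed composite is then Kato's residue map $r_\eta\colon H^j_\sub{\'et}(F,\mu_{p^r}^{\otimes j})\to W_r\Omega^{j-1}_{\Frac(R/\frak m R),\sub{log}}$ attached to the henselian discrete valuation field $F$. I would view $W_r\Omega^{j-1}_{R/\frak m R,\sub{log}}$ inside $W_r\Omega^{j-1}_{\Frac(R/\frak m R),\sub{log}}$ via the Gersten complex for $W_r\Omega^{j-1}_{\sub{log}}$ on the regular ring $R/\frak m R$; exactness of that complex in its first two terms identifies $W_r\Omega^{j-1}_{R/\frak m R,\sub{log}}$ with the intersection, over all height-one primes $y$ of $R/\frak m R$, of the kernels of the de Rham--Witt residue maps $\partial_y\colon W_r\Omega^{j-1}_{\Frac(R/\frak m R),\sub{log}}\to W_r\Omega^{j-2}_{k(y),\sub{log}}$. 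It then remains to prove that $\partial_y\bigl(r_\eta(\alpha|_F)\bigr)=0$ for every class $\alpha\in H^j_\sub{\'et}(R[\tfrac1p],\mu_{p^r}^{\otimes j})$ and every such $y$.

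For this, fix $y$ and let $\tilde y\subset R$ be the height-two prime with $\tilde y/\frak m R=y$, so that $R_{\tilde y}$ is a two-dimensional regular local ring in which $\pi$ is part of a regular system of parameters and $(\pi)$ is the codimension-one point of $\Spec R_{\tilde y}$ lying over $\eta$. One pulls $\alpha$ back to $H^j_\sub{\'et}(R_{\tilde y}[\tfrac1p],\mu_{p^r}^{\otimes j})$ and observes that for every height-one prime $\frak q\neq(\pi)$ of $R_{\tilde y}$ the element $p$ is a unit in $(R_{\tilde y})_{\frak q}$, over which $\alpha$ therefore extends, so that its residue $\partial_{\frak q}(\alpha)$ vanishes by cohomological purity. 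In the Gersten complex attached to Sato's $p$-adic étale Tate twist $\mathfrak{T}_r(j)$ on $\Spec R_{\tilde y}$ --- in which the residue from the generic point to $(\pi)$ is $r_\eta$, the residue from $(\pi)$ to the closed point $\tilde y$ is $\partial_y$, the residues from the generic point to the other codimension-one points $\frak q$ are the Gysin maps $\partial_{\frak q}$, and $d^2=0$ --- the only possibly nonzero component of $d(\alpha)$ thus lies at $(\pi)$ and equals $r_\eta(\alpha|_F)$, whence $0=(d^2\alpha)_{\tilde y}=\partial_y\bigl(r_\eta(\alpha|_F)\bigr)$, which is what we want.

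The hard part will be the input used in the previous paragraph: that these residue maps of mixed nature --- the Bloch--Kato residue $r_\eta$ valued in logarithmic de Rham--Witt forms of the characteristic-$p$ divisor $V(\pi)$, the de Rham--Witt residue $\partial_y$, and the Gysin residues $\partial_{\frak q}$ along the characteristic-zero divisors --- are indeed the differentials of the Gersten complex of $\mathfrak{T}_r(j)$, so that consecutive ones compose to zero. This is a standard but not purely formal compatibility; it would be extracted from the Jannsen--Saito--Sato theory of $p$-adic étale Tate twists on regular arithmetic schemes (or from Kato's complexes), or else proved by hand by reducing to symbols and invoking the Bloch--Kato description of the $p$-adic nearby cycles of a smooth $V$-scheme. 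The only other nontrivial ingredient is the Gersten resolution of $W_r\Omega^{j-1}_{\sub{log}}$ on the regular special fibre used in the second paragraph above.
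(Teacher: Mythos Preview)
Your reduction to the essentially smooth case by a filtered colimit is the same as the paper's. After that, however, the paper's argument is much shorter: it simply invokes Bloch--Kato's construction \cite[(6.6)]{Bloch1986} of the residue as a morphism of \'etale sheaves
\[
\bor:\frak i^*R^j\frak j_*\mu_{p^r}^{\otimes j}\longrightarrow W_r\Omega_{\sub{log}}^{j-1}
\]
on the special fibre $\Spec(S/\frak mS)$. Since this map already exists at the sheaf level, taking global sections over the local scheme $\Spec(S/\frak mS)$ immediately shows that the composite in question lands in $W_r\Omega^{j-1}_{S/\frak mS,\sub{log}}$; there is nothing further to check.

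Your approach---characterising $W_r\Omega^{j-1}_{R/\frak mR,\sub{log}}$ via the Gersten resolution and then verifying vanishing of the secondary residues $\partial_y\circ r_\eta$ by working on the two-dimensional local ring $R_{\tilde y}$ and appealing to $d^2=0$ in the Gersten complex for Sato's $\frak T_r(j)$---is correct and is in fact the alternative route the paper alludes to in its final sentence, citing \cite[Lem.~3.2.4]{Sato2007}. The trade-off is that your argument is more self-contained in spirit but imports heavier machinery (the Gersten resolution of $W_r\Omega^{j-1}_{\sub{log}}$ from Gros--Suwa, and the compatibility of mixed-characteristic residues in Sato's complex, which as you rightly flag is the nontrivial input). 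The paper's route is quicker because Bloch--Kato have already packaged exactly this compatibility into the existence of the sheaf map.
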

\begin{proof}
Throughout the paper we adopt the following notation when given a $V$-scheme $X$: the inclusion of the special and generic fibres are denoted by $\frak i:X\times_VV/\frak m\into X$ and $\frak j:X[\tfrac1p]\to X$ respectively.

By taking a filtered colimit we may reduce to the same assertion in which $R$ is replaced by a local, essentially smooth $V$-algebra $S$, at which point the claim follows by taking global sections on $\Spec(S/\frak mS)$ of Bloch--Kato's residue map of \'etale sheaves $\bor:\frak i^*R^j\frak j_*\mu_{p^r}^{\otimes j}\to W_r\Omega_{\Spec(S/\frak mS),\sub{log}}^{j-1}$ \cite[(6.6)]{Bloch1986}. We note that Bloch--Kato's construction of this map can also be replaced by an argument through Gersten complexes \cite[Lem.~3.2.4]{Sato2007}. 
\end{proof}

\begin{remark}[Symbolic generation of $W_r\Omega^j_\sub{log}$]\label{remark_WrOmegalog}
Let $A$ be a local $\bb F_p$-algebra. We denote by $W_r\Omega^j_{A,\sub{log}}$ the subgroup of $W_r\Omega^j_A$ generated by $\dlog{[f_1]}\wedge\cdots\wedge\dlog{[f_j]}$ for $f_1,\dots,f_j\in A^\times$, where $[f]\in W_r(A)$ is the Teichm\"uller lift of any $f\in A^\times$ and $\dlog{[f]}:=\tfrac{d[f]}{[f]}$. This coincides with the more common definition, often denoted by $\nu_r(j)(A)$, in terms of forms which are \'etale locally generated by such dlog forms by \cite[Corol.~4.2(i)]{Morrow_pro_GL2}. 

The symbol map $K_j^M(A)\to W_r\Omega^j_{A,\sub{log}}$ descends to improved Milnor $K$-theory. Indeed, the map $W_r\Omega^j_{A,\sub{log}}\to \prod_{\frak p\in\Spec A}W_r\Omega^j_{A_{\frak p}^\sub{sh},\sub{log}}$ is injective, where $A_{\frak p}^\sub{sh}$ is the strict henselisation of $A_\frak p$, so it is enough to show that the symbol map descends for each $A_\frak p^\sub{sh}$; but then there is nothing to prove as $K_j^M(A_\frak p^\sub{sh})\isoto \hat K_j^M(A_\frak p^\sub{sh})$.

If $A$ is moreover assumed to be regular Noetherian (or, more generally, ind smooth over $\bb F_p$) then the symbol map $\dlog:\hat K_j^M(A)/p^r\to W_r\Omega^j_{A,\sub{log}}$ is an isomorphism by the Bloch--Kato--Gabber theorem \cite{Bloch1986} and the Gersten conjectures for both sides \cite{GrosSuwa1988, Kerz2009} (see \cite[Thm.~5.1]{Morrow_pro_GL2} for a more detailed discussion of the proof).
\end{remark}

The second formulation of Theorem \ref{theorem_BK} eliminates the hypothesis that $R$ has big residue field:

\begin{theorem}\label{theorem1}
Let $V$ be a complete discrete valuation ring of mixed characteristic, and $R$ a local, $p$-henselian, ind-smooth $V$-algebra; let $j,r\ge 1$. Then the sequence \[0\To \hat K_j^M(R)/p^r\To H^j_\sub{\'et}(R[\tfrac1p],\mu_{p^r}^{\otimes j})\xto{\bor} W_r\Omega_{R/\frak m R,\sub{log}}^{j-1}\To 0\] is exact.
\end{theorem}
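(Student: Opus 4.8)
The plan is to split Theorem \ref{theorem1} into the routine assertions --- surjectivity of $\bor$ and the statement that the displayed sequence is a complex --- which come straight from the definitions, and the genuine content --- injectivity of the first map together with exactness at the middle --- which I would prove by the ``convoluted'' route sketched in the introduction: identify $\ker\bor$ with a piece of syntomic cohomology, reduce by left Kan extension to an absolutely unramified base, and finish by hand.

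\emph{The routine part.} By definition (Remark \ref{remark_WrOmegalog}) the group $W_r\Omega^{j-1}_{R/\m R,\sub{log}}$ is generated by the forms $\dlog[\bar u_1]\wedge\cdots\wedge\dlog[\bar u_{j-1}]$ with $\bar u_i\in(R/\m R)^\times$. Lifting each $\bar u_i$ to a unit $u_i\in R^\times$ and fixing a uniformiser $\pi$ of $V$, one has $\{u_1,\dots,u_{j-1},\pi\}\in K_j^M(R[\tfrac1p])/p^r$ since $\pi$ is invertible in $R[\tfrac1p]$; and, $\roi_F:=R_{\m R}^h$ being a discrete valuation ring with uniformiser $\pi$ and residue field $\Frac(R/\m R)$, the description of $\bor$ in Lemma \ref{lemma_Katos_residue} together with the tame-symbol formula for the Milnor boundary map of $F$ gives $\bor\, h^j_{p^r}(\{u_1,\dots,u_{j-1},\pi\})=\pm\,\dlog[\bar u_1]\wedge\cdots\wedge\dlog[\bar u_{j-1}]$, so $\bor$ is surjective. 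On the other hand, a symbol $\{a_1,\dots,a_j\}$ with all $a_i\in R^\times$ maps into $K_j^M(F)/p^r$ to a symbol of units of $\roi_F$, on which the boundary map of $F$ vanishes; since $\hat K_j^M(R)$ is a quotient of $K_j^M(R)$ (Remark \ref{remark_improved}) and hence its image in $H^j_\sub{\'et}(R[\tfrac1p],\mu_{p^r}^{\otimes j})$ is generated by such symbols, the composite $\hat K_j^M(R)/p^r\to H^j_\sub{\'et}(R[\tfrac1p],\mu_{p^r}^{\otimes j})\xto{\bor}W_r\Omega^{j-1}_{R/\m R,\sub{log}}$ is zero.

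\emph{Reduction of the remaining assertions.} It remains to prove that $\hat K_j^M(R)/p^r\to H^j_\sub{\'et}(R[\tfrac1p],\mu_{p^r}^{\otimes j})$ is injective with image exactly $\ker\bor$. Here I would invoke the forthcoming comparison of Bhatt--Clausen--Mathew (Theorem \ref{theorem_BCM_LKE}) between syntomic cohomology and older approaches to $p$-adic \'etale motivic cohomology, a consequence of which is that for such $R$ the kernel $\ker\bor$ is naturally identified with $H^j(\bb Z_p(j)(R)/p^r)$; the two remaining assertions then become precisely the Nesterenko--Suslin isomorphism $\hat K_j^M(R)/p^r\isoto H^j(\bb Z_p(j)(R)/p^r)$ of Theorem \ref{intro_NS}, specialised to $R$. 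Now the rigidity of $\tau^{\le j}\bb Z_p(j)(-)$ --- the fact that it is left Kan extended from smooth algebras (Proposition \ref{proposition_motivic_cohomol_LKE}) --- combined with the right-exactness of left-Kan-extended Milnor $K$-theory (Proposition \ref{proposition_LKE_Milnor}) reduces this isomorphism to the case in which $R$ is ind-smooth over $\bb Z_p$; a standard argument adjoining a primitive $p^r$-th root of unity then brings us down to $r=1$, and small residue fields are absorbed throughout by passing to improved Milnor $K$-theory via Kerz's rational function rings (Remark \ref{remark_improved}).

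\emph{The main obstacle.} What is then left --- and is the heart of the proof --- is to show that $K_j^M(R)/p\to K_j^M(R[\tfrac1p])/p\xto{h^j_p}H^j_\sub{\'et}(R[\tfrac1p],\mu_p^{\otimes j})$ is injective for $R$ ind-smooth over $\bb Z_p$. I would attack this via Bloch--Kato's filtration on the $p$-adic nearby-cycle sheaves \cite{Bloch1986}: it suffices to check that each of Bloch--Kato's differential maps describing the graded pieces factors through the above composite. For $p$ odd this reduces to producing a factorisation of a single map $\Omega^{j-1}_{R/pR}\to H^j_\sub{\'et}(R[\tfrac1p],\mu_p^{\otimes j})$, which can be built from Dennis--Stein symbols; for $p=2$ (so $\tfrac{ep}{p-1}=2$ over $\bb Z_p$) one must in addition factor the maps emanating from the cokernels of the Artin--Schreier operators $C^{-1}-1$ on $\Omega^{j-\ep}_{R/pR}$ for $\ep=1,2$, which demands a considerably more delicate Dennis--Stein computation and, as a prerequisite, an explicit description of the restriction to $K_j^M(R)$ of the unit-group filtration on $K_j^M(R[\tfrac1p])$. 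I expect identifying that filtration and carrying out the $p=2$ symbol manipulations to be the principal difficulty; the case $j=2$, where one may instead quote the Dennis--Stein exact sequence of \cite[Corol.~4.3]{Morrow_K2} (Remark \ref{remark_j=2}), feeds into the general argument.
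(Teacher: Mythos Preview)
Your proposal is correct and follows essentially the same route as the paper: the Bhatt--Clausen--Mathew comparison and left Kan extension arguments reduce to the absolutely unramified, mod $p$, big residue field case, which is then handled by factoring Bloch--Kato's differential maps through Dennis--Stein symbols (with the extra Artin--Schreier step at $p=2$). The only cosmetic differences are that the paper adjoins $\zeta_p$ rather than $\zeta_{p^r}$ and inducts on $r$ (Proposition~\ref{prop_reduction0}), handles small residue fields by a norm argument along a prime-to-$p$ finite \'etale extension (Proposition~\ref{prop_reduction1}) rather than directly via Kerz's rational-function rings, and requires a somewhat convoluted back-and-forth between Theorems~\ref{theorem_BK} and~\ref{theorem1} to order the reductions (Corollary~\ref{corollary_final_reduction}).
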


We will see in the next section that Theorem \ref{theorem_BK} and Theorem \ref{theorem1} are indeed equivalent if $R$ has big residue field.

\begin{remark}
The assumption that $V$ is complete in Theorems \ref{theorem_BK} and \ref{theorem1} is actually redundant. Indeed, all terms in the conclusions of the theorems are unaltered if we replace $R$ by the $p$-henselisation of $R\otimes_V\hat V$ (c.f., the first paragraph of the proof of Theorem \ref{theorem_Nesterenko_Suslin}).
\end{remark}

\subsection{Reduction to the mod \texorpdfstring{$p$}{p}, absolutely unramified, big residue field case}
In this section we reduce Theorems \ref{theorem_BK} and \ref{theorem1} to the special case of Theorem \ref{theorem_BK} where $V$ has big residue field, is absolutely unramified, and $r=1$ (see Corollary \ref{corollary_final_reduction}) which will then be proved in Section \ref{section_unramified_p} (see Theorem \ref{theorem_BK_unram}).

We begin by reducing Theorem \ref{theorem1} to the case of big residue field:

\begin{proposition}\label{prop_reduction1}
Theorem \ref{theorem1} reduces to the case that $V$ has big residue field (i.e., its residue field has more than $M_j$ elements).
\end{proposition}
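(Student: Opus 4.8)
The plan is to reduce the general case to the big-residue-field case by adjoining a transcendental variable to the base discrete valuation ring, thereby enlarging the residue field while preserving all the relevant structure.

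First I would observe that the statement of Theorem \ref{theorem1} involves, besides the (improved) Milnor $K$-theory and \'etale cohomology of $R[\tfrac1p]$, only the de Rham--Witt $\dlog$ forms of the special fibre $R/\frak m R$. So let $V'$ be the henselisation (along $p$, or equivalently along its maximal ideal) of the rational function ring $V(t)$ in Kerz's sense of \cite[Lem.~8]{Kerz2010}: concretely, localise $V[t]$ at the prime $\frak m V[t]$ and then $p$-henselise. Then $V'$ is again a discrete valuation ring of mixed characteristic (its maximal ideal is still generated by a uniformiser $\pi$ of $V$, and it is $p$-henselian, hence one may pass to its completion without changing anything by the final Remark of \S\ref{subsection_main_thms}), and its residue field $\Frac(k_V(t))$ is infinite, hence certainly big. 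Set $R':=$ the $p$-henselisation of $R\otimes_V V'$; this is a local, $p$-henselian, ind-smooth $V'$-algebra, so Theorem \ref{theorem1} applies to $R'/V'$ once we know it in the big-residue-field case. The key point is then that each of the three terms of the sequence for $R'/V'$ can be recovered functorially from the corresponding term for $R/V$ via the two maps $i_{1*},i_{2*}$ coming from the two $V$-algebra structures on $V(t_1,t_2)$, exactly as in the definition of improved Milnor $K$-theory recalled in Remark \ref{remark_improved}.

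Concretely, I would argue that the sequence of Theorem \ref{theorem1} for $R/V$ is the equaliser (termwise) of the two maps $i_{1*}-i_{2*}$ from the sequence for $R(t)/V(t)$-type ring to the sequence for the $R(t_1,t_2)/V(t_1,t_2)$-type ring. For the first term this is the definition $\hat K_j^M(R)=\ker(K_j^M(R(t))\to K_j^M(R(t_1,t_2)))$, and one checks $R(t)$ and $R'$ have the same improved Milnor $K$-theory and \'etale cohomology since $\hat K^M$ and $H^j_\sub{\'et}(-[\tfrac1p],\mu_{p^r}^{\otimes j})$ are insensitive to henselisation along $p$ and to the further localisation involved. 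For the second term this is \cite[Prop.~9]{Kerz2010} together with the transfer structure on \'etale cohomology, as recalled in Remark \ref{remark_improved}. For the third term one needs the analogous statement that $W_r\Omega^{j-1}_{-,\sub{log}}$ of the special fibres forms such an equaliser, i.e. $W_r\Omega^{j-1}_{(R/\frak m R),\sub{log}}=\ker\big(W_r\Omega^{j-1}_{(R/\frak m R)(t),\sub{log}}\to W_r\Omega^{j-1}_{(R/\frak m R)(t_1,t_2),\sub{log}}\big)$; this follows by the same transfer argument, since $W_r\Omega^\bullet_{-,\sub{log}}$ (equivalently $\nu_r(\bullet)$, by Remark \ref{remark_WrOmegalog}) has transfers for finite \'etale maps, or alternatively from the Gersten resolution of $\nu_r$ on the regular rings $k_V(t)$-schemes involved. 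Finally one verifies that Kato's residue map $\bor$ and the map $\hat K^M\to H^j_\sub{\'et}$ are compatible with $i_{1*}$ and $i_{2*}$ — for $\bor$ this is its construction via the boundary map of the dvr $F$, which is plainly natural, and for the symbol it is naturality of the Galois symbol — so that the three equalisers assemble into a short exact sequence of equalisers, which is precisely the sequence for $R/V$; exactness is then inherited from exactness of the sequence for $R'/V'$ because a kernel of a map of short exact sequences of abelian groups is again a short exact sequence once one knows left-exactness, and the only non-formal input, surjectivity of $\bor$ for $R/V$, is itself checked directly by a symbol computation or deduced from the big-residue-field surjectivity by a norm/transfer argument.

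The main obstacle I anticipate is the third term: one must know that $W_r\Omega^{j-1}_{-,\sub{log}}$ of the special fibres behaves correctly under the rational-function-ring construction, i.e. has the requisite transfers and that the relevant sequence is exact, and that these transfers are compatible with Kato's residue map. This is the de Rham--Witt analogue of \cite[Prop.~9]{Kerz2010} and, while surely true (and essentially contained in the Gersten theory for $\nu_r$ of \cite{GrosSuwa1988}), it is the place where a genuine argument, rather than a formal diagram chase, is required; the compatibility of the transfers on $\nu_r$ with those on \'etale cohomology under Bloch--Kato's residue map is the subtle point, and is presumably handled exactly as the analogous compatibility for Milnor $K$-theory alluded to in Remark \ref{remark_improved}.
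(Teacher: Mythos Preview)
Your approach via a transcendental extension and the Kerz equaliser description is genuinely different from the paper's, which instead passes to a \emph{finite \'etale} extension of degree coprime to $p$. After reducing to the case that $R$ is the $p$-henselisation of a local, essentially smooth $V$-algebra with residue field $K$, the paper picks a finite extension $k'/k$ of the residue field of $V$ of degree $\ell\ge M_j$ coprime both to $p$ and to $[K:k(\underline t)]$, lifts it to a finite \'etale extension $V'/V$, and sets $R':=R\otimes_VV'$. Because $R'$ is \emph{finite} over the $p$-henselian ring $R$ it is automatically $p$-henselian, and the coprimality hypotheses force $K\otimes_kk'$ to be a field so that $R'$ is local. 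The norm map for the finite \'etale extension $R\to R'$ then provides a retraction up to multiplication by $\ell$, so all vertical maps in the comparison diagram are split injective modulo $p^r$; exactness for $R$ follows from exactness for $R'$ by a short diagram chase, using the compatibility of norms with the Galois symbol established in Lemma~\ref{lemma_norms_are_comp}.

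Your approach has a gap, and it is not the one you flag. To apply the big-residue-field case of Theorem~\ref{theorem1} you must produce a ring that is $p$-henselian, so you are forced to take $R'$ to be the $p$-henselisation of $R(t)$ (or of $R\otimes_VV'$). But Kerz' equaliser description $\hat K^M_j(R)=\ker\big(K_j^M(R(t))\xrightarrow{i_{1*}-i_{2*}} K_j^M(R(t_1,t_2))\big)$ involves the \emph{non-henselised} rings. Your assertion that ``$\hat K^M$ \dots\ are insensitive to henselisation along $p$'' is exactly the bridge needed, but it is unjustified at this point: there is no available tool to compare $K_j^M(R(t))/p^r$ with $K_j^M(R')/p^r$, and in fact this invariance is a \emph{consequence} of the $p$-adic Nesterenko--Suslin isomorphism (Theorem~\ref{theorem_Nesterenko_Suslin}), whose proof rests on Theorem~\ref{theorem1}. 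Invoking it here is circular. By contrast, the obstacle you identify---the third term $W_r\Omega^{j-1}_{\log}$---is harmless: the special fibre is unchanged by $p$-henselisation, so $W_r\Omega^{j-1}_{R(t)/\frak m R(t),\log}=W_r\Omega^{j-1}_{R'/\frak m R',\log}$ automatically. The paper's finite-\'etale route sidesteps the whole issue because finite extensions of $p$-henselian rings are $p$-henselian, so no further henselisation is required.
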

\begin{proof}
Obviously we only need to worry about the case that $V$ has finite residue field $k$. By taking a filtered colimit we reduce to the case that $R$ is the $p$-henselisation of a local, essentially smooth $V$-algebra; let $K$ denote its residue field, which is a finitely generated, separable field extension of $k$. So we may realise $K$ as a finite separable extension of a rational function field $k(\ul t):=k(t_1,\dots,t_d)$.

Pick any integer $\ell\ge M_j$ which is coprime to both $p$ and $|K:k(\ul t)|$, and let $k'$ be the unique degree $\ell$ extension of the finite field $k$. Note that $K\otimes_kk'=K\otimes_{k(\ul t)} k'(\ul t)$ is the tensor product of finite field extensions of coprime degree, hence is a field.

Let $V'$ be the finite \'etale extension of $V$ corresponding to the extension $k'$ of $k$, and set $R':=R\otimes_VV'$. Note that $R'$ is still $p$-henselian, since it is a finite extension of $R$, and that it is local since $R'/\frak m_RR'=K\otimes_kk'$ is a field. In particular, we are allowed to assume that Theorem \ref{theorem1} holds for $R'$.

But now we obtain Theorem \ref{theorem1} for $R$ by an easy norm argument as follows. There is a map of complexes
\[\xymatrix{
0\ar[r] & \hat K_j^M(R')/p^r\ar[r]\ar@{-->}@/^5mm/[d]^N & H^j_\sub{\'et}(R'[\tfrac1p],\mu_{p^r}^{\otimes j})\ar[r]\ar@{-->}@/^5mm/[d]^N& W_r\Omega_{R'/\pi R',\sub{log}}^{j-1}\ar[r] &0\\
0\ar[r] & \hat K_j^M(R)/p^r\ar[u]\ar[r]& H^j_\sub{\'et}(R[\tfrac1p],\mu_{p^r}^{\otimes j})\ar[r]\ar[u]& W_r\Omega_{R/\pi R,\sub{log}}^{j-1}\ar[u]\ar[r] &0
}\]
in which we include the norm maps as dashed arrows. The left vertical arrow is injective since its composition with $N$ is multiplication by $\ell$, so we deduce that $\hat K_j^M(R)/p^r\to H^j_\sub{\'et}(R[\tfrac1p],\mu_{p^r}^{\otimes j})$ is injective. Then exactness at the middle of the bottom row follows from the analogous exactness of the top row and compatibility of the norm maps (see Lemma \ref{lemma_norms_are_comp}). Since Kato's residue map is surjective (see the proof of Proposition \ref{prop_reduction2}) we have proved exactness of the bottom row, as desired.
\end{proof}

As promised in Remark \ref{remark_improved} and used in the previous proof, here is the required compatibility of the norm maps on improved Milnor $K$-theory and \'etale cohomology:

\begin{lemma}\label{lemma_norms_are_comp}
Let $R$ be a local, $p$-henselian, ind-smooth algebra over a complete discrete valuation ring of mixed characteristic; let $S$ be a local, finite \'etale $R$-algebra. Then the diagram
\[\xymatrix{
\hat K_j^M(S)\ar[r]\ar[d]_N & H^j_\sub{\'et}(S[\tfrac1p],\mu_{p^r}^{\otimes j})\ar[d]^N\\
\hat K_j^M(R)\ar[r] & H^j_\sub{\'et}(R[\tfrac1p],\mu_{p^r}^{\otimes j})
}\]
commutes for any $j,r\ge0$.
\end{lemma}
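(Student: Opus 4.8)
The plan is to reduce the statement to the well-known case of fields by a Gersten/localisation argument, exploiting the fact that both $\hat K_j^M$ and $H^j_\sub{\'et}(-[\tfrac1p],\mu_{p^r}^{\otimes j})$ are computed Nisnevich-locally and inject into products over generic points once one strict-henselises. First I would reduce to the situation where $R$ is the $p$-henselisation of a local, essentially smooth $V$-algebra and $S/R$ is a standard finite \'etale extension, by writing everything as a filtered colimit (norm maps being compatible with the transition maps in the colimit defining $\hat K_j^M$, by their construction in \cite[\S1]{Kerz2010}). Since the norm on improved Milnor $K$-theory is by definition induced by the norm on $K_j^M(-(t))$ and $K_j^M(-(t_1,t_2))$, and the analogous statement for \'etale cohomology holds by \cite[Prop.~9]{Kerz2010}, one further reduces to the case that $R$ (hence $S$) has infinite residue field, so that $\hat K_j^M$ may be replaced by honest $K_j^M$ throughout.

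Next I would set up the comparison with the generic fibre. Write $K = \Frac(R)$ and $L = S\otimes_R K$, a finite \'etale $K$-algebra, i.e.\ a product of finite separable field extensions of $K$. The key diagram to chase is the obvious enlargement of the square in which one maps $K_j^M(S)$ and $K_j^M(R)$ into $K_j^M(L)$ and $K_j^M(K)$, and $H^j_\sub{\'et}(S[\tfrac1p],\mu_{p^r}^{\otimes j})$ and $H^j_\sub{\'et}(R[\tfrac1p],\mu_{p^r}^{\otimes j})$ into $H^j_\sub{\'et}(L,\mu_{p^r}^{\otimes j})$ and $H^j_\sub{\'et}(K,\mu_{p^r}^{\otimes j})$; all horizontal faces involving only fields commute by the classical compatibility of the norm residue homomorphism with norms (Bass--Tate, Kato, cf.\ \cite[pg.~237]{Gille2006}). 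The left vertical map $K_j^M(R)\to K_j^M(K)$ is injective by Theorem \ref{theorem_BK}(ii) composed with the injectivity $K_j^M(R[\tfrac1p])/p^r\hookrightarrow\cdots$ --- but to avoid circularity (this lemma is used in the proof of Theorem \ref{theorem1}) I would instead invoke only the already-established case $j=2$ of Remark \ref{remark_j=2}, together with the Gersten injectivity of Kerz \cite{Kerz2009,Kerz2010} which gives $K_j^M(R)\hookrightarrow K_j^M(K)$ integrally for $R$ essentially smooth over $V$; likewise $H^j_\sub{\'et}(R[\tfrac1p],\mu_{p^r}^{\otimes j})\hookrightarrow H^j_\sub{\'et}(K,\mu_{p^r}^{\otimes j})$ follows from the Gersten resolution for \'etale cohomology of the regular scheme $\Spec R[\tfrac1p]$ (Bloch--Ogus, or \cite[Lem.~3.2.4]{Sato2007}). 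Given these two injectivities, commutativity of the square for $R,S$ is forced by commutativity of the corresponding square for $K,L$: a diagram chase shows the two composites $K_j^M(S)\to H^j_\sub{\'et}(R[\tfrac1p],\mu_{p^r}^{\otimes j})$ agree after the injection into $H^j_\sub{\'et}(K,\mu_{p^r}^{\otimes j})$, hence agree.

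The main obstacle I anticipate is the potential circularity: Lemma \ref{lemma_norms_are_comp} is invoked inside the proof of Proposition \ref{prop_reduction1}, which reduces Theorem \ref{theorem1} to the big residue field case, so one cannot freely use Theorem \ref{theorem_BK}(ii) or Theorem \ref{theorem1} at this point. The remedy, as sketched above, is that for the injectivities needed here one only needs the unconditional Gersten injectivity in Milnor $K$-theory (Kerz) and in \'etale cohomology (Bloch--Ogus/Gabber), which hold for essentially smooth $V$-algebras without reference to the mod $p^r$ results being proved; the norm compatibility over \emph{fields} is entirely classical. A secondary, more bookkeeping, difficulty is checking that the norm map on $K_j^M(R)$ --- which depends on auxiliary presentation data --- is compatible with restriction to the generic fibre $K_j^M(K)$ and there becomes the usual field norm; this follows because Kerz's norm is constructed precisely so as to be compatible with the Bass--Tate--Kato norm on the function fields appearing in the Gersten complex, so the vertical injections intertwine the two norm maps by construction. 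Once these points are dealt with, the lemma follows formally.
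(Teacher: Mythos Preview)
Your strategy matches the paper's: pass to a field via an enlarged (cube) diagram, use the classical norm compatibility for fields, and conclude by an injectivity on the \'etale cohomology side. Two of your supporting citations, however, do not hold as stated.

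First, you invoke Kerz' Gersten injectivity to obtain $K_j^M(R)\hookrightarrow K_j^M(K)$ integrally. Kerz' theorem is only known for regular local rings \emph{containing a field}; the mixed characteristic case is open integrally and is exactly what this paper establishes mod $p^r$, so you cannot quote it here. Fortunately this injection is not actually needed: in the cube diagram the only injectivity required to force commutativity of the front face is that of the map $H^j_\sub{\'et}(R[\tfrac1p],\mu_{p^r}^{\otimes j})\to H^j_\sub{\'et}(\text{field},\mu_{p^r}^{\otimes j})$ sitting on the bottom front-to-back edge. Your own diagram chase already reflects this; the Milnor $K$-theory injectivity can simply be dropped.

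Second, your justification for $H^j_\sub{\'et}(R[\tfrac1p],\mu_{p^r}^{\otimes j})\hookrightarrow H^j_\sub{\'et}(K,\mu_{p^r}^{\otimes j})$ via Bloch--Ogus is not quite right: $\Spec R[\tfrac1p]$ is not a local scheme, so the Gersten resolution does not directly yield injectivity of global $H^j_\sub{\'et}$ into the generic point. The paper instead passes to $F:=\Frac(R_{\frak mR}^h)$ (rather than $K=\Frac R$), shows $L:=S\otimes_RF$ is a \emph{field}, and then invokes Gabber's Gersten-type injectivity for vanishing cycles (recorded as Theorem~\ref{theorem_Gabber_injectivity}) to obtain $H^j_\sub{\'et}(R[\tfrac1p],\mu_{p^r}^{\otimes j})\hookrightarrow H^j_\sub{\'et}(F,\mu_{p^r}^{\otimes j})$. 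With that correction your argument goes through and is the same as the paper's.
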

\begin{proof}
To reduce the compatibility to the case of fields we let $F:=\Frac(R_{\frak mR}^h)$ be as in the paragraph before Lemma \ref{lemma_Katos_residue} and set $L:=S\otimes_RF$, which we claim is a field. Since $L$ is finite \'etale over the field $F$, it is enough to show that it is an integral domain, which will follow from showing that $S\otimes_RR_{\frak mR}^h$ is an integral domain. But this is finite \'etale over the henselian discrete valuation ring $R_{\frak mR}^h$, so it is an integral domain if and only if its special fibre $S\otimes_RR_{\frak mR}/\frak mR_{\frak mR}$ is local. But this is finite \'etale over the field $R_{\frak mR}/\frak mR_{\frak mR}$ so (conversely to above) it is enough to check it is an integral domain; but it is a localisation of $S/\frak mS$, which is an integral domain since it is local and ind-smooth over the field $V/\frak m$.

There is a cube of maps
\[\xymatrix@=.5cm{
&&&\hat K_j^M(L)\ar[r]\ar[dd]_N & H^j_\sub{\'et}(L,\mu_{p^r}^{\otimes j})\ar[dd]^N\\
\hat K_j^M(S)\ar[rrru]\ar[r]\ar[dd]_N & H^j_\sub{\'et}(S[\tfrac1p],\mu_{p^r}^{\otimes j})\ar[rrru]\ar[dd]^N&&&\\
&&&\hat K_j^M(F)\ar[r] & H^j_\sub{\'et}(F,\mu_{p^r}^{\otimes j})\\
\hat K_j^M(R)\ar[r]\ar[rrru] & H^j_\sub{\'et}(R[\tfrac1p],\mu_{p^r}^{\otimes j})\ar[rrru] &&&
}\]
in which the goal is to prove that the front face commutes. The back left face commutes by the compatibility of Kerz' norm maps for $R\to S$ and $F\to L$; the back right face commutes by compabibility of the norm maps on Milnor $K$-theory and \'etale cohomology in the case of fields \cite[Prop. 7.5.5]{Gille2006}. Since the top, bottom, and front right faces clearly commute, the desired commutativity of the front right face now follows formally from the fact that the map $H^j_\sub{\'et}(R[\tfrac1p],\mu_{p^r}^{\otimes j})\to H^j_\sub{\'et}(F,\mu_{p^r}^{\otimes j})$ is injective: this is a result of Gabber which we recall as Theorem \ref{theorem_Gabber_injectivity}.
\end{proof}

To compare the two main theorems, we will use the following localisation sequence; we record it in greater generality than immediately necessary:

\begin{lemma}\label{lemma_localisation_in MilnorK}
Let $R$ be a ring additively generated by units and $t\in R$ a non-zero-divisor in the Jacobson radical such that $tR$ is a prime ideal, $R_{tR}$ is a discrete valuation ring,\footnote{We record the following criterion: given a ring $R$ and non-zero-divisor $t\in R$, then $R_{t R}/\bigcap_{r\ge 1}t^rR_{t R}$ is a discrete valuation ring (so, in particular, if $R_{t R}$ is $t$-adically separated then it is a discrete valuation ring). Indeed, $R_{t R}$ is a local ring with maximal ideal generated by non-zero-divisor $\pi$, and then it is easily to check $\pi$ is still a non-zero-divisor in the quotient $R':=R_{t R}/\cap_{r\ge 1}t^rR_{t R}$. So the latter is a local ring, with maximal ideal generated by a single non-zero-divisor $t$, such that $\max\{r\ge0:x\in t^rR'\}$ exists for all non-zero $x\in R'$; it easily follows that $R'$ is a dvr.} and $R[\tfrac1t]\cap R_{tR}=R$. Then there are unique homomorphisms $\bor$ and $\bor_t$ fitting into a commutative diagram with exact row
\[\xymatrix{
K_j^M(R)\ar@{->>}[d]\ar[r] & K_j^M(R[\tfrac1t])\ar[dl]^{\bor_t}\ar[r]^\bor & K_{j-1}^M(R/tR)\ar[r]&0\\
K_j^M(R/tR)&&&
}\]
and satisfying \[\bor(\{a_1,\dots,a_{j-1},t\})=\{a_1,\dots,a_{j-1}\},\qquad \bor_t(\{a_1,\dots,a_{j-1},t\})=0\] for all $a_1,\dots,a_{j-1}\in R^\times$.
\end{lemma}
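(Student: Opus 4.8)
My plan is to deduce everything from the structure of the unit group, namely $R[\tfrac1t]^\times=R^\times\times t^{\bb Z}$. To prove this, I would note that for $x\in R[\tfrac1t]^\times$ one may write $x=a/t^n$ with $a\in R\setminus tR$ (possible since $tR$ is prime and $t$ is not a unit) and $x^{-1}=b/t^m$ with $b\in R$; then $ab=t^{n+m}$ in $R$, so in the discrete valuation ring $R_{tR}$ the element $b/t^{n+m}$ has valuation $0$, hence lies in $R[\tfrac1t]\cap R_{tR}=R$, and cancelling $t^{n+m}$ shows $a\in R^\times$ and $x=a t^{-n}$. Consequently $K^M_j(R[\tfrac1t])$ is generated by symbols of type $(\mathrm a)$, $\{u_1,\dots,u_j\}$, and type $(\mathrm b)$, $\{u_1,\dots,u_{j-1},t\}$, with all $u_i\in R^\times$, and the type $(\mathrm a)$ symbols span the image of $\iota\colon K^M_j(R)\to K^M_j(R[\tfrac1t])$. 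Uniqueness of $(\partial,\partial_t)$ follows at once: the displayed formulas pin them down on type $(\mathrm b)$ symbols, while commutativity of the diagram forces $\partial_t\circ\iota$ to be the reduction $K^M_j(R)\twoheadrightarrow K^M_j(R/tR)$ and exactness of the row forces $\partial\circ\iota=0$, pinning both maps down on type $(\mathrm a)$ symbols. Surjectivity of $\partial$ will likewise be automatic, since every unit of $R/tR$ lifts to a unit of $R$ (as $tR$ lies in the Jacobson radical), so $K^M_{j-1}(R/tR)$ is generated by symbols $\{\bar u_1,\dots,\bar u_{j-1}\}=\partial\{u_1,\dots,u_{j-1},t\}$.

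To construct $\partial$ and $\partial_t$ simultaneously, I would introduce the graded-commutative ring $A_\bullet:=K^M_\bullet(R/tR)\oplus K^M_{\bullet-1}(R/tR)\,\varepsilon$, where $\varepsilon$ has degree $1$, anticommutes with degree-one elements, and satisfies $\varepsilon^2=\{-1\}\varepsilon$ (a routine square-zero-type graded extension), together with the group homomorphism $\varphi\colon R[\tfrac1t]^\times=R^\times\times t^{\bb Z}\to A_1$ given by $u\,t^n\mapsto\{\bar u\}+n\varepsilon$. Since $K^M_\bullet(R[\tfrac1t])$ is the universal graded-commutative ring receiving $R[\tfrac1t]^\times$ in degree one and killing Steinberg pairs, $\varphi$ extends to a graded ring homomorphism $\Phi\colon K^M_\bullet(R[\tfrac1t])\to A_\bullet$ provided $\varphi(x)\varphi(1-x)=0$ whenever $x,1-x\in R[\tfrac1t]^\times$. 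I would check this by cases on the valuation $v$ of $R_{tR}$: if $v(x)\ge0$ then either one of $\bar x,\overline{1-x}$ is $0$ or $1$, killing a factor, or $x,1-x\in R^\times$ and $\varphi(x)\varphi(1-x)$ is the image of a Steinberg relation in $R/tR$; if $v(x)=-k<0$ then $v(1-x)=-k$ as well, and writing $x=u t^{-k}$, $1-x=-u(1-u^{-1}t^k)t^{-k}$, a short computation — using $\{\bar u,-\bar u\}=0$ — gives $\varphi(x)\varphi(1-x)=k(k+1)\{-1\}\varepsilon$, which vanishes since $k(k+1)$ is even and $2\{-1\}=\{1\}=0$. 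Decomposing $\Phi(\alpha)=\partial_t(\alpha)+\partial(\alpha)\,\varepsilon$ with $\partial_t(\alpha)\in K^M_j(R/tR)$ and $\partial(\alpha)\in K^M_{j-1}(R/tR)$ then defines group homomorphisms taking the required values on type $(\mathrm a)$ and $(\mathrm b)$ symbols; in particular $\partial_t\circ\iota$ is the reduction map, so the diagram commutes, and $\partial$ kills type $(\mathrm a)$ symbols, so $\Im\iota\subseteq\ker\partial$.

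The remaining inclusion $\ker\partial\subseteq\Im\iota$ is the main point. Given $\alpha\in\ker\partial$, one subtracts an appropriate element of $\Im\iota$ (possible since $\partial_t\circ\iota$ is onto) to reduce to the case $\Phi(\alpha)=0$; writing $\alpha$ as a sum of type $(\mathrm a)$ symbols plus $\iota(\xi)\{t\}$ with $\xi\in K^M_{j-1}(R)$, the vanishing of the $\varepsilon$-part of $\Phi(\alpha)$ shows $\xi\in\ker\big(K^M_{j-1}(R)\to K^M_{j-1}(R/tR)\big)$, so it suffices to prove $\iota(\xi)\{t\}\in\Im\iota$ for such $\xi$. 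For this, I would take $T\in(R^\times)^{\otimes(j-1)}$ representing $\xi$; modulo $tR$ it becomes a sum of Steinberg tensors of $R/tR$, and in each such tensor a defining relation ``$c_l+c_k=1$'' lifts by lifting $c_l$ to a unit of $R$ and $c_k$ to $1-(\text{that unit})$, which is again a unit since $tR$ lies in the Jacobson radical — yielding a Steinberg tensor of $R$, hence $0$ in $K^M_{j-1}(R)$. Thus, up to such tensors, $T$ lies in $\ker\big((R^\times)^{\otimes(j-1)}\to((R/tR)^\times)^{\otimes(j-1)}\big)$, which is generated by simple tensors with an entry in $1+tR$; so $\xi$ is a sum of symbols $\{a_1,\dots,a_{j-1}\}$ with some $a_l=1+tb$. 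Rearranging in the graded ring $K^M_\bullet(R[\tfrac1t])$, $\iota(\xi)\{t\}$ becomes a sum of products $\pm\{1+tb,t\}\cdot(\text{a symbol of units of }R)$, and it remains to show $\{1+tb,t\}\in\Im\big(K^M_2(R)\to K^M_2(R[\tfrac1t])\big)$ for every $b\in R$. This follows by induction on the number of units summing to $b$: if $b\in R^\times$, the Steinberg relation $\{1+tb,-tb\}=0$ gives $\{1+tb,t\}=-\{1+tb,-b\}$, a symbol of units of $R$; and if $b=u_1+\dots+u_n$, the factorisation $1+tb=\big(1+t(u_1+\dots+u_{n-1})\big)\big(1+c^{-1}tu_n\big)$ with $c:=1+t(u_1+\dots+u_{n-1})\in R^\times$ reduces to the inductive hypothesis.

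The main obstacle is this last reduction, where the hypothesis that $R$ is additively generated by units is essential — both for lifting Steinberg relations from $R/tR$ and for the telescoping identity for $\{1+tb,t\}$ — and which also uses the standard facts that the Milnor $K$-theory of a ring additively generated by units (such as $R$ and $R/tR$) is graded-commutative and satisfies $\{a,-a\}=0$; these I would invoke from the existing theory rather than reprove.
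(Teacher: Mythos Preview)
Your proposal is correct and follows essentially the same route as the paper. The paper likewise begins from the decomposition $R[\tfrac1t]^\times=R^\times\times t^{\bb Z}$, constructs $\partial$ and $\partial_t$ by citing Serre's argument (your graded-ring $A_\bullet$ construction is exactly that argument spelled out), and reduces exactness to the single fact that $\{1+bt,t\}\in\Im\big(K_2^M(R)\to K_2^M(R[\tfrac1t])\big)$ for all $b\in R$, proved by the same induction on the number of units summing to $b$.

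The one difference worth noting is in the exactness step. Rather than analysing an arbitrary element of $\ker\partial$ as you do, the paper directly constructs a section
\[
K_{j-1}^M(R/tR)\To K_j^M(R[\tfrac1t])/\Im\iota,\qquad \{a_1,\dots,a_{j-1}\}\mapsto\{\tilde a_1,\dots,\tilde a_{j-1},t\},
\]
and checks it is well-defined. This is a bit slicker: well-definedness under change of lift is exactly the $\{1+bt,t\}$ computation (applied in the last slot via the Steinberg relation $\{a_1,\dots,1+bt,\dots,a_{j-1},-bt\}=0$, so no graded-commutativity is needed), and the Steinberg relations in $R/tR$ lift to Steinberg relations in $R$ since $t$ lies in the Jacobson radical. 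Your longer detour through tensor representatives of $\xi\in\ker(K_{j-1}^M(R)\to K_{j-1}^M(R/tR))$ arrives at the same place but is not needed.
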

\begin{proof}
The hypotheses ensure that any unit of $R[\tfrac1t]$ may be written uniquely as $t^iu$ for some $i\in\bb Z$ and $u\in R^\times$. The maps $\bor$ and $\bor_t$ may then be constructed via the usual argument due to Serre which is well-known in the case of a discrete valuation ring with uniformiser $t$; e.g., see \cite[Prop.~7.1.4]{Gille2006}. 

To see that the row is exact one considers the map \[K_{j-1}^M(R/tR)\To K_j^M(R[\tfrac1t])/\op{Im}(K_j^M(R)\to K_j^M(R[\tfrac1t])),\qquad \{a_1,\dots,a_{j-1}\}\mapsto \{\tilde a_1,\dots,\tilde a_{j-1},t\},\] where the $\tilde a_i\in R^\times$ are arbitrary lifts of $a_i\in (R/tR)^\times$. It is enough to show that this is well-defined, as it will then provide the desired inverse to $\bor$ modulo the image of $K_j^M(R)$; the well-definedness reduces to checking that $\{1+bt,t\}\in\op{Im}(K_2^M(R)\to K_2^M(R[\tfrac1t])$ for all $b\in R$. But $R$ is additively generated by units, so the group $1+tR$ is generated multiplicatively by its subset $1+tR^\times$ (Proof: by induction on the length of the expression for $b$ as a sum of units, noting that if $b=b'+u$ with $u$ a unit then $1+bt=(1+b't)(1+\tfrac{u}{1+b't}t)$.) and therefore we may assume $b$ is a unit; then $\{1+bt,t\}=\{1+bt,-b\}$ indeed lies in the image.
\end{proof}

\begin{proposition}\label{prop_reduction2}
For any fixed $V$, $R$, $j$, $r$, where $R$ has big residue field, Theorems \ref{theorem_BK} and \ref{theorem1} are equivalent.
\end{proposition}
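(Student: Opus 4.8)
The plan is to compare the two theorems by means of the localisation sequence of Lemma \ref{lemma_localisation_in MilnorK} applied to the inclusion of the special fibre. Fix a uniformiser $\pi$ of $V$, so that $\frak m R=\pi R$ and $R[\tfrac1\pi]=R[\tfrac1p]$. Since $R$ is ind-smooth, hence normal, over $V$, the hypotheses of Lemma \ref{lemma_localisation_in MilnorK} hold with $t=\pi$ (using that $R_{\pi R}$ is a discrete valuation ring, as recalled before Lemma \ref{lemma_Katos_residue}, and that $R[\tfrac1\pi]\cap R_{\pi R}=R$ by normality, $\pi R$ being the only height-one prime containing $\pi$); applying that lemma, or applying it to the essentially smooth local $V$-algebras of which $R$ is a filtered colimit, and then reducing modulo $p^r$, we obtain an exact row
\[K_j^M(R)/p^r\To K_j^M(R[\tfrac1p])/p^r\xto{\ \delta\ }K_{j-1}^M(R/\frak m R)/p^r\To 0.\]

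I would then record two inputs. First, $R/\frak m R$ is local and ind-smooth over the characteristic-$p$ field $V/\frak m$, hence ind-smooth over $\bb F_p$, and its residue field is that of $R$, which is big; so by Remark \ref{remark_WrOmegalog} the symbol map $\dlog\colon K_{j-1}^M(R/\frak m R)/p^r\isoto W_r\Omega_{R/\frak m R,\sub{log}}^{j-1}$ is an isomorphism. Second, and this is the crux, the square
\[\xymatrix{
K_j^M(R[\tfrac1p])/p^r \ar[r]^-{\dlog\circ\delta} \ar[d]_{h^j_{p^r}} & W_r\Omega_{R/\frak m R,\sub{log}}^{j-1} \ar@{=}[d] \\
H^j_\sub{\'et}(R[\tfrac1p],\mu_{p^r}^{\otimes j}) \ar[r]^-{\bor} & W_r\Omega_{R/\frak m R,\sub{log}}^{j-1}
}\]
commutes, $\bor$ being Kato's residue map of Lemma \ref{lemma_Katos_residue}. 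Granting this, $\bor$ is moreover surjective: by the symbolic description of Remark \ref{remark_WrOmegalog} the target is generated by the forms $\dlog[\bar a_1]\wedge\dots\wedge\dlog[\bar a_{j-1}]$ with $\bar a_i\in(R/\frak m R)^\times$, and lifting the $\bar a_i$ to units $a_i\in R^\times$ gives $\dlog[\bar a_1]\wedge\dots\wedge\dlog[\bar a_{j-1}]=\dlog(\delta(\{a_1,\dots,a_{j-1},\pi\}))=\bor(h^j_{p^r}(\{a_1,\dots,a_{j-1},\pi\}))$.

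To prove the commutativity, recall that Lemma \ref{lemma_Katos_residue} constructs $\bor$ by restricting a class to $F=\Frac(R_{\frak m R}^h)$, applying the Bloch--Kato isomorphism $H^j_\sub{\'et}(F,\mu_{p^r}^{\otimes j})\cong K_j^M(F)/p^r$, and then applying the Milnor boundary of the discrete valuation field $F$ (with uniformiser $\pi$) followed by $\dlog$. By naturality of the Galois symbol along $R[\tfrac1p]\to F$, the commutativity reduces to the compatibility of the boundary $\delta$ of Lemma \ref{lemma_localisation_in MilnorK} with the Milnor boundary of $F$ along $R[\tfrac1p]\to F$ and $R/\frak m R\into\Frac(R/\frak m R)$. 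This is a routine verification: both boundary maps send $\{u_1,\dots,u_{j-1},\pi\}\mapsto\{\bar u_1,\dots,\bar u_{j-1}\}$ and $\{u_1,\dots,u_j\}\mapsto 0$ for the $u_i$ units (of $R$, resp.\ of $R_{\frak m R}^h$), and every symbol is a product of such ones since a unit of $R[\tfrac1p]$, resp.\ of $F$, has the form $\pi^n u$ with $u$ a unit of $R$, resp.\ of $R_{\frak m R}^h$. This bookkeeping with boundary maps is where essentially all the work lies; the rest is formal.

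Finally I would feed these inputs into a short diagram chase (essentially the five lemma) on the two-row diagram whose top row is the exact row above — with $\dlog$ identifying its last nonzero term with $W_r\Omega_{R/\frak m R,\sub{log}}^{j-1}$ — and whose bottom row is the sequence of Theorem \ref{theorem1} (recall $\hat K_j^M(R)=K_j^M(R)$ as $R$ has big residue field), with vertical maps the canonical $K_j^M(R)/p^r\to K_j^M(R[\tfrac1p])/p^r$ followed by $h^j_{p^r}$ on the left, $h^j_{p^r}$ in the middle, and the identity on the right; both squares commute by the above. If Theorem \ref{theorem_BK} holds, the left vertical is injective (by (ii) then (i)) and $h^j_{p^r}$ is an isomorphism, so $\bor=\dlog\circ\delta\circ(h^j_{p^r})^{-1}$; injectivity of $\dlog$ and surjectivity of $\bor$ then force the bottom row to be exact, which is Theorem \ref{theorem1}. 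Conversely, if Theorem \ref{theorem1} holds, then injectivity of $\hat K_j^M(R)/p^r\to H^j_\sub{\'et}(R[\tfrac1p],\mu_{p^r}^{\otimes j})$ gives Theorem \ref{theorem_BK}(ii) at once, and a chase on the same diagram yields Theorem \ref{theorem_BK}(i): a class killed by $h^j_{p^r}$ is killed by $\delta$, hence comes from $K_j^M(R)/p^r$, hence vanishes by injectivity of the left vertical; and any class $z$ is hit, since $\bor(z)=\dlog(w)$ for some $w$, then $w=\delta(x)$ for some $x$, and $z-h^j_{p^r}(x)\in\ker\bor$ lies in the image of $\hat K_j^M(R)/p^r$ by exactness of Theorem \ref{theorem1}. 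The one genuinely non-formal point is the residue-map compatibility square.
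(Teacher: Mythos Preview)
Your proof is correct and follows essentially the same route as the paper: both arguments set up the commutative ladder between the localisation sequence of Lemma~\ref{lemma_localisation_in MilnorK} (mod $p^r$) and the sequence of Theorem~\ref{theorem1}, identify the outer vertical maps as isomorphisms via the big residue field hypothesis and Remark~\ref{remark_WrOmegalog}, and deduce the equivalence by a diagram chase. You supply more detail than the paper on why the residue-map square commutes and why $\bor$ is surjective, but the skeleton is the same.
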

\begin{proof}
We have a commutative diagram of complexes
\[\xymatrix{
0\ar[r] &\hat K_j^M(R)/p^r\ar[r] & H^j_\sub{\'et}(R[\tfrac1p],\mu_{p^r}^{\otimes j})\ar[r]& W_r\Omega_{R/\pi R,\sub{log}}^{j-1}\ar[r] & 0\\
0\ar[r] &K_j^M(R)/p^r\ar[r]\ar[u]^\cong & K_j^M(R[\tfrac1p])/p^r\ar[u]^{h_{p^r}^j}\ar[r]_-{\bor}\ar[u]& K_{j-1}^M(R/\pi R)/p^r\ar[r]\ar[u]^\cong & 0
}\]
The two indicated vertical arrows are isomorphisms since $R$ and $R/\pi R$ have big residue field (for the isomorhism on the right, note that $R/\pi R$ is an ind-smooth, local $\bb F_p$-algebra and see Remark \ref{remark_WrOmegalog}). From a diagram chase we now see that the top complex is exact if and only if (1) the bottom complex is also injective at the left and (2) the Galois symbol is an isomorphism.
\end{proof}

The following type of reduction to the mod $p$ case is well-known, though we are forced to modify the usual argument since the Milnor $K$-theory of the non-local ring $R[\tfrac1p]$ does not a priori admit norm maps:

\begin{proposition}\label{prop_reduction0}
Let $V$ be a complete discrete valuation ring of mixed characteristic; letting $F$ denote its field of fractions, set $F':=F(\zeta_p)$ and let $V'$ be the ring of integers of $F'$. Assume that $f(F'/F)=1$, i.e., that no residue field extension occurs. Let $R$ be a local, $p$-henselian, ind-smooth $V$-algebra whose residue field has $\ge M_J$ elements for some fixed $J\ge1$, and put $R':=R\otimes_VV'$. Then 
\begin{center}
\begin{tabular}{c}
``Theorem \ref{theorem_BK} for $V\to R$, for $r=1$, and for $0\le j\le J$\\
and\\
Theorem \ref{theorem_BK} for $V'\to R'$, for $r=1$, and for $0\le j\le J$''\\
implies\\
Theorem \ref{theorem_BK} for $V\to R$, for all $r\ge1$, and for $0\le j\le J$.
\end{tabular}
\end{center}
\end{proposition}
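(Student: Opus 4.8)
The plan is to run the classical Bockstein/roots-of-unity induction on $r$, being careful to phrase it so that the absence of norm maps on $K_j^M(R[\tfrac1p])$ does not bite. First I would establish the statement for $V'\to R'$ with $r=1$ as the base case — this is literally one of the hypotheses — and then argue by induction on $r$, the inductive step passing from $r-1$ (for both $V\to R$ and $V'\to R'$, and for all $j\le J$) to $r$. The engine is the short exact sequence $0\to \mu_{p}^{\otimes j}\to \mu_{p^r}^{\otimes j}\to \mu_{p^{r-1}}^{\otimes j}\to 0$ of \'etale sheaves on $R[\tfrac1p]$ (using that $p$ is invertible there), together with the evident analogue $0\to K_j^M(-)/p\to K_j^M(-)/p^r\to K_j^M(-)/p^{r-1}\to 0$ — the latter being exact because $K_j^M(-)/p^{r}$ surjects onto $K_j^M(-)/p^{r-1}$ with kernel the image of $p^{r-1}K_j^M$, and one checks the multiplication-by-$p^{r-1}$ map $K_j^M/p\to p^{r-1}K_j^M/p^r K_j^M$ is an isomorphism. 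Comparing these two sequences via the Galois symbol $h^j$, a five-lemma argument reduces part (i) for $r$ to part (i) for $1$ and for $r-1$; similarly part (ii) for $r$ reduces to part (ii) for $1$ and $r-1$ once one knows the relevant $K_j^M(R[\tfrac1p])/p^\bullet$ sequence is exact on the left, which is where injectivity for $r-1$ feeds in. So the whole statement collapses to the case $r=1$, which is a hypothesis — except that the hypothesis is stated only for $V$ and $V'$ at $r=1$, so the real content is the inductive bookkeeping, not new input.

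The one genuine subtlety — and the reason the proposition is phrased with the auxiliary extension $V'=V(\zeta_p)$, $R'=R\otimes_V V'$ — is that the naive mod-$p^r$ Bockstein argument for fields uses a norm (transfer) argument along $F(\zeta_p)/F$ to control the kernel and cokernel of multiplication by $\zeta_p$-type elements, and here $R[\tfrac1p]$ is not local so has no norm map on $K_j^M$. The fix I would use is to transport everything through the isomorphisms already available at level $r=1$: by hypothesis $h^j_p:K_j^M(R'[\tfrac1p])/p\to H^j_\sub{\'et}(R'[\tfrac1p],\mu_p^{\otimes j})$ is an isomorphism, and since $R'$ contains $\zeta_p$ (as $f(F'/F)=1$ forces $\zeta_p\in V'$), the \'etale cohomology carries a transfer along the finite \'etale degree-$[F':F]$ (prime to $p$) cover $R'[\tfrac1p]/R[\tfrac1p]$ — \'etale cohomology \emph{does} have transfers for finite \'etale extensions of rings, as already used in Remark~\ref{remark_improved}. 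Thus one runs the roots-of-unity argument entirely on the \'etale side for $R'$, uses the $r=1$ isomorphism to know $K_j^M(R'[\tfrac1p])/p \cong H^j_\sub{\'et}(R'[\tfrac1p],\mu_p^{\otimes j})$, and then descends from $R'$ to $R$ using that the composite $R[\tfrac1p]\to R'[\tfrac1p]\to R[\tfrac1p]$ (restriction then transfer) is multiplication by the prime-to-$p$ integer $[F':F]$, hence an isomorphism mod $p^r$. This simultaneously handles (i) for $R$ and feeds the injectivity in (ii) for $R$, provided one has it for $R'$.

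Concretely the steps, in order: (1) record the two three-term exact sequences (mod $p$ vs mod $p^r$, in Milnor $K$-theory and in $\mu^{\otimes j}$ \'etale cohomology) and their compatibility under $h^j$; (2) prove part (i) for $V'\to R'$ and all $r$ by induction, feeding the $r=1$ hypothesis into the five-lemma; (3) deduce part (i) for $V\to R$ and all $r$ by the transfer-restriction composite argument along $R[\tfrac1p]\to R'[\tfrac1p]$, whose composite is prime-to-$p$ multiplication; (4) prove part (ii) for $V'\to R'$ and all $r$, again inductively, now also using part (i) for $R'$ at levels $1$ and $r-1$ to identify the mod-$p^\bullet$ Milnor groups with \'etale cohomology and conclude left-exactness of $K_j^M(R'[\tfrac1p])/p^\bullet$; (5) descend (ii) to $R$ by the same transfer argument, noting $K_j^M(R)/p^r$ is a submodule-in-the-limit once one knows $K_j^M(R)/p^r\to K_j^M(R')/p^r$ is split-injective by the prime-to-$p$ composite. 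The main obstacle I expect is step~(4)–(5): making the left-exactness of the mod-$p^r$ Milnor $K$-theory sequence of the non-local ring $R[\tfrac1p]$ rigorous without a norm on $K_j^M(R[\tfrac1p])$ — this is exactly the point the proposition's statement has been engineered to route around via $V'$, and one must take care that the induction on $r$ for (ii) only ever invokes (ii) for smaller $r$ together with (i) (not (ii) at the same $r$), so that the recursion is well-founded; the degree bound $j\le J$ is there to keep the symbol-length bookkeeping honest since passing to $R/\pi R$ (via Kato's residue) drops $j$ by one.
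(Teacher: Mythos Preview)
Your broad architecture (Bockstein induction on $r$, exploit $\zeta_p\in R'$, descend along $R\to R'$ via prime-to-$p$ transfers) matches the paper's, but two load-bearing steps are missing. First, the ``five-lemma'' in your step~(2) cannot run as stated: the Milnor sequence $K_j^M/p\to K_j^M/p^r\to K_j^M/p^{r-1}$ is \emph{not} short exact (your claim that $p^{r-1}:K_j^M/p\to K_j^M/p^r$ is injective amounts to $K_j^M[p^{r-1}]\subseteq pK_j^M$, which is essentially what is being proved), so on one side of the ladder there is a genuine boundary map $\delta:H^{j-1}_\sub{\'et}(R'[\tfrac1p],\mu_p^{\otimes j})\to H^j_\sub{\'et}(R'[\tfrac1p],\mu_{p^{r-1}}^{\otimes j})$ with nothing above it. The paper's actual argument is to identify $\delta$ on the Milnor side: since $\zeta_p\in R'$, one shows (as in \cite[Lem.~7.5.10]{Gille2006}) that $\delta$ is induced by the multiplication map $\mu_p\otimes K_{j-1}^M(R'[\tfrac1p])/p\to K_j^M(R'[\tfrac1p])/p^{r-1}$, $\zeta\otimes x\mapsto\{\zeta\}x$. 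This is where the inductive hypothesis in degree $j-1$ enters, and it is also how one descends to $R$---by taking $\Gal(F'/F)$-invariants of that commutative square, not merely by a restriction--corestriction composite on the final answer.

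Second, your steps (4)--(5) for part~(ii) underestimate the difficulty. After the diagram chase (using the $\delta$-identification above), a class $\alpha\in\ker\big(K_j^M(R')/p^r\to K_j^M(R'[\tfrac1p])/p^r\big)$ is shown to equal $p\alpha'$ where $\alpha'$ has image $\{\zeta_p\}\beta+\{\zeta_p,\pi\}\beta'$ in $K_j^M(R'[\tfrac1p])/p^{r-1}$. The symbol $\{\zeta_p,\pi\}$ does not lie in $K_2^M(R')$; the paper rewrites it as the Dennis--Stein symbol $\pid{a,\pi}$ (using $\zeta_p=1+a\pi$) to pull it back into $K_2(R')$, and then concludes $\alpha=p\pid{a,\pi}\beta'$. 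To kill this one needs $p\pid{a,\pi}=0$ in $K_2^M(R')/p^r$, which is supplied externally by the already-known $j=2$ case of part~(ii) (Remark~\ref{remark_j=2}). Your outline contains neither the Dennis--Stein rewrite nor this appeal to the $j=2$ input, and without them the induction does not close.
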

\begin{proof}
Let $R$ be a local, $p$-henselian, ind-smooth $V$-algebra, and set $R':=R\otimes_VV'$. Note that $R'$ is ind-smooth over $V'$, that it is $p$-henselian (since it is a finite $R$-algebra), and consequently that it is local (since $R'/\frak m'R'=R/\frak mR\otimes_{V/\frak m}V'/\frak m'=R/\frak mR$ is local, where we crucially use our hypothesis that $V$ and $V'$ have the same residue field). 

Assuming Theorem \ref{theorem_BK} for $r=1$, $j\ge0$, and both $V\to R$ and $V'\to R'$, we will actually prove Theorem \ref{theorem_BK} for $r\ge1$, for $0\le j\le J$, and for both $V\to R$ and $V'\to R'$; this looks stronger than stated in the proposition but is actually equivalent, since the statement of the proposition can also be applied to the case $V=V'$). We do this by induction on $r$.

We begin by proving part (i) of Theorem \ref{theorem_BK}. We will use the commutative diagram in which the rows are exact:
\[\xymatrix{
\cdots\ar[r]^-\delta & H^j_\sub{\'et}(R[\tfrac1p],\mu_{p^{r-1}}^{\otimes j})\ar[r]^\rho& H^j_\sub{\'et}(R[\tfrac1p],\mu_{p^r}^{\otimes j})\ar[r] & H^j_\sub{\'et}(R[\tfrac1p],\mu_{p}^{\otimes j})\ar[r] & 0 \\
& K_j^M(R[\tfrac1p])/p^{r-1}\ar[u]^\cong \ar[r]_p& K_j^M(R[\tfrac1p])/p^{r}\ar[u]\ar[r] & K_j^M(R[\tfrac1p])/p \ar[r]\ar[u]^\cong &0 
}\]
The indicated vertical arrows are isomorphisms by the inductive hypothesis, the right one of which is used to obtain right exactness of the top sequence. It follows that the middle vertical arrow is surjective. Writing $\delta'$ for the boundary map in the analogous diagram for $R'$, there is a commutative diagram
\begin{equation}\xymatrix{
H^{j-1}_\sub{\'et}(R'[\tfrac1p],\mu_{p}^{\otimes j})\ar[r]^{\delta'} & H^j_\sub{\'et}(R'[\tfrac1p],\mu_{p^{r-1}}^{\otimes j})\\
\mu_p\otimes_{\bb F_p}K_{j-1}^M(R'[\tfrac1p])/p\ar[u]^{[\,\,]\cup h^{j-1}_{p^{r-1}}}_\cong\ar[r]_-\gamma & K_j^M(R'[\tfrac1p])/p^{r-1}\ar[u]^{h^j_{p^{r-1}}}_\cong
}\label{eqn_delta}\end{equation}
where $\gamma$ denotes multiplication and $[\,\,]:\mu_p(V')\to H^0_\sub{\'et}(R'[\tfrac1p],\mu_{p^{}}^{})$ is the canonical map, where we write $\mu_p(V')$ for the group of $p^\sub{th}$ roots of unity in $V'$. The commutativity of this diagram is proved exactly as in the case of a field, for which we refer to \cite[Lem. 7.5.10]{Gille2006}.

The vertical arrows in (\ref{eqn_delta}) are isomorphisms by the inductive hypothesis and the fact that the top left term can equivalently be written $\mu_p( V')\otimes_{\bb F_p}H^{j-1}_\sub{\'et}(R'[\tfrac1p],\mu_{p}^{\otimes j-1})$. A diagram chase now proves the inductive step in the case of $V'\to R'$ 

Now let $\Delta$ be the Galois group of the extension $\Frac V\subseteq \Frac V'$, and replace the commutative square (\ref{eqn_delta}) by its $\Delta$ invariants; the existence of trace maps on \'etale cohomology and the coprimeness of $p,|\Delta|$ imply that $H^j_\sub{\'et}(R'[\tfrac1p],\mu_{p^{r-1}}^{\otimes j})^\Delta=H^j_\sub{\'et}(R[\tfrac1p],\mu_{p^{r-1}}^{\otimes j})$ (and similarly for the top left term), whence the resulting commutative square is 
\begin{equation}\xymatrix{
H^{j-1}_\sub{\'et}(R[\tfrac1p],\mu_{p}^{\otimes j})\ar[r]^{\delta} & H^j_\sub{\'et}(R[\tfrac1p],\mu_{p^{r-1}}^{\otimes j})\\
(\mu_p\otimes_{\bb F_p}K_{j-1}^M(R'[\tfrac1p])/p)^\Delta\ar[u]_\cong\ar[r]_-\gamma & K_j^M(R[\tfrac1p])/p^{r-1}\ar[u]_\cong
}\label{eqn_delta2}\end{equation}
We emphasise that we are not using the a priori existence of any trace maps on Milnor $K$-theory to obtain the identification $(K_j^M(R'[\tfrac1p])/p^{r-1})^\Delta=K_j^M(R[\tfrac1p])/p^{r-1}$, but rather the inductive hypothesis to identify both terms with \'etale cohomology (although these identifications in fact define the trace maps which could alternatively be used). The surjectivity of the left arrow, combined with our initial diagram and inductive hypothesis, shows at once that the middle symbol map in the initial diagram is injective, as required to complete the proof of the reduction.

Part (ii): We claim first that the map $K_j^M(R)\to K_j^M(R')$ is injective modulo any power of $p$. Indeed, by a filtered colimit argument we may assume that $R$ is in addition regular Noetherian, in which case it satisfies the conditions of Dahlhausen's version of Kerz' norm map \cite[Prop.~C.1]{Dahlhausen2018}: indeed, $R$ is factorial by Auslander--Buchsbaum, and $V'=V[X]/\pi$ for some monic irreducible polynomial $\pi\in V[X]$ which remains irreducible in $R[X]$ (since otherwise $R'=R[X]/\pi$ would not be a domain). So there is a norm map $\hat K_j^M(R')\xto{N}\hat K_j^M(R)$ such that the composition $K_j^M(R)=\hat K_j^M(R)\to \hat K_j^M(R')\xto{N}\hat K_j^M(R)=K_j^M(R)$ is multiplication by an integer coprime to $p$. This proves the claimed injectivity, thereby reducing the desired injectivity of $K_j^M(R)/p\to K_j^M(R[\tfrac1p])/p^r$ to that of $K_j^M(R')/p\to K_j^M(R'[\tfrac1p])/p^r$, which we will check by induction on $r$.

To do this we will use the commutative diagram
\[\xymatrix{
\mu_p\otimes_{\bb F_p}K_{j-1}^M(R'[\tfrac1p])/p\ar[r]_-\gamma & K_j^M(R'[\tfrac1p])/p^{r-1} \ar[r]_p& K_j^M(R'[\tfrac1p])/p^{r}\ar[r] & K_j^M(R'[\tfrac1p])/p \ar[r] &0 \\
& K_j^M(R')/p^{r-1}\ar@{^(->}[u] \ar[r]_p& K_j^M(R')/p^{r}\ar[u]\ar[r] & K_j^M(R')/p \ar[r]\ar@{^(->}[u]&0
}\]
in which the top row is exact by the proof of part (i). The indicated arrows are injective by the inductive hypotheses. Suppose that $\al\in K_j^M(R')/p^r$ is an element which vanishes in $K_j^M(R'[\tfrac1p])/p^r$. A diagram chase immediately shows the following (bearing in mind that any element of $K_{j-1}^M(R'[\tfrac1p])$ may be written as $\beta+\{\pi\}\beta'$ for some $\beta\in K_{j-1}^M(R')$, $\beta'\in K_{j-2}^M(R')$): the element $\al$ may be written as $p\al'$, where $\al'\in K_j^M(R')/p^{r-1}$ has image $\{\zeta_p\}\beta+\{\zeta_p,\pi\}\beta'$ in $K_j^M(R'[\tfrac1p]/p^{r-1})$. But $\zeta_p=1+\pi a$ for some $a\in  V'$, so the second term may be rewritten $\pid{a,\pi}\beta'$ and the injectivity of the left vertical arrow shows that $\al'=\{\zeta_p\}\beta+\pid{a,\pi}\beta'$. Multiplication by $p$, i.e., the lower left horizontal map, now maps $\alpha'$ to $\al=p(\{\zeta_p\}\beta+\pid{a,\pi}\beta')=p\pid{a,\pi}\beta'$. But this term is zero since $p\pid{a,\pi}$ is zero in $K_2^M(R')/p^r$: indeed certainly $p\pid{a,\pi}=p\{\zeta_p,\pi\}$ vanishes in $K_2^M(R'[\tfrac1p])$, and $K_2^M(R')/p^r\to K_2^M(R'[\tfrac1p])/p^r$ is injective by Remark \ref{remark_j=2}.
\end{proof}

It remains to reduce further to the case that $V$ is absolutely unramified. This is achieved via a left Kan extension argument using motivic cohomology. For any $p$-adically complete ring $A$, or more generally derived $p$-complete simplicial ring, we denote by $\bb Z_p(j)(A)$ its weight-$j$ \'etale-syntomic cohomology in the sense of \cite{BhattMorrowScholze2}; more precisely, this was defined in \cite{BhattMorrowScholze2} for quasisyntomic rings, then extended via left Kan extension to derived $p$-complete simplicial rings in \cite[\S5]{AntieauMathewMorrowNikolaus}; we refer to op.~cit.\ for further details; in particular, each $\bb Z_p(j)(A)$ is a $p$-complete complex supported in cohomological degree $\le j+1$. To avoid constantly needing to include additional completions, it will be convenient to write \[\bb Z_p(j)(A):=\bb Z_p(j)(\op{Rlim}_sA\dotimes_{\bb Z}\bb Z/p^s\bb Z)\]
for any $p$-henselian ring, where $\op{Rlim}_sA\dotimes_{\bb Z}\bb Z/p^s\bb Z$ is the derived $p$-adic completion of $A$; if $A$ has bounded $p$-power torsion, in particular if $A$ is Noetherian, this coincides with the usual $p$-adic completion $\hat A$.

For smooth algebras over complete discrete valuation rings, forthcoming work of Bhatt--Clausen--Mathew concerning the motivic filtration on Selmer $K$-theory will include the following identification of $\bb Z_p(j)$ with the original approach to $p$-adic \'etale motivic cohomology studied by Geisser, Sato, and Schneider \cite{Geisser2004, Sato2007, Schneider1994}:

\begin{theorem}[Bhatt--Clausen--Mathew]\label{theorem_BCM_LKE}
Let $V$ be a mixed characteristic complete discrete valuation ring and $R$ a $p$-henselian, ind-smooth $V$-algebra. Then there are natural equivalences \[\bb Z_p(j)(R)/p^r\simeq \op{hofib}\big(R\Gamma_\sub{\'et}(R/\frak mR,\frak i^*\tau^{\le j}R\frak j_*\mu_{p^r}^{\otimes j})\To R\Gamma_\sub{\'et}(R/\frak mR,W_r\Omega^{j-1}_{\sub{log}})[-j]\big)\] for all $r,j\ge1$, where the arrow is Bloch--Kato's residue map as in the proof of Lemma \ref{lemma_Katos_residue}.
\end{theorem}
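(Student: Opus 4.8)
\emph{Proof proposal.} The natural route, which the cited forthcoming work of Bhatt--Clausen--Mathew carries out, is via the motivic filtration on Selmer $K$-theory. First one reduces: both sides commute with filtered colimits in $R$, so $R$ may be assumed essentially smooth over $V$, hence regular Noetherian. Since $R$ is henselian along $\frak mR$, proper base change identifies the right-hand side with $R\Gamma_\sub{\'et}(\Spec R,\frak T_r(j))$, where on $\Spec R$ we write $\frak T_r(j):=\op{hofib}\big(\tau^{\le j}R\frak j_*\mu_{p^r}^{\otimes j}\to \frak i_*W_r\Omega^{j-1}_{\sub{log}}[-j]\big)$ for the Geisser--Sato--Schneider $p$-adic \'etale Tate twist. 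As presheaves on the \'etale site of smooth $V$-schemes, $R\Gamma_\sub{\'et}(-,\frak T_r(j))$ satisfies \'etale descent by construction and $\bb Z_p(j)(-)/p^r$ satisfies quasisyntomic, hence \'etale, descent; so it suffices to produce a natural transformation and to check that it is an equivalence after passing to strict henselisations, i.e. for $R$ a strictly henselian local ring that is ind-smooth over $V$.

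The comparison map comes from comparing motivic filtrations. By definition $\bb Z_p(j)(\hat R)/p^r$ is, up to shift, the $j$-th graded piece of the motivic filtration on $\TC(\hat R;\bb Z_p)/p^r$, while the forthcoming work equips Selmer $K$-theory $K^\sub{Sel}(R;\bb Z_p)/p^r$ with a compatible multiplicative filtration whose $j$-th graded piece, for $R$ regular over a mixed-characteristic complete discrete valuation ring, is $R\Gamma_\sub{\'et}(\Spec R,\frak T_r(j))[2j]$. The latter identification rests on the description of $K^\sub{Sel}$ as gluing the Thomason \'etale $K$-theory of the generic fibre -- whose motivic graded pieces are $\tau^{\le j}R\frak j_*\mu_{p^r}^{\otimes j}$ -- to $\TC$ of the $p$-complete part -- whose graded pieces are syntomic cohomology -- with the gluing term supported on the special fibre contributing the de Rham--Witt correction $\frak i_*W_r\Omega^{j-1}_{\sub{log}}[-j]$; this last is the Geisser--Levine--Hesselholt description of $\TC$, hence syntomic cohomology, of characteristic-$p$ Cartier smooth rings through logarithmic de Rham--Witt forms (compare Remark \ref{remark_WrOmegalog}). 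The cyclotomic trace is filtered for these filtrations and induces the desired natural map on $j$-th graded pieces.

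It then remains to verify this map is an equivalence when $R$ is strictly henselian local and ind-smooth over $V$. There both sides sit in parallel localisation (Gysin) triangles with the same outer terms: the \'etale cohomology of the generic fibre $R[\tfrac1p]$ truncated in degrees $\le j$ -- on the syntomic side this identification is the $p$-adic Hodge-theoretic comparison of syntomic with \'etale cohomology in the Beilinson--Lichtenbaum range -- and the logarithmic de Rham--Witt forms $W_r\Omega^{j-1}_{R/\frak mR,\sub{log}}$ of the special fibre, which here is strictly henselian local and ind-smooth over $\bb F_p$ so that via $\dlog$ the log forms reduce to $\hat K_{j-1}^M(R/\frak mR)/p^r$. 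Once one knows the comparison map respects both triangles, the theorem reduces to checking that the two connecting maps agree.

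That final comparison -- that the boundary produced by the $\TC$/prismatic localisation triangle coincides with Bloch--Kato's residue map of Lemma \ref{lemma_Katos_residue} -- is the main obstacle. Concretely one must compute the syntomic boundary of a symbolic class $\{a_1,\dots,a_{j-1},t\}$ with $t$ a uniformiser and identify it with $\{\bar a_1,\dots,\bar a_{j-1}\}$, which requires controlling how the prismatic/syntomic formalism interacts with $\dlog$ of a uniformiser and with Teichm\"uller lifts on the special fibre. Secondary difficulties are the bookkeeping with the truncation $\tau^{\le j}$ -- one must check that $H^{j+1}$ of $\bb Z_p(j)(\hat R)/p^r$ is accounted for exactly by $H^j$ of the special-fibre log forms, so that nothing is lost on the generic fibre -- and arranging the comparison map to be natural enough in $R$ to permit the descent reduction, which is precisely why it is constructed out of the motivic filtration on $K^\sub{Sel}$ rather than by an ad hoc formula.
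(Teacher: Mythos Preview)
The paper does not give a proof of this theorem. It is stated as a black-box result attributed to forthcoming work of Bhatt--Clausen--Mathew on the motivic filtration on Selmer $K$-theory; immediately after the statement the paper passes to Corollary~\ref{corollary_BCM} without any proof environment. So there is no ``paper's own proof'' against which to compare your proposal.

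That said, your sketch is a reasonable outline of the route the cited work is announced to take: reduction to the essentially smooth case by filtered colimits, identification of the right-hand side with the Geisser--Sato--Schneider complex $\frak T_r(j)$ via Gabber's affine proper base change, construction of the comparison map through the motivic filtration on $K^\sub{Sel}$, and reduction by \'etale descent to strictly henselian local rings. You are also right to flag the genuine work that remains: matching the boundary map coming from the prismatic/$\TC$ localisation triangle with Bloch--Kato's residue map, and controlling the degree-$(j{+}1)$ piece. None of this is carried out in the present paper, nor could it be from what is written here; the theorem is used only through its Corollary~\ref{corollary_BCM}, and the paper explicitly defers the proof to Bhatt--Clausen--Mathew.
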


\begin{corollary}\label{corollary_BCM}
Let $V$ be a mixed characteristic complete discrete valuation ring and $R$ a $p$-henselian, ind-smooth $V$-algebra. Then there are natural isomorphisms \[H^j(\bb Z_p(j)(R)/p^r)\cong \op{ker}(H^j_\sub{\'et}(R[\tfrac1p],\mu_{p^r}^{\otimes j})\xto{\bor} W_r\Omega_{R/\frak m R,\sub{log}}^{j-1})\] for all $r,j\ge1$.
\end{corollary}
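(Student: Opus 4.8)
The plan is to extract the group $H^j$ from the homotopy fibre sequence of Theorem \ref{theorem_BCM_LKE} by means of its long exact cohomology sequence, having first rewritten the two outer terms so that they match the statement of the corollary.

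First I would rewrite the source of the residue map in terms of the generic fibre. Since $R$ is $p$-henselian, the pair $(R,\frak m R)$ is henselian (the ideals $pR$ and $\frak m R$ having the same radical), so Gabber's affine analogue of the proper base change theorem applies to the bounded-below complex of torsion \'etale sheaves $R\frak j_*\mu_{p^r}^{\otimes j}$ on $\Spec R$ and yields a natural equivalence
\[R\Gamma_\sub{\'et}(R[\tfrac1p],\mu_{p^r}^{\otimes j})=R\Gamma_\sub{\'et}(\Spec R,R\frak j_*\mu_{p^r}^{\otimes j})\isoto R\Gamma_\sub{\'et}(R/\frak m R,\frak i^*R\frak j_*\mu_{p^r}^{\otimes j}).\]
Replacing $R\frak j_*$ by $\tau^{\le j}R\frak j_*$ alters the right-hand side only in cohomological degrees $\ge j+1$, so in particular $H^j_\sub{\'et}(R[\tfrac1p],\mu_{p^r}^{\otimes j})\cong H^j\big(R\Gamma_\sub{\'et}(R/\frak m R,\frak i^*\tau^{\le j}R\frak j_*\mu_{p^r}^{\otimes j})\big)$. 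For the target, the complex $R\Gamma_\sub{\'et}(R/\frak m R,W_r\Omega^{j-1}_{\sub{log}})[-j]$ is concentrated in cohomological degrees $\ge j$, and its degree-$j$ cohomology $H^0_\sub{\'et}(R/\frak m R,W_r\Omega^{j-1}_{\sub{log}})$ equals $W_r\Omega^{j-1}_{R/\frak m R,\sub{log}}$ because $R/\frak m R$ is a local $\bb F_p$-algebra (Remark \ref{remark_WrOmegalog}).

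Feeding these two identifications into the long exact cohomology sequence of the homotopy fibre triangle of Theorem \ref{theorem_BCM_LKE}, the segment around degree $j$ reads
\[0\To H^j(\bb Z_p(j)(R)/p^r)\To H^j_\sub{\'et}(R[\tfrac1p],\mu_{p^r}^{\otimes j})\To W_r\Omega^{j-1}_{R/\frak m R,\sub{log}},\]
the left-hand $0$ being $H^{j-1}$ of a complex concentrated in degrees $\ge j$. It then remains only to check that the right-hand map is Kato's residue map $\bor$ of (\ref{eqn_Katos_residue}): by construction the map of Theorem \ref{theorem_BCM_LKE} is induced from Bloch--Kato's residue map of \'etale sheaves $\frak i^*R^j\frak j_*\mu_{p^r}^{\otimes j}\to W_r\Omega^{j-1}_{\sub{log}}$, which by the proof of Lemma \ref{lemma_Katos_residue} computes $\bor$, and unwinding the edge maps of the relevant hypercohomology spectral sequences shows that the map induced on $H^j$ is exactly $\bor$. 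This yields the desired natural isomorphism $H^j(\bb Z_p(j)(R)/p^r)\isoto\op{ker}(\bor)$. The only genuinely non-formal ingredient is Gabber's affine proper base change theorem, and the one step requiring some care is this final identification of the connecting homomorphism with Kato's residue map.
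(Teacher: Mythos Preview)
Your proof is correct and follows essentially the same approach as the paper's, which simply cites Theorem \ref{theorem_BCM_LKE} together with Gabber's affine proper base change to identify $R\Gamma_\sub{\'et}(R/\frak mR,\frak i^*\tau^{\le j}R\frak j_*\mu_{p^r}^{\otimes j})\simeq R\Gamma_\sub{\'et}(R,\tau^{\le j}R\frak j_*\mu_{p^r}^{\otimes j})$; you have merely unpacked the long exact sequence and the identification of the degree-$j$ terms more explicitly than the paper does.
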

\begin{proof}
This follows from Theorem \ref{theorem_BCM_LKE} and Gabber's affine analogue of the proper base change theorem implying that $R\Gamma_\sub{\'et}(R/\frak mR,\frak i^*\tau^{\le j}R\frak j_*\mu_{p^r}^{\otimes j})\simeq R\Gamma_\sub{\'et}(R,\tau^{\le j}R\frak j_*\mu_{p^r}^{\otimes j})$.
\end{proof}

The second property of the $\bb Z_p(j)(-)$ which we need to recall is that they are suitably left Kan extended:

\begin{proposition}\label{proposition_motivic_cohomol_LKE}
For each $j,r\ge 1$, the functor $\{\text{$p$-henselian rings}\}\to D(\bb Z/p^r\bb Z)$, $A\mapsto \tau^{\le j}(\bb Z_p(j)(A)/p^r)$ is left Kan extended from $p$-henselian, ind-smooth $\bb Z_{(p)}$-algebras.
\end{proposition}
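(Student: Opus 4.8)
The plan is to reduce the proposition to an assertion about derived $p$-complete simplicial rings, where it becomes a repackaging of the rigidity results of \cite{AntieauMathewMorrowNikolaus}. The reduction is formal. By its very definition the functor $A\mapsto\tau^{\le j}(\bb Z_p(j)(A)/p^r)$ factors through derived $p$-completion, which --- being a left adjoint --- preserves all colimits, in particular the geometric realisations that compute left Kan extensions. Next, $p$-henselisation of (simplicial) rings is also a left adjoint, the $p$-henselisation of a polynomial $\bb Z_{(p)}$-algebra is $p$-henselian and ind-smooth over $\bb Z_{(p)}$, and it has the same derived $p$-completion --- hence the same $\bb Z_p(j)(-)/p^r$ --- as the polynomial algebra. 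Thus, for a $p$-henselian ring $A$ and a simplicial resolution $P_\bullet\to A$ by polynomial $\bb Z_{(p)}$-algebras, applying $p$-henselisation termwise gives a resolution of $A$ by $p$-henselian, ind-smooth $\bb Z_{(p)}$-algebras along which the left Kan extension in the proposition is computed as $|\tau^{\le j}(\bb Z_p(j)(\widehat{P_\bullet})/p^r)|$, with $\widehat{(\cdot)}$ denoting derived $p$-completion. Hence the proposition is equivalent to the statement that the functor $B\mapsto\tau^{\le j}(\bb Z_p(j)(B)/p^r)$ on derived $p$-complete simplicial rings is left Kan extended from $p$-completed polynomial $\bb Z_p$-algebras (equivalently, from $p$-completely smooth $\bb Z_p$-algebras).

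This last statement is the form in which I would invoke \cite{AntieauMathewMorrowNikolaus}. Recall that $\bb Z_p(j)(-)$ is built there by left Kan extension from quasisyntomic rings --- which contain the $p$-completely smooth $\bb Z_p$-algebras --- and, crucially, that although $\bb Z_p(j)(-)$ is \emph{not} itself left Kan extended from smooth algebras, its truncated torsion version $\tau^{\le j}(\bb Z_p(j)(-)/p^r)$ is. One deduces this from the rigidity statements of op.\ cit.\ by a Gabber-style devissage along the henselian pair $(-,(p))$, which reduces the comparison to two cases where the left Kan extension property is classical: smooth $\bb Z_p$-algebras, where $\bb Z_p(j)$ agrees with $p$-adic \'etale motivic cohomology, and smooth $\bb F_p$-algebras, where $\bb Z_p(j)/p^r$ is logarithmic de Rham--Witt cohomology (Geisser--Levine, Bloch--Kato--Gabber). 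Combined with the formal reductions of the first paragraph, this gives the proposition.

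The step I expect to be the main obstacle --- and the reason neither the truncation $\tau^{\le j}$ nor the reduction mod $p^r$ can be dropped --- is the comparison in the top cohomological degree $j$. The complexes $\bb Z_p(j)(B)$ are supported in degrees $\le j+1$, and in degree $j$ the object $\tau^{\le j}(\bb Z_p(j)(B)/p^r)$ detects the $p^r$-torsion of $H^{j+1}\bb Z_p(j)(B)$, a group that is genuinely not computed by left Kan extension (it is already nonzero on, e.g., quasiregular semiperfect rings). What saves the truncated torsion functor is that this discrepancy is exactly cancelled by the (also nonformal) failure of $\tau^{\le j}$ to commute with the geometric realisations at hand; it is this cancellation, packaged in the rigidity results of \cite{AntieauMathewMorrowNikolaus}, that does the real work, and once it is granted the rest of the argument is bookkeeping.
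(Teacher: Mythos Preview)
Your formal reductions in the first paragraph are fine and match the paper's implicit setup. But the substance of your argument rests on a misconception: you assert that ``$\bb Z_p(j)(-)$ is \emph{not} itself left Kan extended from smooth algebras,'' and then try to rescue the truncated version via a devissage. In fact the opposite is true. The untruncated functor $\bb Z_p(j)(-)/p^r$ on $p$-henselian rings \emph{is} left Kan extended from $p$-henselisations of finitely generated polynomial $\bb Z_{(p)}$-algebras; this is \cite[Thm.~5.1(2)]{AntieauMathewMorrowNikolaus} (it is essentially the definition of $\bb Z_p(j)$ on non-quasisyntomic rings). The paper takes this as its starting point.

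Given that, the remaining task is to show that the \emph{complementary} piece $\tau^{>j}(\bb Z_p(j)(-)/p^r)$ is also left Kan extended, whence $\tau^{\le j}$ follows as the fibre. Here the paper's argument is much simpler than your proposed cancellation mechanism: restricting the indexing diagram to those ind-smooth $R$ with $R\to A$ a henselian surjection (a cofinal system), rigidity \cite[Thm.~5.2]{AntieauMathewMorrowNikolaus} says that each individual map $\tau^{>j}(\bb Z_p(j)(R)/p^r)\to\tau^{>j}(\bb Z_p(j)(A)/p^r)$ is already an equivalence. So the colimit is constant and the comparison map is trivially an isomorphism. There is no ``delicate cancellation in degree $j$'' to manage; the $\tau^{>j}$ part is simply rigid.

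Your second paragraph --- the Gabber-style devissage along $(-,(p))$ reducing to separate characteristic-$0$ and characteristic-$p$ comparisons --- does not lead anywhere as written. In particular, appealing to ``$\bb Z_p(j)$ agrees with $p$-adic \'etale motivic cohomology on smooth $\bb Z_p$-algebras'' is circular in this paper: that identification (Theorem~\ref{theorem_BCM_LKE}) is one of the inputs being assembled, not a known fact you can invoke to prove Proposition~\ref{proposition_motivic_cohomol_LKE}. The rigidity theorem of \cite{AntieauMathewMorrowNikolaus} is indeed the engine, but it is applied to the pair $(R,\ker(R\to A))$, not to $(-,(p))$.
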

\begin{proof}
Dropping the truncation initially, the functor $\bb Z_p(j)(-)/p^r$ itself is left Kan extended from $p$-henselisations of finitely generated $\bb Z_{(p)}$-polynomial algebras (hence a fortiori from $p$-henselian, ind-smooth $\bb Z_{(p)}$-algebras) thanks to the analogous result in the $p$-complete case \cite[Thm.~5.1(2)]{AntieauMathewMorrowNikolaus}.

It therefore remains to check that $\tau^{>j}(\bb Z_p(j)(-)/p^r)$ is left Kan extended from $p$-henselian, ind-smooth $\bb Z_{(p)}$-algebras. For any fixed $p$-henselian $A$, it is equivalent by cofinality to show that the canonical map $\op{hocolim}_{R\to A}\tau^{>j}(\bb Z_p(j)(R)/p^r)\to \tau^{>j}(\bb Z_p(j)(A)/p^r)$ is an equivalence, where the colimit is taken over $p$-henselian, ind-smooth $\bb Z_{(p)}$-algebras $R$ equipped with morphism $R\to A$ which is a henselian surjection. But this follows by noting the much stronger fact that each map $\tau^{>j}(\bb Z_p(j)(R)/p^r)\to\tau^{>j}(\bb Z_p(j)(A)/p^r)$ is an equivalence: indeed, by taking a filtered colimit it is enough to check that $\tau^{>j}(\bb Z_p(j)(R)/p^r)\quis\tau^{>j}(\bb Z_p(j)(A)/p^r)$ whenever $R\to A$ is a henselian surjection of $p$-henselian rings with bounded $p$-power torsion; but then $\hat R\to\hat A$ is also a henselian surjection (we leave this simple verification to the reader) and indeed $\tau^{>j}(\bb Z_p(j)(\hat R)/p^r)\quis \tau^{>j}(\bb Z_p(j)(\hat A)/p^r)$ by the rigidity property of $\bb Z_p(j)$ \cite[Thm.~5.2]{AntieauMathewMorrowNikolaus}.
\end{proof}

We will combine this with the following left Kan extension property of Milnor $K$-theory; we refer the reader to \cite[App.~A]{ElmantoHoyoisKhanSosniloYakerson2019} for further information about left Kan extending from smooth algebras.

\begin{proposition}\label{proposition_LKE_Milnor}
Let $k$ be a commutative ring and $\ell\ge 0$ an integer; let $L^\sub{sm}(K_j^M/\ell):\op{Alg}_k^\sub{loc}\to D^{\le 0}(\bb Z)$ be the left Kan extension of $K_n^M(-)/\ell$ from local, ind-smooth $k$-algebras. Then, for any local $k$-algebra $A$, the co-unit map $H_0(L^\sub{sm}(K_j^M/\ell)(A))\to K_j^M(A)/\ell$ is an isomorphism; i.e., Milnor $K$-theory is ``right exact''.
\end{proposition}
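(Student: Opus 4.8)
The idea is to unwind the left Kan extension into an explicit colimit and then exhibit an explicit section of the counit map. Since $H_0$ is a composite of left adjoints it commutes with homotopy colimits, so the counit in question is identified with the canonical map
\[c\colon\varinjlim_{R\to A}K_j^M(R)/\ell\To K_j^M(A)/\ell,\]
the colimit being taken over the comma category of local, ind-smooth $k$-algebras $R$ equipped with a $k$-algebra homomorphism $\phi\colon R\to A$. One small observation used repeatedly: each class in this colimit is represented by a pair $(R,\phi)$ with $\phi$ \emph{local}, because $(R,\phi)$ and $(R_{\phi^{-1}(\frak m_A)},\phi)$ represent the same class. It then remains to show that $c$ is bijective.

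Surjectivity is immediate: a generator $\{a_1,\dots,a_j\}$ of $K_j^M(A)$ (with $a_i\in A^\times$) is the image of $\{t_1,\dots,t_j\}\in K_j^M(R_{\underline a})$, where $R_{\underline a}$ denotes the localisation of the Laurent polynomial ring $k[t_1^{\pm1},\dots,t_j^{\pm1}]$ at the preimage of $\frak m_A$ under $t_i\mapsto a_i$; this $R_{\underline a}$ is essentially smooth and local, and its structure map $\phi_{\underline a}$ to $A$ is local.

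For injectivity I would build a homomorphism $\Psi\colon K_j^M(A)/\ell\to\varinjlim K_j^M(R)/\ell$ splitting $c$, by setting $\Psi(\{a_1,\dots,a_j\})=[\{t_1,\dots,t_j\},R_{\underline a},\phi_{\underline a}]$ and extending $\bb Z$-linearly over the free abelian group on $j$-tuples of units of $A$; the content is to check that the defining relations of $K_j^M(A)/\ell$ map to $0$. The $\ell$-power relations die because the colimit is already taken mod $\ell$. Multilinearity dies by mapping $R_{(\dots,a_la_l',\dots)}$, $R_{(\dots,a_l,\dots)}$ and $R_{(\dots,a_l',\dots)}$ over $A$ into the localisation of $k[t_1^{\pm1},\dots,t_j^{\pm1},s^{\pm1}]$ via $t_l\mapsto t_ls$, resp.\ $t_l\mapsto t_l$, resp.\ $t_l\mapsto s$ (identity on the remaining variables), and invoking the identity $\{\dots,t_ls,\dots\}=\{\dots,t_l,\dots\}+\{\dots,s,\dots\}$ there. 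The Steinberg relation is the crux: if $a_l+a_k=1$, then $R_{\underline a}$ maps, over $A$, to the localisation $R'$ of $k[t_i^{\pm1}\,(i\ne l,k)][t_l^{\pm1},(1-t_l)^{-1}]$ at the preimage of $\frak m_A$ (with $t_i\mapsto a_i$ for $i\ne k$ and $1-t_l\mapsto 1-a_l=a_k$), the map being $t_k\mapsto 1-t_l$ and the identity otherwise; here $R'$ is again essentially smooth and local, and $\{t_1,\dots,t_j\}$ goes to a symbol two of whose entries add to $1$, hence to $0$ in $K_j^M(R')$. Granting that $\Psi$ is well defined, $c\circ\Psi=\mathrm{id}$ is tautological, while $\Psi\circ c=\mathrm{id}$ follows because, for a class $[\xi,R,\phi]$ with $\phi$ local and $\xi=\sum_s n_s\{r_{s,1},\dots,r_{s,j}\}$ ($r_{s,i}\in R^\times$), the maps $R_{(\phi(r_{s,1}),\dots,\phi(r_{s,j}))}\to R$, $t_i\mapsto r_{s,i}$, are morphisms of the comma category and identify $\Psi(\phi(\xi))$ with $[\xi,R,\phi]$.

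The point that makes this slightly delicate — and the reason for building $\Psi$ rather than chasing elements — is that the indexing comma category is not filtered, so a class can become zero in the colimit without being killed by any single morphism out of $R$; the explicit section circumvents this. Once that is realised, the essential input is just the elementary remark that the Steinberg relation is already defined over the smooth affine curve $\Spec k[t^{\pm1},(1-t)^{-1}]$, so it can be resolved by a map from a smooth $k$-algebra; the remaining work is routine bookkeeping — checking that every map introduced respects the augmentation to $A$ and factors through the indicated localisation, for which one only needs that a local homomorphism reflects units.
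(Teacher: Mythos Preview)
Your argument is correct and takes a genuinely different route from the paper's. The paper proceeds in a ``Quillen-style'' fashion: it uses that the left Kan extension is lax symmetric monoidal to equip $\bigoplus_{j\ge0}H_0(L^\sub{sm}k_j^M(A))$ with a graded ring structure, cites \cite{ElmantoHoyoisKhanSosniloYakerson2019} to handle $j=1$ (i.e., that $\bb G_m$ is already left Kan extended), computes the Kan extension via a simplicial resolution $R_\blob\to A$ by henselian surjections, and then invokes the standard fact that the kernel of $\bigoplus_jk_j^M(R_0)\to\bigoplus_jk_j^M(A)$ is generated as an ideal by its degree-one part $\ker(R_0^\times\to A^\times)$. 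Your approach instead identifies $H_0$ of the Kan extension with the ordinary colimit $\varinjlim_{R\to A}K_j^M(R)/\ell$ over the comma category and constructs an explicit section by lifting each symbol to a localised Laurent polynomial ring, checking the relations by hand via the observation that multilinearity and the Steinberg relation are already defined over smooth $k$-algebras (namely over $\Spec k[t^{\pm1},s^{\pm1}]$ and $\Spec k[t^{\pm1},(1-t)^{-1}]$ respectively). Your route is more elementary and self-contained --- it avoids the monoidal structure, the external input for $\bb G_m$, and the ``standard argument'' --- and makes transparent exactly why right exactness holds; the paper's route is more conceptual, treating all $j$ uniformly via the ring structure, and dovetails with the simplicial-resolution framework used later (e.g., in constructing norm maps on $\bb Z_p(j)$).
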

\begin{proof}
We use a standard type of argument going back to Quillen, showing that functors with suitable universal properties are right exact. To simplify the notation we write $k_j^M=K_j^M/\ell$. We will exploit the fact that the left Kan extension is lax symmetric monoidal, so that $\bigoplus_{j\ge0}H_0(L^\sub{sm}k_j^M(A))$ naturally has the structure of a graded commutative ring.

We first handle the case $j=1$. In that case more is known: the functor $K_1^M=\bb G_m$ is left Kan extended from local, ind-smooth $K$-algebras by \cite[Prop. A.0.1]{ElmantoHoyoisKhanSosniloYakerson2019}, so taking $H_0$ is not even necessary if $\ell=0$. When $\ell>0$ the right exact sequence of functors $K_1^M\xto{\ell}K_1^M\to k_1^M\to 0$ on $\op{Alg}_k^\sub{loc}$ induces a right exact sequence of abelian groups $L^\sub{sm}K_1^M(A)\xto{\ell}L^\sub{sm}K_1^M(A)\to L^\sub{sm}k_1^M(A)\to 0$; from the previous sentence we deduce that $H_0(L^\sub{sm}k_1^M(A))=A^\times/\ell$, as required.

Let $R_\bullet\to A$ be a simplicial resolution by ind-smooth, local $k$-algebras where, for each simplicial degree $q\ge0$, the kernel of the surjection $R_q\to A$ is a henselian ideal. Such a resolution exists and calculates our desired left Kan extension; namely, given any functor $F:\op{Sm}_k^\sub{loc}\to D(\bb Z)$ commuting with filtered colimits, the value of its left Kan extension $L^\sub{sm}F$ on the arbitrary local $k$-algebra $A$ is given by the geometric realisation of the simplicial object $q\mapsto F(R_q)$. In particular, $L^\sub{sm}k_j^M(A)\in D(\bb Z)$ can be represented by (the complex associated to) the simplicial abelian group $k_j^M(R_\blob):q\mapsto k_j^M(R_q)$.

We will argue using the following commutative diagram of graded commutative rings
\[\xymatrix{
\bigoplus_{j\ge0}k_j^M(R_0) \ar[d] \ar[rd] & \\
\bigoplus_{j\ge 0}H_0(L^\sub{sm}k_j^M(R_\blob)) \ar[r] & \bigoplus_{j\ge0}k_j^M(A)
}\]
in which all arrows are surjective (the vertical as $H_0(k_j^M(R_\blob))$ is by definition the equaliser of $k_j^M(R_1)\rightrightarrows k_j^M(R_0)$; the diagonal as $R_0\to A$ is surjective on units). The goal is to show that the horizontal arrow is an isomorphism.

The horizontal arrow is an isomorphism in degrees $0$ (as $k_0^M$ is the constant functor $\bb Z/\ell\bb Z$) and $1$ (explained above). Therefore $\bigoplus_{j\ge 0}H_0(L^\sub{sm}k_j^M(R_\blob))$ is the quotient of $\bigoplus_{j\ge0}k_j^M(R_0)$ by a graded ideal which is contained in $I:=\op{ker}(\bigoplus_{j\ge0}k_j^M(R_0)\to \bigoplus_{j\ge0}k_j^M(A))$ (by the existence of the commutative diagram) and contains the ideal $J$ generated by the degree one elements $\op{ker}(R_0^\times\to A^\times)$ (since the horizontal map is an isomorphism in degree $1$). But a standard argument in Milnor $K$-theory shows that $I=J$, so in fact the three ideals coincide and the bottom horizontal map is an isomorphism.
\end{proof}

We may now state and prove the final reduction:

\begin{proposition}\label{proposition_reduction_unram}
Fix $j,r\ge1$, and let $V'\subseteq V$ be an extension of mixed characteristic complete discrete valuation rings, both having big residue field. Then
\begin{center}
\begin{tabular}{c}
Theorem \ref{theorem1} for all local, $p$-henselian, ind-smooth $V'$-algebras\\ implies\\ Theorem \ref{theorem1} for all local, $p$-henselian, ind-smooth $V$-algebras.
\end{tabular}
\end{center}
\end{proposition}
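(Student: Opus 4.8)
The idea is to reformulate both sides of Theorem \ref{theorem1} in terms of syntomic cohomology, where the dependence on the base ring disappears, and then to descend from $V$ to $V'$ along a simplicial resolution by ind-smooth $V'$-algebras.

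For the reformulation: since $V$ has big residue field, so does every local $V$-algebra $R$, whence $\hat K_j^M(R)=K_j^M(R)$ and, by Proposition \ref{prop_reduction2}, Theorem \ref{theorem1} for $R$ is equivalent to Theorem \ref{theorem_BK} for $R$. By Corollary \ref{corollary_BCM} the kernel of Kato's residue map $\bor$ on $H^j_\sub{\'et}(R[\tfrac1p],\mu_{p^r}^{\otimes j})$ is canonically $H^j(\bb Z_p(j)(R)/p^r)$, and $\bor$ is surjective (proof of Proposition \ref{prop_reduction2}); hence Theorem \ref{theorem1} holds for $R$ if and only if the natural symbol map
\[\phi_R\colon K_j^M(R)/p^r\To H^j(\bb Z_p(j)(R)/p^r)\]
is an isomorphism. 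The same remark, using now that $V'$ has big residue field, shows that the hypothesis of the proposition is exactly the assertion that $\phi_S$ is an isomorphism for every local, $p$-henselian, ind-smooth $V'$-algebra $S$. So it remains to prove, granting this, that $\phi_R$ is an isomorphism for every local, $p$-henselian, ind-smooth $V$-algebra $R$.

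Fix such an $R$ and regard it as a $V'$-algebra. Choose a simplicial resolution $R_\bullet\to R$ in which each $R_q$ is a local, ind-smooth $V'$-algebra and $\ker(R_q\to R)$ is a henselian ideal; such a resolution exists and computes left Kan extensions from local, ind-smooth $V'$-algebras (as in the proofs of Propositions \ref{proposition_LKE_Milnor} and \ref{proposition_motivic_cohomol_LKE}), and, $R$ being $p$-henselian with henselian kernels, each $R_q$ is automatically $p$-henselian. By hypothesis each $\phi_{R_q}$ is then an isomorphism (each $R_q$ being a local, $p$-henselian, ind-smooth $V'$-algebra), so $\phi_{R_\bullet}$ is a levelwise isomorphism of simplicial abelian groups, and it suffices to identify $\phi_R$ with $H_0(\phi_{R_\bullet})$. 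On the source side, Proposition \ref{proposition_LKE_Milnor} applied with base ring $k=V'$ gives $H_0(q\mapsto K_j^M(R_q)/p^r)=K_j^M(R)/p^r$. On the target side I claim that $|q\mapsto\tau^{\le j}(\bb Z_p(j)(R_q)/p^r)|\simeq\tau^{\le j}(\bb Z_p(j)(R)/p^r)$; granting this, since all terms lie in $D^{\le j}(\bb Z/p^r\bb Z)$, applying $H^j$ to the geometric realisation — which for such a simplicial object is degreewise $H^j$ followed by $H_0$ of the resulting simplicial abelian group, the relevant spectral sequence degenerating for degree reasons — yields $H_0(q\mapsto H^j(\bb Z_p(j)(R_q)/p^r))=H^j(\bb Z_p(j)(R)/p^r)$. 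Naturality of $\phi$ and of the two counit maps then gives $\phi_R\cong H_0(\phi_{R_\bullet})$, completing the proof.

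The step requiring the most care — and the main obstacle — is the claimed equivalence $|q\mapsto\tau^{\le j}(\bb Z_p(j)(R_q)/p^r)|\simeq\tau^{\le j}(\bb Z_p(j)(R)/p^r)$, i.e. that $\tau^{\le j}(\bb Z_p(j)(-)/p^r)$ is left Kan extended from local, $p$-henselian, ind-smooth $V'$-algebras, whereas Proposition \ref{proposition_motivic_cohomol_LKE} only supplies this over $\bb Z_{(p)}$. I would obtain it by rerunning the proof of that proposition with base $V'$, noting that its inputs are base-independent: $\bb Z_p(j)(-)/p^r$ preserves geometric realisations, being left Kan extended from $p$-henselisations of polynomial $\bb Z_{(p)}$-algebras \cite[Thm.~5.1(2)]{AntieauMathewMorrowNikolaus}, so $\bb Z_p(j)(R)/p^r\simeq|q\mapsto\bb Z_p(j)(R_q)/p^r|$; the functor $\tau^{>j}(\bb Z_p(j)(-)/p^r)$ sends every henselian surjection of $p$-henselian rings to an equivalence by the rigidity result \cite[Thm.~5.2]{AntieauMathewMorrowNikolaus}, a statement with no mention of a base ring, so $q\mapsto\tau^{>j}(\bb Z_p(j)(R_q)/p^r)$ is the constant simplicial object $\tau^{>j}(\bb Z_p(j)(R)/p^r)$ and its realisation is itself; and $\tau^{\le j}(\bb Z_p(j)(-)/p^r)$ is the fibre of the natural map $\bb Z_p(j)(-)/p^r\to\tau^{>j}(\bb Z_p(j)(-)/p^r)$, while fibre sequences commute with geometric realisation in the stable category $D(\bb Z/p^r\bb Z)$. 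The truncation is genuinely needed here: as $\bb Z_p(j)(R_q)/p^r$ is supported only in cohomological degrees $\le j+1$, without it the $H^j$ of the realisation would acquire a spurious contribution from the $H^{j+1}$ terms — which is precisely the point at which rigidity enters the proof of Proposition \ref{proposition_motivic_cohomol_LKE}.
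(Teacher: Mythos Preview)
Your proof is correct and follows essentially the same approach as the paper's: reformulate Theorem~\ref{theorem1} as the assertion that the symbol map $K_j^M(R)/p^r\to H^j(\bb Z_p(j)(R)/p^r)$ is an isomorphism (via Corollary~\ref{corollary_BCM}), then left Kan extend this isomorphism from local, $p$-henselian, ind-smooth $V'$-algebras to all local, $p$-henselian $V'$-algebras using Propositions~\ref{proposition_motivic_cohomol_LKE} and~\ref{proposition_LKE_Milnor}. You are more explicit than the paper about one point it glosses over---that Proposition~\ref{proposition_motivic_cohomol_LKE} is stated with base $\bb Z_{(p)}$ rather than $V'$---and your observation that the proof of that proposition can be rerun over $V'$ (since its two inputs, sifted-colimit preservation and rigidity of $\tau^{>j}$, are base-independent) is exactly the right justification.
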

\begin{proof}
Given a local, $p$-henselian, ind-smooth $V$-algebra, the description of Corollary \ref{corollary_BCM} shows that Theorem \ref{theorem1} is exactly the assertion that the symbol map defines a natural isomorphism \begin{equation}K_j^M(R)/p^r\cong H^j(\bb Z/p^r\bb Z(j)(R))=\op{ker}(H^j_\sub{\'et}(R[\tfrac1p],\mu_{p^r}^{\otimes j})\to W_r\Omega_{R/\pi R,\sub{log}}^{j-1}).\label{eqn_sub}\end{equation} Assuming that this is true for all local, $p$-henselian, ind-smooth $V'$-algebras then it follows by left Kan extending from these, using Propositions \ref {proposition_motivic_cohomol_LKE} and \ref{proposition_LKE_Milnor}, that there are natural isomorphisms \begin{equation}K_j^M(A)/p^r\isoto H^j(\bb Z/p^r\bb Z(j)(A))\label{eqn_NS}\end{equation} for all local, $p$-henselian $V'$-algebras $A$.

But when $A=R$ is a local, $p$-henselian, ind-smooth $V$-algebra then the right side is the kernel of Kato's residue map by another application of Theorem \ref{theorem_BCM_LKE}. That is, assuming that (\ref{eqn_sub}) is an isomorphism for all local, $p$-henselian, ind-smooth $V'$-algebras, we have proved it is an isomorphism for all local, $p$-henselian, ind-smooth $V$-algebras, as required.
\end{proof}

To summarise the reductions:

\begin{corollary}\label{corollary_final_reduction}
Theorems \ref{theorem_BK} and \ref{theorem1} in general reduce to the special case of Theorem \ref{theorem_BK} where $V$ has big residue field, is absolutely unramified, and $r=1$.
\end{corollary}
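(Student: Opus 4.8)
The plan is to assemble the four reductions proved in this section, together with one standard piece of structure theory: every mixed characteristic complete discrete valuation ring $V$ with residue field $k$ contains an \emph{absolutely unramified} complete discrete valuation ring with residue field $k$, namely a Cohen ring $C(k)$ of $k$. The canonical local homomorphism $C(k)\into V$ provided by Cohen's structure theorem is injective because its kernel is a prime ideal of the domain $C(k)$ not containing $p$; consequently, if $V$ has big residue field then so do $C(k)$ and the ring of integers of any extension of $\Frac V$ with trivial residue field extension.

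First I would strip away the residue-field hypothesis and fix the formulation. By Proposition \ref{prop_reduction1}, Theorem \ref{theorem1} for general $V$ reduces to Theorem \ref{theorem1} for $V$ with big residue field, and by Proposition \ref{prop_reduction2} this is equivalent, in the big residue field case, to Theorem \ref{theorem_BK}. So it suffices to prove Theorem \ref{theorem_BK} for every $V$ with big residue field (all admissible $R$, all $j,r\ge1$). Next I would reduce to $V$ absolutely unramified: for such a $V$ with residue field $k$, I would apply Proposition \ref{proposition_reduction_unram} to the extension $C(k)\subseteq V$ --- translating into and out of Theorem \ref{theorem1} by Proposition \ref{prop_reduction2} at each end --- to conclude that Theorem \ref{theorem_BK} for the absolutely unramified ring $C(k)$ implies Theorem \ref{theorem_BK} for $V$. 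Since Propositions \ref{prop_reduction2} and \ref{proposition_reduction_unram} each work at a fixed weight and a fixed $r$, this reduction to the absolutely unramified case is valid level by level in $r$.

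It remains to reduce to $r=1$. For $V$ absolutely unramified with big residue field $k$, put $F:=\Frac V$, $F':=F(\zeta_p)$, $V':=\roi_{F'}$; because $\bb Q_p(\zeta_p)/\bb Q_p$ is totally ramified and $V$ is unramified over $\bb Z_p$ (so that the relevant cyclotomic polynomial stays Eisenstein over $V$), the extension $F'/F$ is totally ramified, $f(F'/F)=1$, and $V'$ again has residue field $k$. Proposition \ref{prop_reduction0} then reduces Theorem \ref{theorem_BK} for $V$ at all $r\ge1$ to Theorem \ref{theorem_BK} at $r=1$ for the two rings $V$ and $V'$. The first of these is the target case; the second, although $V'$ is ramified, is handled by re-running the previous paragraph at $r=1$ --- $V'$ has big residue field, so its $r=1$ case follows from that of the absolutely unramified ring $C(k)\subseteq V'$, which is again the target case. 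Letting $J$ run through all positive integers, this completes the chain down to Theorem \ref{theorem_BK} for absolutely unramified $V$ with big residue field and $r=1$, i.e.\ Theorem \ref{theorem_BK_unram} of Section \ref{section_unramified_p}.

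I do not expect a genuine obstacle here: the corollary is purely organisational, and all the real content sits in Propositions \ref{prop_reduction1}--\ref{proposition_reduction_unram} and in the explicit computations of Section \ref{section_unramified_p}. The one point deserving care is the seemingly circular dependency in the last step --- the reduction to $r=1$ forces a detour through the ramified base $\roi_{\Frac V(\zeta_p)}$ --- which dissolves once one observes that the reduction to an absolutely unramified base is valid for each fixed $r$ and may therefore be invoked a second time at $r=1$. One should also keep the harmless bookkeeping straight that ``big residue field'' is to be read with respect to the bound $M_J$ for $J$ at least as large as the largest weight in play.
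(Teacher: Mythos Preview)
Your proposal is correct and follows essentially the same approach as the paper, using the same four propositions in the same roles; the only difference is presentational, in that you run the reductions top-down (general $\Rightarrow$ big residue field $\Rightarrow$ absolutely unramified $\Rightarrow$ $r=1$) while the paper argues bottom-up from the target case. In particular, your resolution of the apparent circularity---observing that Proposition~\ref{proposition_reduction_unram} applies at each fixed $r$ and may therefore be invoked a second time at $r=1$ to handle the ramified base $V'=\roi_{F(\zeta_p)}$---is exactly the paper's argument, which first establishes Theorem~\ref{theorem_BK} for \emph{all} $V$ with big residue field at $r=1$ (thereby covering $V'$ automatically) before invoking Proposition~\ref{prop_reduction0}.
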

\begin{proof}
We suppose Theorem \ref{theorem_BK} is known whenever $V$ has big residue field, is absolutely unramified, and $r=1$. It follows from Proposition \ref{prop_reduction2} that Theorem \ref{theorem1} is true under the same conditions (obviously if $V$ has big residue field then so does $R$). For general $V$ with big residue field there exists an absolutely unramified complete discrete valuation ring $V'\subseteq V$ with the same residue field (it is the ring of Witt vector if the residue field is perfect; in the imperfect case it still exists but is not unique \cite[Chap.~2 \S5.5--5.6]{Fesenko2002}); from Proposition \ref{proposition_reduction_unram} we can then deduce that Theorem \ref{theorem1} is true whenever $V$ has big residue field and $r=1$. So Theorem \ref{theorem_BK} is true under the same conditions, again by Proposition \ref{prop_reduction2}; for any $V$ with big residue field satisfying the hypothesis of Proposition \ref{prop_reduction0}, that proposition then lets us extend Theorem \ref{theorem_BK} to all $r\ge1$. But this hypothesis holds in particular if $V$ is absolutely unramified; so appealing yet again to Proposition \ref{prop_reduction2}, we have shown that Theorem \ref{theorem1} is true whenever $V$ is absolutely unramified and has big residue field.

By again using Proposition \ref{proposition_reduction_unram} as in the previous paragraph we eliminate the hypothesis that $V$ is absolutely unramified, and then by Proposition \ref{prop_reduction1} we obtain Theorem \ref{theorem1} in general. A final application of Proposition \ref{prop_reduction2} implies Theorem \ref{theorem_BK} in general.
\end{proof}

\begin{remark}[Replacing ind-smoothness by quasismoothness]
We can actually offer the following weakening of the ind-smooth hypotheses in Theorem \ref{theorem_BK}(ii). Let $V$ be a complete discrete valuation ring of mixed characteristic, and $A$ a local, $p$-henselian $V$-algebra with big residue field satisfying the following ``quasismoothness'' conditions: $\pi A$ is a prime ideal, $A_{\pi A}$ is a discrete valuation ring, $A[\tfrac1\pi]\cap A_{\pi A}=A$, and the $\bb F_p$-algebra $A/\pi A$ is Cartier smooth (see \S\ref{subsection_KM_val_charp}). Then $K_j^M(A)/p^r\to K_j^M(A[\tfrac1p])/p^r$ is injective with cokernel $K_{j-1}^M(A/\pi A)/p^r\cong W_r\Omega^j_{A/\pi A,\sub{log}}$.

We prove this as follows. Theorem \ref{theorem_BK}(ii) and Lemma \ref{lemma_localisation_in MilnorK} provide an exact sequence \[0\To K_j^M(-)/p^r\To K_j^M(-[\tfrac1p])/p^r\To K_{j-1}^M(-/\pi -)/p^r\To 0\] of functors on the category of local, $p$-henselian, ind-smooth $V$-algebras; we may left Kan extend this to all local $p$-henselian $V$-algebras and evaluate on $A$ to obtain a fibre sequence \[L^\sub{sm}(K_j^M/p^r)(A)\To L^\sub{sm}(K_j^M(-[\tfrac1p])/p^r)(A)\To L^\sub{sm}(K_{j-1}^M(-/\pi -)/p^r)(A).\] The $H_0$ of the outer two terms are respectively $K_j^M(A)/p^r$ and $K_j^M(A/\pi A)/p^r$ by Lemma \ref{proposition_LKE_Milnor}. Meanwhile, a modification of the argument of that lemma, crucially using that $A[\tfrac1p]^\times=A^\times \pi^\bb Z$ (and similarly for any ind-smooth $V$-algebra $R$ in place of $A$), shows that $H_0$ of the middle term is $K_j^M(A[\tfrac1p])/p^r$, i.e., $K_j^M(-[\tfrac1p])/p^r$ is ``right exact at $A$''. Finally, Proposition \ref{proposition_LKE_Omegalog2}(i) shows that the rightmost term identifies with $W_r\Omega^{j-1}_{A,\sub{log}}$, and in particular it has no $H_1$. The $H_0$s of the fibre sequence therefore provide the desired short exact sequence.

Under the hypotheses on $A$, it is conceivable that the cohomological symbol $K_j^M(A[\tfrac1p])/p^r\to H^j_\sub{\'et}(A[\tfrac1p],\mu_{p^r}^{\otimes j})$ is also an isomorphism, but we have not seriously tried to prove it.
\end{remark}

\section{The unramified, mod \texorpdfstring{$p$}{p} case}\label{section_unramified_p}
In this section we prove Theorem \ref{theorem_BK} in the special case that $V$ has big residue field, is absolutely unramified, and $r=1$ (see Theorem \ref{theorem_BK_unram}). Thanks to Corollary \ref{corollary_final_reduction}, this will complete the proof of Theorems \ref{theorem_BK} and \ref{theorem1}.

\subsection{Relations in \texorpdfstring{$K_2(R)$}{K2R}}\label{subsection_K2}
Given a ring $R$, and elements $a,b\in R$ such that $1+ab\in R^\times$, let $\pid{a,b}\in K_2(R)$ denote the corresponding Dennis--Stein symbol defined in \cite{Dennis1973a}. These symbols have the following properties:
\begin{enumerate}
\item[(D1)] $\pid{a,b}=-\pid{-b,-a}$ for $a,b\in R$ such that $1+ab\in\mult R$.
\item[(D2)] $\pid{a,b}+\pid{a,c}=\pid{a,b+c+abc}$ for $a,b,c\in R$ such that $1+ab,1+ac\in\mult R$.
\item[(D3)] $\pid{a,bc}=\pid{ab,c}+\pid{ac,b}$ for $a,b,c\in R$ such that $1+abc\in\mult R.$
\end{enumerate}
We recall also the following relations to Steinberg symbols:
\begin{enumerate}
\item[(D4)] $\pid{a,b}=\{1+ab,b\}$ for $a,b\in R$ such that $b,1+ab\in R^\times$.
\item[(D5)] $\pid{a,b}=\left\{-\frac{1+a}{1-b},\frac{1+ab}{1-b}\right\}$ for $a,b\in R$ such that $1+a,1-b,1+ab\in R^\times$.
\end{enumerate}

For a deduction of (D5) from the relations (D1)--(D3), we refer to \cite[Rem.~3.14]{Morrow_K2}. Note that we use the older sign convention of \cite{Dennis1973a} for these symbols; this changed around 1980 and our $\pid{a,b}$ became $\pid{-a,b}$ (see \cite[after III, Def. 5.11]{Weibel2013}).

\begin{definition}
Let $k\geq 1$ be an integer. A ring $R$ is said to be {\em $k$-fold stable} if and only if
whenever $a_1, b_1,..., a_k, b_k\in R$ are given such that $(a_1, b_i)=\cdots =(a_k,b_k) = R$, then there exists $r \in R$ such that $a_1 + rb_1,\dots,a_k + rb_k$ are units.

Following \cite{Morrow_K2}, we will say that $R$ is {\em weakly $k$-fold stable} if and only if whenever $a_1, b_1,..., a_k, b_{k-1}\in R$ are given, then there exists $u\in R$ such that $1 + ub_1,..., 1 + ub_{k-1}$ are units. Note that weak $k$-fold stability follows from $k$-fold stability (using the pairs $(1, b_1),..., (1, b_{k-1}), (0, 1))$.
\end{definition}

\begin{remark}[Reminder on presentations of $K_2$]\label{remark_on_presentations}
\begin{enumerate}
\item If $R$ is local or three-fold stable then $K_2(R)$ is generated by the Dennis--Stein symbols subject to relations (D1)--(D3) \cite[Thm.~1]{vanderKallen1975}.
\item If $R$ is local and has residue field $\neq\bb F_2$, then $K_2(R)$ is generated by the Steinberg symbols subject to bilinearity, Steinberg relation, and the alternating relation $\{x,y\}=-\{y,x\}$ \cite[Corol.~3.2]{Kolster1985}.
\item If $R$ is weakly five-fold stable, then bilinearily and the Steinberg relation imply the skew symmetry relation $\{x,-x\}=0$ (e.g., \cite[Lem.~3.6]{Morrow_K2}, which itself implies the alternating relation; so from (ii) we deduce that if $R$ is local with residue field having $> 5$ elements, then $K_2^M(R)\isoto K_2(R)$ (even better, this isomorphism actually holds for any five-fold stable ring by combining (i) with \cite[Thm.~8.4]{vanderKallen1977}).
\item Since (D4) shows that any Steinberg symbol can be written as a Dennis--Stein symbol, we see from (i) that if $R$ is local or three-fold stable then $K_2(R)$ is generated by Steinberg symbols.
\end{enumerate}
\end{remark}

For the rest of this subsection we fix a ring $R$ which is additively generated by its units and whose Jacobson radical contains the prime number $p$.

\begin{definition}\label{definition_Vfiltration}
For $i\ge0$, let $U^iK_2(R)\subseteq K_2(R)$ denote the subgroup generated by Steinberg symbols $\{a,b\}$ where $a$ or $b$ belongs to $1+p^iR$. To give uniform statements, we also set \[V^iK_2(R):=\begin{cases} U^2K_2(R)+\langle\pid{pc,p}\,:\,c\in R\rangle & i=2=p\\ U^iK_2(R)&\text{else.}\end{cases}\] It follows from part (ii) of the next lemma that $V^2K_2(R)\subseteq U^1K_2(R)$, so this really is a descending filtration.
\end{definition}

\begin{lemma}\label{lemma_additive}
Let $i\ge1$, and fix an element $\pi\in p^iR$. Then:
\begin{enumerate}
\item $\pid{\pi, a}+\pid{\pi, b}\equiv \pid{\pi, a+ b}$ mod $U^{i}K_2(R)$ for all $a,b\in R$;
\item If $R$ is additively generated by units then $\pid{\pi,b}\in U^iK_2(R)$ for all $b\in R$.
\end{enumerate}
\end{lemma}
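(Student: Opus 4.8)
\textbf{Proof plan for Lemma \ref{lemma_additive}.}

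The plan is to deduce (i) from the Dennis--Stein relation (D2) and then bootstrap to (ii) using the hypothesis that $R$ is additively generated by units. For part (i): apply (D2) with $a\rightsquigarrow \pi$, $b\rightsquigarrow a$, $c\rightsquigarrow b$. This is legal because $1+\pi a$ and $1+\pi b$ are units, being congruent to $1$ modulo the Jacobson radical (here we use $\pi\in p^iR\subseteq pR$, and $p$ lies in the Jacobson radical by the standing hypothesis). This yields $\pid{\pi,a}+\pid{\pi,b}=\pid{\pi,a+b+\pi ab}$. It then remains to show $\pid{\pi,a+b+\pi ab}\equiv \pid{\pi,a+b}$ mod $U^iK_2(R)$, i.e. that the ``error term'' $\pi ab$ can be absorbed. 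Writing $c:=a+b$ and $c':=c+\pi ab = c(1+\pi\cdot\tfrac{ab}{c})$ — better, to avoid division, use (D2) again in the form $\pid{\pi,c}+\pid{\pi,\pi ab}=\pid{\pi,c+\pi ab+\pi^2 abc}$ and compare; the cleanest route is: by (D4), $\pid{\pi, x(1+\pi y)} $ differs from $\pid{\pi,x}$ by a Steinberg symbol involving $1+\pi y\in 1+p^iR$. Concretely, for any $x\in R$ with $1+\pi x$ a unit and any $z\in 1+p^iR$ one computes, using (D4) where applicable and (D1)--(D3) in general, that $\pid{\pi, xz}-\pid{\pi,x}\in U^iK_2(R)$; applying this with $z=1+\pi ab$ (when $c=a+b$ is itself a unit) finishes (i) in that case, and the general case follows since $R$ is additively generated by units so every element is a sum of units, to which the already-established additivity (i) applies inductively. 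The main technical point is organizing the reduction so that the error terms consistently land in $U^iK_2(R)$; I expect this bookkeeping with (D1)--(D4) to be the fiddliest part, though entirely routine.

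For part (ii): the strategy is to reduce to the case $b\in R^\times$ and then exhibit $\pid{\pi,b}$ directly as a symbol in $U^iK_2(R)$. If $b$ is a unit, then by (D4) we have $\pid{\pi,b}=\{1+\pi b, b\}$, and since $\pi b\in p^iR$ we get $1+\pi b\in 1+p^iR$, so $\{1+\pi b,b\}\in U^iK_2(R)$ by definition. For general $b$, write $b=u_1+\cdots+u_n$ with each $u_k\in R^\times$ (possible since $R$ is additively generated by units); by part (i), $\pid{\pi,b}\equiv \sum_k \pid{\pi,u_k}$ mod $U^iK_2(R)$, and each summand lies in $U^iK_2(R)$ by the unit case just treated. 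Hence $\pid{\pi,b}\in U^iK_2(R)$.

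One subtlety to check en route: the additive relation in (i) was stated only modulo $U^iK_2(R)$, which is exactly the precision needed for the inductive argument in (ii) — one never needs the error-free version. I would also double-check that the exponent is $i$ and not $i-1$ throughout: since $\pi\in p^iR$, the term $1+\pi b$ lies in $1+p^iR$, matching the definition of $U^iK_2(R)$, so no off-by-one issue arises. The only place requiring genuine (if elementary) care is the claim ``$\pid{\pi,xz}-\pid{\pi,x}\in U^iK_2(R)$ for $z\in 1+p^iR$'' used in (i); this is where (D3) enters, expanding $\pid{\pi, xz}$ in terms of $\pid{\pi x, z}$ and $\pid{\pi z, x}$, the first of which is visibly in $U^iK_2(R)$ via (D4) (as $1+\pi x z\in 1+p^iR$ and we can recognize the symbol appropriately) and the second of which is handled by noting $\pi z\equiv\pi$ modulo $p^{2i}R$, so $\pid{\pi z,x}$ and $\pid{\pi,x}$ agree modulo a higher-filtration (hence $U^iK_2(R)$) term. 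Assembling these, part (i) follows, and then part (ii) is immediate as above.
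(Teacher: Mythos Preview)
Your argument for part (ii) is correct and matches the paper's: reduce to $b\in R^\times$ via (i), then apply (D4).

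For part (i), however, there is a gap. You correctly obtain $\pid{\pi,a}+\pid{\pi,b}=\pid{\pi,a+b+\pi ab}$ from (D2), but your comparison of $\pid{\pi,(a+b)z}$ with $\pid{\pi,a+b}$ for $z\in 1+p^iR$ is incomplete. After applying (D3) to write $\pid{\pi,(a+b)z}=\pid{\pi(a+b),z}+\pid{\pi z,a+b}$, the first term is indeed in $U^iK_2(R)$ by (D4), but your treatment of the second term (``$\pi z\equiv\pi$ mod $p^{2i}R$, so $\pid{\pi z,x}$ and $\pid{\pi,x}$ agree modulo a higher-filtration term'') is only justified when $x=a+b$ is a \emph{unit}: in that case (D4) rewrites both as Steinberg symbols $\{1+\pi z x,x\}$ and $\{1+\pi x,x\}$, whose ratio lies in $1+p^{2i}R$. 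For general $x$ there is no such direct argument, and your proposed inductive reduction (``the general case follows since $R$ is additively generated by units \dots\ the already-established additivity (i) applies inductively'') is circular --- you would be invoking the very statement you are proving, and the special case you have in hand (sum a unit) is not the right shape to feed an induction over sums of units.

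The paper sidesteps the case analysis by a direct algebraic trick: set $c:=ab/(1+\pi(a+b))$ and verify the identity
\[
a+b+\pi ab=(a+b)+\pi c+\pi(a+b)\cdot\pi c.
\]
This is exactly the shape required to run (D2) \emph{backwards}, giving $\pid{\pi,a+b+\pi ab}=\pid{\pi,a+b}+\pid{\pi,\pi c}$ with no remainder and no hypothesis on $a+b$. It then only remains to show $\pid{\pi,\pi c}\in U^iK_2(R)$, which follows from (D5): the entries $1+\pi$, $1-\pi c$, $1+\pi^2c$ are all units, and $\tfrac{1+\pi^2c}{1-\pi c}=1+\tfrac{\pi c(1+\pi)}{1-\pi c}\in 1+p^iR$.
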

\begin{proof}
(i): Set $c= a b/(1+\pi( a+ b))$, so that $(1+\pi a)(1+\pi b)=(1+\pi( a+ b))(1+\pi^2c)$ and $ a+ b+\pi a b=( a+ b)+\pi c+\pi( a+ b)\pi c$. Then \[\pid{\pi, a}+\pid{\pi, b}\stackrel{\sub{(D2)}}=\pid{\pi, a+ b+\pi a b}=\pid{\pi,( a+ b)+\pi c+\pi( a+ b)\pi c}.\] Applying (D2) again lets us rewrite the right side $\pid{\pi, a+ b}+\pid{\pi,\pi c}$. The second symbol lies in $U^{i}K_2(R)$ by (D5).

(ii) By part (i) and the fact that $R$ is additively generated by units, we reduce to the case that $b$ is a unit; but then the claim is clear from (D4).
\end{proof}

\begin{lemma}\label{lemma_mult_filtration}
Let $i\ge 1$. Then
\begin{enumerate}
\item $\{1+ap^i,1+bp\}\in V^{i+1}K_2(R)$ for all $a,b\in R$;
\item $\{1+ap^i,-1\}\in V^{i+1}K_2(R)$ for all $a\in R$.
\item $\pid{p^{i-1}a,p},\pid{p^{i-1},pa}\in V^iK_2(R)$ for all $a\in R$.
\end{enumerate}
\end{lemma}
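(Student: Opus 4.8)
The plan is to prove the three parts together by induction on $i$, using only the Dennis--Stein calculus (D1)--(D5) and Lemma \ref{lemma_additive}; the point of all the formal manipulations is to reduce everything to controlling the single family of symbols $\pid{\pi,p}$ with $\pi\in p^iR$, which is the combinatorial heart of the lemma.

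First the reductions. For (i): as $1+bp$ is a unit, (D4) rewrites $\{1+ap^i,1+bp\}=\pid{c,1+bp}$ with $c=ap^i(1+bp)^{-1}\in p^iR$; then (D2) (together with $\pid{c,1}=0$) replaces the second argument by some $d\in pR$, and writing $d=pd'$ and applying (D3) splits $\pid{c,pd'}$ as $\pid{cp,d'}+\pid{cd',p}$. Here $cp\in p^{i+1}R$, so $\pid{cp,d'}\in U^{i+1}K_2(R)$ by Lemma \ref{lemma_additive}(ii), leaving a residual symbol $\pid{cd',p}=\pid{p^ie,p}$; thus (i) at index $i$ is equivalent to ``$\pid{p^ie,p}\in V^{i+1}K_2(R)$ for all $e\in R$''. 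For (iii): (D5), after peeling off a summand lying in $U^iK_2(R)$, gives $\pid{p^{i-1}a,p}\equiv-\{1+p^{i-1}a,1-p\}\pmod{U^iK_2(R)}$, and $\{1+p^{i-1}a,1+(-1)p\}$ is a symbol of the type handled by (i) at index $i-1$, so (iii) at index $i$ follows from (i) at index $i-1$. For (ii): when $p=2$ one has $-1=1+(-1)p$, so (ii) is the case $b=-1$ of (i); when $p$ is odd, $\{1+ap^i,-1\}\in U^iK_2(R)$ is $2$-torsion while $p\{1+ap^i,-1\}=\{(1+ap^i)^p,-1\}$ lies in $U^{i+1}K_2(R)$ since $(1+ap^i)^p\in1+p^{i+1}R$, so its class in $U^iK_2(R)/U^{i+1}K_2(R)$ is killed by $\gcd(2,p)=1$ and hence vanishes.

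It remains to establish the core statement $\pid{p^ie,p}\in V^{i+1}K_2(R)$. For $p=2$, $i=1$ this is true by definition, since $\pid{2e,2}$ is precisely one of the generators adjoined to $U^2K_2(R)$ to form $V^2K_2(R)$ --- this is exactly the point at which the unit filtration must be enlarged. For $p$ odd one examines the class of $\pid{p^ie,p}$ (which lies in $U^iK_2(R)$ by Lemma \ref{lemma_additive}(ii)) in $U^iK_2(R)/U^{i+1}K_2(R)$: the congruence $\pid{p^ie,p}\equiv-\{1+p^ie,1-p\}\pmod{U^{i+1}K_2(R)}$ together with $p\{x,y\}=\{x^p,y\}$ and $(1+p^ie)^p\in1+p^{i+1}R$ shows the class is killed by $p$, while iterating (D3) yields the exact identity $\pid{p^ae,p^b}=b\,\pid{p^{a+b-1}e,p}$, so that $(i+1)\pid{p^ie,p}=\pid{e,p^{i+1}}=-\pid{-p^{i+1},-e}\in U^{i+1}K_2(R)$ by Lemma \ref{lemma_additive}(ii), and similarly its $b=2$ instance combined with (D1) produces further multiples of the class inside $U^{i+1}K_2(R)$. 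At least when $\gcd(i+1,p)=1$ this already kills the class, and feeding the outcome back into the reductions completes the induction.

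The main obstacle is that (D1)--(D5) are essentially ``type-preserving'': any naive manipulation of $\{1+ap^i,1+bp\}$ or of $\pid{p^ie,p}$ returns a symbol of the same shape, so the three parts are a priori circularly dependent and none is deducible from the others by symbol-pushing alone. Breaking this circle --- genuinely showing the residual symbol $\pid{p^ie,p}$ lands in the deeper step $V^{i+1}K_2(R)$ rather than merely in $U^iK_2(R)$ --- is the technical crux; it is here that the enlargement of $U$ to $V$ is forced and where the sharpest Dennis--Stein computations are needed, the most delicate case being $p\mid i+1$ (and, for $p=2$, $i$ odd), where the simple torsion count above must be supplemented.
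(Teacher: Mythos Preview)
Your reductions are sound and closely parallel the paper's: rewriting $\{1+ap^i,1+bp\}$ via (D4)/(D2)/(D3) to isolate a residual $\pid{p^ie,p}$, reducing (iii) to (i) at the previous index via (D5), and handling (ii) by the $2$-torsion/$p$-torsion count are all steps the paper takes as well. The issue is that your proof is genuinely incomplete at the point you yourself flag: the ``core statement'' $\pid{p^ie,p}\in V^{i+1}K_2(R)$ is left unproved whenever your torsion argument fails, namely when $p\mid i+1$ for odd $p$, and when $p=2$ with $i$ odd and $\ge 3$. Saying that these cases ``must be supplemented'' is not a proof.

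The gap is not merely technical: by reducing to a \emph{generic} $e\in R$ you have discarded precisely the information that the paper uses to finish. In the paper one keeps track of the fact that the residual symbol is $\pid{p^iad,p}$ with $d\equiv 1+b\pmod{p}$ when $i\ge 2$, and then shows (via (D5) as you do) that
\[
\{1+p^ia,1+pb\}\equiv -\{1+p^ia(1+b),1-p\}\pmod{U^{i+1}K_2(R)}
\]
for \emph{all} $a,b$. Now comes the trick you are missing: this congruence is self-referential. Substituting the pair $(a(1+b),0)$ for $(a,b)$ leaves the right side unchanged but makes the left side vanish (it becomes $\{1+p^ia(1+b),1\}=0$), so the right side already lies in $U^{i+1}K_2(R)$, and hence so does the left. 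This disposes of all $i\ge 2$ uniformly for every $p$, with no torsion bookkeeping at all; the only place a torsion count is needed is the base case $i=1$ with $p$ odd (where your $2\pid{pe,p}=\pid{e,p^2}\in U^2$ argument, combined with $\op{gr}^1K_2(R)$ being $p$-torsion, is exactly what the paper does). Finally, note that your reduction of (iii) at index $i$ to (i) at index $i-1$ uses (D5), which requires $1+p^{i-1}a\in R^\times$; this fails for $i=1$, so the base case $\pid{a,p}\in V^1K_2(R)$ must be handled separately (the paper does this directly via Lemma~\ref{lemma_additive}(ii) and (D1)).
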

\begin{proof}
(i): All congruences below are with respect to $U^{i+1}K_2(R)$. One has \begin{equation}\{1+p^ia,1+pb\}\equiv \{1+p^ia(1+pb),1+pb\}\stackrel{\sub{(D5)}}{=}\pid{p^ia,1+pb}.\label{eqn_d5line}\end{equation} Next, a trivial induction using (D2) shows that, for any element $x\in R$ such that $1+x\in R^\times$, and $n\ge1$, we have $n\pid{x,1}=\pid{x,f(x)}$ for some polynomial $f$ with integer coefficients such that $f(0)=n$; note that this symbol equals $0$ since $\pid{x,1}=\{1+x,1\}$. In particular $0=(p-1)\pid{p^ia,1}=\pid{p^ia,p-1+p^ic}$ for some $c\in R$, which we add to the right side of (\ref{eqn_d5line}), using (D2), to obtain $\pid{p^ia,p-1+p^ic+1+pb+p^ia(p-1+p^ic)(1+pb)}=\pid{p^ia,pd}$ with $d=1+b+p^{i-1}(c+a(p-1+p^ic))$. But this equals $\pid{p^{i+1}a,d}+\pid{p^iad,p}$ by (D3), where the first term lies in $U^{i+1}K_2(R)$ by Lemma \ref{lemma_additive}(ii); meanwhile the second term is \begin{equation}\pid{p^iad,p}\stackrel{\sub{(D5)}}=\left\{-\frac{1+p^iad}{1-p},\frac{1+p^{i+1}ad}{1-p}\right\}\equiv -\{1+p^{i}ad,1-p\}.\label{eqn_penultimate}\end{equation} Assuming $i\ge 2$, then the right side is $\equiv \{1+p^{i}a(1+b),1-p\}$ thanks to the value of $d$, and so in conclusion we have shown that \[\{1+p^ia,1+pb\}\equiv -\{1+p^ia(1+b),1-p\}\] for all $a,b\in R$; but replacing the pair $a,b$ by the pair $a(1+b),0$ does not change the right side, while the left side becomes $0$, so in fact both sides are always $0$.

It remains to treat two special cases. The first is when $p$ is odd and $i=1$. Returning to line (\ref{eqn_penultimate}), it is enough to show that $\pid{pe,p}\in U^2K_2(R)$ for all $e\in R$. But \[2\pid{pe,p}\stackrel{\sub{(D3)}}=\pid{e,p^2}\stackrel{\sub{(D1)}}=-\pid{-p^2,-e}\stackrel{\sub{(D5)}}\in U^2K_2(R),\] and this is enough since $\op{gr}^1K_2(R)$ is killed by $p$ and so has no $2$-torsion.

The second special case is when $p=2$ and $i=1$. But we showed above that $\{1+p^ia,1+pb\}\equiv \pid{p^iad,p}$, which lies in $V^2K_2(M)$ by definition.

(ii): When $p$ is odd this again follows from the observation that $\op{gr}^iK_2(R)$ has no $2$-torsion. When $p=2$ it follows from part (i) by noting that $\{1+ap^i,-1\}=\{1+ap^i,1+(-1)2\}$.

(iii): When $i\ge2$ we have \[\pid{p^{i-1}a,p}\stackrel{\sub{(D5)}}=\left\{-\frac{1+p^{i-1}a}{1-p},\frac{1+p^ia}{1-p}\right\}\equiv-\{1+p^{i-1}a,1-p\} \text{ mod }U^iK_2(R),\]
which belongs to $V^iK_2^M(R)$ by part (i). Meanwhile, when $i=1$, the claim is special case of Lemma \ref{lemma_additive}(ii) (using (D1) to swap the order of the symbol).
\end{proof}

\begin{remark}\label{remark_filtration}
As far as we are aware, Lemma \ref{lemma_mult_filtration}(i) is not true in the case $p=2=i+1$ if we replace $V^2K_2(R)$ by $U^2K_2(R)$. In fact, it follows from (D5) that $V^2K_2(R)$ could equivalently be defined as $U^2K_2(R)+\langle\{1+ap,1+bp\}\,:\,a,b\in R\rangle$. Indeed, we showed in the proof of Lemma \ref{lemma_mult_filtration}(i) that $\{1+p^ia,1+pb\}\equiv \pid{p^iad,p}$ mod $U^{i+1}K_2(R)$ for some $d\in R$. The $V$-filtration is thus the minimal modification of the $U$-filtration which is multiplicative; see also Lemma \ref{lemma_multiplicative2}.
\end{remark}

\begin{corollary}\label{corollary_additive}
Let $i\ge1$, and fix an element $\pi\in p^iR$. Then $\pid{\pi, a}+\pid{\pi, b}\equiv \pid{\pi, a+ b}$ mod $V^{i+1}K_2(R)$ for all $a,b\in R$.
\end{corollary}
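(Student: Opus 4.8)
The plan is to upgrade Lemma \ref{lemma_additive}(i), which already gives the additivity congruence modulo $U^{i}K_2(R)$, to a congruence modulo the finer subgroup $V^{i+1}K_2(R)$, using the multiplicativity-type refinements just established in Lemma \ref{lemma_mult_filtration}. The point is that the proof of Lemma \ref{lemma_additive}(i) produced an \emph{explicit} error term: one has $\pid{\pi,a}+\pid{\pi,b}=\pid{\pi,a+b}+\pid{\pi,\pi c}$, with $c=ab/(1+\pi(a+b))$, and the claim there was only that the error $\pid{\pi,\pi c}$ lies in $U^{i}K_2(R)$. So the whole task is to show this particular error term actually lies in $V^{i+1}K_2(R)$.

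First I would fix $\pi\in p^iR$ and write $\pi=p^i\rho$ for some $\rho\in R$ (using that $p^iR$ is exactly the set of such elements), so $\pi c = p^i\rho c$. Then $\pid{\pi,\pi c}=\pid{p^i\rho, p^i \rho c}$, and I would like to recognise this as one of the symbols handled by Lemma \ref{lemma_mult_filtration}. The cleanest route: by (D5), $\pid{\pi,\pi c}=\bigl\{-\tfrac{1+\pi}{1-\pi c},\tfrac{1+\pi^2 c}{1-\pi c}\bigr\}$, and since $\pi,\pi c\in p^iR\subseteq p R$ with $i\ge1$, modulo $U^{i+1}K_2(R)$ this Steinberg symbol simplifies (the numerators and denominators differ from $1$ by elements of $p^iR$, and products/quotients of such are controlled). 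Concretely $\pid{\pi,\pi c}\equiv -\{1+\pi,1-\pi c\}$ mod $U^{i+1}K_2(R)$ by the usual bilinear manipulation of $\{1+x,1+y\}$-type symbols with $x,y\in p^iR$; and $\{1+\pi,1-\pi c\}=\{1+p^i\rho,1+(-\rho c)p^i\}$. When $i\ge 2$ this is of the form $\{1+ap^i,1+bp^{i-1}\cdot p\}$ hence lies in $V^{i+1}K_2(R)=U^{i+1}K_2(R)$ by Lemma \ref{lemma_mult_filtration}(i) (applied with exponent $i$ and the unit $1+bp$ replaced appropriately — indeed $1-\pi c\in 1+pR$ suffices). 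When $i=1$ this is exactly $\{1+\rho p,1+(-\rho c)p\}$, which lies in $V^2K_2(R)$ by definition (cf. Remark \ref{remark_filtration}, where $V^2K_2(R)=U^2K_2(R)+\langle\{1+ap,1+bp\}\rangle$), and this is the one place where the case $p=2=i+1$ genuinely needs the enlarged filtration.

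The main obstacle, and the reason one cannot simply quote Lemma \ref{lemma_mult_filtration}(i) verbatim, is the bookkeeping in the step $\pid{\pi,\pi c}\equiv -\{1+\pi,1-\pi c\}$ mod $U^{i+1}K_2(R)$: one must check that the factors $-\tfrac{1}{1-\pi c}$ and $\tfrac{1+\pi^2c}{1}$ contribute only to $U^{i+1}K_2(R)$, which amounts to the elementary but slightly fiddly fact that for $x,y\in p^iR$ one has $\{1+x,1+y\}+\{1+x,1+z\}\equiv\{1+x,1+y+z\}$ and $\{u(1+x),1+y\}\equiv\{u,1+y\}+\{1+x,1+y\}$ modulo $U^{i+1}K_2(R)$ when the ``extra'' unit $u$ itself lies in $1+p^iR$ — essentially Lemma \ref{lemma_additive}(ii) and bilinearity applied in the second slot. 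Once that reduction is in hand, the conclusion is immediate from Lemma \ref{lemma_mult_filtration}(i) for $i\ge2$ and from the definition of $V^2K_2(R)$ for $i=1$. I would therefore write the proof as: invoke the formula from the proof of Lemma \ref{lemma_additive}(i) to isolate the error term $\pid{\pi,\pi c}$; apply (D5) and the above bilinear simplification to reduce it mod $U^{i+1}K_2(R)$ to $\pm\{1+\pi,1+(\text{element of }pR)\}$; and conclude by Lemma \ref{lemma_mult_filtration}(i) together with the definition of the $V$-filtration.
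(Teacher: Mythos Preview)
Your argument is correct but takes a different route from the paper's. Both proofs begin identically, isolating the error term $\pid{\pi,\pi c}$ from the proof of Lemma~\ref{lemma_additive}(i) and writing $\pi=p^i\rho$. The paper then applies (D3) directly to $\pid{p^i\rho,p^i\rho c}$, splitting it into a Dennis--Stein symbol lying in $U^{2i}K_2(R)\subseteq V^{i+1}K_2(R)$ by Lemma~\ref{lemma_additive}(ii) and a symbol of the form $\pid{p^{i}(\cdot),p}$, which lies in $V^{i+1}K_2(R)$ by Lemma~\ref{lemma_mult_filtration}(iii); this is a one-line computation. Your route via (D5) converts the error term to a Steinberg symbol, simplifies it modulo $U^{i+1}K_2(R)$ to $-\{1+\pi,1-\pi c\}$ (your bookkeeping here does work out: expanding bilinearly, the $\{-1,1-\pi c\}$ and $-\{1-\pi c,1-\pi c\}$ contributions cancel exactly in $K_2(R)$ via $\{x,x\}=-\{x,-1\}$ and skew symmetry), and then invokes Lemma~\ref{lemma_mult_filtration}(i). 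So you trade Lemma~\ref{lemma_mult_filtration}(iii) for Lemma~\ref{lemma_mult_filtration}(i), at the cost of a longer Steinberg-symbol manipulation. The paper's approach is shorter and avoids the case split on $i$; your approach has the minor advantage of making explicit why the enlarged $V^2$ is needed when $p=2=i+1$, since the error lands visibly in $\langle\{1+ap,1+bp\}\rangle$. Note also that Lemma~\ref{lemma_mult_filtration}(i) already covers $i=1$ uniformly, so your separate appeal to Remark~\ref{remark_filtration} is not needed.
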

\begin{proof}
Repeating the proof of Lemma \ref{lemma_additive}(i), we must show that $\pid{\pi,\pi c}\in V^{i+1}K_2(R)$ for all $c\in R$. Writing $\pi=p^ib$ for some $b\in R$, we have $\pid{\pi,\pi c}=\pid{p^ib,p^ibc}\stackrel{\sub{(D3)}}=\pid{p^{2i}b,p^ibc}+\pid{p^ib^2c,p}$; the first term lies in $U^{2i}K_2(R)\subseteq V^{i+1}K_2(R)$ by Lemma \ref{lemma_additive}(ii), while the second term lies in $V^{i+1}K_2(R)$ by Lemma \ref{lemma_mult_filtration}(iii).
\end{proof}

\begin{lemma}\label{lemma_filtr_vanishes}
Assume in addition that $R$ is $p$-henselian. If $p$ is odd then $V^2K_2(R)\subseteq pK_2(R)$. If $p>2$ then $V^3K_2(R)\subseteq pK_2(R)$; if $p=2$ and the Artin--Schreier map $x\mapsto x^2+x$ is surjective on $R/pR$, then $V^2K_2(R)\subseteq pK_2(R)$.
\end{lemma}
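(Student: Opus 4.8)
The statement asserts that under $p$-henselianity, certain steps of the $V$-filtration on $K_2(R)$ are divisible by $p$. The natural approach is to show that the relevant generators of $V^iK_2(R)$ can be written as $p$-th powers (in the multiplicative notation of Milnor $K$-theory, i.e.\ as $p\cdot\gamma$ for $\gamma\in K_2(R)$), exploiting that $p$-henselianity makes $1+p^iR$ highly divisible. The key elementary input is that for $p$ odd, every element of $1+p^2R$ is a $p$-th power in $1+pR$: indeed the map $x\mapsto x^p$ sends $1+pR$ onto $1+pR$ with the ``extra'' $p$-divisibility landing one step up, which is a standard Hensel-type computation ($(1+pa)^p\equiv 1+p^2a\bmod p^3$ when $p$ is odd, and one iterates/uses henselianity to solve exactly); for $p=2$ the analogous statement holds one step later, $1+4R\subseteq(1+2R)^2$, with the Artin--Schreier surjectivity hypothesis being exactly what is needed to also capture $1+2R$ modulo squares appropriately.

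\textbf{Key steps.} First I would record the divisibility lemma: if $p$ is odd then $(1+pR)^p=1+p^2R$, using that $p$ lies in the Jacobson radical and $R$ is $p$-henselian so that the polynomial $T^p-(1+p^2a)$ has a root in $1+pR$ (its reduction mod the maximal-type ideal splits appropriately, and one checks the derivative condition; alternatively solve it by successive approximation since $p^iR$ is a descending system with $R$ being $p$-henselian). Second, given a generator $\{a,b\}$ of $U^2K_2(R)$ with, say, $a\in 1+p^2R$, write $a=c^p$ with $c\in 1+pR$, so $\{a,b\}=\{c^p,b\}=p\{c,b\}\in pK_2(R)$; this handles $V^2K_2(R)=U^2K_2(R)$ for $p$ odd. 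Third, for the additional Dennis--Stein generators $\pid{pc,p}$ appearing in $V^2K_2(R)$ when $p=2$: use (D3) and (D5) to rewrite $\pid{pc,p}$ in terms of Steinberg symbols $\{1+4(\cdot),\ \cdot\}$ modulo $U^3$, bring in the divisibility $1+4R\subseteq(1+2R)^2$, and use the Artin--Schreier surjectivity to dispose of the residual $U^2/U^3$-part — this is where the hypothesis is consumed. Fourth, for the $V^3K_2(R)\subseteq pK_2(R)$ assertion when $p>2$, the same mechanism applies with one more step of divisibility ($(1+pR)^p=1+p^2R$ composed with itself, or directly $(1+p^2R)^p=1+p^3R$), reducing $U^3$-generators to $p$-th powers.

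\textbf{Main obstacle.} The routine part is the Steinberg-symbol bookkeeping via (D1)--(D5); the genuinely delicate point is the $p=2$ case, where squares in $1+2R$ do \emph{not} exhaust $1+2R$ (only $1+4R$), so one must argue modulo the graded piece $\mathrm{gr}^1K_2(R)=U^1/U^2$ and show that the Artin--Schreier surjectivity of $x\mapsto x^2+x$ on $R/2R$ forces the obstruction class of each $V^2$-generator to vanish there. Concretely, $\mathrm{gr}^1K_2(R)/p$ should be controlled by $\Omega^1_{R/pR}$-type data via a $\dlog$-style map $\{1+2a,b\}\mapsto a\,\tfrac{db}{b}$, and Artin--Schreier surjectivity kills exactly the relevant classes; making this identification precise (and checking it is compatible with the Dennis--Stein relations defining $V^2$) is the step I expect to require the most care. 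I would also double-check the edge interaction with Lemma~\ref{lemma_mult_filtration}(i), which already shows $\{1+2a,1+2b\}\in V^2K_2(R)$, to confirm no circularity.
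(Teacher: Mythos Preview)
Your approach for the Steinberg-symbol generators is exactly what the paper does: when $p$ is odd, Hensel lifts the $p^\sub{th}$-power map to a surjection $1+pR\twoheadrightarrow 1+p^2R$, so $\{1+p^2a,b\}=p\{c,b\}$; when $p=2$ with the Artin--Schreier hypothesis, the same mechanism gives $1+4R=(1+2R)^2$ (note that surjectivity of $x\mapsto x^2+x$ on $R/2R$ lifts to $R$ by Hensel, since the derivative $2x+1\equiv1$ is a unit); and for $V^3$ when $p=2$ one uses $1+8R=(1+4R)^2$, which needs no Artin--Schreier input. (Incidentally, the ``$p>2$'' in the $V^3$ clause is a typo for $p=2$; for odd $p$ the $V^3$ statement is already contained in the $V^2$ one.)

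Where your plan diverges from the paper, and becomes unnecessarily difficult, is the treatment of the extra Dennis--Stein generators $\pid{2c,2}$ in $V^2K_2(R)$ for $p=2$. You propose to rewrite $\pid{2c,2}$ via (D3)/(D5) as Steinberg symbols modulo $U^3$, then kill the obstruction in a graded piece using a $\dlog$-type identification with differential forms. That route is not clearly complete: the (D5) expansion $\pid{2c,2}=\{1+2c,-(1+4c)\}$ leaves you with a term $\{1+2c,-1\}$ that is not visibly in $U^3$ nor a square, and pushing further along your outline would effectively require the machinery of Proposition~\ref{proposition_graded_pieces_MilnorK}, which is far heavier than this lemma warrants. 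The paper instead observes a one-line identity: applying (D2) with $a=-2$ and $b=c=-x$, then (D1), gives
\[
-2\pid{-2,-x}=\pid{2(x^2+x),2}.
\]
Since Artin--Schreier surjectivity (lifted to $R$) writes any $c\in R$ as $x^2+x$, this shows $\pid{2c,2}\in 2K_2(R)$ directly. So your overall strategy is right, but for the $\pid{2c,2}$ part you should look for this identity rather than the graded-piece analysis.
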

\begin{proof}
If $p$ is odd then any element of $1+p^2R$ can be written as the $p^\sub{th}$-power of an element of $1+pR$; this is also true when $p=2$ if the Artin--Schreier map is surjective, in which case we also use the identity $-2\pid{-p,-x}=\pid{p(x^2+x),p}$ (by (D1) and (D2)) to see that $V^2K_2(R)\subseteq 2K_2(R)$.

If $p=2$ but we drop any assumption on the  Artin--Schreier map, then it is still true that any element of $1+p^3R$ can be written as the $p^\sub{th}$-power of an element of $1+p^2R$.
\end{proof}

\subsection{The differential maps to the graded pieces}\label{subsection_differential _maps}
The next step is to define the standard differential maps onto the graded pieces, for which we recall the following description of differential forms:

\begin{lemma}\label{lemma_representation_diff_forms}
Let $R$ be a ring which is additively generated by its units. Then the map \[R\otimes_{\bb Z}R^{\times\otimes_{\bb Z}j}\To \Omega_R^j,\qquad a\otimes b_1\otimes\cdots\otimes b_j\mapsto a\tfrac{db_1}{b_1}\wedge\cdots\wedge\tfrac{db_{j}}{b_{j}}\] is surjective and its kernel is generated by elements of the following types:
\begin{itemize}
\item $a\otimes b_1\otimes\cdots\otimes b_j$, where $b_i=b_j$ for some $i\neq j$, and
\item $\sum_{i=1}^n a_i\otimes a_i\otimes b_1\otimes\cdots\otimes b_{j-1}-\sum_{i=1}^m a_i'\otimes a_i'\otimes b_1\otimes\cdots\otimes b_{j-1}$ where $a_i,a_i'\in R^\times$ are elements satisfying $\sum_{i=1}^na_i=\sum_{i=1}^ma_i'$.
\end{itemize}
\end{lemma}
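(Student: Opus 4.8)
The statement describes $\Omega^j_R$ as a quotient of $R \otimes_{\bb Z} (R^\times)^{\otimes j}$; the plan is to reduce to the well-known case $j=1$ and then bootstrap by the multiplicativity of the exterior algebra. First I would treat $j=1$: the claim is that $\Omega^1_R$ is the quotient of $R \otimes_{\bb Z} R^\times$ by the submodule generated by the two listed families (with the first type, $b_i = b_j$, being vacuous when $j=1$). Surjectivity is immediate since $R$ is additively generated by units: every $a \in R$ is a $\bb Z$-linear combination of units $u$, and $du = u \tfrac{du}{u}$, so the $\tfrac{du}{u}$ with $u \in R^\times$ generate $\Omega^1_R$ as an $R$-module. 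For the kernel, one uses the standard presentation of the Kähler differentials: $\Omega^1_R$ is $R \otimes_{\bb Z} R$ (thinking of the second factor as $db$) modulo the Leibniz relation $d(bb') = b\,db' + b'\,db$ and $\bb Z$-linearity $d(b+b') = db + db'$. Rewriting the generating set in terms of $\tfrac{db}{b}$ for $b$ a unit translates the additive relation $d(b+b') = db+db'$ into precisely the second type of relation listed (the "$\sum a_i = \sum a_i'$" relation), while the Leibniz rule $\tfrac{d(bb')}{bb'} = \tfrac{db}{b} + \tfrac{db'}{b'}$ becomes bilinearity in the $R^\times$-slot, which is already built into the tensor product $R \otimes_{\bb Z} R^\times$ (here $R^\times$ is written multiplicatively, so $b_1 \otimes \dots$ is bilinear over $\bb Z$ in each $b_i$). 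Some care is needed to check that no further relations are required — i.e. that the module generated by the listed elements really is all of the kernel — which amounts to verifying that the evident map from the proposed quotient to $\Omega^1_R$ has an inverse, constructed by sending $a \otimes b \mapsto a \tfrac{db}{b}$ and checking it is well-defined on $R \otimes_{\bb Z} R$ modulo Leibniz and additivity.

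For general $j$, I would argue that $\Omega^j_R = \bigwedge^j_R \Omega^1_R$ and that the exterior power of a quotient module is computed by the analogous quotient. Concretely, surjectivity for general $j$ follows from the $j=1$ case since $\Omega^j_R$ is generated as an abelian group by products $a \tfrac{db_1}{b_1} \wedge \dots \wedge \tfrac{db_j}{b_j}$ with $a \in R$, $b_i \in R^\times$, again using that $R$ is additively generated by units to absorb the leading coefficient. For the kernel, the relations of the first type ($b_i = b_j \Rightarrow$ zero) are exactly the alternating/exterior relations, and the relations of the second type are the pullbacks of the $j=1$ relations along each of the $j$ tensor slots — more precisely, a relation $\sum a_i \otimes a_i = \sum a_i' \otimes a_i'$ in the kernel for $j=1$, wedged with a fixed $\tfrac{db_1}{b_1} \wedge \dots \wedge \tfrac{db_{j-1}}{b_{j-1}}$, must lie in the kernel for $j$, and by the alternating relations it suffices to place it in the last slot. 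One then checks these suffice: the quotient of $R \otimes (R^\times)^{\otimes j}$ by the two families maps onto $\Omega^j_R$, and the $j=1$ result together with the universal property of the exterior algebra (as a quotient of the tensor algebra over $R$ by the relations $\omega \otimes \omega$) produces the inverse.

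**Main obstacle.** The routine part is the $j=1$ case and the surjectivity statements. The genuinely delicate point is the passage from $j=1$ to general $j$: one must verify that no relations beyond the two listed families are needed, i.e. that the exterior power functor is "right exact" in the precise sense required here. The subtlety is that $\bigwedge^j$ does not commute with arbitrary quotients of modules in general, so one cannot simply invoke a formal lemma; instead the argument must use the explicit description of $\Omega^1_R$ as a quotient of the \emph{free} $R$-module on symbols $\tfrac{db}{b}$ (modulo additivity relations only, once the multiplicativity is absorbed into the tensor structure), for which $\bigwedge^j$ \emph{is} well-behaved. I would organize this by first presenting $\Omega^1_R$ as $\big(R \otimes_{\bb Z} R^\times\big)/N$ where $N$ is generated by the second-type elements, then observing $\Omega^j_R = \bigwedge^j_R\big((R \otimes_{\bb Z} R^\times)/N\big)$, and finally computing this exterior power directly: $\bigwedge^j$ of a quotient $M/N$ of $R$-modules is $\big(\bigwedge^j M\big)/\big(N \wedge \textstyle\bigwedge^{j-1} M\big)$ when $M$ is the situation at hand, which unwinds to exactly the claimed generators. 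The bookkeeping of which slot a relation sits in, using the alternating relations to normalize, is where most of the (omitted) calculation lives.
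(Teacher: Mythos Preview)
The paper does not actually prove this lemma: it simply records that the statement was used in \cite[Lem.~4.3]{Bloch1983} and that a detailed proof may be found in the appendix of \cite{Izhboldin2000}. So there is no ``paper's own proof'' to compare against, and your task is really to supply one.

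Your strategy is sound and would succeed if written out carefully. The $j=1$ case is exactly right: defining $D:R\to Q$ by $D(\sum u_i):=\sum u_i\otimes u_i$ for units $u_i$ is well-defined precisely by the second family of relations, and the Leibniz rule for arbitrary $b,b'$ follows from the unit case by writing each as a sum of units and using additivity. For general $j$, your identification $T^j_R(R\otimes_{\bb Z}R^\times)\cong R\otimes_{\bb Z}(R^\times)^{\otimes j}$ is correct, and right-exactness of $\bigwedge^j_R$ gives $\Omega^j_R=(\bigwedge^j_R M)/(N\wedge\bigwedge^{j-1}_R M)$ with $M=R\otimes_{\bb Z}R^\times$. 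The point you flag as the obstacle, namely showing that $N\wedge\bigwedge^{j-1}M$ modulo the alternating relations is generated \emph{as an abelian group} by the listed type-2 elements, is genuine but not hard: one checks that for $v\in R^\times$ and a type-2 generator $g=\sum a_i\otimes a_i-\sum a_i'\otimes a_i'$, the identity $va_i\otimes a_i=(va_i)\otimes(va_i)-(va_i)\otimes v$ (using multiplicativity in the $R^\times$ slot) shows $v\cdot g=\sum(va_i)\otimes(va_i)-\sum(va_i')\otimes(va_i')$ is again type-2, so the abelian-group span and the $R$-module span agree. You should also note that the type-1 relations, as literally stated with the repeated index being $b_j$, already generate full alternation via the usual expansion of $b_1\otimes\cdots\otimes(b_kb_l)\otimes\cdots\otimes(b_kb_l)$.
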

\begin{proof}
Used in \cite[Lem.~4.3]{Bloch1983}, a detailed proof may be found in the appendix of \cite{Izhboldin2000}.
\end{proof}

To define the differential maps in a wide degree of generality, it is helpful to adopt the following definitions (in practice our ring $R$ will be local and $\res K_*(R)$ will either be its improved Milnor $K$-theory or a quotient thereof):

\begin{definition}\label{definition_V_filtration_general}
Let $R$ be a ring which is additively generated by units and whose Jacobson radical contains $p$. Let $\res K_*(R)$ be a graded quotient of $K_*^M(R)$ such that the quotient map $K_2^M(R)\to \res K_2(R)$ kills $\ker(K_2^M(R)\to K_2(R))$.

For $i,j\ge0$, we let $U^i\res K_j(R)\subseteq \res K_j(R)$ denote the subgroup generated by Steinberg symbols $\{a_1,\dots,a_j\}$ where at least one of $a_1,\dots,a_j\in R^\times$ belongs to $1+p^iR$. Similarly to Definition \ref{definition_Vfiltration}, we also introduce
\[V^i\res K_j(R):=\begin{cases} V^2K_2(R)\cdot \res K_{j-2}(R) & p=2=i\text{ and }j\ge2, \\ U^i\res K_j(R) &\text{else.}\end{cases}\] whose graded pieces will be denoted by $\op{gr}^i_V\res K_j(R):=V^i\res K_j(R)/V^{i+1}\res K_j(R)$.
\end{definition}

Although it is not necessary for our main results, one justification for the $V$-filtration is the following multiplicativity:

\begin{lemma}\label{lemma_multiplicative2}
Under the same hypotheses as the previous definition, the $V$-filtration is descending and multiplicative, i.e., $V^i\res K_j(R)\cdot V^{i'}\res K_{j'}(R)\subseteq V^{i+i'}\res K_{j+j'}(R)$ for all $i,i',j,j'\ge0$.
\end{lemma}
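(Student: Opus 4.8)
The plan is to reduce multiplicativity to the two non-trivial cases where the $V$-filtration genuinely differs from the $U$-filtration, and otherwise argue directly with Steinberg symbols. First, descending was already noted: $V^{i+1}\subseteq V^i$ follows from the definition except that $V^2K_2(R)\subseteq U^1K_2(R)$, which is the content of Lemma \ref{lemma_additive}(ii) together with Remark \ref{remark_filtration} (the extra generators $\pid{pc,p}$ lie in $U^1K_2(R)$). For multiplicativity, the product of a symbol in $U^i\res K_j(R)$ and one in $U^{i'}\res K_{j'}(R)$ is plainly a symbol with a factor in $1+p^iR$ and another in $1+p^{i'}R$; since $1+p^iR$ is multiplicatively closed we may as well assume $i=i'$ if we wish, but in any case a Steinberg symbol with one entry in $1+p^iR$ and another in $1+p^{i'}R$ certainly has one entry in $1+p^{\min(i,i')}R\supseteq 1+p^{i+i'}R$... — that is too weak, so instead I would use the elementary identity that for $u\in 1+p^iR$ and $v\in 1+p^{i'}R$, the Steinberg symbol $\{u,v\}$ lies in $U^{i+i'}K_2(R)$. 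This is the real lemma hiding here; it is essentially (D4)/(D5) bookkeeping: writing $u=1+ap^i$, $v=1+bp^{i'}$, one has $\{1+ap^i,1+bp^{i'}\}=\pid{ap^i,1+bp^{i'}}\cdot(\text{correction})$ via (D5) as in the proof of Lemma \ref{lemma_mult_filtration}(i), and then (D3) splits off a factor $\pid{-,p^{i'}}$ bringing the total $p$-adic order to $i+i'$; alternatively one invokes the standard fact (c.f.\ Bloch--Kato \cite[\S1]{Bloch1986}) that $\{1+p^iR,1+p^{i'}R\}\subseteq U^{i+i'}K_2$.

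Granting that, the general case of $V^i\res K_j(R)\cdot V^{i'}\res K_{j'}(R)\subseteq V^{i+i'}\res K_{j+j'}(R)$ follows: a generator on the left is a product $\{a_1,\dots,a_j\}\cdot\{b_1,\dots,b_{j'}\}$ with some $a_\ell\in 1+p^iR$ and some $b_m\in 1+p^{i'}R$, whose product is a length-$(j+j')$ symbol; reorder so that $a_\ell$ and $b_m$ are the first two entries; then $\{a_\ell,b_m\}\in U^{i+i'}K_2(R)$ by the above, and multiplying by the remaining $j+j'-2$ units keeps us in $U^{i+i'}\res K_{j+j'}(R)\subseteq V^{i+i'}\res K_{j+j'}(R)$. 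Here I use that the quotient map $K_2^M(R)\to\res K_2(R)$ kills $\ker(K_2^M(R)\to K_2(R))$, so that the Dennis--Stein relations (D1)--(D5) — which hold in $K_2(R)$, hence in the image $K_2^M(R)/\ker$, hence in $\res K_2(R)$ — may be used freely in $\res K_*(R)$.

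It remains to handle the cases where a $V$-factor involves the exceptional term, i.e.\ $p=2$ and $i=2$ or $i'=2$. By symmetry say $i=2$, $p=2$, $j\ge 2$, so that a generator of $V^2\res K_j(R)$ is $\eta\cdot\kappa$ with $\eta\in V^2K_2(R)$ and $\kappa\in\res K_{j-2}(R)$; multiplying by a generator $\theta\in V^{i'}\res K_{j'}(R)$ gives $\eta\cdot(\kappa\theta)$ with $\kappa\theta\in\res K_{j+j'-2}(R)$, and I must check $\eta\cdot(\text{anything})\in V^{2+i'}\res K_{j+j'}(R)$. If $i'\neq 2$ this reduces to showing $V^2K_2(R)\cdot\res K_{i'}(R)\subseteq V^{2+i'}\res K_{2+i'}(R)=U^{2+i'}\res K_{2+i'}(R)$; writing $\eta$ as a sum of $\{1+2a,1+2b\}$'s (Remark \ref{remark_filtration}), each such, by the displayed computation in the proof of Lemma \ref{lemma_mult_filtration}(i), is $\equiv\pid{2ad,2}$ mod $U^2K_2(R)$, and $\pid{2ad,2}\stackrel{\text{(D5)}}{=}\{-\tfrac{1+2ad}{1-2},\tfrac{1+4ad}{1-2}\}\equiv -\{1+2ad,1-2\}$ lies in $U^1K_2(R)$ with a further entry in $1+2\bb Z$; more carefully one checks it lands in $U^2$ after multiplying by one more $1+2^{i'}$-entry, i.e.\ $V^2K_2(R)\cdot U^{i'}\subseteq U^{2+i'}$ for $i'\geq 1$ using $\{1+2R,1+2^{i'}R\}\subseteq U^{2+i'}$ again. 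The genuinely new subcase is $i=i'=2$, $p=2$: then I need $V^2K_2(R)\cdot V^2K_2(R)\subseteq V^4\res K_4(R)=U^4\res K_4(R)$, which follows from $\{1+2a,1+2b\}\{1+2c,1+2d\}$: each factor is $\equiv\pid{*,2}$ mod $U^2$, so the product is congruent mod $U^2\cdot V^2\subseteq U^4$ (by the previous subcase) to $\pid{e,2}\pid{f,2}=\{1+2e,2\}\{1+2f,2\}$, and $\{1+2e,1+2f\}\in U^2$ forces, via multiplicativity and the relation $\{x,2\}\{y,2\}$ manipulations, membership in $U^4$. \textbf{The main obstacle} I anticipate is precisely this last point — verifying that the exceptional degree-$2$ generators, which are \emph{defined} only modulo $U^2K_2(R)$ as Dennis--Stein symbols $\pid{2c,2}$, multiply into $U^4$ rather than merely $U^3$; this needs the quadratic improvement "$\{1+p^iR,1+p^{i'}R\}\subseteq U^{i+i'}$" applied with $i=i'=2$, i.e.\ the full strength of the argument in Lemma \ref{lemma_mult_filtration}(i), so I would isolate that inclusion as a preliminary lemma and cite Bloch--Kato \cite[Corol.~1.4.1]{Bloch1986} as a sanity check on the graded pieces.
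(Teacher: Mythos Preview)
Your overall structure is reasonable and matches the paper's: reduce to $K_2$-level assertions about products of unit-filtration pieces, and then isolate the exceptional case $p=2$, $i=2$ where the extra Dennis--Stein generators appear. However, there is a genuine gap in the crucial subcase.

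The claim you invoke repeatedly, ``$\{1+p^iR,1+p^{i'}R\}\subseteq U^{i+i'}K_2(R)$'', is \emph{false} when $p=2$ and $i=i'=1$. Indeed, the failure of $\{1+2a,1+2b\}\in U^2K_2(R)$ is exactly why the $V$-filtration was introduced; see Remark~\ref{remark_filtration}. What Lemma~\ref{lemma_mult_filtration}(i) gives you is only $\{1+p^iR,1+pR\}\subseteq V^{i+1}K_2(R)$, and Bloch--Kato's \cite[\S1]{Bloch1986} argument is for the fraction field where the uniformiser is available as a symbol entry, so it does not transfer. This already makes your treatment of the ``easy'' $U\cdot U$ case imprecise (though easily repaired by writing $V^{i+i'}$ in place of $U^{i+i'}$).

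More seriously, your handling of $V^2K_2(R)\cdot U^1\subseteq V^3\res K_3(R)$ when $p=2$ is circular. You correctly reduce to showing $\{1+2x,-1,1+2c\}\in U^3\res K_3(R)$, but then propose to conclude ``using $\{1+2R,1+2R\}\subseteq U^{2}$ again''---which is the very inclusion that fails. Even if it held, it would only place the product in $U^2\res K_3(R)$, not $U^3$. The paper's proof of this case is substantially harder: after the same (D5) manipulation you indicate, it reduces further to showing $\{1+2d,-1,-1\}\in V^3\res K_3(R)$, and then disposes of this via the non-trivial $K_3^M$ identity of Lemma~\ref{lemma_K3_relation}, namely $\{1-u\pi,1-\pi,u\}=0$ for $u\in R^\times$ and $\pi$ in the Jacobson radical (proved by a Suslin--Yarosh-style argument under weak stability). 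Your sketch contains no analogue of this step, and I do not see how to complete the argument without it. Incidentally, the paper only gives full details for $(i,i')=(2,1)$ with $p=2$, noting that this is the sole case needed downstream; your anticipated ``main obstacle'' $(i,i')=(2,2)$ is in fact never used.
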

\begin{proof}
We only treat the case where $p=2$, $i=2$, $i'=1$. All other cases follow from a modification of the arguments in the proof of Lemma \ref{lemma_mult_filtration}, and in any case will not be needed for the main results of the article.

Given any $a,c\in R$, we need to show that $\pid{pa,p}\{1+pc\} \in V^3K_3^M(R)$. We have that 
\begin{align*}
\pid{pa,p}\{1+pc\}&\stackrel{\sub{(D5)}}{=}\{1+pa,-(1+p^2a),1+pc\}\\ 
&=\{1+pa,1+p^2a,1+pc\}+ \{1+pa,-1,1+pc\}\\
&\stackrel{\sub{Lem.~\ref{lemma_mult_filtration}(i)}}{\equiv}  \{1+pa,-1,1+pc\}\mod{V^3\res K_3(R)}
\end{align*}
Now applying the congruence in Remark \ref{remark_filtration} to see that this is $\equiv\pid{pd,p}$ mod $U^2\res K_2(R)\{-1\}$ for some $d\in R$; but $U^2\res K_2(R)\{-1\}\subseteq V^3\res K_3(R)$ by Lemma \ref{lemma_mult_filtration}(ii), so we have reduced the  problem to showing that $\pid{pd,p}\{-1\} \in V^3K_3^M(R)$. Rewriting this using (D5) and again using Lemma \ref{lemma_mult_filtration}(ii), it is equivalent to check that $\{1+pd,-1,-1\} \in V^3K_3^M(R)$. By the same argument as in the proof of Lemma \ref{lemma_localisation_in MilnorK} we may assume that $d\in R^\times$.

Next note that we can write $1+pd=(1-pd)(1+p^2d')$ for some $d'\in R$, and therefore we have $\{1+pd,-1,-1\}=\{1+pd,-1,-d\}-\{(1-pd)(1+p^2d'),-1,d\}$. But $\{1+pd,-1,-d\}$ and $\{1-pd,-1,d\}$ vanish by Lemma \ref{lemma_K3_relation} below, and $\{1+p^2d',-1,-d\}$ lies in $V^3\res K_3^M(R)$ by Lemma \ref{lemma_mult_filtration}(ii).
\end{proof}

We next prove that Bloch--Kato's maps to the graded pieces of $p$-adic \'etale cohomology factor through the $K$-groups of $R$ itself. In the case of the map $\rho_j^i$ when $p\neq 2$, a similar result is stated by Kurihara in special cases \cite[Lem.~2.2]{Kurihara1988}, though he omits the manipulations of Dennis--Stein symbols which we believe are necessary.

\begin{definition}
Let $R$ be an $\bb F_p$-algebra. We define $\tilde\nu(j)(R):=\mathrm{coker}(1-C^{-1}:\Omega^{j}_{R}\to \Omega^{j}_{R}/d\Omega^{j-1}_{R})$, where $C^{-1}$ is the inverse Cartier operator. Equivalently $\tilde\nu(j)(R)=H^1_\sub{\'et}(\Spec R,\Omega^j_\sub{log})$, as follows from the short exact sequence $0\to\Omega^j_\sub{log}\to \Omega^j\xto{1-C^{-1}}\Omega^j/d\Omega^{j-1}\to 0$ of \'etale sheaves on $\Spec R$ \cite[Corol.~4.2(iii)]{Morrow_pro_GL2}.
\end{definition}

\begin{proposition}\label{proposition_graded_pieces_MilnorK}
Let $R$ and $\res K_*(R)$ be as in Definition \ref{definition_V_filtration_general}, and fix $j\ge0$, $i\ge1$. Then there are well-defined homomorphisms
\[\rho_j^i:\Omega^{j-1}_{R/pR}\to \op{gr}^i_V\res K_j(R),\qquad a\tfrac{db_1}{b_1}\wedge\cdots\wedge\tfrac{db_{j-1}}{b_{j-1}}\mapsto \{1+\tilde ap^i,\tilde b_1,\dots,\tilde b_{j-1}\}\] and \[\lrho_j^i:\Omega^{j-2}_{R/pR}\to \op{gr}^i_V\res K_j(R),\qquad a\tfrac{db_1}{b_1}\wedge\cdots\wedge\tfrac{db_{j-2}}{b_{j-2}}\mapsto \pid{\tilde{a}p^{i-1},p}\{\tilde b_1,\dots,\tilde b_{j-2}\}\] where tilde denotes arbitrary lifts of elements from $R/pR$ to $R$. Moreover:
\begin{enumerate}
\item When $p=2=i$ and $j\ge2$, the sum $\rho_j^i\oplus \lrho_j^i:\Omega^{j-1}_{R/pR}\oplus \Omega^{j-2}_{R/pR}\to \op{gr}^iK_j^M(R)$ is surjective; in all other cases $\rho_j^i$ itself is surjective.
\item Assume $p=2=i$, that $j\ge2$, and $\res K_*(R)$ is killed by $p$. Then $\rho_j^i$ factors through $\tilde\nu(j-1)(R/pR)$; assuming in addition that $R$ is weakly $4$-fold stable, then also $\lrho_j^i$ factors through $\tilde\nu(j-2)(R/pR)$.
\end{enumerate}
\end{proposition}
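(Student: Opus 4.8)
I would prove the four assertions in order: well-definedness of $\rho_j^i$, well-definedness of $\lrho_j^i$, surjectivity (i), and finally the factorisation (ii), which is where all the real work lies. Throughout I would freely use that $\res K_*(R)$ is a quotient of $K_*^M(R)$ (so all Milnor relations, in particular the alternating law, hold) and that the $V$-filtration is descending and multiplicative (Lemma \ref{lemma_multiplicative2}).

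\textbf{Well-definedness.} Both candidate maps are visibly multilinear in the $b_l$, so by Lemma \ref{lemma_representation_diff_forms} it suffices to check they are independent of the chosen lifts and kill the two families of kernel generators listed there. For independence of lifts: replacing a lift $\tilde b_l$ by $\tilde b_l(1+pc)$ introduces a symbol with two entries in $1+pR$, hence an element of $V^{i+1}\res K_j(R)$ by Lemma \ref{lemma_mult_filtration}(i) and multiplicativity; replacing $\tilde a$ by $\tilde a+pc$ replaces $1+\tilde a p^i$ by a unit congruent to it mod $1+p^{i+1}R$, changing the symbol by an element of $U^{i+1}\res K_j(R)\subseteq V^{i+1}\res K_j(R)$; the corresponding checks for $\lrho_j^i$ follow from Lemma \ref{lemma_additive} and the Dennis--Stein gymnastics of Lemma \ref{lemma_multiplicative2}. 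The ``repeated $b_l$'' generators map to symbols with a repeated entry, hence to $0$ by the alternating law. For the ``additivity of $d$'' generators one uses (D4) to rewrite $\{1+\widetilde{a_l}p^i,\widetilde{a_l}\}=\pid{p^i,\widetilde{a_l}}$; after adjusting one lift so that $\sum_l\widetilde{a_l}=\sum_l\widetilde{a_l'}$ holds on the nose in $R$ (legitimate by the independence just proved), repeated application of Corollary \ref{corollary_additive} gives $\sum_l\pid{p^i,\widetilde{a_l}}\equiv\sum_l\pid{p^i,\widetilde{a_l'}}$ mod $V^{i+1}\res K_2(R)$, and multiplying by $\{\widetilde{b_1},\dots\}$ (multiplicativity) finishes it. The argument for $\lrho_j^i$ is the same with $\pid{p^i,-}$ replaced by $\pid{-p,p^{i-1}(-)}$, additivity now coming from (D1)+(D2).

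\textbf{Surjectivity (i).} By Definition \ref{definition_V_filtration_general}, and (in the case $p=2=i$) the equivalent description of $V^2$ in Remark \ref{remark_filtration}, $V^i\res K_j(R)$ is generated by the symbols $\{1+ap^i,c_1,\dots,c_{j-1}\}$ with $c_l\in R^\times$, together in the exceptional case with the products $\pid{pc,p}\cdot\res K_{j-2}(R)$. By the surjectivity half of Lemma \ref{lemma_representation_diff_forms} every such generator is $\rho_j^i$ (resp. $\rho_j^i\oplus\lrho_j^i$) of a suitable differential form, which is the claim.

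\textbf{Factorisation (ii), and the main obstacle.} Now $p=2=i$; write $\omega_0=\dlog b_1\wedge\cdots\wedge\dlog b_{j-1}$. Since $C^{-1}$ fixes $\dlog$-forms and is Frobenius on functions, $C^{-1}(a\omega_0)=a^2\omega_0$ mod $d\Omega^{j-2}$, so the preimage in $\Omega^{j-1}_{R/pR}$ of $\op{im}(1-C^{-1})$ is spanned by $d\Omega^{j-2}_{R/pR}$ together with the forms $(a+a^2)\omega_0$ (note $a-a^2=a+a^2$ in characteristic $2$). The latter are killed by $\rho_j^2$ via the identity $(1+\tilde a p)^2=1+2\tilde a p+\tilde a^2p^2=1+(\tilde a+\tilde a^2)p^2$ (using $2p=p^2$), which gives $\rho_j^2((a+a^2)\omega_0)=\{(1+\tilde a p)^2,\tilde b_1,\dots\}=2\{1+\tilde a p,\tilde b_1,\dots\}=0$ since $\res K_*$ is $p$-torsion. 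It then remains to show $\rho_j^2$ kills $d\Omega^{j-2}_{R/pR}$: expanding the scalar of a generator of $\Omega^{j-2}_{R/pR}$ as a sum of units and using additivity and multiplicativity, this reduces to the single $K_2$-statement $\pid{p^2,u}\in V^3\res K_2(R)$ for all $u\in R^\times$ (then $\rho_j^2(d(u\,\dlog b_1\wedge\cdots))=\pid{p^2,\tilde u}\{\tilde b_1,\dots\}\in V^3\res K_j(R)$). This last statement is the technical heart of the proposition: I would prove it by reducing, via Corollary \ref{corollary_additive} and the ``sum of units'' device of the proof of Lemma \ref{lemma_localisation_in MilnorK}, to the case $1-u\in R^\times$, rewriting $\pid{p^2,u}$ through (D5), and simplifying with Lemma \ref{lemma_mult_filtration}, the square identity above, and the vanishing of $p$-torsion in the graded pieces. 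For $\lrho_j^2$ the scheme is identical: the $1-C^{-1}$ part is again disposed of by the square identity plus $p$-torsion, while killing $d\Omega^{j-3}_{R/pR}$ reduces to a $K_3$-level identity about the classes $\pid{\tilde u p,p}\{\tilde b\}$, whose Dennis--Stein manipulation genuinely needs weak $4$-fold stability (to create enough units to apply (D5) and the presentations of Remark \ref{remark_on_presentations}) and the auxiliary relation of Lemma \ref{lemma_K3_relation}. The obstacle is structural and the same in both cases: because $p$ and $p^2$ are not units, the Steinberg relation cannot be applied to them directly, forcing delicate bookkeeping with (D1)--(D5) and the filtration estimates of \S\ref{subsection_K2}, and this is precisely where the $p$-torsion and weak-stability hypotheses are consumed.
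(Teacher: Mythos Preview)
Your outline tracks the paper's proof closely: same use of Lemma~\ref{lemma_representation_diff_forms} for well-definedness, same reduction of the additivity relation to $\pid{p^i,-}$ via (D4) and Corollary~\ref{corollary_additive}, and the same endgame through Lemma~\ref{lemma_K3_relation}. A few points in the sketch need correcting, however.

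For the repeated-$b_l$ relation, the ``alternating law'' does not give $0$: in Milnor $K$-theory $\{b,b\}=\{b,-1\}$, and the paper instead uses Lemma~\ref{lemma_mult_filtration}(ii) to place $\{1+\tilde ap^i,-1\}$ into $V^{i+1}$. For $\rho_j^2$ on exact forms, your target $\pid{p^2,u}\in V^3\res K_2(R)$ is correct, but the (D5) route you propose is murky; the paper's argument is direct: $\{1+up^2,u\}\equiv\{1+up^2,-u\}$ mod $V^3$ by Lemma~\ref{lemma_mult_filtration}(ii), and then (D4), (D1), (D3) give $-2\pid{up,p}\in 2V^2K_2(R)$, which vanishes in the $2$-torsion quotient $\res K_2(R)$. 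Finally, for $\lrho_j^2$ on exact forms the reduction is not to a generic $\pid{\tilde up,p}\{\tilde b\}$ but to $\pid{-up,p}\{u\}$ with the \emph{same} $u$ in both positions (because $du=u\,\dlog u$); after (D5) and the identity $1-p=-1$, one is left with the term $\{1+up,-1,u\}$, and it is precisely this repetition of $u$ that lets Lemma~\ref{lemma_K3_relation} (applied with $\pi=p$) finish the argument.
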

\begin{proof}
First note that $\{1+\tilde ap^i,\tilde b_1,\dots,\tilde b_{j-1}\}$ and $\pid{\tilde{a}p^{i-1},p}\{\tilde b_1,\dots,\tilde b_{j-2}\}$ do belong to $V^i\res K_j(R)$; in the first case this is by definition of the $U$-filtration, in the second case it is by Lemma \ref{lemma_mult_filtration}(iii). In other words, we at least have well-defined maps \[\tilde\rho_j^i:R\times(R^\times)^{\times j-1}\To \op{gr}^i_V\res K_j(R),\qquad a\tfrac{db_1}{b_1}\wedge\cdots\wedge\tfrac{db_{j-1}}{b_{j-1}}\mapsto \{1+ ap^i, b_1,\dots, b_{j-1}\}\] (and similarly $\tilde\lrho_j^i$), and our goal is to show that these descend to $\Omega^{j-1}_{R/pR}$ (resp.~$\Omega^{j-2}_{R/pR}$).

Firstly, multilinearity of Steinberg symbols shows that $\tilde\rho_j^i$ and $\tilde\lrho_j^i$ are multilinear away from the first coordinate. In the case of the first coordinate, $\tilde\rho_j^i$ is multilinear by definition of the $U$-filtration, while for $\tilde\lrho_j^i$ we argue as follows: for $a,a'\in R$, we first use (D1) to swap the other of the necessary relation for simplicity and then argue as in Lemma \ref{lemma_additive} to see that $\pid{p,ap^{i-1}}+\pid{p,a'p^{i-1}}=\pid{p,(a+a')p^{i-1}}+\pid{p,p^{2i-1}\tfrac{aa'}{1+p^{2i-1}(a+a')}}$, where the second term lies in $V^{i+1}\res K_2(R)$ by Lemma \ref{lemma_mult_filtration}(iii).

We next claim that the induced map on $R\otimes_{\bb Z}(R^\times)^{\otimes_{\bb Z}j-1}$ descends further to $\res R^\times\otimes_{\bb Z} (\res R^\times)^{\otimes_{\bb Z} j-1}$ (resp.~$j-2$ in place of $j-1$ in the case of $\lrho_j^i$), where we write $\res R:=R/pR$ for simplicity. For $\rho_j^i$ each of the necessary relations follows immediately either from the definition of the $U$-filtration or Lemma \ref{lemma_mult_filtration}(i), so we will focus on the slightly more complicated case of $\lrho_j^i$: firstly, if $a\in pR$ then $\pid{ap^{i-1},p}\in V^{i+1}\res K_2(R)$ by Lemma \ref{lemma_mult_filtration}(iii); secondly, if $b\in 1+pR$ then $\pid{ap^{i-1},p}\{b\}\in V^{i+1}K_3(R)$ by Lemmas \ref{lemma_mult_filtration}(iii) \& \ref{lemma_multiplicative2}.

Our desired maps $\rho_j^i$ and $\lrho_j^i$ have therefore been shown to be well-defined after pre-composition with the surjection $\res R\otimes_{\bb Z}\res R^{\times\otimes_{\bb Z}j-1}\to \Omega_{\res R}^{j-1}$, $a\otimes b_1\otimes\cdots\otimes b_j\mapsto a\tfrac{db_1}{b_1}\wedge\cdots\wedge\tfrac{db_{j-1}}{b_{j-1}}$ (resp.~$j-2$ in place of $j-1$) of Lemma \ref{lemma_representation_diff_forms}. It remains to show that the relations of that lemma are sent to zero in $\op{gr}^i_V\res K_j(R)$.

For the first type of relation, if $b_i=b_j$ in $\res R$ for some $i\neq j$ then we are free to pick the same lift $\tilde b_i=\tilde b_j$, and the desired relation follows from the fact that $\{1+\tilde ap^i,\tilde b_1,\tilde b_j\}=\{1+\tilde ap^i,\tilde b_i, -1\}\in V^{i+1}\res K_3(R)$ by Lemma \ref{lemma_mult_filtration}.

For the second type of relation, it is enough to check that the maps $R^\times \to\op{gr}_V^i\res K_2(R)$, $a\mapsto\{1+ap^i,a\}$, and $R^\times \to\op{gr}_V^i\res K_3(R)$, $a\mapsto\pid{ap^{i-1},p}\{a\}$, extend (necessarily uniquely) to additive maps defined on all of $R$. In the first case we observe that $\{1+ap^i,a\} =\pid{p^i,a}$, where the latter symbol makes sense for any $a\in R$ and is additive by Corollary \ref{corollary_additive}. In the second case, when $i=1$ the problem is vacuous as $\pid{a,p}\{a\}=-\{1+ap,-a, a\}=0$ (first equality by (D1) and (D4), second equality since $\{-a,a\}=0$ in $K_2(R)$); when $i>1$ we instead observe that
\begin{align*}
\pid{ap^{i-1},p}\{a\}&\stackrel{\sub{(D5)}}=\{-\tfrac{1+ap^{i-1}}{1-p},\tfrac{1+ap^{i}}{1-p},a\}\\
&\equiv -\{-\tfrac{1+ap^{i-1}}{1-p},1-p,a\}\mod{V^{i+1}\res K_3(R)}\\
&=\{1+ap^{i-1},a,1-p\}+\{-(1-p),1-p,a\}\\
&\stackrel{\sub{(D4)}}=\pid{p^{i-1},a}\{1-p\}
\end{align*}
where the congruence holds by Lemma \ref{lemma_mult_filtration}, and the final equality again using vanishing of $\{-(1-p),1-p\}$; the final term makes sense for any $a\in R$ and is additive modulo $V^i\res K_2(R)\{1-p\}\subseteq V^{i+1}\res K_3(R)$ by Corollary \ref{corollary_additive} and Lemma \ref{lemma_multiplicative2}.

This completes the proof that the map $\rho_j^i$ and $\lrho_j^i$ are well-defined. Claim (i) about surjectivity is immediate from the definition of the $V$-filtration, so it remains to establishing claim (ii). So in the rest of the proof we assume $p=2=i$ and $j\ge2$, and that $\res K_*(R)$ is killed by $p$.

First we show that $\rho_j^i$ vanishes on closed forms; it is enough (since $R$ is additively generated by units) to check closed forms $da\wedge \tfrac{db_1}{b_1}\wedge\cdots\wedge \tfrac{db_{j-2}}{b_{j-2}}=d(a\wedge \tfrac{db_1}{b_1}\wedge\cdots\wedge \tfrac{db_{j-2}}{b_{j-2}})$ where $a,b_1,\dots,b_{j-2}\in R^\times$. Such a form is sent to $\{1+\tilde ap^2,\tilde a,\tilde b_1,\dots,\tilde b_{j-2}\}\in\op{gr}_V^2\res K_2(R)$. But $u:=\tilde a$ is a unit and we have $\{1+up^2,u\}\equiv \{1+up^2,-u\}$ mod $V^3K_2(R)$ by Lemma \ref{lemma_mult_filtration}(ii), and then $\{1+up^2,-u\}\stackrel{\sub{(D4)}}=\pid{-p^2,-u}\stackrel{\sub{(D1)}}=-\pid{u,p^2}\stackrel{\sub{(D3)}}=-2\pid{up,p}\in 2V^2K_2(R)$, which therefore vanishes in $\op{gr}_V^2\res K_2(R)$. To show that $\rho_j^i$ descends further to $\tilde\nu(j-1)(R/pR)$, it is similarly enough to observe that $\{1+(\tilde a^2-\tilde a) p^2,\tilde b_1,\dots,\tilde b_{j-1}\}$ is a multiple of $2$ mod $V^3\res K_j^M(R)$, which is true since $1+(\tilde a^p-\tilde a)p^2= (1-\tilde ap)^2$.

Since the final sentence can also be applied to $\lrho_j^i$, it remains only to show that $\lrho_j^i$ vanishes on closed forms; similarly to the previous paragraph, it is enough to check on closed forms $-da\wedge \tfrac{db_1}{b_1}\wedge\cdots\wedge \tfrac{db_{j-3}}{b_{j-3}}$ (the minus sign is included to simplify the following manipulations; in any case $-1=1$ in $R/pR$), and this reduces to showing that $\pid{-up,p}\{u\}\in V^3\res K_3(R)$ for all $u\in R^\times$. We have
\[\pid{-up,p}\{u\}\stackrel{\sub{(D5)}}=\{1-up,-(1-up^2),u\}=\{1+up,1-up^2,-u\}+\{1+up,1-up^2,-1\}+\{1+up,-1,u\}\] (note that for the first equation we also use that $1-p=-1$) where we have already shown in the previous paragraph that $\{1+up^2,u\}\in 2V^2K_2(R)$, and in Lemma \ref{lemma_mult_filtration}(ii) that $\{1-up^2,-1\}\in V^3K_2(R)$. Therefore, to complete the proof that $\pid{-up,p}\{u\}\in V^3\res K_3(R)$ it is more than enough to show that $\{1+up,-1,u\}$ vanishes; but since $-1=1-p$, this is a special case of Lemma \ref{lemma_K3_relation} (although we assume weak $5$-fold stability in the statement of that lemma, the proof only uses weak $4$-fold stability and skew symmetry; but the latter follows in the current context from the fact that $K_2^M(R)\to \res K_2(R)$ factors through $\op{Im}(K_2^M(R)\to K_2(R))$, where skew symmetry holds).
\end{proof}

We record the following general lemma which was required:

\begin{lemma}\label{lemma_K3_relation}
Let $R$ be a weakly $5$-fold stable ring and $\pi\in R$ an element of the Jacobson radical. Then $\{1-u\pi,1-\pi,u\}=0$ in $K_3^M(R)$ for all $u\in R^\times$.
\end{lemma}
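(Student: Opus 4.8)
The plan is to prove the identity first under the additional hypothesis $1-u\in R^\times$, where an explicit manipulation with Steinberg and Dennis--Stein symbols suffices, and then to remove that hypothesis by a reduction using weak $4$-fold stability. We record at the outset that, since $\pi$ and hence $u\pi$ lie in the Jacobson radical, the elements $1-\pi$ and $1-u\pi$ are automatically units and $1+\pi R$ is a subgroup of $R^\times$; and that weak $5$-fold stability provides skew symmetry and the alternating relations (Remark \ref{remark_on_presentations}(iii)), so that the entries of a symbol may be permuted up to sign, $\{a,-a,\ast\}=0$, $\{a,a,\ast\}=\{a,-1,\ast\}$, and any symbol with an entry $-1$ is $2$-torsion.

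Assume $1-u\in R^\times$. Rewriting $\{1-u\pi,1-\pi,u\}=\{u,1-u\pi,1-\pi\}$ and using the factorisation $1-u\pi=(1-u)\bigl(1+\tfrac{u(1-\pi)}{1-u}\bigr)$, bilinearity together with the Steinberg relation $\{u,1-u\}=0$ gives $\{1-u\pi,1-\pi,u\}=\{u,\,1+\tfrac{u(1-\pi)}{1-u},\,1-\pi\}$. Now (D4) identifies the last two entries with a Dennis--Stein symbol, $\{1+\tfrac{u(1-\pi)}{1-u},1-\pi\}=\langle\tfrac u{1-u},1-\pi\rangle$; (D2) applied with second entry $1$ (which contributes $\langle\tfrac u{1-u},1\rangle=0$) rewrites this as $\langle\tfrac u{1-u},-\pi(1-u)\rangle$; (D3) splits it as $\langle\tfrac{-u\pi}{1-u},1-u\rangle+\langle u,-\pi\rangle$; and (D4), together with (D1) for the second term, rewrites these as $\{1-u\pi,1-u\}$ and $-\{1-u\pi,-u\}$ respectively. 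Collecting terms, $\{1-u\pi,1-\pi,u\}=\{u,1-u\pi,1-u\}-\{u,1-u\pi,-1\}-\{u,1-u\pi,u\}$; the first summand vanishes by the Steinberg relation again, and the third equals $\{u,1-u\pi,-1\}$ because $\{u,u\}=\{u,-1\}$, so the total is $-2\{u,1-u\pi,-1\}$, which is $0$ since that symbol is $2$-torsion. This settles the case $1-u\in R^\times$.

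It remains to remove the assumption $1-u\in R^\times$, which can fail irreducibly --- for instance whenever $u\equiv 1$ modulo a maximal ideal, e.g.\ $u\in 1+\pi R$ --- so that the Steinberg relation $\{u,1-u\}=0$ used above is simply unavailable. Here one invokes weak $4$-fold stability to reduce to the previous case: roughly, one writes $u$ multiplicatively in terms of units $v$ with $1-v\in R^\times$ (allowing a residual factor in $1+\pi R$), and controls the resulting correction symbols by means of the factorisation $1-u_1u_2\pi=(1-u_1\pi)\bigl(1+\tfrac{u_1(1-u_2)\pi}{1-u_1\pi}\bigr)$, which expresses $\{1-u_1u_2\pi,1-\pi,u_i\}$ through the already-proven symbols plus terms of the same shape with one argument a ``generic'' unit. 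This reduction is the real difficulty: the bare Dennis--Stein/Steinberg formalism over $R$ is, by itself, circular (it only re-derives $T=T$), so the vanishing genuinely uses the extra room that $4$-fold stability provides to bring every configuration that occurs into the range in which $\{u,1-u\}=0$ applies, and arranging this uniformly in $u$ while keeping the correction terms inside that range is the delicate step. Keeping track of signs and of the $2$-torsion classes $\{\,\cdot\,,-1\}$ throughout the bilinear expansions is a further, more routine, source of care.
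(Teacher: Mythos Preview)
Your first paragraph, handling the case $1-u\in R^\times$, is essentially correct. The Dennis--Stein manipulations you carry out are valid, and the resulting identity $\{\tfrac{1-u\pi}{1-u},1-\pi\}=\{1-u\pi,1-u\}-\{1-u\pi,-u\}$ can in fact be checked directly in $K_2^M(R)$ using only bilinearity, Steinberg, and skew symmetry (expand the Steinberg relation $\{\tfrac{1-u}{1-u\pi},\tfrac{u(1-\pi)}{1-u\pi}\}=0$, which holds since the two arguments sum to $1$). One small caution: the Dennis--Stein symbols $\langle a,b\rangle$ live in $K_2(R)$, not a priori in $K_2^M(R)$, and the lemma does not assume $K_2^M(R)\to K_2(R)$ is an isomorphism; you should either verify the intermediate steps hold as Steinberg identities in $K_2^M$ (which they do, via (D5) when the second entry fails to be a unit), or bypass the issue with the direct verification above.

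The genuine gap is your second paragraph. You have not proved the general case; you have described the \emph{shape} of an argument (``roughly, one writes $u$ multiplicatively in terms of units $v$ with $1-v\in R^\times$\ldots'') and then explicitly acknowledged that ``this reduction is the real difficulty'' and ``the delicate step'' without carrying it out. That is not a proof. The multiplicative decomposition you suggest, together with the factorisation $1-u_1u_2\pi=(1-u_1\pi)(1+\tfrac{u_1(1-u_2)\pi}{1-u_1\pi})$, does not obviously close up: expanding $\{1-u_1u_2\pi,1-\pi,u_1u_2\}$ bilinearly produces cross-terms like $\{1-u_1\pi,1-\pi,u_2\}$ which are neither of your special form nor amenable to your Step~1 unless $1-u_2\in R^\times$, and iterating this does not obviously terminate. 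You would need a concrete mechanism---presumably weak $4$-fold stability---to guarantee that every such cross-term can be absorbed, and you have not supplied one.

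The paper's proof avoids this entirely by a different route. It rewrites the symbol as $\{1-u\pi,u,u(\pi-1)\}$, observes that the three entries sum to $1$, and invokes Suslin--Yarosh's lemma \cite[Lem.~1.5]{Suslin1991}: using weak $4$-fold stability to choose an auxiliary unit $a$ (and then $b:=u\tfrac{a-u\pi}{a-u}$) satisfying enough unit conditions, one gets $\{u,u(\pi-1)\}=\{a,u\pi-a\}+\{b,u\pi-b\}$ in $K_2^M(R)$. This reduces the problem to showing $\{1-u\pi,x,u\pi-x\}=0$ for $x=a,b$, each of which has $1+x\in R^\times$; that in turn falls out of the identity $\{y,x\}=\{y+x,-\tfrac{x}{y}\}$ (valid when $x+y\in R^\times$) applied with $y=1-u\pi$, followed by Steinberg. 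No case split on $1-u$ is needed.
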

\begin{proof}
Firstly, weak $5$-fold stability implies that skew symmetry $\{x,-x\}=0$ holds for all $x\in R^\times$. So it is equivalent to prove vanishing of the symbol $\{1-u\pi,-u(1-\pi),u\}$, or even of $\{1-u\pi,u,u(\pi-1)\}$. Morally the reason that this symbol vanishes is that its terms sum to $1$ \cite[Corol.~1.8]{Suslin1991}, but we carefully unfold Suslin--Yarosh's proof to show that our stability condition suffices.

By $4$-fold stability, there exists a unit $a\in R^\times$ such that $1+a$, $1-au^{-1}$, and $1+a(1+u^{-1})$ are all units; so $u-a$ is a unit, and we set $b:=u\tfrac{a-u\pi}{a-u}\in R^\times$. Using that $\pi$ is in the Jacobson radical, it is easily checked that $a+b$ and $1+b$ are both units; note also that $\tfrac{ab}{a+b-u\pi}=u$.

It now follows from \cite[Lem.~1.5]{Suslin1991} (with $s:=u\pi$, whence $s-a$, $s-b$, and $s-a-b$ are all units) that $\{a,u\pi-a\}+\{b,u\pi-b\}=\{u,u(\pi-1)\}$. So it is enough to show that $\{1-u\pi,x,u\pi-x\}=0$ for all $x\in R^\times$ such that $1+x$ is also a unit (namely, $x=a,b$). To prove this we first recall the standard identity $\{y,x\}=\{y+x,-\tfrac{x}{y}\}$ for all $x,y\in R^\times$ for which $x+y$ is also a unit (to check this, just expand $0=\{\tfrac{y}{x+y},\tfrac{x}{x+y}\}$). Applying this with $y=1-u\pi$ we deduce that, for all $x\in R^\times$ for which $1+x$ is a unit, \[\{1-u\pi,x,u\pi-x\}=\{1-u\pi+x,-\tfrac{x}{1-u\pi},u\pi-x\},\] which vanishes as desired by the Steinberg relation.
\end{proof}

\begin{corollary}\label{corollary_graded_pieces}
Let $R$ be a $p$-henselian ring which is additively generated by units; assume that $K_2^M(R)\isoto K_2(R)$. Set $k_j^M(R):=K_j^M(R)/p$.
\begin{enumerate}
\item When $p$ is odd, there is an exact sequence \[\Omega^{j-1}_{R/pR}\xto{\rho_j^1}k_j^M(R)\xto{\bor}k_j^M(R/pR)\To 0\]
\item When $p=2$ and $R$ is weakly $4$-fold stable, then the map \[\rho^2_j\oplus\lrho^2_j:\tilde\nu(j-1)(R/pR)\oplus\tilde\nu(j-2)(R/pR)\To k_j^M(R)\] is well-defined and its cokernel fits into an exact sequence \[\Omega^{j-1}_{R/pR}\xto{\rho_j^1}\op{coker}(\rho^2_j\oplus\lrho^2_j)\xto{\bor}k_j^M(R/pR)\to 0\]
\end{enumerate}
\end{corollary}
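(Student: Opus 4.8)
The plan is to read off both sequences from the finite filtration
\[ k_j^M(R)\supseteq U^1k_j^M(R)\supseteq V^2k_j^M(R)\supseteq V^3k_j^M(R), \]
taking $\res K_\ast(R):=k_\ast^M(R)$ in Definition \ref{definition_V_filtration_general} (a legitimate choice, since $K_2^M(R)\isoto K_2(R)$ forces $\op{ker}(K_2^M(R)\to K_2(R))=0$) and using Proposition \ref{proposition_graded_pieces_MilnorK} to identify the graded pieces. Two further inputs are needed. First, the identification $\op{ker}(\bor)=U^1k_j^M(R)$, where $\bor$ denotes the reduction map: since $p$ lies in the Jacobson radical units lift, so $\bor$ is surjective, and $\op{ker}(K_j^M(R)\to K_j^M(R/pR))=U^1K_j^M(R)$ by the standard argument --- a Steinberg relation $\bar a+\bar b=1$ in $R/pR$ lifts to one in $R$ (take any unit lift $a$ of $\bar a$; then $1-a$ is a unit lifting $\bar b$), so lifting a kernel class together with the finitely many Steinberg relations it maps to leaves a discrepancy in $\op{ker}((R^\times)^{\otimes j}\to(R/pR)^{\times\otimes j})$, which is generated by tensors with one slot in $1+pR$ --- whence $\op{ker}(\bor)=U^1k_j^M(R)$ by right exactness of $-\otimes\bb Z/p$. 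Second, the vanishing of the top of the filtration: exactly as in the proof of Lemma \ref{lemma_filtr_vanishes}, $p$-henselianness gives $1+p^2R\subseteq(1+pR)^p$ for $p$ odd and $1+8R\subseteq(1+4R)^2$ for $p=2$, so every generator of $U^2k_j^M(R)$ (resp.\ $U^3k_j^M(R)$) is $p$ times a symbol, hence zero modulo $p$; as $V$ agrees with $U$ at these indices, $V^2k_j^M(R)=0$ when $p$ is odd and $V^3k_j^M(R)=0$ when $p=2$.

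For part (i), $p$ odd: since $V^2k_j^M(R)=0$ we have $\op{gr}^1_Vk_j^M(R)=U^1k_j^M(R)$, and Proposition \ref{proposition_graded_pieces_MilnorK}(i) gives that $\rho_j^1$ surjects onto it; thus $\op{im}(\rho_j^1)=U^1k_j^M(R)=\op{ker}(\bor)$. Together with $\bor\circ\rho_j^1=0$ (clear, as $1+\tilde ap\equiv1\bmod p$) and surjectivity of $\bor$, this is precisely the asserted exactness.

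For part (ii), $p=2$ (the degenerate cases $j\le1$ being straightforward): Proposition \ref{proposition_graded_pieces_MilnorK}(ii) applies, as $R$ is weakly $4$-fold stable and $k_\ast^M(R)$ is killed by $2$, and provides the factorings of $\rho_j^2$ and $\lrho_j^2$ through $\tilde\nu(j-1)(R/2R)$ and $\tilde\nu(j-2)(R/2R)$; since $V^3k_j^M(R)=0$, their common target $\op{gr}^2_Vk_j^M(R)$ is the honest subgroup $V^2k_j^M(R)$ of $k_j^M(R)$, so $\rho_j^2\oplus\lrho_j^2$ is well defined into $k_j^M(R)$, with image $V^2k_j^M(R)$ by the surjectivity in Proposition \ref{proposition_graded_pieces_MilnorK}(i); hence $\op{coker}(\rho_j^2\oplus\lrho_j^2)=k_j^M(R)/V^2k_j^M(R)$. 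Because $V^2k_j^M(R)\subseteq U^1k_j^M(R)=\op{ker}(\bor)$, the map $\bor$ descends to a surjection on this cokernel with kernel $U^1k_j^M(R)/V^2k_j^M(R)=\op{gr}^1_Vk_j^M(R)$; and $\rho_j^1$, viewed into the cokernel, has image this same $\op{gr}^1_Vk_j^M(R)$ by Proposition \ref{proposition_graded_pieces_MilnorK}(i). This yields the exact sequence.

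The substantive new content is the computation $\op{ker}(\bor)=U^1k_j^M(R)$ together with the two $p$-henselian estimates (the latter already being extracted inside the proof of Lemma \ref{lemma_filtr_vanishes}); everything else is bookkeeping around Proposition \ref{proposition_graded_pieces_MilnorK}. I expect the main obstacle to be making the lifting argument fully rigorous --- in particular, being careful that an arbitrary element of the Steinberg subgroup of $(R/pR)^{\otimes j}$ is an integral combination of the specific relation-generators $a_1\otimes\cdots\otimes a_j$ with $a_l+a_k=1$, each of which can be lifted entry-by-entry while preserving that single relation.
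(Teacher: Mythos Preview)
Your proposal is correct and follows essentially the same route as the paper: identify $\ker(\bor)=U^1k_j^M(R)$, use Lemma~\ref{lemma_filtr_vanishes} to kill $V^2$ (resp.\ $V^3$) modulo $p$, and read off the graded pieces via Proposition~\ref{proposition_graded_pieces_MilnorK}. The only notable difference is in how the kernel identification is argued: rather than lifting a kernel class together with the Steinberg relations witnessing its vanishing, the paper simply constructs the inverse map $k_j^M(R/pR)\to k_j^M(R)/U^1k_j^M(R)$, $\{a_1,\dots,a_j\}\mapsto\{\tilde a_1,\dots,\tilde a_j\}$, and checks it is well defined --- which only requires lifting one Steinberg relation at a time and noting that different lifts of a unit differ by a factor in $1+pR$; this completely sidesteps the rigour worry you raise in your final paragraph.
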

\begin{proof}
First note that $V^1k_j^M(R)=\ker(k_j^M(R)\to k_j^M(R/pR)$. This follows from the usual construction of an inverse map $k_j^M(R/pR)\to k_j^M(R)/V^1k_j^M(R)$, $\{a_1,\dots,a_j\}\mapsto\{\tilde a_1,\dots,\tilde a_j\}$. If $p$ is odd then $V^2k_j^M(R)=0$, and if $p$ is even then $V^3k_j^M(R)=0$ (see Lemma \ref{lemma_filtr_vanishes}). The statements now follow from Proposition \ref{proposition_graded_pieces_MilnorK}.
\end{proof}

\subsection{Comparison to \'etale cohomology and proof of Theorem \ref{theorem_BK_unram}}
In this section we complete the proof of the main theorems of \ref{subsection_main_thms}. We will need the following theorem which is a special case of \cite[Corol.~1.4.1]{Bloch1986} and which calculates the sheaf of $p$-adic vanishing cycles $\frak i^*R\frak j_*\mu_p^{\otimes j}$ (for the notation see the proof of Lemma \ref{lemma_Katos_residue}) on a smooth scheme over a complete discrete valuation ring $V$ of mixed characteristic. We cite their result only in the special case when $r=1$ and $V$ is absolutely unramified, when the filtration calculating $\frak i^*R^j\frak j_*\mu_p^{\otimes j}$ is very short.

\begin{theorem}[Bloch--Kato]\label{theorem_BK_unramified}
Let $V$ be an absolutely unramified, complete discrete valuation ring of mixed characteristic, and $X$ a smooth $V$-scheme. Then there is a natural short exact sequence of \'etale sheaves on $Y$ \[0\To \Omega^{j-1}_Y\xto{\rho_j^1} \frak i^*R^j\frak j_*\mu_p^{\otimes j}\xto{\bor\oplus\bor_p}\Omega^{j-1}_{Y,\sub{log}}\oplus \Omega^j_{Y,\sub{log}}\To 0.\]
\end{theorem}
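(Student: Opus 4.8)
The plan is to obtain this by specialising Bloch--Kato's general structure theorem for the sheaves of $p$-adic vanishing cycles \cite[\S1.4, \S6]{Bloch1986} to the case of absolute ramification index $e=1$ and $r=1$. Recall that for $X$ smooth over a complete discrete valuation ring $V$ of absolute ramification index $e$, Bloch--Kato construct a functorial decreasing filtration $U^\bullet$ of $M:=\frak i^*R^j\frak j_*\mu_{p^r}^{\otimes j}$ by \'etale subsheaves on $Y$, with the following properties, writing $e':=ep/(p-1)$: (a) $U^iM=0$ for $i>e'$; (b) when $r=1$, the map $\bor\oplus\bor_p\colon M\to\Omega^{j-1}_{Y,\sub{log}}\oplus\Omega^j_{Y,\sub{log}}$ is surjective with kernel $U^1M$; (c) for $0<i<e'$ the graded piece $\gr^iM:=U^iM/U^{i+1}M$ is a quotient of $\Omega^{j-1}_Y\oplus\Omega^{j-2}_Y$, and when in addition $p\nmid i$ the second summand dies and $\rho_j^i\colon\Omega^{j-1}_Y\to\gr^iM$ is an isomorphism; (d) when $e'$ is an integer, the top piece $\gr^{e'}M$ is a cokernel involving the Artin--Schreier-type operators $1-C^{-1}$ on $\Omega^{j-1}_Y$ and $\Omega^{j-2}_Y$. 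First I would recall these facts precisely from \cite[Cor.~1.4.1, Thm.~6.7]{Bloch1986}, emphasising throughout that they are statements about \'etale sheaves on $Y$ and that all the maps involved are natural in $X$.

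Next I would specialise to $e=1$, so $e'=p/(p-1)$, and split into two cases. If $p$ is odd then $1<e'<2$, so by (a) the filtration is just $M\supseteq U^1M\supseteq 0$; applying (c) with $i=1$ (and $p\nmid 1$) gives $U^1M=\gr^1M\cong\Omega^{j-1}_Y$ via $\rho_j^1$, which combined with (b) is exactly the asserted short exact sequence. If $p=2$ then $e'=2$, so the filtration reads $M\supseteq U^1M\supseteq U^2M\supseteq 0$; again (c) with $i=1$ gives $\gr^1M\cong\Omega^{j-1}_Y$ via $\rho_j^1$, while by (d) the sheaf $U^2M=\gr^2M$ is a cokernel involving the operators $1-C^{-1}$. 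Now $1-C^{-1}$ is a \emph{surjection} of \'etale sheaves --- this is the content of the short exact sequence $0\to\Omega^n_{Y,\sub{log}}\to\Omega^n_Y\xto{1-C^{-1}}\Omega^n_Y/d\Omega^{n-1}_Y\to 0$ recalled in the definition of $\tilde\nu$ above (see \cite[Cor.~4.2(iii)]{Morrow_pro_GL2}) --- so $\gr^2M$ vanishes as an \'etale sheaf on $Y$, whence once more $U^1M=\gr^1M\cong\Omega^{j-1}_Y$ and (b) yields the sequence. In both cases $\rho_j^1$, $\bor$, and $\bor_p$ are Bloch--Kato's own maps, so naturality in $X$ is automatic.

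The only genuine subtlety, and the step I would be most careful about, is the $p=2$ argument: one must extract from \cite[Cor.~1.4.1]{Bloch1986} the precise description of the top graded piece $\gr^{e'}M$ as a cokernel of $1-C^{-1}$ (the hypothesis $r=1$ being exactly what keeps this piece free of $W_r$-complications), and then note that, although such a cokernel is in general nonzero \emph{sectionwise} --- this is precisely why the $\tilde\nu(j-1)$ and $\tilde\nu(j-2)$ terms survive in the $K$-theoretic analogues, Proposition~\ref{proposition_graded_pieces_MilnorK}(ii) and Corollary~\ref{corollary_graded_pieces} --- it does vanish as an \'etale sheaf on $Y$. A secondary, more routine, point to verify is that in (b), for $r=1$, one genuinely obtains the direct sum $\Omega^{j-1}_{Y,\sub{log}}\oplus\Omega^j_{Y,\sub{log}}$ and not merely an extension of one of these factors by the other; this too is part of Bloch--Kato's computation.
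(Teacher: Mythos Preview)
The paper does not give its own proof of this statement: it is stated as a black-box citation of \cite[Corol.~1.4.1]{Bloch1986}, prefaced only by the remark that ``we cite their result only in the special case when $r=1$ and $V$ is absolutely unramified, when the filtration calculating $\frak i^*R^j\frak j_*\mu_p^{\otimes j}$ is very short.'' Your proposal is a correct unpacking of exactly this remark --- specialising Bloch--Kato's filtration to $e=1$, $r=1$, and observing that the filtration collapses to two steps --- so you are following the same route the paper alludes to but does not spell out.

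One small comment: your handling of the $p=2$ case is right, and it is worth noting that it is consistent with how the paper subsequently uses the theorem. In the proof of Theorem~\ref{theorem_BK_unram}, the $\tilde\nu(j-1)\oplus\tilde\nu(j-2)$ terms (i.e., the sectionwise cokernels of $1-C^{-1}$) reappear precisely as $H^1_\sub{\'et}$ of the sheaf $\frak i^*R^{j-1}\frak j_*\mu_p^{\otimes j}$, via the isomorphism (\ref{eqn2}) and the short exact sequence (\ref{eqn3}). This is exactly what one expects once $\gr^2M$ vanishes as an \'etale sheaf but not sectionwise: the Artin--Schreier cokernel is pushed into degree-one cohomology of $\Omega^\bullet_\sub{log}$ via the long exact sequence associated to the short exact sequence of the theorem. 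So your careful distinction between \'etale-local vanishing and sectionwise non-vanishing of $\gr^{e'}M$ is not only correct but is precisely the mechanism driving the $p=2$ half of the paper's main argument.
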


We may finally prove Theorem \ref{theorem_BK} in the unramified, mod $p$ case:

\begin{theorem}\label{theorem_BK_unram}
Let $V$ be an absolutely unramified, complete discrete valuation ring of mixed characteristic, and $R$ a local, $p$-henselian, ind-smooth $V$-algebra; let $j\ge 1$ and assume $R$ has big residue field. Then
\begin{enumerate}
\item the cohomological symbol $h^j_{p}:K_j^M(R[\tfrac1p])/p\To H^j_\sub{\'et}(R[\tfrac1p],\mu_{p}^{\otimes j})$ is an isomorphism;
\item the canonical map $K_j^M(R)/p\To K_j^M(R[\tfrac1p])/p$ is injective.
\end{enumerate}
\end{theorem}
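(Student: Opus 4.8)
The plan is to deduce (i) and (ii) simultaneously from the single assertion that the composite
\[\beta\colon K_j^M(R)/p\To K_j^M(R[\tfrac1p])/p\xto{h^j_p}H^j_\sub{\'et}(R[\tfrac1p],\mu_p^{\otimes j}),\]
which lands in the kernel of Kato's residue map $\bor$ of Lemma~\ref{lemma_Katos_residue} (as $\bor$ annihilates any symbol built from units of $R$), is an isomorphism onto $\ker\!\big(\bor\colon H^j_\sub{\'et}(R[\tfrac1p],\mu_p^{\otimes j})\to W_1\Omega^{j-1}_{R/pR,\sub{log}}\big)$. Indeed, applying Lemma~\ref{lemma_localisation_in MilnorK} with $t=p$ (legitimate since $R$ is a local, $p$-henselian, ind-smooth $V$-algebra, so $pR$ is prime, $R_{pR}$ is a dvr and $R[\tfrac1p]\cap R_{pR}=R$) gives the mod-$p$ localisation sequence $K_j^M(R)/p\to K_j^M(R[\tfrac1p])/p\xto{\bor}K_{j-1}^M(R/pR)/p\to 0$ whose first map has image exactly $\ker(\bor)$; mapping it, via $\beta$, $h^j_p$ and the $\dlog$-isomorphism $K_{j-1}^M(R/pR)/p\isoto W_1\Omega^{j-1}_{R/pR,\sub{log}}$ (Bloch--Kato--Gabber for the local, ind-smooth — hence Cartier smooth — $V/pV$-algebra $R/pR$, which still has big residue field, see Remark~\ref{remark_WrOmegalog}), into the tautological exact sequence $0\to\ker(\bor)\to H^j_\sub{\'et}(R[\tfrac1p],\mu_p^{\otimes j})\xto{\bor}W_1\Omega^{j-1}_{R/pR,\sub{log}}\to 0$ (whose right-exactness reuses the surjectivity of the Milnor residue and of $\dlog$), a short diagram chase shows that the isomorphy of $\beta$ forces $h^j_p$ to be an isomorphism (giving (i)) and $K_j^M(R)/p\to K_j^M(R[\tfrac1p])/p$ to be injective (giving (ii)).

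To prove that $\beta$ is an isomorphism I would compare compatible filtrations. On the source use the $V$-filtration of Definition~\ref{definition_V_filtration_general}; by Corollary~\ref{corollary_graded_pieces} (applicable since $R$ has big residue field, whence $K_2^M(R)\isoto K_2(R)$, $R$ is additively generated by units, weakly $4$-fold stable and has $p$ in its Jacobson radical, so Proposition~\ref{proposition_graded_pieces_MilnorK} and all of \S\ref{subsection_K2}--\ref{subsection_differential _maps} apply with $\res K_*=K_*^M$) its graded pieces are $\op{gr}^0=K_j^M(R/pR)/p$, then the image of $\rho_j^1\colon\Omega^{j-1}_{R/pR}\twoheadrightarrow\op{gr}^1$, with $V^2=0$ for $p$ odd, and for $p=2$ a further piece $\op{gr}^2=V^2$ which is the image of $\rho_j^2\oplus\lrho_j^2\colon\tilde\nu(j-1)(R/pR)\oplus\tilde\nu(j-2)(R/pR)\twoheadrightarrow\op{gr}^2$, with $V^3=0$. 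On the target transport Bloch--Kato's unit-group filtration: passing from $H^j_\sub{\'et}(R[\tfrac1p],\mu_p^{\otimes j})$ to global sections over $\Spec(R/pR)$ of the $p$-adic vanishing cycle sheaf $\frak i^*R^j\frak j_*\mu_p^{\otimes j}$ via Gabber's affine analogue of the proper base change theorem, Theorem~\ref{theorem_BK_unramified} (together with the vanishing, beyond degree $1$, of the étale cohomology over the affine $\Spec(R/pR)$ of the coherent- and $\Omega_\sub{log}$-type sheaves occurring there) endows $\ker(\bor)$ with a bounded filtration whose graded pieces are $K_j^M(R/pR)/p$, then $\Omega^{j-1}_{R/pR}$ — via Bloch--Kato's \emph{injective} structural map $\rho_j^1$ — and, only when $p=2$ (where $ep/(p-1)=2$ is an integer, so the last Bloch--Kato graded step contributes, on taking cohomology, the $H^1_\sub{\'et}$ of $\Omega_\sub{log}$-sheaves), $\tilde\nu(j-1)(R/pR)\oplus\tilde\nu(j-2)(R/pR)$.

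The decisive point is that $\beta$ is filtered — each side being generated by symbols graded by the $p$-adic divisibility of their entries — and induces isomorphisms on graded pieces, so that (both filtrations being bounded) it is an isomorphism. On $\op{gr}^0$ the induced map is the $\dlog$-comparison for $R/pR$, an isomorphism by Bloch--Kato--Gabber. On $\op{gr}^1$, and for $p=2$ on $\op{gr}^2$, one uses that Bloch--Kato's structural maps into $\frak i^*R^j\frak j_*\mu_p^{\otimes j}$ are given by the \emph{same symbol formulas} $\bar a\,\tfrac{d\bar b_1}{\bar b_1}\wedge\cdots\mapsto\{1+\tilde ap,\tilde b_1,\dots\}$, resp.\ $\bar a\,\tfrac{d\bar b_1}{\bar b_1}\wedge\cdots\mapsto\pid{\tilde ap,p}\{\tilde b_1,\dots\}$, as our $\rho_j^1$ and $\lrho_j^2$ into $K_j^M(R)$; hence the triangle formed by our surjection onto the source graded piece, the induced map $\op{gr}(\beta)$, and Bloch--Kato's isomorphism onto the target graded piece commutes, which forces $\op{gr}(\beta)$ to be an isomorphism there (and incidentally proves injectivity of our own $\rho_j^1$).

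The main obstacle is exactly the legitimacy of the ``same formula'' step: one must know that these formulas really define maps out of $\Omega^{j-1}_{R/pR}$ (and, for $p=2$, out of $\tilde\nu(j-1)(R/pR)\oplus\tilde\nu(j-2)(R/pR)$) landing in the correct step of the $V$-filtration on $K_j^M(R)/p$, which is Proposition~\ref{proposition_graded_pieces_MilnorK} and rests on the delicate Dennis--Stein manipulations of \S\ref{subsection_K2}--\ref{subsection_differential _maps} (Lemmas~\ref{lemma_mult_filtration}, \ref{lemma_filtr_vanishes}, \ref{lemma_multiplicative2}, \ref{lemma_K3_relation}). For $p$ odd only $\rho_j^1$ is at issue, and one is essentially factoring Bloch--Kato's differential $\Omega^{j-1}_{R/pR}\to\frak i^*R^j\frak j_*\mu_p^{\otimes j}$ through $K_j^M(R)/p$ — a factorisation which, as noted in the introduction, is folklore but does not appear in the literature and is already delicate because of the Dennis--Stein bookkeeping; for $p=2$ one must moreover analyse the two Artin--Schreier cokernel terms $\tilde\nu(j-\ep)(R/pR)$ for $\ep=1,2$, checking that the Dennis--Stein symbols $\pid{\tilde ap,p}\{\tilde b_1,\dots\}$ indeed lie in $V^2K_j^M(R)$ and reproduce Bloch--Kato's description of the top graded step of the vanishing cycle sheaf — this is substantially harder. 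A subsidiary technical point, needed to pass from the sheaf statement of Theorem~\ref{theorem_BK_unramified} to a statement about the group $H^j_\sub{\'et}(R[\tfrac1p],\mu_p^{\otimes j})$, is the identification of $H^1_\sub{\'et}(\Spec(R/pR),\frak i^*R^{j-1}\frak j_*\mu_p^{\otimes j})$, which vanishes when $p$ is odd and supplies the $\tilde\nu$-pieces when $p=2$.
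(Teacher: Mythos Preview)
Your overall architecture --- reduce (i) and (ii) to the single assertion that $\beta$ is an isomorphism onto $\ker(\bor)$, and prove the latter by comparing the $V$-filtration of \S\ref{subsection_K2}--\ref{subsection_differential _maps} to the Bloch--Kato filtration on nearby cycles --- is exactly the paper's strategy. The gap is that you never invoke Gabber's Gersten-type injectivity (Theorem~\ref{theorem_Gabber_injectivity}), and the paper uses it at two essential points which your sketch papers over.

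First, for $p$ odd you assert that $H^1_\sub{\'et}(\Spec(R/pR),\frak i^*R^{j-1}\frak j_*\mu_p^{\otimes j})$ ``vanishes''. This is not an input but a conclusion: the spectral sequence only gives the short exact sequence (\ref{eqn3}), and the paper deduces that the map $\al$ is injective (hence the $H^1$-contribution is zero) by comparing with the discrete valuation ring $\roi_F=R_{pR}^h$, where $\al_{\roi_F}$ is an isomorphism because the Bloch--Kato theorem for the field $F$ is already known, and then using Gabber's injectivity $H^j_\sub{\'et}(R[\tfrac1p],\mu_p^{\otimes j})\hookrightarrow H^j_\sub{\'et}(F,\mu_p^{\otimes j})$. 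Without this step your filtration on $\ker(\bor)$ has an unidentified extra piece and the comparison with the two-step $V$-filtration on $K_j^M(R)/p$ does not close.

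Second, for $p=2$ you claim that the induced map on $\op{gr}^2$ agrees with Bloch--Kato's because the latter is ``given by the same symbol formulas'' as $\rho^2_j\oplus\lrho^2_j$. But the target map $\beta\circ\gamma$ in the paper's diagram is defined via the Leray boundary map together with a Tate twist by $\zeta_p=-1$, not by a symbol formula; the identification of $h^j_p\circ(\rho^2_j\oplus\lrho^2_j)$ with $\beta\circ\gamma$ is precisely the commutativity the paper singles out as nontrivial. The paper again uses Gabber's injectivity to reduce this compatibility to the case $R=\roi_F$, where it is checked in \cite[(5.15)]{Bloch1986} and \cite[\S4.4.8]{ColliotThelene1999}. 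Your ``same formula'' claim therefore hides the entire content of this step.
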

\begin{proof}
Although $X:=\Spec R$ is not smooth over $V$, the conclusions of Theorem~\ref{theorem_BK_unramified} remain valid by taking a filtered colimit. In particular, by taking cohomology we obtain a short exact sequence \begin{equation}0\To \Omega^{j-1}_{R/pR}\xto{\rho_j^1}H^0_\sub{\'et}(\Spec(R/pR),\frak i^*R^j\frak j_*\mu_p^{\otimes j})\xto{\bor\oplus\bor_p}\Omega^{j-1}_{R/pR,\sub{log}}\oplus \Omega^j_{R/pR,\sub{log}}\To 0\label{eqn1}\end{equation} and an isomorphism \begin{equation}\bor\oplus\bor_p:H^1_\sub{\'et}(\Spec(R/pR), \frak i^*R^j\frak j_*\mu_p^{\otimes j})\isoto \tilde\nu(j-1)(R/pR)\oplus\tilde\nu(j)(R/pR)\label{eqn2}\end{equation} for each $j\ge0$.

First we treat the case that $p$ is odd. Then we have the following commutative diagram
\[\xymatrix@R=5mm{
0\ar[r] & \Omega^{j-1}_{R/pR}\ar[r]^-{\rho_j^1} & H^0_\sub{\'et}(\Spec(R/pR), \frak i^*R^j\frak j_*\mu_p^{\otimes j})\ar[r]^-{\bor\oplus\bor_p} & \Omega^{j-1}_{R/pR,\sub{log}}\oplus \Omega^j_{R/pR,\sub{log}}\ar[r] & 0 \\
&&H^j_\sub{\'et}(R[\tfrac1p],\mu_p^{\otimes j})\ar[u]^{\al} && \\
&& k_j^M(R[\tfrac1p])\ar[u]^{h_1^j} && \\
 & \Omega^{j-1}_{R/pR}\ar@{=}[uuu] \ar[r]^{\rho_j^1} & k_j^M(R)\ar[u] \ar[r] & k_j^M(R/pR)\ar[r]\ar[uuu]_{(0,\sub{dlog})} & 0 
}\] 
in which the top row is exact by (\ref{eqn1}) and the bottom row by Corollary \ref{corollary_graded_pieces}(i)). So we immediately deduce that the map $\rho_j^1$ in the bottom row is injective. Next, the map $\al$ is surjective: indeed, it fits into a short exact sequence \begin{equation}0\To H^1_\sub{\'et}(\Spec(R/pR),\frak i^*R^{j-1}\frak j_*\mu_p^{\otimes j})\xto{\beta} H^j_\sub{\'et}(R[\tfrac1p],\mu_p^{\otimes j})\xto{\al} H^0_\sub{\'et}(\Spec(R/pR), \frak i^*R^j\frak j_*\mu_p^{\otimes j})\to 0.\label{eqn3}\end{equation} This follows from the spectral sequence 
\[E_2^{s,t}=H^s_\sub{\'et}(\Spec(R/pR),\frak i^*R^t\frak j_*(\mu_p^{\otimes j}))\Rightarrow H^{s+t}_\sub{\'et}(R[\tfrac1p],\mu_p^{\otimes j})\]
and the fact that $\Spec(R/pR)$ has \'etale cohomological dimensional $\le 1$ for $p$-torsion \'etale sheaves (we remark that (\ref{eqn3}) remains valid when $p=2$ and will be used later in that case).

The composition $\al\circ h_1^j$ is an isomorphism: indeed, there is an exact sequence \[\Omega^{j-1}_{R/pR}\To k_j^M(R[\tfrac1p])\xto{\bor\oplus\bor_p}k_{j-1}^M(R/pR)\oplus k_j^M(R/pR)\To 0\] by splicing Lemma \ref{lemma_localisation_in MilnorK} (with $t=p$) and Corollary \ref{corollary_graded_pieces}(i), and this may be compared to the top row of the diagram to prove the isomorphism.

The hypotheses of the proposition, and hence the diagram, remain valid if we replace $R$ by $\roi_F:=$ the henselisation of the discrete valuation ring $R_{pR}$; as indicated, we write $F:=\Frac\roi_F=\roi_F[\tfrac1p]$. Then the cohomological symbol $h^j_1:k_j^M(F)\isoto H^j_\sub{\'et}(F,\mu_p^{\otimes j})$ is an isomorphism by Bloch--Kato \cite[Thm.~5.12]{Bloch1986}; combined with the isomorphism of the previous paragraph (for $\roi_F$ rather than $R$), we deduce that $\al_{\roi_F}:H^j_\sub{\'et}(F,\mu_p^{\otimes j})\to H^0_\sub{\'et}(\Spec(\roi_F/p\roi_F), \frak i^*R^j\frak j_*\mu_p^{\otimes j})$ is an isomorphism.

Two results of Gabber, which we recall in Theorem \ref{theorem_Gabber_injectivity}, imply that $H^j_\sub{\'et}(R[\tfrac1p],\mu_p^{\otimes j})\to H^j_\sub{\'et}(F,\mu_p^{\otimes j})$ is injective. So from the injectivity of $\al_{\roi_F}$ and the surjectivity of $\al$ (and their compatibility), it follows that $\al$ is an isomorphism. Therefore $h_1^j$ is an isomorphism and a diagram chase reveals that $k_j^M(R)\to k_j^M(R[\tfrac1p])$ is injective.
Diagrammatically, this last argument can be summarised as follows:
\[\xymatrix@R=5mm{
k_j^M(F)\ar[r]^-\cong_-{h^j_1} & H^j_\sub{\'et}(F,\mu_p^{\otimes j})\ar[r]^-\cong_-{\alpha_{\roi_F}} & H^0_\sub{\'et}(\Spec(\roi_F/p\roi_F), \frak i^*R^j\frak j_*\mu_p^{\otimes j}) \\
 k_j^M(R[\tfrac1p]) \ar[r]_-{h^j_1} & H^j_\sub{\'et}(R[\tfrac1p],\mu_p^{\otimes j}) \ar@{^{(}->}[u] \ar@{->>}[r]_-\al & H^0_\sub{\'et}(\Spec(R/pR), \frak i^*R^j\frak j_*\mu_p^{\otimes j}). \ar[u]
}\] 

It remains to treat the case $p=2$. In that case we have a diagram (ignoring the dashed arrow for the moment)
\[\xymatrix@R=5mm{
0\ar[r] & \Omega^{j-1}_{R/pR}\ar[r]^-{\rho_j^1} & H^0_\sub{\'et}(\Spec(R/pR), \frak i^*R^j\frak j_*\mu_p^{\otimes j})\ar[r]^{\bor\oplus\bor_p} & \Omega^{j-1}_{R/pR,\sub{log}}\oplus \Omega^j_{R/pR,\sub{log}}\ar[r] & 0 \\
&&H^j_\sub{\'et}(R[\tfrac1p],\mu_p^{\otimes j})\ar[u]^{\al} && \\
&k_j^M(R[\tfrac1p])\ar[ur]^{h_1^j}& \tilde\nu(j-1)(R/pR)\oplus\tilde\nu(j-2)(R/pR)\ar[d]^{\rho^2_j\oplus\lrho^2_j}\ar[u]_{\beta\circ\gamma} && \\
&&k_j^M(R)\ar[ul]\ar[d]&&\\
 & \Omega^{j-1}_{R/pR}\ar@/^20mm/[uuuu]^= \ar[r]^{\rho_j^1} & \op{coker}(\rho^2_j\oplus\lrho^2_j) \ar[r]\ar@/_30mm/@{-->}[uuuu] & k_j^M(R/pR)\ar[r]\ar[uuuu]_{(0,\sub{dlog})} & 0 
}\] 
in which the top row is exact by (\ref{eqn1}) and the bottom row by Corollary \ref{corollary_graded_pieces}(ii). The map $\rho^2_j\oplus\lrho^2_j$ is that of Corollary \ref{corollary_graded_pieces}(ii). The top half of the middle of the diagram consists of the exact sequence 
\[0\To \tilde\nu(j-1)(R/pR)\oplus\tilde\nu(j-2)(R/p)\xto{\beta\circ\gamma} H^j_\sub{\'et}(R[\tfrac1p],\mu_p^{\otimes j})\xto{\al} H^0_\sub{\'et}(\Spec(R/pR), \frak i^*R^j\frak j_*\mu_p^{\otimes j})\To 0\] obtained by combining (\ref{eqn3}) with the isomorphism
\[\gamma:\tilde\nu(j-1)(R/pR)\oplus\tilde\nu(j-2)(R/pR)\stackrel{\sub{(\ref{eqn2})}}\cong H^1_\sub{\'et}(\Spec(R/pR), \frak i^*R^{j-1}\frak j_*\mu_p^{\otimes j-1})\stackrel{\zeta_p}\cong H^1_\sub{\'et}(\Spec(R/pR), \frak i^*R^{j-1}\frak j_*\mu_p^{\otimes j}),\] where we use (\ref{eqn2}) for $j-1$ with the order of the two summands swapped, and where $\zeta_p$ denotes the isomorphism given by cupping with the primitive $p^\sub{th}$-root of unity $\zeta_p=-1\in R$.

It is clear that the diagram commutes, except for identifying the two maps \[\tilde\nu(j-2)(R/pR)\oplus\tilde\nu(j-1)(R/pR)\to H^j_\sub{\'et}(R[\tfrac1p],\mu_p^{\otimes j}).\] But by functoriality and Gabber's aforementioned injectivity $H^j_\sub{\'et}(R[\tfrac1p],\mu_p^{\otimes j})\into H^j_\sub{\'et}(F,\mu_p^{\otimes j})$ this reduces to the analogous commutativity for $\roi_F$ in place of $R$; in that case the commutativity is implicit in \cite[(5.15)]{Bloch1986} and mentioned explicitly in \cite[\S4.4.8]{ColliotThelene1999}.

The commutativity of the diagram has two consequences. Firstly, it implies that $\rho^2_j\oplus\lrho^2_j$ is injective. Secondly, it implies that the map $k_j^M(R)\to H^0_\sub{\'et}(\Spec(R/pR), \frak i^*R^j\frak j_*\mu_p^{\otimes j})$ factors through $\op{coker}(\rho^2_j\oplus\lrho^2_j)$; the dashed map therefore exists and the diagram continues to commute.

With the dashed map in place, it follows immediately from a simple diagram chase that the $\rho_j^1$ in the bottom row is injective. A slightly longer diagram chase shows that $k_j^M(R)\to k_j^M(R[\tfrac1p])$ is injective.

The map $\bor\oplus\bor_p:k_j^M(R[\tfrac1p])\to \Omega^{j-1}_{R/pR,\sub{log}}\oplus\Omega^j_{R/pR,\sub{log}}$ is surjective and its kernel is contained in the image of $k_j^M(R)\to k_j^M(R[\tfrac1p])$ by Lemma \ref{lemma_localisation_in MilnorK}. Diagram chasing now shows that $h_1^j$ is an isomorphism.
\end{proof}

\section{\texorpdfstring{$p$}{p}-adic Nesterenko--Suslin isomorphism}
The Nesterenko--Suslin isomorphism \cite{Suslin1989}, reproved by Totaro \cite{Totaro1992}, states that for any field $k$ there is a natural isomorphism $K_j^M(k)\isoto H^j(k,\bb Z(j))$ where the target denotes the weight $j$, degree $j$ motivic cohomology of $k$. This was extended by Kerz to all regular local rings containing a field \cite{Kerz2009} \cite[Prop.~10(11)]{Kerz2010} (replacing Milnor $K$-theory by improved Milnor $K$-theory in the small residue field case).

In this section we prove the following $p$-adic analogue in mixed characteristic; the generality in which $\bb Z_p(j)(A)$ is defined allows us to avoid any regularity hypotheses:

\begin{theorem}\label{theorem_Nesterenko_Suslin}
The Galois cohomological symbol induces, for any local, $p$-henselian ring $A$, natural isomorphisms \[\hat K_j^M(A)/p^r\isoto H^j(\bb Z/p^r(j)(A))\] for all $r,j\ge0$.
\end{theorem}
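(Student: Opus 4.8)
The plan is to deduce the theorem from Theorem~\ref{theorem1} by a left Kan extension argument of the kind used in the proof of Proposition~\ref{proposition_reduction_unram}, but now with $\bb Z_{(p)}$ as the base in place of a dvr with big residue field; the only new feature is that small residue fields — and hence \emph{improved} Milnor $K$-theory — must be carried throughout. Two preliminaries: a local $p$-henselian ring has residue characteristic $p$, hence is a $\bb Z_{(p)}$-algebra; and the symbol map is a natural transformation $\hat K_j^M(-)/p^r\to H^j(\bb Z/p^r(j)(-))$ of functors on local $p$-henselian rings, its factorisation through $\hat K^M_j$ rather than merely $K^M_j$ being obtained as in Remark~\ref{remark_improved} from the transfers on $H^j(\bb Z/p^r(j)(-))$ along finite \'etale maps. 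The cases $r=0$ and $j=0$ are trivial, so assume $r,j\ge1$.

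\textbf{Step 1 (the ind-smooth case).} For $R$ a local, $p$-henselian, ind-smooth $\bb Z_p$-algebra the symbol map $\hat K_j^M(R)/p^r\to H^j(\bb Z/p^r(j)(R))$ is an isomorphism. Indeed, Theorem~\ref{theorem1} applied with $V=\bb Z_p$ gives the exact sequence $0\to\hat K_j^M(R)/p^r\to H^j_\sub{\'et}(R[\tfrac1p],\mu_{p^r}^{\otimes j})\xto{\bor}W_r\Omega^{j-1}_{R/pR,\sub{log}}\to 0$, and Corollary~\ref{corollary_BCM} identifies $H^j(\bb Z/p^r(j)(R))$ with $\ker\bor$; what is left is the standard compatibility that the symbol map, followed by the comparison map to $H^j_\sub{\'et}(R[\tfrac1p],\mu_{p^r}^{\otimes j})$ underlying Theorem~\ref{theorem_BCM_LKE}, recovers the Galois symbol — equivalently, that syntomic first Chern classes of units are Kummer classes. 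Completeness of the base plays no role in this argument (cf.\ the remark following Theorem~\ref{theorem1}), so it applies equally to local, $p$-henselian, ind-smooth $\bb Z_{(p)}$-algebras.

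\textbf{Step 2 (left Kan extension).} Fix an arbitrary local $p$-henselian $A$ and choose a simplicial resolution $R_\blob\to A$ by local \emph{henselian} — a fortiori $p$-henselian — ind-smooth $\bb Z_{(p)}$-algebras, each $R_q\to A$ being a surjection with kernel inside the maximal ideal; for instance, henselise a simplicial polynomial resolution of $A$ at the relevant maximal ideals. This resolution computes, at $A$, the left Kan extension from local $p$-henselian ind-smooth $\bb Z_{(p)}$-algebras of both functors in the statement. For $\hat K_j^M(-)/p^r$ this is the improved-Milnor-$K$ analogue of Proposition~\ref{proposition_LKE_Milnor}, whose proof goes through unchanged: $\bigoplus_{j\ge0}\hat K_j^M$ is again a graded-commutative ring generated in degree one, with $\hat K_0^M=\bb Z$ and $\hat K_1^M=\bb G_m$, and the step identifying the ideals $I$ and $J$ in that proof is insensitive to the improvement. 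For $H^j(\bb Z/p^r(j)(-))=H^j(\tau^{\le j}\bb Z/p^r(j)(-))$ it follows from Proposition~\ref{proposition_motivic_cohomol_LKE} together with the right exactness of top cohomology: since $\tau^{\le j}\bb Z/p^r(j)(-)$ is valued in cohomological degrees $\le j$, comparing $\tau^{\le j}\bb Z/p^r(j)(R_q)$ with its further truncation $H^j(\bb Z/p^r(j)(R_q))[-j]$, whose fibre sits in degrees $<j$, shows on passing to realisations that $H^j(\bb Z/p^r(j)(A))=\op{coker}\!\big(H^j(\bb Z/p^r(j)(R_1))\rightrightarrows H^j(\bb Z/p^r(j)(R_0))\big)$. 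As the symbol map is an isomorphism on each $R_q$ by Step 1, it is an isomorphism on these coequalisers, and hence $\hat K_j^M(A)/p^r\cong H^j(\bb Z/p^r(j)(A))$.

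The points I expect to require genuine work rather than assembly are the improved-Milnor-$K$ analogue of Proposition~\ref{proposition_LKE_Milnor} — the one place where a proof must be revisited rather than quoted — and the symbol compatibility used in Step 1; Theorems~\ref{theorem1} and~\ref{theorem_BCM_LKE} and Propositions~\ref{proposition_motivic_cohomol_LKE} and~\ref{proposition_LKE_Milnor} are then applied off the shelf.
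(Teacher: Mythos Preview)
Your overall strategy---prove the ind-smooth case from Theorem~\ref{theorem1} and Corollary~\ref{corollary_BCM}, then left Kan extend---matches the paper's, and Step~1 is fine. The gap is in Step~2, specifically your claim that the improved-Milnor-$K$ analogue of Proposition~\ref{proposition_LKE_Milnor} ``goes through unchanged.'' The crux of that proof is the identification $I=J$: the kernel of $k_*^M(R_0)\to k_*^M(A)$ is generated as an ideal by its degree-one part $1+\frak a$. For $\hat K_*^M$ this amounts to showing that the kernel of $\hat k_*^M(R_0)\to\hat k_*^M(A)$ is likewise generated in degree one, which unwinds to the surjectivity of $\ker\big(K_*^M(R_0)\to\hat K_*^M(R_0)\big)\twoheadrightarrow\ker\big(K_*^M(A)\to\hat K_*^M(A)\big)$. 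You have not argued this, and it is not clear: the improvement ideal is defined via the auxiliary rings $R(t)$, $R(t_1,t_2)$, and there is no evident reason why the extra relations for $A$ lift to extra relations for $R_0$ when the common residue field is small. Note that if $A$ has small residue field then so does every $R_q$ in your resolution, so you cannot sidestep the issue.

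The paper avoids this problem entirely. It uses Proposition~\ref{proposition_LKE_Milnor} only for \emph{ordinary} Milnor $K$-theory, obtaining a map $K_j^M(A)/p^r\to H^j(\bb Z/p^r(j)(A))$, and then proves separately that it descends to $\hat K_j^M(A)/p^r$ and is an isomorphism. For this it passes to a finite \'etale extension $A'$ of degree prime to $p$ with big residue field (as in Proposition~\ref{prop_reduction1}), where $K^M=\hat K^M$ and the big-residue-field case (line~(\ref{eqn_NS})) applies directly; the descent back to $A$ uses norm maps on $\bb Z_p(j)$ for finite \'etale extensions, constructed in Lemma~\ref{lemma_norm_exists}, to get injectivity of the right vertical arrow in a comparison square (Corollary~\ref{corollary_injective}). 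Surjectivity is handled by a rigidity argument. So the paper's extra ingredient---norms on syntomic cohomology---is precisely what replaces your unproven right-exactness claim for $\hat K_*^M$.
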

\begin{proof}
We begin by explaining the existence of a natural symbol map $K_j^M(A)\to H^j(\bb Z/p^r(j)(A))$, induced by the Galois symbol in the smooth case. So suppose first that $A$ is a local, $p$-henselian, ind-smooth $\bb Z_{(p)}$-algebra; to put ourselves in the context of our main theorems (which are stated over complete discrete valuation rings), let $A'$ be the $p$-henselisation of $A\otimes_{\bb Z_{(p)}}\bb Z_p$ and note that $A'$ is a local, $p$-henselian, ind-smooth $\bb Z_p$-algebra. The Galois symbol therefore induces \[\hspace{-3mm}K_j^M(A)\to K_j^M(A')\to \ker(H^j_\sub{\'et}(A'[\tfrac1p],\mu_{p^r}^{\otimes j})\to W_r\Omega_{A'/p A',\sub{log}}^{j-1}))\stackrel{\sub{Corol.~\ref{corollary_BCM}}}\cong H^j(\bb Z_p(j)(A')/p^r)\stackrel{\simeq}{\leftarrow}H^j(\bb Z_p(j)(A)/p^r),\] where the final isomorphism holds since $A$ and $A'$ have the same $p$-adic completions (from which it also follows that the first arrow is an isomorphism modulo any power of $p$, which will be relevant later). This defines the desired map $K_j^M(A)\to H^j(\bb Z_p(j)(A)/p^r)$ whenever $A$ is a local, $p$-henselian, ind-smooth $\bb Z_{(p)}$-algebra.

For general local, $p$-henselian rings $A$, we define $K_j^M(A)\to H^j(\bb Z_p(j)(A)/p^j)$ via left Kan extension from the ind-smooth case, using Propositions \ref{proposition_motivic_cohomol_LKE} and \ref{proposition_LKE_Milnor}. Concretely, this means that if we pick any local, $p$-henselian, ind-smooth $\bb Z_{(p)}$-algebra $R$ surjecting onto $A$, then the map $K_j^M(R)\to H^j(\bb Z_p(j)(R)/p^r)$ defined in the previous paragraph descends (necessarily uniquely) to a map $K_j^M(A)\to H^j(\bb Z_p(j)(A)/p^r)$. Our goal is to show that this descends further to an isomorphism $\hat K_j^M(A)/p^r\isoto H^j(\bb Z_p(j)(A)/p^r)$.

We have already established this isomorphism in two cases:
\begin{enumerate}
\item ``smooth case'', namely whenever $A=R$ is a local, ind-smooth $p$-henselian algebra over a complete discrete valuation ring: indeed, this is precisely Theorem \ref{theorem1};
\item and ``big residue field case'', namely whenever $A$ is a local, $p$-henselian algebra over a complete discrete valuation ring with big residue field: this is precisely line (\ref{eqn_NS}) of the proof of Proposition~\ref{proposition_reduction_unram}, which was conditional at the time on the now-established Theorem \ref{theorem1}.
\end{enumerate}
To establish the isomorphism in general we may reduce, by taking a filtered colimit, to the case that $A$ is the $p$-henselisation of a local, essentially finite type $\bb Z_{(p)}$-algebra. Applying the same trick as in the proof of Proposition \ref{prop_reduction1}, we realise the residue field $K$ of $A$ as a finite extension of $\bb F_p(t_1,\dots,t_d)$ and then pick a big enough finite field $k'$ such that $|k':\bb F_p|$ is coprime to both $p$ and $|K:\bb F_p(\ul t)|$. Let $A'$ be the $p$-henselisation of $A\otimes_{\bb Z_{(p)}}W(k')$, which is a local, $p$-henselian, $W(k')$-algebra, and consider the commutative diagram
\[\xymatrix{
K_j^M(A')/p^r\ar[r]^-\cong & H^j(\bb Z_p(j)(A')/p^r)\ar@{}[r]|{\cong}&H^j(\bb Z_p(j)(\hat{A'})/p^r)\\
K_j^M(A)/p^r\ar[r]\ar[u] & H^j(\bb Z_p(j)(A)/p^r)\ar[u]\ar@{}[r]|{\cong}&H^j(\bb Z_p(j)(\hat A)/p^r)\ar[u]
}\]
The symbol map in the top row is an isomorphism by the big residue field case. The right vertical arrow is injective since $\hat{A'}=\hat A\otimes_{\bb Z_p}W(k')$ is a finite \'etale extension of $\hat A$ of degree prime to $p$: see Corollary~\ref{corollary_injective}. Since $K_j^M(A')/p^r\isoto\hat K_j^M(A')/p^r\supseteq \hat K_j^M(A)/p^r$, it now follows formally from a diagram chase that the symbol map for $A$ descends to an injection $\hat K_j^M(A)/p^r\into H^j(\bb Z_p(j)(A)/p^r)$.

To prove surjectivity, pick an $p$-henselian, ind-smooth $\bb Z_{(p)}$-algebra surjecting onto $A$ and henselise it along the kernel; the result $R$ is an $p$-henselian, ind-smooth $\bb Z_p$-algebra equipped with a henselian surjection $R\to A$. As at the end of the proof of Proposition \ref{proposition_motivic_cohomol_LKE}, the induced map on completions $\hat R\to \hat A$ is still a henselian surjection, and so rigidity for the $\bb Z_p(j)$ \cite[Thm.~5.2]{AntieauMathewMorrowNikolaus} implies that $H^j(\bb Z_p(j)(R)/p^r)\to H^j(\bb Z_p(j)(A)/p^r)$ is surjective. This reduces the necessary surjectivity to the case of $R$ in place of $A$; but that is covered by the smooth case mentioned above.
\end{proof}

\begin{remark}
Theorem \ref{theorem_Nesterenko_Suslin} implies, in particular, that the maps $H^j(\bb Z/p^{r+1}(j)(A))\to H^j(\bb Z/p^r(j)(A))$ are surjective for all $r\ge 1$; equivalently $H^{j+1}(\bb Z_p(j)(A))$ is $p$-torsion-free, whence $H^{j}(\bb Z_p(j)(A))/p^r\isoto H^{j}(\bb Z/p^r(j)(A))$ for all $r\ge1$.

We record here an independent proof of these results bypassing our main theorems. We wish to show that $p:H^{j+1}(\bb Z_p(j)(R)/p^r)\to H^{j+1}(\bb Z_p(j)(R)/p^{r+1})$ is injective; as at the end of the proof of the previous theorem we may use rigidity to reduce to the case that $A=R$ is a local, $p$-henselian, ind-smooth $\bb Z_p$-algebra, then we use rigidity again to replace $R$ by $\res R:=R/pR$.

The exact sequence $0\to W_{r-1}\Omega_\sub{log}^j\xto{p}W_{r}\Omega_\sub{log}^j\to \Omega_\sub{log}^j\to 0$ of \'etale sheaves on $\Spec \res R$ induces upon taking cohomology a sequence \[0\to W_{r-1}\Omega_{\res R,\sub{log}}^j\xto{p}W_{r}\Omega_{\res R,\sub{log}}^j\to \Omega_{\res R,\sub{log}}^j\to\tilde\nu_{r-1}(j)(\res R)\xto{p}\tilde\nu_r(j)(\res R)\to\tilde\nu(j)(\res R)\to 0\] where $\tilde\nu_r(j)(\res R):=H^1_\sub{\'et}(\Spec\res R,W_r\Omega^j_\sub{log})$.

But the projection map $W_{r}\Omega_{\res R,\sub{log}}^j\to \Omega_{\res R,\sub{log}}^j$ is surjective since both sides are quotients of $K_j^M(\res R)$ (see Remark \ref{remark_WrOmegalog}), so we have proved exactness of $0\to\tilde\nu_{r-1}(j)(\res R)\xto{p}\tilde\nu_r(j)(\res R)\to\tilde\nu(j)(A)\to 0$. Recalling that $H^{j+1}(\bb Z/p^r(j)(A))=\tilde\nu_r(j)(A)$ \cite[Corol.~8.19]{BhattMorrowScholze2}, this is precisely the desired result.
\end{remark}

To treat the passage from Milnor $K$-theory to its improved variant, we unsurprisingly needed norm maps on $\bb Z_p(j)(-)$. Although these should exist in much greater generality (say, for arbitrary finite quasisyntomic maps), the case of finite \'etale maps is sufficient for our purposes, namely Corollary \ref{corollary_injective}.

\begin{lemma}\label{lemma_norm_exists}
To any finite \'etale map $A\to B$ of $p$-adically complete rings there are associated norm maps $N=N_{B/A}:\bb Z_p(j)(B)\to \bb Z_p(j)(A)$ for all $j\ge0$ satisfying the following properties:
\begin{enumerate}
\item For any map $A\to A'$ where $A'$ is another $p$-adically complete ring, the diagram
\[\xymatrix{
\bb Z_p(j)(A) \ar[d] &  \ar[l]_N \bb Z_p(j)(B)\ar[d]\\
\bb Z_p(j)(A') & \ar[l]_N \bb Z_p(j)(A'\otimes_AB)
}\]
commutes.
\item When $A\to B$ has constant degree $d$, then the composition $\bb Z_p(j)(A)\to \bb Z_p(j)(B)\xto{N}\bb Z_p(j)(A)$ is multiplication by $d$.
\end{enumerate}

\end{lemma}
\begin{proof}
Suppose first that $A$ is quasisyntomic, in which case $B$ is automatically also quasisyntomic. Letting $A\to S$ be a quasisyntomic cover where $S$ is quasiregular semiperfectoid, then quasisyntomic descent of $\bb Z_p(j)$ implies that $\bb Z_p(j)(A)$ is equivalent to the totalisation of $\bb Z_p(j)(-)$ of the $p$-completed \v Cech nerve of this cover, i.e., \[\bb Z_p(j)(A)\simeq|\cosimp{S}{S\hat\otimes_AS}{S\hat\otimes_AS\hat\otimes_AS} |\] The base change $S_B:=S\otimes_AB$ is a quasiregular semiperfectoid ring providing a quasisyntomic cover of $B$, and so similarly $\bb Z_p(j)(B)\simeq|\bb Z_p(j)(S_B^{\hat\otimes_B\blob})|$.

Since all terms in the \v Cech nerves are themselves quasiregular semiperfectoid rings \cite{}, there are natural equivalences $\bb Z_p(j)(S^{\hat\otimes_An})\simeq\tau_{[2j-1,2j]}K(S^{\otimes_An};\bb Z_p)$ for all $n\ge0$, and similarly for $S_B$, and therefore the $K$-theory norm map for the finite \'etale extension $S^{\hat\otimes_An}\to S_B^{\hat\otimes_Bn}=S^{\hat\otimes_An}\otimes_AB$ induces $N_{S_B^{\hat\otimes_Bn}/S^{\hat\otimes_An}}:\bb Z_p(j)(S_B^{\hat\otimes_Bn})\to \bb Z_p(j)(S^{\hat\otimes_An})$. These are moreover compatible in the sense that the diagram
\[\xymatrix{
\bb Z_p(j)(S^{\hat\otimes_An}) \ar[d]_f &  \ar[l]_N \bb Z_p(j)(S_B^{\hat\otimes_Bn})\ar[d]^f\\
\bb Z_p(j)(S^{\hat\otimes_Am}) & \ar[l]_N \bb Z_p(j)(S_B^{\hat\otimes_Bm})
}\]
commutes for any map $f:[m]\to [n]$, since the analogous diagram commutes in $K$-theory; so we may totalise to induce a norm map $N_{B/A}^S:\bb Z_p(j)(B)\to \bb Z_p(j)(A)$ which appears to depend on $S$.

But this dependence on $S$ is superficial: given any quasisymtomic cover of $A$ by another quasiregular semiperfectoid $T$, which we assume receives a map from $S$ since otherwise we replace it by $S\otimes_AT$, the compatible commutative diagrams
\begin{equation}\xymatrix{
\bb Z_p(j)(S^{\hat\otimes_An}) \ar[d] &  \ar[l]_N \bb Z_p(j)(S_B^{\hat\otimes_Bn})\ar[d]\\
\bb Z_p(j)(T^{\hat\otimes_An}) & \ar[l]_N \bb Z_p(j)(T_B^{\hat\otimes_Bn})
}\label{eqn_norm_1}\end{equation}
(again, since the analogous diagrams in $K$-theory commute and are compatible over $n$) totalise to a commutative diagram
\[\xymatrix{
\bb Z_p(j)(A)\ar[d]^{\rotatebox{90}{$\simeq$}} & \bb Z_p(j)(B)\ar[d]^{\rotatebox{90}{$\simeq$}}\\
|\bb Z_p(j)(S^{\hat\otimes_A\blob})| \ar[d]^{\rotatebox{90}{$\simeq$}} &  \ar[l]_N {|\bb Z_p(j)(S_B^{\hat\otimes_B\blob})|}\ar[d]^{\rotatebox{90}{$\simeq$}}\\
|\bb Z_p(j)(T^{\hat\otimes_A\blob})| & \ar[l]_N {|\bb Z_p(j)(T_B^{\hat\otimes_B\blob})|}
}\]
i.e., informally $N_{B/A}^S=N_{B/A}^{S'}$. More precisely, we therefore define $N_{B/A}$ be be the limit of $N_{B/A}^S$ over all covers $S$ of $A$, viewing $\bb Z_p(A)$ as the limit of $|\bb Z_p(j)(S^{\hat\otimes_A\blob})|$ over all such covers. But in practice in what follows, we will just pick a particular cover $S$.

We next check functoriality (i), assuming that both $A$ and $A'$ are quasisyntomic so that the norm maps have been defined. Let $A\to S$ be a quasisyntomic cover with $S$ quasiregular semiperfectoid, and then let $S\hat\otimes_AA'\to T$ a quasisyntomic cover with $T$ quasiregular semiperfectoid; note that the composition $A'\to T$ is a quasisyntomic cover. Calculating the norms for $A\to B$ and $A'\to A'\otimes_AB$ using the covers $S$ and $T$ respectively, the desired functoriality follows from totalising over the commutative diagrams which are almost identical to (\ref{eqn_norm_1}), just replacing $A$ and $B$ in the bottom row by $A'$ and $B'=A'\otimes_AB$.

We next extend the norm map to finite \'etale maps $A\to B$ between arbitrary $p$-complete rings; the idea is to compatibly simpicialy resolve $A$ and $B$ to reduce to the case already treated. Similarly to the proof of Proposition \ref{proposition_LKE_Milnor} we pick a simplicial resolution $R_\blob\to A$ by ind-smooth $\bb Z_p$-algebras such that the kernel of each surjection $R_q\to A$ is a henselian ideal. Then $R_\blob/p^sR_\blob\quis A\dotimes_{\bb Z}\bb Z/p^s\bb Z$ for each $s\ge1$, whence taking the derived inverse limit shows that $\hat R_\blob\quis A$; here $\hat R_\blob$ is the simplicial ring obtained by $p$-adically completing each $R_q$ (which represents the derived $p$-adic completion of $R_\blob$), and we recall that $A$ is derived $p$-adically complete (as it is $p$-adically complete, which includes separated under our conventions). The kernel of each surjection $\hat R_q\to A$ is henselian, as follows from a straightforward series of manipulations which we leave to the reader.

In conclusion, we have constructed a simplicial resolution $\hat R_\blob\quis A$ along henselian surjections, where each ring in the resolution is quasisyntomic. The finite \'etale map $A\to B$ lifts uniquely to a finite \'etale map $\hat R_q\to Q_q$ for each $q\ge0$, and these assemble to form a simplicial ring $Q_\blob\to B$. Observe that each $Q_q$ is also quasisyntomic over $\bb Z_p$, that $Q_q\to B$ is a henselian surjection (as henselian surjections are preserved under base change along integral maps), and that $Q_\blob\to B$ is an equivalence (if $B$ were a finite free $A$-module then $Q_\blob$ would be a finite free $\hat R_\blob$-module and this would be clear; the general case is a direct summand of such a free case).

The already constructed norm maps in the quasisyntomic case therefore define a map $N:\bb Z_p(j)(\hat R_\blob)\to \bb Z_p(j)(Q_\blob)$ of simplicial complexes. Since $\bb Z_p(j)(-)$ commutes with $p$-completed sifted colimits \cite[Thm.~5.1(2)]{AntieauMathewMorrowNikolaus}, we may then geometrically realise and $p$-complete to define $N_{B/A}:\bb Z_p(j)(B)\to \bb Z_p(j)(A)$.

The independence of $N_{B/A}$ on the chosen resolution is proved similarly to the independence on $S$ in the first part of the proof, as is its functoriality (i); since we do not need these results for Corollary \ref{corollary_injective}, we omit the details of the proofs.

Property (ii) reduces via the definitions to the case that $A$ and $B$ are quasiregular semiperfectoid, in which case it follows from the analogous property of the $K$-theory norm map.
\end{proof}

\begin{corollary}\label{corollary_injective}
Let $A\to B$ be a finite \'etale map of $p$-adically complete rings, of constant degree not divisible by $p$. Then $\bb Z_p(j)(A)\to\bb Z_p(j)(B)$ is split injective.
\end{corollary}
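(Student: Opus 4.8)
The plan is to deduce the splitting formally from the norm maps constructed in Lemma \ref{lemma_norm_exists}. Write $f:\bb Z_p(j)(A)\to\bb Z_p(j)(B)$ for the structure map and $N=N_{B/A}:\bb Z_p(j)(B)\to\bb Z_p(j)(A)$ for the associated norm. By part (ii) of that lemma the composite $N\circ f$ is multiplication by the constant degree $d$ of $A\to B$.

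Next I would observe that multiplication by $d$ is an equivalence on $\bb Z_p(j)(A)$, using the hypothesis $p\nmid d$. Indeed $\bb Z_p(j)(A)$ is by construction a derived $p$-complete complex, hence recovered as $\op{Rlim}_s\big(\bb Z_p(j)(A)\dotimes_{\bb Z}\bb Z/p^s\bb Z\big)$; since $d$ is a unit in each $\bb Z/p^s\bb Z$ it acts invertibly on every term of this tower, and therefore on the limit. Thus $N\circ f=d\cdot\op{id}$ is an equivalence of $\bb Z_p(j)(A)$.

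It then follows that $f$ admits $(N\circ f)^{-1}\circ N$ as a retraction, so $\bb Z_p(j)(A)\to\bb Z_p(j)(B)$ is split injective, as claimed. The argument is entirely formal once Lemma \ref{lemma_norm_exists} is available, so I do not expect any serious obstacle; the only step meriting a remark is the invertibility of $d$ on a derived $p$-complete object, which is precisely the $\op{Rlim}$ computation above.
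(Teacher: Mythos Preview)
Your argument is correct and is exactly the paper's approach: the paper writes the retraction as $\tfrac1d N_{B/A}$, which is precisely your $(N\circ f)^{-1}\circ N$ once one knows that multiplication by $d$ is invertible on the $p$-complete complex $\bb Z_p(j)(A)$. You have simply made explicit the reason $\tfrac1d$ makes sense, which the paper leaves implicit.
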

\begin{proof}
Letting $d$ be the degree, the splitting is provided by $\tfrac1dN_{B/A}$ thanks to Lemma \ref{lemma_norm_exists}(ii).
\end{proof}

As an application of the $p$-adic Nesterenko--Suslin theorem, we describe the $p$-adic Milnor K-groups locally on smooth formal schemes over the rings of integers of perfectoid fields:

\begin{theorem}\label{theorem_Milnor_K_groups_perfectoids}
Let $C$ be a perfectoid field of characteristic $0$ containing all $p$-power roots of unity, and $\frak X$ a smooth, $p$-adic formal $\roi_C$-scheme; let $x\in \frak X$ and let $R:=\roi_{X,x}$ be the corresponding local ring; let $j\ge0$.
\begin{enumerate}
\item The Galois symbol \[\hat K_j^M(R)/p^r\To H^j_\sub{\'et}(R[\tfrac1p],\mu_{p^r}^{\otimes j})\] is an isomorphism; if $R$ has big residue field then the canonical map $K_j^M(R)/p^r\to K_j^M(R[\tfrac1p])/p^r$ is also an isomorphism.
\item The $p$-adic completion of $\hat K_j^M(R)$ is $p$-power-free.
\end{enumerate}
\end{theorem}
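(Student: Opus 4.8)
The plan is to deduce everything from the $p$-adic Nesterenko--Suslin isomorphism (Theorem~\ref{theorem_Nesterenko_Suslin}) together with a Beilinson--Lichtenbaum comparison over the perfectoid base $\roi_C$. Since $R=\roi_{X,x}$ is a local, ind-smooth $\roi_C$-algebra --- which we may take to be $p$-henselian, replacing it by its $p$-henselisation affecting none of the groups in the statement --- Theorem~\ref{theorem_Nesterenko_Suslin} gives natural isomorphisms $\hat K_j^M(R)/p^r\isoto H^j(\bb Z/p^r(j)(R))$ compatible with the Galois symbol. Thus the first assertion of (i) is equivalent to the claim that the \'etale realisation $\bb Z/p^r(j)(R)\to R\Gamma_\sub{\'et}(R[\tfrac1p],\mu_{p^r}^{\otimes j})$ is an isomorphism in cohomological degrees $\le j$; equivalently, one needs the perfectoid-base analogue of Corollary~\ref{corollary_BCM}, namely $\bb Z/p^r(j)(R)\simeq\tau^{\le j}R\Gamma_\sub{\'et}(R[\tfrac1p],\mu_{p^r}^{\otimes j})$ --- now with \emph{no} Kato residue correction, since $\roi_C$ carries no special-fibre Cartier divisor along which to take residues. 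This is a form of the $p$-adic Beilinson--Lichtenbaum (Fontaine--Messing) comparison over a perfectoid base: it uses both that $C$ is perfectoid (so that the $A_\sub{inf}$-cohomology theory of \cite{BhattMorrowScholze2} is available) and that $\mu_{p^\infty}\subseteq C$ (so the Tate twists are trivial Galois modules and the comparison is uniform in $r$). I would obtain it from the forthcoming Bhatt--Clausen--Mathew work already invoked in Theorem~\ref{theorem_BCM_LKE}, whose perfectoid-base case is cleaner than the mixed characteristic one, or directly from \cite{BhattMorrowScholze2,AntieauMathewMorrowNikolaus}; granting it, $\hat K_j^M(R)/p^r\isoto H^j_\sub{\'et}(R[\tfrac1p],\mu_{p^r}^{\otimes j})$.

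For the second assertion of (i), assume $R$ has big residue field, so $K_j^M(R)/p^r=\hat K_j^M(R)/p^r$ by Remark~\ref{remark_improved}; since the Galois symbol for $R[\tfrac1p]$ factors through $K_j^M(R[\tfrac1p])/p^r$, the isomorphism just proved already gives injectivity of $K_j^M(R)/p^r\to K_j^M(R[\tfrac1p])/p^r$. Surjectivity uses that $\roi_C$ is perfectoid: its value group is $p$-divisible, and --- as $R$ modulo the maximal ideal $\m_C$ of $\roi_C$ is a domain, so that $\m_C R$ is prime of height one and is the only height-one prime of $R$ containing $p$ --- one identifies $R[\tfrac1p]^\times/R^\times$ with that value group, hence it is $p$-divisible. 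Thus every unit of $R[\tfrac1p]$ is, modulo $p^r$ and modulo $R^\times$, a $p^r$-th power, so every symbol over $R[\tfrac1p]$ is congruent modulo $p^r$ to one coming from $R$; hence $K_j^M(R)/p^r\to K_j^M(R[\tfrac1p])/p^r$ is surjective, so an isomorphism.

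Part (ii) is then formal. By part (i) the complex $\bb Z/p^r(j)(R)\simeq\tau^{\le j}R\Gamma_\sub{\'et}(R[\tfrac1p],\mu_{p^r}^{\otimes j})$ is concentrated in degrees $\le j$ for every $r$; hence so is $\bb Z_p(j)(R)$, because its only potential top group $H^{j+1}(\bb Z_p(j)(R))$ is $p$-divisible (by part (i), as $H^{j+1}(\bb Z/p^r(j)(R))=0$) and $p$-torsion-free (by the remark following Theorem~\ref{theorem_Nesterenko_Suslin}), hence vanishes since it is derived $p$-complete. Consequently $\bb Z_p(j)(R)\simeq\tau^{\le j}R\Gamma_\sub{\'et}(R[\tfrac1p],\bb Z_p(j))$, and combining this with Theorem~\ref{theorem_Nesterenko_Suslin} and that remark --- so that the $p$-adic Tate module of the $p$-torsion-free group $H^{j+1}(\bb Z_p(j)(R))$ vanishes --- passing to the inverse limit over $r$ identifies the $p$-adic completion of $\hat K_j^M(R)$ with (the $p$-completion of) $H^j(\bb Z_p(j)(R))=H^j_\sub{\'et}(R[\tfrac1p],\bb Z_p(j))$. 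This group is $p$-torsion-free: the $p$-power torsion of $\hat K_j^M(R)$ is $p$-divisible --- a consequence of $\mu_{p^\infty}\subseteq C$, witnessed for instance by the fact that the symbols $\{\zeta_{p^s},\dots\}$ satisfy $\{\zeta_{p^s},\dots\}=p\{\zeta_{p^{s+1}},\dots\}$ --- so it lies in $\bigcap_r p^r\hat K_j^M(R)$, maps to zero in every finite quotient $\hat K_j^M(R)/p^r$, and contributes nothing to the limit; and the $p$-adic completion of a $p$-torsion-free abelian group is again $p$-torsion-free.

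The crux of the argument is the Beilinson--Lichtenbaum comparison of the first paragraph: the reduction through Nesterenko--Suslin and all of the subsequent deductions are routine, whereas identifying $\tau^{\le j}\bb Z/p^r(j)(R)$ with $\tau^{\le j}R\Gamma_\sub{\'et}(R[\tfrac1p],\mu_{p^r}^{\otimes j})$ for $R$ ind-smooth over a perfectoid ring is genuine $p$-adic Hodge theory input --- the primitive comparison theorem, or the statement that syntomic cohomology computes $p$-adic \'etale cohomology in the Beilinson--Lichtenbaum range over a perfectoid base --- on which the precise torsion-freeness bookkeeping in part (ii) also rests.
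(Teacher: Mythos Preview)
Your argument for part (i) is essentially the paper's. One minor point: the comparison $\bb Z/p^r(j)(R)\simeq\tau^{\le j}R\Gamma_\sub{\'et}(R[\tfrac1p],\mu_{p^r}^{\otimes j})$ is already \cite[Thm.~10.1]{BhattMorrowScholze2}; you do not need the forthcoming Bhatt--Clausen--Mathew work here. For the surjectivity in the big residue field case, your identification of $R[\tfrac1p]^\times/R^\times$ with the value group of $C$ is not quite right in general (there is no reason this quotient should be exactly $\Gamma_C$), but the underlying $p$-divisibility idea is correct; the paper's formulation via $p=u\pi^{p^r}$ is more direct.

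Your argument for part (ii), however, has a genuine gap. You assert that the $p$-power torsion of $\hat K_j^M(R)$ is $p$-divisible, ``witnessed'' by the relation $\{\zeta_{p^s},\dots\}=p\{\zeta_{p^{s+1}},\dots\}$. But this relation only shows that those particular symbols are $p$-divisible; it says nothing about an arbitrary $p$-torsion element of $\hat K_j^M(R)$. Without this, your deduction that the torsion lies in $\bigcap_r p^r\hat K_j^M(R)$ fails, and so does the conclusion.

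The paper's argument for (ii) is quite different and avoids this problem entirely. It works directly with the long exact sequence in \'etale cohomology coming from $0\to\mu_{p^{r-1}}^{\otimes j}\to\mu_{p^r}^{\otimes j}\to\mu_p^{\otimes j}\to 0$. The key observation is that the map $H^{j-1}_\sub{\'et}(R[\tfrac1p],\mu_{p^r}^{\otimes j})\to H^{j-1}_\sub{\'et}(R[\tfrac1p],\mu_p^{\otimes j})$ can be identified---using part (i) in degree $j-1$ together with a trivialisation of the extra Tate twist (this is precisely where $\mu_{p^\infty}\subseteq C$ is used)---with the natural surjection $\hat K_{j-1}^M(R)/p^r\to\hat K_{j-1}^M(R)/p$. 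Hence the boundary map vanishes, so $H^j_\sub{\'et}(R[\tfrac1p],\mu_{p^{r-1}}^{\otimes j})\to H^j_\sub{\'et}(R[\tfrac1p],\mu_{p^r}^{\otimes j})$ is injective; by part (i) in degree $j$ this is exactly the map $p:\hat K_j^M(R)/p^{r-1}\to\hat K_j^M(R)/p^r$. Injectivity of these maps for all $r$ is equivalent to $p$-torsion-freeness of the $p$-adic completion.
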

\begin{proof}
We begin by recalling the structure of the ring $R$. Picking an affine open neighbourhood $\Spf S$ of $x$, with $x$ corresponding to the prime ideal $\frak q\subseteq S$, then $R=\indlim_{f\in S\setminus\frak q}\hat{S[\tfrac1f]}$ where the hat denotes $p$-adic completion. In particular, $R$ is a $p$-henselian local ring having residue field $k(\frak q)$.

(i) The Galois symbol is an isomorphism thanks to Theorem \ref{theorem_Nesterenko_Suslin} and the known identification $H^j(\bb Z/p^r(j)(R))\cong H^j_\sub{\'et}(R[\tfrac1p],\mu_{p^r}^{\otimes j})$ \cite[Thm.~10.1]{BhattMorrowScholze2}. When $R$ has big residue field the Galois symbol factors through $\hat K_j^M(R)/p^r=K_j^M(R)/p^r\to K_j^M(R[\tfrac1p])/p^r$, so it remains only to prove that this map is surjective: but that follows from the existence of $u,\pi\in \roi_C$, with $u$ being a unit, such that $p=u\pi^{p^r}$.

(ii) In the first diagram of the proof of Proposition \ref{prop_reduction0} the boundary map $\delta$ is injective since the previous map in the sequence $H^{j-1}(R[\tfrac1p],\mu_{p^r}^{\otimes j})\to H^{j-1}(R[\tfrac1p],\mu_{p}^{\otimes j})$ may be identified (using (i) for $j-1$ and trivalising the Tate twists) with the surjection $\hat K_j^M(R)/p^r\to\hat K_j^M(R)/p$. Again using (i) (this time for $j$), we deduce that the multiplication map $p:\hat K_j^M(R)/p^{r-1}\to \hat K_j^M(R)/p^r$ is injective, from which the $p$-torsion-freeness claim follows. \end{proof}

\begin{remark}
In \cite{Izhboldin1991} Izhboldin proves that the Milnor K-groups of a field of characteristic $p>0$ are $p$-torsion free. Combined with the Gersten conjecture for Milnor K-theory in equal characteristic, this implies that for a smooth scheme $X$ over a field $k$ of characteristic $p>0$ the (improved) Milnor K-sheaf $\hat{\cal K}^M_{j,X}$ is $p$-torsion free. Theorem \ref{theorem_Milnor_K_groups_perfectoids}(ii) may be considered to be an analogue of this statement in mixed characteristic. 
\end{remark}

\section{An application to the Gersten conjecture}\label{section_Gersten_conj}
The Gersten conjecture in Milnor $K$-theory predicts that, for any regular Noetherian local ring $R$, the Gersten complex \[0\To \hat K_j^M(R)\To K_j^M(\Frac R)\To \bigoplus_{x\in \Spec R^{(1)}}K_{j-1}^M(k(x))\To \bigoplus_{x\in \Spec R^{(2)}}K_{j-2}^M(k(x))\To\cdots\] is exact for each $j\ge0$; here we write $\Spec R^{(i)}$ for the set of codimension $i$ points of $\Spec R$. If $R$ contains a field then the Gersten complex is known to be universally exact by Kerz \cite{Kerz2009, Kerz2010}, but the mixed characteristic case is open.

\subsection{The \texorpdfstring{$p$}{p}-henselian, ind-smooth case}\label{ss_Gersten}
By combining Theorem \ref{theorem_BK}(ii) with Kerz' and other existing Gersten results in motivic cohomology, we prove the Gersten conjecture in mod $p$-power Milnor $K$-theory for $p$-henselian, ind-smooth algebras over complete discrete valuation rings:

\begin{theorem}\label{theorem_Gersten}
Let $V$ be a complete discrete valuation ring of mixed characteristic, and $R$ a $p$-henselian, regular, Noetherian, local $V$-algebra such that $R/\frak mR$ is ind-smooth over $V/\frak m$. Then the mod $p$-power Gersten conjecture holds for $R$, i.e., for any $r,j\ge0$, the complex \[0\To \hat K_j^M(R)/p^r\To K_j^M(\Frac R)/p^r\To \bigoplus_{x\in \Spec R^{(1)}}K_{j-1}^M(k(x))/p^r\To \bigoplus_{x\in \Spec R^{(2)}}K_{j-2}^M(k(x))/p^r\To\cdots\] is exact.
\end{theorem}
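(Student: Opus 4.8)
The plan is to deduce this from the $p$-adic Nesterenko--Suslin isomorphism (Theorem~\ref{theorem_Nesterenko_Suslin}) together with the Gersten conjecture for $p$-adic \'etale motivic cohomology over complete discrete valuation rings, of which Theorem~\ref{introduction_thm1}/Theorem~\ref{theorem1} is the missing mixed-characteristic ingredient. First I would reduce to the case that $R$ is ind-smooth over $V$. If $p=0$ in $R$, then $R$ is a regular Noetherian local ring containing $\bb F_p$, its Milnor $K$-theory Gersten complex is (universally) exact by Kerz~\cite{Kerz2009, Kerz2010}, and reducing mod $p^r$ preserves exactness; so we may assume $R$ is flat over $V$. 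Then $\pi$ is a non-zero-divisor of $R$, the special fibre $R/\pi R$ is a regular local ring which is ind-smooth, hence geometrically regular, over $V/\frak m$, and the generic fibre $R[\tfrac1p]$ is regular over the characteristic-zero field $\Frac V$, hence geometrically regular; thus $V\to R$ is a regular morphism, and N\'eron--Popescu desingularisation exhibits $R$ as a filtered colimit of smooth $V$-algebras. Localising, $R$ is a local, ind-(essentially-)smooth, $p$-henselian $V$-algebra, so Theorems~\ref{theorem1}, \ref{theorem_BCM_LKE} and \ref{theorem_Nesterenko_Suslin} all apply to it; and since the Gersten complex is a filtered colimit of the Gersten complexes of the semilocal rings of finitely many closed points of a smooth $V$-scheme, it suffices to treat the latter.

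Next, over $V$ the syntomic complexes $\bb Z/p^r(j)(-)$ agree, on $p$-henselian ind-smooth algebras, with the $p$-adic \'etale motivic cohomology (equivalently, Sato's $p$-adic \'etale Tate twists) studied by Geisser, Sato and Schneider: this is the Bhatt--Clausen--Mathew comparison of Theorem~\ref{theorem_BCM_LKE}, spelled out in Corollary~\ref{corollary_BCM}. These complexes satisfy the Gersten conjecture on smooth $V$-schemes --- using Gabber's geometric presentation lemma over the base $V$, localisation and homotopy invariance, the computation of $p$-adic vanishing cycles \cite{Bloch1986} and logarithmic de Rham--Witt sheaves \cite{GrosSuwa1988} on the two fibres, and, for the generic fibre, the norm residue theorem --- and hence, for $X:=\Spec R$, the coniveau spectral sequence degenerates into Gersten resolutions: for each $n$, the complex
\[0\To H^n(\bb Z/p^r(j)(R))\To \bigoplus_{x\in X^{(0)}}H^n(\bb Z/p^r(j)(k(x)))\To\bigoplus_{x\in X^{(1)}}H^{n-1}(\bb Z/p^r(j-1)(k(x)))\To\cdots\]
is exact, with $x\in X^{(i)}$ contributing $H^{n-i}(\bb Z/p^r(j-i)(k(x)))$; this is where one invokes \cite{Geisser2004, Sato2007, Schneider1994}.

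Finally I would read off the case $n=j$. For a residue field $k(x)$ of characteristic $0$ one has $H^{j-i}(\bb Z/p^r(j-i)(k(x)))=H^{j-i}_\sub{\'et}(k(x),\mu_{p^r}^{\otimes(j-i)})\cong K^M_{j-i}(k(x))/p^r$ by the norm residue theorem (Beilinson--Lichtenbaum); for $k(x)$ of characteristic $p$ one has $H^{j-i}(\bb Z/p^r(j-i)(k(x)))=W_r\Omega^{j-i}_{k(x),\sub{log}}\cong K^M_{j-i}(k(x))/p^r$ by Bloch--Kato--Gabber (and Geisser--Levine); and $H^j(\bb Z/p^r(j)(R))\cong \hat K^M_j(R)/p^r$ by Theorem~\ref{theorem_Nesterenko_Suslin}. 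These identifications carry the Gersten differentials to the residue maps in Milnor $K$-theory, exactly as in Lemma~\ref{lemma_localisation_in MilnorK} and in the construction of the symbol in the proof of Theorem~\ref{theorem_Nesterenko_Suslin}. Hence the $n=j$ row of the above Gersten resolution is precisely the mod $p^r$ Milnor $K$-theory Gersten complex of $R$, which is therefore exact.

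I expect the main obstacle to be the third step: pinning down the Gersten conjecture for $p$-adic \'etale motivic cohomology over the mixed-characteristic base $V$ as a statement about the \emph{global} Gersten complex of the local ring $R$ --- not merely about a resolution of Zariski sheaves --- and matching the weight--degree bookkeeping so that its Milnor-range row is identified, via Theorem~\ref{theorem_Nesterenko_Suslin} and the field-level identifications above, with the Milnor $K$-theory Gersten complex. (Essentially all of the genuinely mixed-characteristic content beyond Kerz's equal-characteristic results is thereby concentrated in Theorem~\ref{introduction_thm1}, which underlies both Theorem~\ref{theorem_Nesterenko_Suslin} and the identification of Corollary~\ref{corollary_BCM}.)
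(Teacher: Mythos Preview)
Your approach is plausible and uses the same underlying ingredients as the paper (N\'eron--Popescu, Kerz over fields, Geisser's mixed-characteristic Gersten for motivic cohomology mod~$p^r$, Beilinson--Lichtenbaum, and the paper's main theorem), but it is organised quite differently from the paper's proof, and the step you yourself flag as the obstacle is exactly where the two diverge.

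You argue by first asserting a Gersten resolution for the syntomic complexes $\bb Z/p^r(j)$ on the local ring $R$, and then identifying the row $n=j$ term-by-term via Theorem~\ref{theorem_Nesterenko_Suslin}. The difficulty is that none of \cite{Geisser2004, Sato2007, Schneider1994} directly gives a Gersten resolution for $\bb Z/p^r(j)$; Geisser proves it for Bloch's motivic cohomology mod~$p^r$, and bridging to syntomic cohomology requires the Bhatt--Clausen--Mathew comparison \emph{together with} a further identification of the Sato-style complexes with motivic cohomology mod~$p^r$ at the level of local rings, plus compatibility of residue maps. This can be done, but it is precisely the bookkeeping you anticipate having to pin down.

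The paper sidesteps this entirely. It never invokes a Gersten resolution for syntomic or motivic cohomology of $R$ itself; instead it works directly with the Milnor $K$-theory Gersten complex $g_j(X)$, decomposes it via the short exact sequence $0\to g_{j-1}(Z)[-1]\to g_j(X)\to g_j(X_\eta)\to 0$ (special fibre $Z$ versus generic fibre $X_\eta$), applies Kerz's equal-characteristic results to each piece, and is left with computing $H^n_\sub{Zar}(X_\eta,\hat{\cal K}_j^M/p^r)$. That is handled by Lemma~\ref{lemma_BL}, which uses Geisser's Gersten for motivic cohomology only on the smooth fibres and only through the Beilinson--Lichtenbaum identification $\hat{\cal K}_j^M/p^r\cong R^j\ep_*\mu_{p^r}^{\otimes j}$ on the characteristic-zero locus. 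The remaining four-term exact sequence is then matched against Theorem~\ref{theorem1} (not Theorem~\ref{theorem_Nesterenko_Suslin}), which gives $H^0(g_j(X))\cong\hat K_j^M(R)/p^r$ and $H^1(g_j(X))=0$ in one stroke. In particular, the paper never needs to check that residue maps in two different theories agree: everything happens inside Milnor $K$-theory until the very last comparison with \'etale cohomology, which is precisely the content of Theorem~\ref{theorem1}.

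So: your route would work once you fully justify the syntomic Gersten resolution and the residue-map compatibility, but the paper's route is more economical because it stays inside Milnor $K$-theory and uses Theorem~\ref{theorem1} directly rather than passing through the Nesterenko--Suslin isomorphism and the Bhatt--Clausen--Mathew comparison.
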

\begin{proof}
As mentioned before the theorem, this will follow from combining our main injectivity theorem with existing results; there are various ways to carry out the argument, among which we propose the following (whose advantage is that it circumvents any new use of Panin's trick \cite{Panin2003} to reduce the ind-smooth case to the smooth case). First note that the hypotheses imply that $V\to R$ is geometrically regular, therefore ind-smooth by N\'eron--Popescu; so our earlier results do apply.

Let $Z=\Spec(R/\frak mR)$ and $X_{\eta}=\Spec(R[\tfrac1p])$ be the special and generic fibres, so that we have Gersten complexes
\[g_j(X)=\qquad0\To K^M_j(\Frac R)/p^r
\rightarrow \bigoplus_{x\in X^{(1)}} K^M_{j-1}(x)/p^r\rightarrow\cdots\]
\[g_j(Z)=\qquad0\To K^M_j(\Frac R)/p^r
\rightarrow \bigoplus_{x\in Z^{(1)}} K^M_{j-1}(x)/p^r\rightarrow\cdots\]

\[g_j(X_\eta)=\qquad0\To K^M_j(\Frac R)/p^r
\rightarrow \bigoplus_{x\in X_\eta^{(1)}} K^M_{j-1}(x)/p^r\rightarrow\cdots.\]
fitting into a short exact sequence $0\to g_{j-1}(Z)[-1]\to g_j(X)\to g_j(X_\eta)\to 0$. Thanks to the Gersten conjecture in Milnor $K$-theory over fields \cite{Kerz2010}, the canonical map $\hat K_j^M(R)/p^r\to g_j(Z)$ is an equivalence and $g_j(X_\eta)$ calculates the Zariski cohomology of the improved Milnor $K$-theory sheaf $\hat{\cal K}_j^M/p^r$ on $X_\eta$ (which is typically not local).

From the short exact sequence of Gersten complexes we therefore obtain an exact sequence
\begin{equation}0\To H^0(g_j(X))\To H^0_\sub{Zar}(X_\eta,\hat{\cal K}^M_j/p^r)\To\hat K^M_{j-1}(A/\frak mA)/p^r\To H^1(g_j(X))\To 0\label{eqn_Gersten}\end{equation}
and isomorphisms $H^n(g_j(X))\isoto H^n_\sub{Zar}(X_\eta,\hat{\cal K}_j^M/p^r)$ for $n\ge2$. But by passage to a filtered colimit over Lemma \ref{lemma_BL}(ii) below we know that \[H^n_\sub{Zar}(X_\pi,\hat{\cal K}_j^M/p^r)=\begin{cases} H^j_\sub{\'et}(X_\eta,\mu_{p^r}^{\otimes j}) & \text{if }n=0, \\ 0 & \text{if }n>0.\end{cases}\] This proves the desired acyclicity in degrees $\ge2$ and moreover lets us compare (\ref{eqn_Gersten}) to the short exact sequence of Theorem \ref{theorem1}, from which we immediately obtain the acyclicity in degree $1$ and the desired isomorphism $\hat K_j^M(R)/p^r\isoto H^0(g_j(X))$.
\end{proof}

We required the following (presumably well-known) result:

\begin{lemma}\label{lemma_BL}
Let $V$ be a complete discrete valuation ring of mixed characteristic, and $S$ an essentially smooth, local $V$-algebra; let $r,j\ge0$. Then
\begin{enumerate}
\item $H^n_\sub{Zar}(\Spec S[\tfrac1p],R^i\ep_*\mu_{p^r}^{\otimes j})=0$ for all $i\le j$ and all $n>0$, where $\ep:\Spec S[\tfrac1p]_\sub{\'et}\to\Spec S[\tfrac1p]_\sub{Zar}$ is the projection map of sites.
\item $H^n_\sub{Zar}(\Spec S[\tfrac1p],\hat{\cal K}_j^M/p^r)=\begin{cases} H^j_\sub{\'et}(\Spec S[\tfrac1p],\mu_{p^r}^{\otimes j}) & \text{if }n=0, \\ 0 & \text{if }n>0.\end{cases}$
\end{enumerate}
\end{lemma}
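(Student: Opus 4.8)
The plan is to reduce both assertions, via Cousin (coniveau) resolutions, to the Gersten conjectures for $p$-adic motivic cohomology, for logarithmic de Rham--Witt forms, and to the norm residue theorem. First observe that $S[\tfrac1p]$ is essentially smooth over the characteristic-zero field $K:=\Frac V$: writing $S=\roi_{X,x}$ for a smooth $V$-scheme $X$ and a point $x$, the scheme $\Spec S[\tfrac1p]$ is the localisation at $x$ of the generic fibre $X_\eta=X\times_VK$, which is smooth over $K$. Since $p$ is invertible in $K$, the Beilinson--Lichtenbaum theorem (i.e.\ the Voevodsky--Rost norm residue theorem) gives $\bb Z/p^r(j)\simeq\tau^{\le j}R\ep_*\mu_{p^r}^{\otimes j}$ over $X_\eta$, and hence an identification of Zariski sheaves $R^i\ep_*\mu_{p^r}^{\otimes j}\cong \mathcal H^i(\bb Z/p^r(j))$ for all $i\le j$; this is exactly why part (i) is restricted to this range.

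For part (i), the plan is then: Geisser's Gersten conjecture for ($p$-adic) motivic cohomology of smooth schemes over the discrete valuation ring $V$ \cite{Geisser2004} shows that the Cousin complex of $\mathcal H^i(\bb Z/p^r(j))$ on the local scheme $\Spec S$ is a flasque resolution, so its cohomology in positive Zariski degrees vanishes. Passing to the open subscheme $\Spec S[\tfrac1p]=\Spec S\setminus V(p)$, its Cousin complex is the quotient of that of $\Spec S$ by the subcomplex of terms supported along the special fibre $V(p)$ (this really is a subcomplex: boundary maps out of a vertical point stay vertical). When $i<j$ this subcomplex vanishes term by term: by purity its entries are $H^{i-c}$ of the char-$p$ residue fields of codimension-$c$ points with coefficients in $\bb Z/p^r(j-c)$, and by Geisser--Levine the latter complex is concentrated in degree $j-c>i-c$; so the two Cousin complexes agree and we are done. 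When $i=j$ the vertical subcomplex is, again by purity and Geisser--Levine, a shift of the Gersten complex of $\Spec(S/p)$ with coefficients in $W_r\Omega^{j-1}_\sub{log}$, whose positive-degree Zariski cohomology over the local $\bb F_p$-algebra $S/p$ vanishes by the Gersten conjecture for logarithmic de Rham--Witt forms of Gros--Suwa \cite{GrosSuwa1988}. The long exact sequence of the three Cousin complexes then yields $H^n_\sub{Zar}(\Spec S[\tfrac1p],R^i\ep_*\mu_{p^r}^{\otimes j})=0$ for all $n>0$ and $i\le j$.

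For part (ii) the first step is to identify the Zariski sheaf $\hat{\cal K}^M_j/p^r$ on $\Spec S[\tfrac1p]$ with $R^j\ep_*\mu_{p^r}^{\otimes j}$: by Kerz's form of the Nesterenko--Suslin isomorphism for regular local rings containing a field \cite{Kerz2009, Kerz2010} together with the vanishing of motivic cohomology above the weight, $\hat{\cal K}^M_j/p^r=\mathcal H^j(\bb Z/p^r(j))$, which by the previous paragraph is $R^j\ep_*\mu_{p^r}^{\otimes j}$. For $n>0$ the asserted vanishing is then the case $i=j$ of part (i). For $n=0$, run the Leray spectral sequence $H^p_\sub{Zar}(\Spec S[\tfrac1p],R^q\ep_*\mu_{p^r}^{\otimes j})\Rightarrow H^{p+q}_\sub{\'et}(\Spec S[\tfrac1p],\mu_{p^r}^{\otimes j})$: in total degree $j$ every entry with $p\ge1$ has $q\le j$ and hence vanishes by part (i), so all differentials into and out of $E_2^{0,j}$ are zero and the edge map gives $H^0_\sub{Zar}(\Spec S[\tfrac1p],R^j\ep_*\mu_{p^r}^{\otimes j})\cong H^j_\sub{\'et}(\Spec S[\tfrac1p],\mu_{p^r}^{\otimes j})$.

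The real content is the package of Gersten-type theorems invoked above (norm residue, Geisser over $V$, Geisser--Levine, and Gros--Suwa over the special fibre); modulo these, everything is a diagram chase. The one step requiring genuine care is the reduction in part (i) from the local scheme $\Spec S$ to its open $\Spec S[\tfrac1p]$ --- that is, controlling the Cousin complex after deleting the special-fibre strata --- where one must identify the deleted vertical subcomplex with the de Rham--Witt log Gersten complex via purity and check compatibility of the boundary maps; one must also be mildly careful that all the cited results are stated, or extend by continuity, to the pro-smooth local schemes at hand.
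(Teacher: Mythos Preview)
Your proposal is correct and follows essentially the same route as the paper: both reduce (ii) to (i) via the identification $\hat{\cal K}_j^M/p^r\cong R^j\ep_*\mu_{p^r}^{\otimes j}$ and the change-of-topology spectral sequence, and both prove (i) by combining Beilinson--Lichtenbaum with the short exact sequence of Gersten complexes for $\Spec S$, its special fibre, and its generic fibre, together with Geisser's Gersten conjecture over $V$. The only cosmetic difference is that you split the vertical subcomplex into the cases $i<j$ (termwise vanishing via Geisser--Levine) and $i=j$ (Gros--Suwa for $W_r\Omega^{j-1}_{\sub{log}}$), whereas the paper handles both uniformly by citing the Gersten conjecture for motivic cohomology over the residue field --- these are equivalent via Geisser--Levine.
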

\begin{proof}
Part (ii) is follows from (i) via the change of topology spectral sequence, using the Bloch--Kato isomorphism $\hat{\cal K}_j^M/p^r\isoto R^j\ep_*\mu_{p^r}^{\otimes j}$ on $\Spec S[\tfrac1p]$.

To prove part (i) we use known Gersten results in motivic cohomology.  For any essentially finite type scheme $Y$ over $V$, let $\bb Z(j)^\sub{mot}(Y):=z^j(Y,\blob)[-2j]$ denote its motivic cohomology complex defined via Bloch's higher Chow groups. Similarly to the previous proof above with Milnor $K$-theory, there are associated Gersten complexes for any $i,j\ge0$ \cite{Geisser2004}
\[g_i(j)^\sub{mot}(\Spec S)=\qquad 0\To H^i(\bb Z(j)^\sub{mot}(\Frac S)/p^r)\To\bigoplus_{x\in \Spec S^{(1)}}H^{i-1}(\bb Z(j-1)^\sub{mot}(k(x))/p^r)\To \cdots\]
\[g_i(j)^\sub{mot}(\Spec S/\frak mS )=\qquad 0\To H^i(\bb Z(j)^\sub{mot}(\Frac(S/\frak mS))/p^r)\To\bigoplus_{x\in \Spec S/\frak mS ^{(1)}}H^{i-1}(\bb Z(j-1)^\sub{mot}(k(x))/p^r)\To \cdots\]
\[g_i(j)^\sub{mot}(\Spec S[\tfrac1p])=\qquad 0\To H^i(\bb Z(j)^\sub{mot}(\Frac(S))/p^r)\To\bigoplus_{x\in \Spec S[\tfrac1p]^{(1)}}H^{i-1}(\bb Z(j-1)^\sub{mot}(k(x))/p^r)\To \cdots\]
fitting into short exact sequences \[0\to g_{i-1}(j)^\sub{mot}(\Spec S/\frak mS)[-1]\to g_i(j)^\sub{mot}(\Spec S)\to g_i(j)^\sub{mot}(\Spec S[\tfrac1p])\to 0.\] However, unlike the previous case of Milnor $K$-theory, we may now appeal to the fact that the Gersten conjecture in motivic cohomology is known not only for essentially smooth algebras over fields, but also for the essentially smooth $V$-algebra $S$ \cite[Corol.~4.5]{Geisser2004} (at least with mod $p^r$-coefficients). So from the corresponding long exact sequence we deduce that $H^n_\sub{Zar}(\Spec S[\tfrac1p],\cal H^i(\bb Z(j)^\sub{mot}/p^r))=0$ for $n>0$, where $\cal H^i(\bb Z(j)^\sub{mot}/p^r)$ denotes the Zariski sheafification of $U\mapsto H^i(\bb Z(j)^\sub{mot}(U)/p^r)$. The proof is completed by appealing to the Beilinson--Lichtenbaum isomorphism $\cal H^i(\bb Z(j)^\sub{mot}/p^r)\simeq R^i\ep_*\mu_{p^r}^{\otimes j}$ on $\Spec S[\tfrac1p]$ when $i\le j$.
\end{proof}

\begin{remark}
In the context of Theorem \ref{theorem_Gersten}, the injectivity at the beginning of the Gersten complex, namely $\hat K_j^M(R)/p^r\into K_j^M(\Frac R)/p^r$, can be deduced more directly. Indeed, letting $F$ be as in the statement of Theorem \ref{theorem_Gabber_injectivity}, it is enough to check that the composition \[\hat K_j^M(R)/p^r\To K_j^M(\Frac R)/p^r\To K_j^M(F)/p^r\xto{h_{p^r}^j}H^j_\sub{\'et}(F,\mu_{p^r}^{\otimes j})\] is injective. But this composition coincides with \[\hat K_j^M(R)/p^r\To H^j_\sub{\'et}(R[\tfrac1p],\mu_{p^r}^{\otimes j})\To H^j_\sub{\'et}(F,\mu_{p^r}^{\otimes j}),\] where the first arrow is injective by Theorem \ref{theorem1} and the second by Theorem \ref{theorem_Gabber_injectivity}.
\end{remark}

We note that, in particular, we have proved the mod $p$-power Gersten conjecture Nisnevich locally on smooth $V$-schemes:

\begin{corollary}\label{corollary_Gesten}
Let $V$ be a complete discrete valuation ring of mixed characteristic and $X$ a smooth $V$-scheme. Then the Gersten sequence of Nisnevich sheaves on $X$ \[0\To\hat{\cal K}_{j,X}^M/p^r\To\bigoplus_{x\in X^{(0)}}i_{x*}(K_j^M(x)/p^r)\To \bigoplus_{x\in X^{(1)}}i_{x*}(K_{j-1}^M(x)/p^r)\To\cdots \] (which will be explained further in the course of the proof) is exact, and consequently there is a natural Bloch--Quillen isomorphism \[\CH^j(X)/p^r\cong H^j_\sub{Nis}(X,\hat{\cal K}_{j,X}^M/p^r).\]
\end{corollary}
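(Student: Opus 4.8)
The plan is to deduce this as the standard Gersten-resolution consequence of the local statement proved in Theorem~\ref{theorem_Gersten}. First I would make precise the complex of Nisnevich sheaves in question: for $x\in X^{(i)}$ write $i_x\colon\Spec k(x)\to X$ for the corresponding point, and set
\[C^\bullet:\qquad 0\To\hat{\cal K}_{j,X}^M/p^r\To C^0\To C^1\To C^2\To\cdots,\qquad C^i:=\bigoplus_{x\in X^{(i)}}i_{x*}\bigl(K_{j-i}^M(k(x))/p^r\bigr),\]
with differentials the residue maps of Milnor $K$-theory and with augmentation $\hat{\cal K}_{j,X}^M/p^r\to C^0$ induced by $\hat K_j^M\to K_j^M$ of function fields. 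That these residue maps compose to zero and are compatible with the augmentation is part of the standard theory of Gersten complexes in Milnor $K$-theory as developed by Kerz. Moreover each $C^i$, being a direct sum of pushforwards $i_{x*}$ of constant sheaves on the function fields $k(x)$, is acyclic for Nisnevich cohomology, so that once $C^\bullet$ is known to be exact it is a resolution of $\hat{\cal K}_{j,X}^M/p^r$ by Nisnevich-acyclic sheaves.

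Next I would verify exactness of $C^\bullet$, which is a stalk-local condition in the Nisnevich topology. The Nisnevich stalk of $C^\bullet$ at a point $x\in X$, together with its augmentation, is precisely the Gersten complex of the ring $R:=\roi_{X,x}^h$ appearing in Theorem~\ref{theorem_Gersten}, so it suffices to check that $R$ meets the hypotheses of that theorem. Indeed $R$ is henselian — a fortiori $p$-henselian — regular, Noetherian, local and a $V$-algebra, while $R/\frak mR=\roi_{X,x}^h/\frak m\roi_{X,x}^h$ is the henselian local ring of the smooth $V/\frak m$-scheme $X\times_VV/\frak m$ at the image of $x$, hence essentially smooth, a fortiori ind-smooth, over $V/\frak m$. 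Thus Theorem~\ref{theorem_Gersten} applies and $C^\bullet$ is exact; this is the asserted exactness of the Gersten sequence of Nisnevich sheaves on $X$.

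Finally I would read off cohomology. Since the $C^i$ are Nisnevich-acyclic, $H^n_\sub{Nis}(X,\hat{\cal K}_{j,X}^M/p^r)$ is computed by the complex of global sections
\[\bigoplus_{x\in X^{(0)}}K_j^M(k(x))/p^r\To\bigoplus_{x\in X^{(1)}}K_{j-1}^M(k(x))/p^r\To\cdots.\]
In cohomological degree $j$ the relevant tail is $\bigoplus_{x\in X^{(j-1)}}K_1^M(k(x))/p^r\to\bigoplus_{x\in X^{(j)}}K_0^M(k(x))/p^r\to 0$; here $K_1^M(k(x))=k(x)^\times$, $K_0^M(k(x))=\bb Z$, and the differential is the sum of the divisor maps, whose integral cokernel is by definition $\CH^j(X)$. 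Right exactness of $-\otimes_{\bb Z}\bb Z/p^r$ then identifies $H^j$ of the global-sections complex with $\CH^j(X)/p^r$, yielding the Bloch--Quillen isomorphism $\CH^j(X)/p^r\cong H^j_\sub{Nis}(X,\hat{\cal K}_{j,X}^M/p^r)$, natural in $X$ and realised by the cycle class map.

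I do not expect any serious obstacle: the argument is the familiar Cousin/Gersten-resolution mechanism, and all the genuine content is already contained in Theorem~\ref{theorem_Gersten}. The only points demanding (minor) care are that one works Nisnevich- rather than Zariski-locally — harmless precisely because the Nisnevich-local rings of a smooth $V$-scheme are henselian local rings with ind-smooth special fibres, which is exactly the class of rings covered by Theorem~\ref{theorem_Gersten} — together with the two standard structural inputs used above, namely that the residue differentials assemble into a complex of Nisnevich sheaves and that the terms $C^i$ are Nisnevich-acyclic, both of which are part of Kerz's treatment of Gersten complexes in Milnor $K$-theory.
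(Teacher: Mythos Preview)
Your approach is essentially that of the paper, and the Bloch--Quillen deduction is fine. There is, however, a genuine oversight in your stalk-local verification of exactness.

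You claim that for every point $x\in X$ the henselian local ring $R=\roi_{X,x}^h$ satisfies the hypotheses of Theorem~\ref{theorem_Gersten}, writing ``henselian --- a fortiori $p$-henselian''. This implication fails when $x$ lies in the generic fibre $X[\tfrac1p]$: then $p$ is a unit in $R$, so the ideal $pR=R$ is not contained in the Jacobson radical and $(R,pR)$ is not a henselian pair. Correspondingly your description of $R/\frak mR$ as ``the henselian local ring of $X\times_VV/\frak m$ at the image of $x$'' is meaningless in this case, since $x$ has no image in the special fibre (indeed $R/\frak mR=0$). The proof of Theorem~\ref{theorem_Gersten} genuinely uses $p$-henselianity, via Theorem~\ref{theorem1} and Corollary~\ref{corollary_BCM}, so you cannot simply stretch its hypotheses to cover this case.

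The fix is exactly what the paper does: split the stalk-local verification into two cases. If $x$ lies in the special fibre then $p$ is in the maximal ideal of $R$, your argument applies, and Theorem~\ref{theorem_Gersten} gives exactness. If $x$ lies in the generic fibre then $R$ is a regular local ring containing the field $\Frac V$, and one instead appeals to Kerz's equicharacteristic Gersten conjecture \cite{Kerz2009, Kerz2010}. With this case distinction in place your proof is complete and matches the paper's.

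A minor imprecision: the Nisnevich sheaves $K_j^M(x)/p^r$ on $\Spec k(x)$ are not constant; they send an \'etale $k(x)$-algebra $L$ to $K_j^M(L)/p^r$ (this is a sheaf by additivity). Your acyclicity claim for the $C^i$ is nonetheless correct, since any Nisnevich sheaf on the spectrum of a field has vanishing higher cohomology and this is preserved under $i_{x*}$.
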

\begin{proof}
For each point $x\in X$, let $K_j^M(x)/p^r$ denote the Nisnevich sheaf on $\Spec k(x)$ defined by sending each \'etale $k(x)$-algebra $L$ to $K_j^M(L)/p^r$; recall that this is indeed a Nisnevich sheaf because it is additive \cite[1.2]{Nisnevich1989}. Let $i_x:\Spec k(x)_\sub{Nis}\to X_\sub{Nis}$ denote the canonical map of sites, and $i_{x_*}(K_j^M(x)/p^r)$ the resulting pushforward of this sheaf to $X_\sub{Nis}$.

In other words, for each \'etale $f:U\to X$ we have $i_{x_*}(K_j^M(x)/p^r)(U)=\bigoplus_{y\in f^{-1}(x)}K_j^M(k(y))/p^r$, and the Nisnevich Gersten complex \[\bigoplus_{x\in X^{(0)}}i_{x*}(K_j^M(x)/p^r)\To \bigoplus_{x\in X^{(1)}}i_{x*}(K_{j-1}^M(x)/p^r)\To\cdots \] is characterised by the fact that its restriction to $U_\sub{Zar}$ is the usual Zariski Gersten complex for all such $U$. Moreover, as sheaves on the site $\Spec k(x)_\sub{Nis}$ have no higher cohomology on any object, the same is true of their pushforwards to $X_\sub{Nis}$; in particular, each Nisnevich sheaf $i_{x_*}(K_j^M(x)/p^r)$ on $X$ has no higher cohomology. To prove that the aforementioned Nisnevich Gersten complex is a resolution of $\hat{\cal K}_{j,X}^M/p^r$ (which denotes the Nisnevich sheafification of $U\mapsto \hat K_j^M(\roi_U(U))/p^r$), we must check the exactness of the Gersten complexes \[0\To\hat{K}_{j}^M(A)/p^r\To\bigoplus_{x\in \Spec A^{(0)}}K_j^M(k(x))/p^r\To \bigoplus_{x\in \Spec A^{(1)}}K_{j-1}^M(k(x))/p^r\To\cdots \] where $A$ runs over the henselian local rings attached to all points of $X$. When the point lies in the generic fibre of $X$, so that $A$ contains a field, we appeal to Kerz \cite{Kerz2010}; when the point lies in the special fibre, so that $A$ is $p$-henselian, we instead appeal to Theorem \ref{theorem_Gersten}.

The Bloch--Quillen formula follows as the Gersten resolution allows us to calculate the cohomology of $\hat{\cal K}_{j,X}^M/p^r$ as \[H^j_\sub{Nis}(X,\hat{\cal K}_{j,X}^M/p^r)=\op{coker}\big(\bigoplus_{x\in X^{(j-1)}}k(x)^\times/p^r\to\bigoplus_{x\in X^{(j)}}\bb Z/p^r\big)=\CH^j(X)/p^r\qedhere\]
\end{proof}

\begin{remark}
With $V$ and $X$ as in Corollary \ref{corollary_Gesten}, the result implies by functoriality the existence of a natural restriction map $\CH^j(X)/p^r\to H^j_\sub{Nis}(X_s,\hat{\cal K}_{j,X_s}^M/p^r)$ for each $s\ge 1$, where $X_s:=X\otimes_VV/\frak m^s$ is the corresponding thickening of the special fibre. When $j=d$ is the relative dimension of $X$ then these restriction maps are surjective by earlier work of the first author \cite{Lueders2019, Lueders2020}, answering a question of Kerz--Esnault--Wittenberg \cite[Conj.~10.1]{EsnaultKerzWittenberg2016} in the smooth case.
\end{remark}

\subsection{Gersten for \texorpdfstring{$K_3^M/p^r$}{K3M} of (incomplete) discrete valuation rings}\label{subsection_KM3}
Using the relative Gersten arguments pioneered by Bloch \cite{Bloch1986c} and Gillet--Levine \cite{GilletLevine1987}, adapted to Milnor $K$-theory by the first author \cite{Lueders2020a}, (and Panin's trick \cite{Panin2003} to reduce the ind-smooth case to the essentially smooth case) the Gersten conjecture for ind-smooth algebras over a discrete valuation ring $V$ mostly reduces to case of the discrete valuation rings obtained by localising smooth $V$-algebras at the generic point of their special fibre. We therefore record here a case of the Gersten conjecture for discrete valuation rings which we were surprised not to find in the literature; it is independent from our main results and the style of argument is not new.

We first review some classical results; let $\roi$ be a discrete valuation ring of residue characteristic $p>0$, and denote its fraction field by $F$ and its residue field by $k$. Then the map $K_2(\roi)\to K_2(F)$ is injective by Dennis--Stein \cite{Dennis1975}, with $p$-torsion-free cokerel $k^\times$, whence $K_2(\roi)/p^r\to K_2(F)/p^r$ is also injective for any $r\ge 1$.

Suppose $\roi$ is equi-characteristic. Then the map $K_3(\roi)\to K_3(F)$ is injective by Quillen \cite{Quillen1973a}, and the cokernel $K_2(k)$ is $p$-torsion-free by Izhboldin \cite{Izhboldin1991} (identifying it with $K_2^M(k)$ by Matsumoto), whence $K_3(\roi)/p^r\to K_3(F)/p^r$ is again injective. Secondly, if $\roi$ has infinite residue field then $K_3^M(\roi)\to K_3(F)$ was shown to be injective by Suslin--Yarosh \cite{Suslin1991} (though this is of course superseded by Kerz \cite{Kerz2009}).

Suppose instead that $\roi$ has mixed characteristic $(0,p)$. Then the injectivity of $K_3(\roi)\to K_3(F)$ remains open, but that of $K_3(\roi;\bb Z/p^r\bb Z)\to K_3(F;\bb Z/p^r\bb Z)$ is a theorem of Geisser--Levine \cite[Thm. 8.2]{GeisserLevine2000}. If $p>2$ then $\hat K_3(\roi)/p^r$ injects into $K_3(\roi)/p^r\subseteq K_3(\roi;\bb Z/p^r\bb Z)$ \cite[Prop.~10(6)]{Kerz2009} and so we deduce that $\hat K_3^M(\roi)/p^r\to K_3^M(F)/p^r$ is also injective. Here we observe that the argument of Suslin--Yarosh may also be used to prove the latter injectivity, without the hypothesis that $p>2$:

\begin{proposition}
Let $\roi$ be a discrete valuation ring of mixed characteristic, with field of fractions $F$ and residue field $k$. Then there is an exact sequence \[0\To \hat K_3^M(\roi)/p^r\To K_3^M(F)/p^r\xto{\bor}K_2^M(k)/p^r\To 0\] for any $r\ge1$.
\end{proposition}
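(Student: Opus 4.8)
The plan is to split the statement into its right-exact part, which is essentially formal, and the injectivity of $\hat K_3^M(\roi)/p^r\to K_3^M(F)/p^r$, which is the substantive content and which I would obtain by adapting the symbol calculations of Suslin--Yarosh \cite{Suslin1991}. \emph{Right exactness.} Fixing a uniformiser $\pi$, I would apply Lemma \ref{lemma_localisation_in MilnorK} with $R=\roi$ and $t=\pi$: $\roi$ is local, hence additively generated by units; $\pi$ is a non-zero-divisor lying in the Jacobson radical $\frak m=\pi\roi$, which is prime; $\roi_{\frak m}=\roi$ is a discrete valuation ring; and $F\cap\roi=\roi$. This yields an exact sequence $K_3^M(\roi)\to K_3^M(F)\xto{\bor}K_2^M(k)\to 0$. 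Since $F$ is a field we have $\hat K_3^M(F)=K_3^M(F)$, and by naturality of $K_3^M(-)\to\hat K_3^M(-)$ the first map factors through the surjection $K_3^M(\roi)\onto\hat K_3^M(\roi)$; as this does not alter the image, $\hat K_3^M(\roi)\to K_3^M(F)\xto{\bor}K_2^M(k)\to 0$ is again exact. Applying the right-exact functor $-\otimes_{\bb Z}\bb Z/p^r$ leaves it exact, so the whole proposition reduces to the injectivity of $\hat K_3^M(\roi)/p^r\to K_3^M(F)/p^r$.

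\emph{Reduction to large residue field.} To make the Suslin--Yarosh manipulations available I would first replace $\roi$ by the Gabber--Kerz rational function ring $\roi(t)$ \cite[Lem.~8]{Kerz2010}, which is again a mixed characteristic discrete valuation ring, but with infinite residue field $k(t)$, so that $K_3^M(\roi(t))$ is already improved and $\roi(t)$ has field of fractions $F(t)$. The natural map $\hat K_3^M(\roi)\to\hat K_3^M(\roi(t))$ is split injective by the basic structure of improved Milnor $K$-theory \cite{Kerz2010}, hence stays injective modulo $p^r$; in the evident commutative square relating $\hat K_3^M(\roi)/p^r\to K_3^M(F)/p^r$ to $\hat K_3^M(\roi(t))/p^r\to K_3^M(F(t))/p^r$, the left vertical arrow and --- granting the infinite-residue-field case treated below for $\roi(t)$ --- the bottom horizontal arrow are injective, which forces injectivity of the top. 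So I may assume $\roi$ has infinite residue field $k$, and in particular $\hat K_3^M(\roi)=K_3^M(\roi)$.

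\emph{The Suslin--Yarosh argument.} It now suffices to prove the \emph{integral} injectivity of $K_3^M(\roi)\to K_3^M(F)$: combined with the first step this gives a short exact sequence $0\to K_3^M(\roi)\to K_3^M(F)\xto{\bor}K_2^M(k)\to 0$, and since $K_2^M(k)$ is $p$-torsion-free by Izhboldin's theorem \cite{Izhboldin1991} (identifying it with $K_2(k)$ via Matsumoto), the functor $-\otimes_{\bb Z}\bb Z/p^r$ keeps it exact on the left, so $K_3^M(\roi)/p^r\to K_3^M(F)/p^r$ is injective. For the integral statement I would run the argument of \cite{Suslin1991}, which does not use equal characteristic: writing each element of $F^\times$ uniquely as $u\pi^m$ with $u\in\roi^\times$, one presents $K_3^M(F)$ by the symbols $\{u,v,w\}$ and $\{u,v,\pi\}$ with $u,v,w\in\roi^\times$, and constructs a retraction of the inclusion $K_3^M(\roi)\into\ker(\bor\colon K_3^M(F)\to K_2^M(k))$ by $\{u,v,w\}\mapsto\{u,v,w\}$ and $\{u,v,\pi\}\mapsto 0$; well-definedness of this retraction modulo the image of $\ker\bor$ reduces, using the stability afforded by the infinite residue field, to the symbol identities of \cite[Lem.~1.5, Corol.~1.8]{Suslin1991}, of which Lemma \ref{lemma_K3_relation} above is one instance.

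The main obstacle is precisely this last step: checking that the Suslin--Yarosh symbol gymnastics, originally carried out for equal-characteristic discrete valuation rings, survive the passage to mixed characteristic, and that the stability coming from an infinite residue field (secured by the reduction of the second step) genuinely suffices to justify every manipulation. By contrast, the first two steps are routine, relying only on Lemma \ref{lemma_localisation_in MilnorK}, the right-exactness of reduction modulo $p^r$, and the formal properties of improved Milnor $K$-theory.
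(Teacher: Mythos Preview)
Your right-exactness step and the reduction to infinite residue field are fine (the paper uses a tower of prime-to-$p$ finite \'etale extensions instead of $\roi(t)$, but either works). The gap is in the Suslin--Yarosh step, and it is a real one.

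What Suslin--Yarosh's results \cite[Thms.~3.9 \& 4.1]{Suslin1991} actually give is not a retraction but a \emph{cocartesian square}
\[\xymatrix{
K_2(\roi,\pi\roi)\ar[r]\ar[d] & K_3^M(\roi)\ar[d]\\
K_2(\roi)\ar[r]_{\cdot\{\pi\}} & K_3^M(F).
}\]
From this, injectivity of $K_3^M(\roi)\to K_3^M(F)$ follows \emph{only if} one knows that $K_2(\roi,\pi\roi)\to K_2(\roi)$ is injective, equivalently that $K_3(\roi)\to K_3(k)$ is surjective. In equal characteristic this is automatic because $\roi$ contains a coefficient field, splitting $\roi\to k$; that is the hidden ingredient making the integral Suslin--Yarosh argument work. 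In mixed characteristic there is no such section, and indeed the paper explicitly records just before the proposition that the integral injectivity of $K_3(\roi)\to K_3(F)$ is open. So your attempt to prove the \emph{integral} injectivity of $K_3^M(\roi)\to K_3^M(F)$ and then descend via Izhboldin cannot succeed as written; the ``retraction'' you sketch is not well-defined without already knowing the injectivity you are trying to prove.

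The paper's fix is to work modulo $p^r$ from the outset and invoke Geisser--Levine: $K_3(k;\bb Z/p^r\bb Z)$ is generated by symbols, which lift to $K_3(\roi;\bb Z/p^r\bb Z)$, hence $K_3(\roi;\bb Z/p^r\bb Z)\to K_3(k;\bb Z/p^r\bb Z)$ is surjective and so $K_2(\roi,\pi\roi)/p^r\to K_2(\roi)/p^r$ is injective. One then takes the cocartesian square mod $p^r$ and reads off the desired injectivity directly; Izhboldin is not needed. This Geisser--Levine input is precisely the missing idea in your approach.
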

\begin{proof}
The canonical map $K_2(\roi,\pi\roi;\bb Z/p^r\bb Z)\to K_2(\roi;\bb Z/p^r\bb Z)$ is injective since $K_3(k;\bb Z/p^r\bb Z)$ is generated by symbols (which lift to $K_3(\roi;\bb Z/p^r\bb Z)$) by Geisser--Levine, and this restricts to an injection $K_2(\roi,\pi\roi)/p^r\to K_2(\roi)/p^r$. From the exact sequence $K_2(\roi,\pi\roi)\to  K_2(\roi)\to K_2(k)\to 0$ (which is exact on the right since $K_2(k)$ is generated by symbols, which lift) we therefore obtain a short exact sequence \[0\To K_2(\roi,\pi\roi)/p^r\To  K_2(\roi)/p^r\To K_2(k)/p^r\To 0.\]

Assuming that $\roi$ has infinite residue field, the proof is completed by recalling Suslin--Yarosh's argument, which is itself a $K_3^M$-version of Dennis--Stein's argument: taking \cite[Thms.~3.9 \& 4.1]{Suslin1991} modulo $p^r$ implies the existence of a cocartesian square
\[\xymatrix{
K_2(\roi,\pi\roi)/p^r\ar[r] \ar[d] & K_3^M(\roi)/p^r\ar[d] \\
K_2^M(\roi)/p^r\ar[r]_{\cdot\{\pi\}}&K_3^M(F)/p^r
}\]
which indeed completes the proof when combined with the above exact sequence and the identity $\hat K_2^M(\roi)=K_2(\roi)$.

When $\roi$ has finite residue field, the injectivity of $\hat K_3(\roi)/p^r\to K_3(F)/p^r$ then follows from a standard norm trick by picking a tower of finite \'etale extensions $\roi\subseteq\roi_1\subseteq\roi_2\subseteq\cdots$ of discrete valuation rings where each extension has degree prime to $p$.
\end{proof}

\begin{remark}
The focus of this subsection is in the incomplete case, but we mention that $\hat K_j^M(\roi)\to K_j^M(F)$ is known to be injective for all $j\ge0$ if $\roi$ is compelte and has finite residue field \cite{Dahlhausen2018}.
\end{remark}

\section{\texorpdfstring{$p$}{p}-adic Milnor $K$-theory of local \texorpdfstring{$\bb F_p$}{Fp}-algebras}
In this section we present two variants of the Bloch--Kato--Gabber theorem, describing the mod $p$-power Milnor $K$-theory of certain local $\bb F_p$-algebras. The analogous isomorphisms for algebraic $K$-theory are already known by earlier work of the second author and collaborators \cite[Thm.~5.27]{ClausenMathewMorrow2020} \cite[Thm.~2.1]{KellyMorrow2018a}; the new tool which allows us to treat Milnor $K$-theory is the left Kan extension observation of Proposition~\ref{proposition_LKE_Milnor}.

\subsection{Cartier smooth \texorpdfstring{$\bb F_p$}{Fp}-algebras}\label{subsection_KM_val_charp}
We start by recalling the following terminology from \cite{KellyMorrow2018a}: an $\bb F_p$-algebra $A$ is called {\em Cartier smooth} if it satisfies the following smoothness criteria:
\begin{itemize}\itemsep0pt
\item $\Omega^1_A$ is a flat $A$-module;
\item $H_j(\bb L_{A/\bb F_p})=0$ for all $j>0$;
\item the inverse Cartier map $C^{-1}: \Omega^j_A\to H^j(\Omega^\bullet_A)$ is an isomorphism for all $j\geq 0$.
\end{itemize}
Let $\op{CSm}_{\bb F_p}$ denote the category of Cartier smooth $\bb F_p$-algebras, which includes the category of smooth $\bb F_p$-algebras $\op{Sm}_{\bb F_p}$; to simplify notation in the following proof, let $\op{Sm}_{\bb F_p}^\Sigma$ be the subcategory of finitely generated polynomial $\bb F_p$-algebras.

As a more interesting example, results of Gabber--Ramero \cite[Thm.~6.5.8(ii) \& Corol.~6.5.21]{GabberRamero2003} and Gabber \cite[App.]{KerzStrunkTamme2018} state that any valuation ring of characteristic $p$ is Cartier smooth. 

The following description of the \'etale-syntomic cohomology of Cartier smooth algebras was implicit in \cite{KellyMorrow2018a}:

\begin{proposition}\label{proposition_LKE_Omegalog2}
Let $r,j\ge0$.
\begin{enumerate}
\item The functor $W_r\Omega^j_\sub{log}:\op{CSm}_{\bb F_p}\to  D(\bb Z)$ is left Kan extended from $\op{Sm}_{\bb F_p}$.
\item For any $A\in \op{CSm}_{\bb F_p}$, there are natural equivalences \[\bb Z/p^r\bb Z(j)(A)[j]\simeq R\Gamma_\sub{\'et}(\Spec A,W_r\Omega_\sub{log}^j)\simeq[W_r\Omega^j_{A}\xto{\res F-1}W_r\Omega^j_A/dV^{r-1}\Omega^{j-1}_A].\] 
\end{enumerate}
\end{proposition}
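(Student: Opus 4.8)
The plan is to prove (i) and (ii) more or less simultaneously, since the computation of $\bb Z/p^r\bb Z(j)(A)$ for smooth $A$ is classical (Geisser--Levine / Bloch--Kato--Gabber), and the passage to Cartier smooth algebras is a left Kan extension argument using the three defining properties of Cartier smoothness.

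\textbf{Step 1: the smooth case.} First I would record that for $A\in\op{Sm}_{\bb F_p}$ (indeed for any smooth $\bb F_p$-algebra, and by a filtered colimit for ind-smooth ones), one has $\bb Z/p^r\bb Z(j)(A)\simeq R\Gamma_\sub{\'et}(\Spec A, W_r\Omega^j_{\sub{log}})[-j]$, and moreover $R\Gamma_\sub{\'et}(\Spec A, W_r\Omega^j_{\sub{log}})\simeq [W_r\Omega^j_A\xto{\res F-1}W_r\Omega^j_A/dV^{r-1}\Omega^{j-1}_A]$. The first equivalence is the comparison of syntomic cohomology with de Rham--Witt $\dlog$ forms in characteristic $p$ (this is how $\bb Z_p(j)$ behaves on smooth $\bb F_p$-algebras, cf. \cite{BhattMorrowScholze2}); the second is the standard two-term \'etale resolution $0\to W_r\Omega^j_{\sub{log}}\to W_r\Omega^j\xto{\res F-1}W_r\Omega^j/dV^{r-1}\Omega^{j-1}\to 0$ of \'etale sheaves, together with the fact that $W_r\Omega^j$ and $W_r\Omega^j/dV^{r-1}\Omega^{j-1}$ are quasicoherent, hence have no higher \'etale cohomology on the affine $\Spec A$.

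\textbf{Step 2: left Kan extension of $W_r\Omega^j$ and related functors, giving (i).} The crux is that each of the functors $A\mapsto W_r\Omega^j_A$ and $A\mapsto W_r\Omega^j_A/dV^{r-1}\Omega^{j-1}_A$ is left Kan extended from $\op{Sm}_{\bb F_p}$ (equivalently, from $\op{Sm}_{\bb F_p}^\Sigma$) when restricted to $\op{CSm}_{\bb F_p}$. For a Cartier smooth $A$ one resolves $A$ by a simplicial polynomial $\bb F_p$-algebra $P_\bullet\to A$; the vanishing $H_{>0}(\bb L_{A/\bb F_p})=0$ together with flatness of $\Omega^1_A$ forces the left derived functors of $\Omega^j_{-}=\wedge^j\Omega^1_{-}$ (hence of $\Omega^{\ge 0}$, hence of $W_r\Omega^{\bullet}$ built from the de Rham complex by the de Rham--Witt construction) to be concentrated in degree $0$ and to agree with the naive value on $A$; the Cartier isomorphism hypothesis is exactly what is needed to control $H^*(\Omega^\bullet)$ and thereby the quotients $W_r\Omega^j/dV^{r-1}\Omega^{j-1}$. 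This is the content of \cite[Thm.~2.1 and its proof]{KellyMorrow2018a}; I would cite that rather than redo the simplicial bookkeeping. Taking the mapping fibre of $\res F-1$ between two functors which are left Kan extended from $\op{Sm}_{\bb F_p}$ yields a functor left Kan extended from $\op{Sm}_{\bb F_p}$, and by Step 1 this fibre agrees on $\op{Sm}_{\bb F_p}$ with $W_r\Omega^j_{\sub{log}}\simeq \bb Z/p^r\bb Z(j)(-)[j]$; this proves (i) (the left Kan extension statement for $W_r\Omega^j_{\sub{log}}$).

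\textbf{Step 3: deduce (ii).} For $A\in\op{CSm}_{\bb F_p}$, $\bb Z/p^r\bb Z(j)(A)$ is by definition (via the left Kan extension from smooth, or polynomial, $\bb F_p$-algebras defining $\bb Z_p(j)$ on all rings, cf.\ \cite{AntieauMathewMorrowNikolaus}) the value at $A$ of the left Kan extension of $\bb Z/p^r\bb Z(j)(-)$; by Step 1 this functor on $\op{Sm}_{\bb F_p}^\Sigma$ is $[W_r\Omega^j_{-}\xto{\res F-1}W_r\Omega^j_{-}/dV^{r-1}\Omega^{j-1}_{-}][-j]$, whose left Kan extension to $\op{CSm}_{\bb F_p}$ is, by Step 2, $[W_r\Omega^j_{A}\xto{\res F-1}W_r\Omega^j_A/dV^{r-1}\Omega^{j-1}_A][-j]$ evaluated naively; and this two-term complex is $R\Gamma_\sub{\'et}(\Spec A, W_r\Omega^j_{\sub{log}})[-j]$ by the same quasicoherent-sheaf argument as in Step 1 (the \'etale resolution of $W_r\Omega^j_{\sub{log}}$ and vanishing of higher \'etale cohomology of the quasicoherent terms hold for any $\bb F_p$-algebra, in particular for Cartier smooth $A$, once one knows the sequence is exact as \'etale sheaves, which uses the Cartier isomorphism). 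Shifting by $[j]$ gives exactly the two displayed equivalences of (ii).

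\textbf{Main obstacle.} The genuinely technical point is Step 2: showing that $W_r\Omega^j_{-}$ and the quotient $W_r\Omega^j_{-}/dV^{r-1}\Omega^{j-1}_{-}$ are left Kan extended from smooth $\bb F_p$-algebras, i.e.\ that the naive values at a Cartier smooth $A$ coincide with the derived (simplicially resolved) values. This is where all three Cartier smoothness hypotheses are used in an essential, interlocking way, and it is precisely the content extracted from \cite{KellyMorrow2018a} and \cite{GabberRamero2003}; I would lean on those references and only indicate the shape of the argument (resolve by polynomial algebras, use $L\wedge^j$ of the flat module $\Omega^1_A$, use $H_{>0}(\bb L)=0$ and the Cartier isomorphism to kill higher homology) rather than reprove it.
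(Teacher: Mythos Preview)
Your argument essentially establishes (ii), and by the same route as the paper: the second equivalence in (ii) in fact holds for \emph{any} $\bb F_p$-algebra (the paper records this at the outset, via the \'etale short exact sequence $0\to W_r\Omega^j_\sub{log}\to W_r\Omega^j\xto{\res F-1}W_r\Omega^j/dV^{r-1}\Omega^{j-1}\to 0$ and vanishing of higher \'etale cohomology of the quasi-coherent terms on affines), and the first follows by left Kan extension from polynomial algebras once one knows that the two-term complex is LKE on $\op{CSm}_{\bb F_p}$.

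However, your Step 2 does \emph{not} prove (i). You write that the fibre of $\res F-1$ ``agrees on $\op{Sm}_{\bb F_p}$ with $W_r\Omega^j_\sub{log}\simeq \bb Z/p^r\bb Z(j)(-)[j]$'', but this is false: that fibre is $R\Gamma_\sub{\'et}(\Spec A,W_r\Omega^j_\sub{log})$, which has $H^0=W_r\Omega^j_{A,\sub{log}}$ \emph{and} $H^1=\tilde\nu_r(j)(A)=H^1_\sub{\'et}(\Spec A,W_r\Omega^j_\sub{log})$, and the latter is typically nonzero even on polynomial $\bb F_p$-algebras (already $H^1_\sub{\'et}(\bb A^1_{\bb F_p},\bb Z/p\bb Z)\neq 0$ by Artin--Schreier). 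So what you have actually shown is that $R\Gamma_\sub{\'et}(\Spec-,W_r\Omega^j_\sub{log})$ is LKE from $\op{Sm}_{\bb F_p}^\Sigma$, not that the abelian group $W_r\Omega^j_\sub{log}$ placed in degree $0$ is. The paper supplies the missing piece: after reducing to $r=1$, it uses the fibre sequence $\Omega^j_{A,\sub{log}}\to R\Gamma_\sub{\'et}(\Spec A,\Omega^j_\sub{log})\to\tilde\nu(j)(A)[-1]$ together with the fact that $\tilde\nu(j)$ is \emph{rigid} (invariant under henselian surjections, \cite[Prop.~4.30]{ClausenMathewMorrow2020}), hence itself left Kan extended from $\op{Sm}_{\bb F_p}$; two of the three terms being LKE then forces the third.

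A secondary point: the paper does not invoke from \cite{KellyMorrow2018a} that $W_r\Omega^j$ and $W_r\Omega^j/dV^{r-1}\Omega^{j-1}$ are LKE for general $r$. It first reduces both (i) and the LKE claim for $R\Gamma_\sub{\'et}$ to the case $r=1$, using the short exact sequence $0\to W_{r-1}\Omega^j_{A,\sub{log}}\xto{p}W_r\Omega^j_{A,\sub{log}}\to\Omega^j_{A,\sub{log}}\to 0$ valid on $\op{CSm}_{\bb F_p}$ (and the induced fibre sequence on $R\Gamma_\sub{\'et}$), and only then cites from \cite{KellyMorrow2018a} that $\Omega^j$ and $d\Omega^{j-1}$ are LKE from $\op{Sm}_{\bb F_p}^\Sigma$. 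Your direct approach at level $r$ may well be justifiable, but it is not what that reference immediately gives you.
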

\begin{proof}
The second equivalence in (ii) actually holds for any $\bb F_p$-algebra, since on any $\bb F_p$-scheme $X$ there is a short exact sequence of \'etale sheaves $0\to W_r\Omega_{X,\sub{log}}^j\to W_r\Omega^j_{X}\xto{\res F-1}W_r\Omega^j_X/dV^{r-1}\Omega^{j-1}_X\to 0$ where the middle and final term have no higher cohomology on affines \cite[Corol.~4.1(ii) \& 4.2(iii)]{Morrow_pro_GL2}.

For any $A\in \op{CSm}_{\bb F_p}$, there is a natural short exact sequence $0\to W_{r-1}\Omega^j_{A,\sub{log}}\xto{p}W_{r}\Omega^j_{A,\sub{log}}\to \Omega^j_{A,\sub{log}}\to 0$ \cite[Thm. 2.11]{KellyMorrow2018a}, whence part (i) reduces to the case $r=1$. Similarly, since this short exact sequence equally holds for any \'etale $A$-algebra (since it is also Cartier smooth), there is an induced short exact sequence of sheaves on the \'etale site of $A$ and a corresponding fibre sequence of cohomology
\[R\Gamma_\sub{\'et}(\Spec A,W_{r-1}\Omega_\sub{log}^j)\xto{p}R\Gamma_\sub{\'et}(\Spec A,W_r\Omega_\sub{log}^j)\to R\Gamma_\sub{\'et}(\Spec A,\Omega_\sub{log}^j).\] So the claim ``$R\Gamma_\sub{\'et}(\Spec -,W_{r}\Omega_\sub{log}^j):\op{CSm}_{\bb F_p}\to  D(\bb Z)$ is left Kan extended from $\op{Sm}_{\bb F_p}^\Sigma$'' also reduces to the case $r=1$. But this claim would imply the first equivalence in (ii): it is even equivalent to it as the first equivalence in (ii) is already known for all finitely generated polynomial $\bb F_p$-algebras (even all smooth $\bb F_p$-algebras) \cite[Corol.~8.19]{BhattMorrowScholze2} and $\bb Z/p^r\bb Z(j)(-)$ is left Kan extended from $\op{Sm}_{\bb F_p}^\Sigma$ (either by the formula $\bb Z/p^j(j)(-)=\op{hofib}(\tfrac\phi{p^j}-1:\cal N^{\ge j}LW\Omega\to LW\Omega_A)/p^r$ of \cite{BhattMorrowScholze2}, or just by quoting \cite[Thm.~5.1(ii)]{AntieauMathewMorrowNikolaus}).

We have reduced (i) and (ii) to proving that on the category $\op{CSm}_{\bb F_p}$, the functors $\Omega^j_\sub{log}$ and $R\Gamma_\sub{\'et}(\Spec-,\Omega^j_\sub{log})$ are left Kan extended from $\op{Sm}_{\bb F_p}$ and from $\op{Sm}_{\bb F_p}^\Sigma$ respectively.

But it was shown in \cite{KellyMorrow2018a} that both $\Omega^j$ and $d\Omega^{j-1}$, on the category $\op{CSm}_{\bb F_p}$, are left Kan extended from $\op{Sm}_{\bb F_p}^\Sigma$, so the same is true of $R\Gamma_\sub{\'et}(\Spec-,\Omega^j_\sub{log})$ using the second equivalence of part (ii). This equivalence also shows that there is a natural fibre sequence $\Omega^j_\sub{log}\to R\Gamma_\sub{\'et}(\Spec A,\Omega^j_\sub{log})[j]\to \tilde\nu(j)(A)[-1]$ for all $A\in\op{CSm}_{\bb F_p}$; the final term is rigid \cite[Prop.~4.30]{ClausenMathewMorrow2020} hence left Kan extended from $\op{Sm}_{\bb F_p}$, and so we deduce the same for the first term.
\end{proof}

This allows us to complement the main theorem of \cite{KellyMorrow2018a}, namely $K_j(A;\bb Z/p^r\bb Z)\cong W_r\Omega_{A,\sub{log}}^j$, by also describing Milnor $K$-theory:

\begin{theorem}[Bloch--Kato--Gabber theorem for Cartier smooth algebras]\label{theorem_BKG}
For any local, Cartier smooth $\bb F_p$-algebra $A$, the symbol map $\hat K_j^M(A)/p^r\to W_r\Omega_{A,\sub{log}}^j$ is an isomorphism for any $r,j\ge0$.
\end{theorem}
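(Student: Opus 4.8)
My plan is to obtain Theorem~\ref{theorem_BKG} by concatenating the $p$-adic Nesterenko--Suslin isomorphism (Theorem~\ref{theorem_Nesterenko_Suslin}) with the syntomic cohomology computation for Cartier smooth algebras (Proposition~\ref{proposition_LKE_Omegalog2}(ii)); granted those two results the argument is essentially formal, so most of the work has already been done. I will first dispose of the degenerate cases: when $r=0$ both groups vanish, and when $j=0$ both sides equal $\bb Z/p^r\bb Z$ and the symbol map is the identity; so from now on assume $j,r\ge1$.

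The first step is to observe that, since $A$ is an $\bb F_p$-algebra, the ideal $(p)$ is zero, so $A$ is trivially $p$-henselian. Hence Theorem~\ref{theorem_Nesterenko_Suslin} applies and the symbol map induces a natural isomorphism $\hat K_j^M(A)/p^r\isoto H^j(\bb Z/p^r(j)(A))$. I will also note that $A$ has trivial (in particular bounded) $p$-power torsion and is its own $p$-adic completion, so that $\bb Z/p^r(j)(A)$ is simply the mod $p^r$ reduction of the genuine syntomic cohomology complex $\bb Z_p(j)(A)$ of the $p$-complete ring $A$.

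The second step is to apply Proposition~\ref{proposition_LKE_Omegalog2}(ii): since $A$ is Cartier smooth, $\bb Z/p^r(j)(A)[j]\simeq R\Gamma_\sub{\'et}(\Spec A,W_r\Omega^j_\sub{log})$, and taking $H^0$ of both sides gives \[H^j(\bb Z/p^r(j)(A))\cong H^0_\sub{\'et}(\Spec A,W_r\Omega^j_\sub{log})=W_r\Omega^j_{A,\sub{log}},\] the last equality being the identification of $W_r\Omega^j_{A,\sub{log}}$ with $\nu_r(j)(A)=H^0_\sub{\'et}$ recalled in Remark~\ref{remark_WrOmegalog} (which uses that $A$ is local). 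Composing the two isomorphisms yields an isomorphism $\hat K_j^M(A)/p^r\isoto W_r\Omega^j_{A,\sub{log}}$, and it then remains only to check that this composite is the $\dlog$ symbol map. Since the Nesterenko--Suslin symbol of Theorem~\ref{theorem_Nesterenko_Suslin} and the equivalence of Proposition~\ref{proposition_LKE_Omegalog2}(ii) are both built by left Kan extension from smooth (even finitely generated polynomial) $\bb F_p$-algebras, while $\dlog$ is a natural transformation of functors on all $\bb F_p$-algebras, this reduces by naturality to the case of a smooth $\bb F_p$-algebra; there the identification $H^j(\bb Z/p^r(j)(R))\cong W_r\Omega^j_{R,\sub{log}}$ is that of \cite[Corol.~8.19]{BhattMorrowScholze2}, under which the syntomic symbol is $\dlog$ by the classical Bloch--Kato--Gabber theorem.

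As for the main obstacle: honestly there is no serious difficulty left at this point. The heavy lifting is entirely inside the two cited inputs --- Theorem~\ref{theorem_Nesterenko_Suslin} (which rests on the bulk of the paper) and Proposition~\ref{proposition_LKE_Omegalog2} (which repackages the Kelly--Morrow calculations in de Rham--Witt cohomology via left Kan extension). The only place I expect to need any care is the final compatibility --- matching the abstract composite isomorphism with the $\dlog$ symbol map --- and even that I expect to be routine, handled by naturality together with the left Kan extension construction of the symbol as indicated above.
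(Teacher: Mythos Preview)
Your argument is correct, and in fact the paper explicitly acknowledges this route in the very first sentence of its own proof: ``Although this follows from Theorem~\ref{theorem_Nesterenko_Suslin} and Proposition~\ref{proposition_LKE_Omegalog2}(ii), we prefer to give a proof which avoids the passage through mixed characteristic.'' So you have found the intended shortcut, but the paper deliberately takes a different path.

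The paper's proof stays entirely in characteristic~$p$: surjectivity is Remark~\ref{remark_WrOmegalog}; for injectivity one reduces to big residue field by a norm trick, observes that the symbol map $K_j^M(-)/p^r\to W_r\Omega^j_{-,\sub{log}}$ is an isomorphism on essentially smooth local $\bb F_p$-algebras with big residue field (classical Bloch--Kato--Gabber), and then left Kan extends via Propositions~\ref{proposition_LKE_Milnor} and~\ref{proposition_LKE_Omegalog2}(i). The advantage is logical independence: this argument needs only the left Kan extension observations and the classical theorem, whereas your route imports Theorem~\ref{theorem_Nesterenko_Suslin}, which in turn rests on the entire mixed-characteristic development of Sections~1--2 (the Dennis--Stein manipulations, the Bloch--Kato filtration analysis, and the Bhatt--Clausen--Mathew input). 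Your approach is shorter on the page given those inputs; the paper's approach shows that Theorem~\ref{theorem_BKG} is really a characteristic-$p$ statement with a characteristic-$p$ proof.
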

\begin{proof}
Although this follows from Theorem \ref{theorem_Nesterenko_Suslin} and Proposition \ref{proposition_LKE_Omegalog2}(ii), we prefer to give a proof which avoids the passage through mixed characteristic.

The symbol map is surjective by Remark \ref{remark_WrOmegalog} so it remains to treat its injectivity, for which we may assume $A$ has big residue field (otherwise take a finite \'etale extension $A'\supseteq A$ of degree coprime to $p$, with $A'$ also local, so that $\hat K_j^M(A)/p^r\to K_j^M(A')/p^r$ is injective by existence of the norm map). We view $K_j^M(-)/p^r\to W_r\Omega^j_\sub{log}$ as a map of functors on $\op{SCm}_{\bb F_p}^\sub{loc}$. It is an isomorphism on any essentially smooth, local $\bb F_p$-algebra with big residue field by the Bloch--Kato--Gabber theorem (see Remark \ref{remark_WrOmegalog} again); by left Kan extending using Propositions \ref{proposition_LKE_Milnor} and \ref{proposition_LKE_Omegalog2} we obtain the desired isomorphism for $A$. Note here that, in the diagram of essentially smooth, local $\bb F_p$-algebras mapping to $A$, those with big residue field form a cofinal system (alternatively, computing the left Kan extension by the type of simplicial resolution which appeared in Proposition \ref{proposition_LKE_Milnor}, just note that each $R_q$ has the same big residue field as $A$).
\end{proof}

\subsection{A pro Bloch--Kato--Gabber theorem}
We prove a version of the Bloch--Kato--Gabber theorem for pro rings, and an associated continuity theorem for Milnor $K$-theory. The initial two lemmas are very much in the style of \cite{Morrow_pro_GL2}, to which we refer for further background on arguments with pro abelian groups (which we denote by curly brackets $\{A_s\}_s$, rather than $\projlimf_sA_s$) and the role of F-finiteness.

\begin{lemma}\label{lemma_pro_Illusie_sequence}
Let $A$ be an F-finite, regular, Noetherian, local $\bb F_p$-algebra, and $I\subseteq A$ an ideal. Then there is a natural short exact sequence of pro abelian groups \[0\to \{W_{r-1}\Omega^j_{A/I^s,\sub{log}}\}_s\xto{p}\{W_{r}\Omega^j_{A/I^s,\sub{log}}\}_s\to \{\Omega^j_{A/I^s,\sub{log}}\}_s\to 0\] for each $j\ge0$.
\end{lemma}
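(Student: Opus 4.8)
The plan is to reduce the statement to the known short exact sequence of \'etale sheaves and then leverage F-finiteness to pass to the pro category. First I would recall from \cite[Thm.~2.11]{KellyMorrow2018a} (or the closely related fact used in the proof of Proposition \ref{proposition_LKE_Omegalog2}) that for any $\bb F_p$-algebra $B$ there is a short exact sequence of \'etale sheaves $0\to W_{r-1}\Omega^j_{B,\sub{log}}\xto{p}W_r\Omega^j_{B,\sub{log}}\to\Omega^j_{B,\sub{log}}\to 0$; applying this with $B=A/I^s$ for each $s$ and taking \'etale cohomology (equivalently, global sections, since these sheaves are generated by symbols and the relevant higher cohomology is controlled) produces, for each $s$, a six-term exact sequence
\[0\to W_{r-1}\Omega^j_{A/I^s,\sub{log}}\xto{p}W_r\Omega^j_{A/I^s,\sub{log}}\to\Omega^j_{A/I^s,\sub{log}}\to \tilde\nu_{r-1}(j)(A/I^s)\xto{p}\tilde\nu_r(j)(A/I^s)\to\tilde\nu(j)(A/I^s)\to 0,\]
where $\tilde\nu_r(j)(-):=H^1_\sub{\'et}(\Spec(-),W_r\Omega^j_\sub{log})$. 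So the only thing to prove is that the connecting map $\Omega^j_{A/I^s,\sub{log}}\to\tilde\nu_{r-1}(j)(A/I^s)$ is pro-zero, i.e.\ that the pro system $\{\tilde\nu_r(j)(A/I^s)\}_s$ (equivalently $\{H^{j+1}(\bb Z/p^r(j)(A/I^s))\}_s$, using $H^{j+1}(\bb Z/p^r(j)(-))\cong\tilde\nu_r(j)(-)$ from \cite[Corol.~8.19]{BhattMorrowScholze2}) behaves well, and more precisely that the surjection $\{W_r\Omega^j_{A/I^s,\sub{log}}\}_s\to\{\Omega^j_{A/I^s,\sub{log}}\}_s$ is actually injective in the category of pro abelian groups is automatic, so what remains is exactness on the left of the middle, which amounts to the pro-vanishing of the above connecting map.

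The key input will be the pro Bloch--Kato--Gabber machinery: since $A$ is F-finite, regular, Noetherian and local, the forthcoming Theorem \ref{intro_charp}(ii) (equivalently the calculation of $H^j(\bb Z/p^r(j)(A/I^s))$ via derived de Rham cohomology from \cite{Morrow_pro_GL2}) identifies $\{\hat K_j^M(A/I^s)/p^r\}_s$ with $\{W_r\Omega^j_{A/I^s,\sub{log}}\}_s$. The strategy is therefore: first establish the pro-vanishing of $\{H^{j+1}(\bb Z_p(j)(A/I^s))[p^\infty]\}_s$, or directly that multiplication by $p$ on $\{H^{j+1}(\bb Z/p^{r-1}(j)(A/I^s))\}_s\to\{H^{j+1}(\bb Z/p^r(j)(A/I^s))\}_s$ is pro-injective, using the continuity/pro-descent results for $\bb Z_p(j)$ of \cite{Morrow_pro_GL2} together with the identification of $\bb Z_p(j)$ of F-finite regular Noetherian rings modulo ideals with derived de Rham--Witt cohomology. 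Granting that, the six-term sequence above collapses pro-wise to the desired three-term sequence. The argument is thus parallel to the absolute-coefficients remark after Theorem \ref{theorem_Nesterenko_Suslin}, but carried out in the pro category and in characteristic $p$.

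Concretely, I would organize the proof in three steps. Step 1: produce the six-term exact sequence for each $s$ from the sheaf sequence of \cite[Thm.~2.11]{KellyMorrow2018a} and \cite[Corol.~4.1--4.2]{Morrow_pro_GL2}, noting naturality in $s$. Step 2: prove $\{\tilde\nu_r(j)(A/I^s)\}_s$ has no $p$-torsion as a pro abelian group; this is where F-finiteness and regularity enter, via the comparison of $\bb Z_p(j)(A/I^s)$ with truncated derived de Rham--Witt cohomology and the pro-continuity statements of \cite{Morrow_pro_GL2} (together with the fact that $H^{j+1}(\bb Z_p(j)(A))$ is $p$-torsion-free for $A$ regular, which propagates to the pro system modulo $I^s$). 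Step 3: feed this back into the six-term sequence to extract the short exact sequence, checking naturality. The main obstacle I anticipate is Step 2: one must genuinely control the pro system $\{\tilde\nu_r(j)(A/I^s)\}_s$, and the cleanest route is to quote the de Rham--Witt computations of \cite{Morrow_pro_GL2} for F-finite regular Noetherian rings, so the real work is bookkeeping the identification $H^{j+1}(\bb Z/p^r(j)(A/I^s))\cong\tilde\nu_r(j)(A/I^s)$ compatibly in $r$ and $s$, rather than any new geometric input.
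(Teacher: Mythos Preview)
Your proposal has a genuine circularity problem. In Step 2 you plan to invoke Theorem \ref{intro_charp}(ii) (the pro Bloch--Kato--Gabber isomorphism $\{\hat K_j^M(A/I^s)/p^r\}_s\isoto\{W_r\Omega^j_{A/I^s,\sub{log}}\}_s$, stated in the body as Theorem \ref{theorem_pro_BKG}). But the proof of Theorem \ref{theorem_pro_BKG} uses Lemma \ref{lemma_pro_Illusie_sequence} to reduce from general $r$ to $r=1$. So you cannot appeal to that theorem here.

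There is also a gap in Step 1. The short exact sequence of \'etale sheaves $0\to W_{r-1}\Omega^j_\sub{log}\xto{p}W_r\Omega^j_\sub{log}\to\Omega^j_\sub{log}\to 0$ from \cite[Thm.~2.11]{KellyMorrow2018a} is proved for \emph{Cartier smooth} $\bb F_p$-algebras, not for arbitrary ones; the quotients $A/I^s$ are typically not Cartier smooth (indeed, the failure of this sequence level-by-level is precisely why the lemma is a pro statement). So you do not get a six-term exact sequence for each fixed $s$ to feed into your argument.

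The paper's proof avoids both issues by going through the diagonally indexed pro system $\{W_s\Omega^j_{A/I^s,\sub{log}}\}_s$: it quotes \cite[Corol.~4.8]{Morrow_pro_GL2}, which already establishes that this pro abelian group is $p$-torsion-free and that $\{W_s\Omega^j_{A/I^s,\sub{log}}/p^r\}_s\isoto\{W_r\Omega^j_{A/I^s,\sub{log}}\}_s$. The desired sequence is then just the trivial sequence $0\to M/p^{r-1}\xto{p}M/p^r\to M/p\to 0$ for a $p$-torsion-free pro abelian group $M$. No syntomic cohomology or $\tilde\nu$-groups enter, and the lemma is available as input to Theorem \ref{theorem_pro_BKG} rather than the other way around.
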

\begin{proof}
It was shown in \cite[Corol.~4.8]{Morrow_pro_GL2} that the diagonally indexed pro abelian group $\{W_{s}\Omega^j_{A/I^s,\sub{log}}\}_s$ is $p$-torsion-free and the canonical map $\{W_{s}\Omega^j_{A/I^s,\sub{log}}/p^r\}_s\to \{W_{r}\Omega^j_{A/I^s,\sub{log}}\}_s$ is an isomorphism. So the desired short exact sequence is $0\to \{W_{s}\Omega^j_{A/I^s,\sub{log}}/p^{r-1}\}_s\xto{p}\{W_{s}\Omega^j_{A/I^s,\sub{log}}/p^r\}_s\to \{W_{s}\Omega^j_{A/I^s,\sub{log}}/p\}_s\to~0$.
\end{proof}

For a functor $F:\bb F_p\op{-algs}\to\op{Ab}$, let $\bb LF:\bb F_p\op{-algs}\to D^{\le 0}(\bb Z)$ denote the left Kan extension from $\op{Sm}_{\bb F_p}^\Sigma$ of the restriction of $F$. For the application to Theorem \ref{theorem_pro_BKG}, note that for each of the three functors in the lemma it would be equivalent to left Kan extend from $\op{Sm}_{\bb F_p}$ (as follows from the Cartier isomorphism, or alternatively apply the following lemma with $I=0$).

\begin{lemma}\label{lemma_LKE_of_BZ}
Let $A$ be a F-finite, regular, Noetherian $\bb F_p$-algebra and $I\subseteq A$ an ideal. Then the canonical maps of complexes  \[\bb L\Omega^j_{A/I^s}\To \Omega^j_{A/I^s}\qquad \bb LB\Omega^j_{A/I^s}\To B\Omega^j_{A/I^s}\qquad \bb LZ\Omega^j_{A/I^s}\To Z\Omega^j_{A/I^s}\] become equivalences of pro complexes after applying $\{\,\}_s$.
\end{lemma}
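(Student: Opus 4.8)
The plan is to treat the three cases in the order $\bb L\Omega^j$, then $\bb LB\Omega^j$ and $\bb LZ\Omega^j$, exploiting the two short exact sequences of functors on $\op{Sm}_{\bb F_p}^\Sigma$
$$0\to Z\Omega^j\to\Omega^j\xto{d}B\Omega^{j+1}\to 0,\qquad 0\to B\Omega^j\to Z\Omega^j\xto{C}\Omega^j\to 0$$
(the second exact there by the Cartier isomorphism). Left Kan extending them yields fibre sequences $\bb LZ\Omega^j\to\bb L\Omega^j\to\bb LB\Omega^{j+1}$ and $\bb LB\Omega^j\to\bb LZ\Omega^j\xto{\bb LC}\bb L\Omega^j$ of functors on all $\bb F_p$-algebras.

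First I would settle $\bb L\Omega^j$. On $\op{Sm}_{\bb F_p}^\Sigma$ one has $\Omega^j=\wedge^j\Omega^1$ and $\bb L_{P/\bb F_p}=\Omega^1_P$, so $\bb L\Omega^j=L\wedge^j\bb L_{-/\bb F_p}$. Since $A$ is F-finite, regular and Noetherian it is Cartier smooth (see \S\ref{subsection_KM_val_charp}), so $\bb L_{A/\bb F_p}\simeq\Omega^1_A$ is a finite flat $A$-module in degree $0$; the transitivity triangle for $\bb F_p\to A\to A/I^s$ reads $\Omega^1_A\otimes_A A/I^s\to\bb L_{(A/I^s)/\bb F_p}\to\bb L_{(A/I^s)/A}$. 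The essential input, of exactly the type used throughout \cite{Morrow_pro_GL2}, is that $\{\bb L_{(A/I^s)/A}\}_s$ is pro-zero — a refinement of the elementary pro-vanishing of the conormal systems $\{I^s/I^{2s}\}_s$ (for each $s$, $I^{s'}\subseteq I^{2s}$ once $s'\ge 2s$). Feeding the triangle into Illusie's filtration on $L\wedge^j$ of a cofibre sequence, whose graded pieces are $\bigl(\Omega^{j-i}_A\otimes_A A/I^s\bigr)\dotimes_{A/I^s}L\wedge^i\bb L_{(A/I^s)/A}$ for $0\le i\le j$, only the $i=0$ piece survives pro-ly: for $i\ge 1$, $L\wedge^i$ carries a null-homotopic map to a null-homotopic map, so $\{L\wedge^i\bb L_{(A/I^s)/A}\}_s$ is pro-zero, and base change along the flat module $\Omega^{j-i}_A\otimes_A A/I^s$ preserves this. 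Hence $\{\bb L\Omega^j_{A/I^s}\}_s\simeq\{\Omega^j_A\otimes_A A/I^s\}_s$, and the conormal presentation $\Omega^{j-1}_A\otimes_A I^s/I^{2s}\to\Omega^j_A\otimes_A A/I^s\to\Omega^j_{A/I^s}\to 0$ together with pro-vanishing of $\{I^s/I^{2s}\}_s$ identifies the latter with $\{\Omega^j_{A/I^s}\}_s$, compatibly with the canonical comparison map and with the de Rham differentials. This handles $\bb L\Omega^j$ for all $j$.

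The cases of $\bb LB\Omega^j$ and $\bb LZ\Omega^j$ I would then deduce by propagating the pro-equivalences just obtained through the two fibre sequences and comparing with the underived sequences (the underived $0\to Z\Omega^j_{A/I^s}\to\Omega^j_{A/I^s}\xto{d}B\Omega^{j+1}_{A/I^s}\to 0$ being a genuine cofibre sequence in $D(\bb Z)$ with all terms in degree $0$, so that taking cofibres of the comparison maps $\bb LZ\Omega^j\to Z\Omega^j$, $\bb L\Omega^j\to\Omega^j$ gives the $\bb LB\Omega^{j+1}$-statement and taking fibres gives back the $\bb LZ\Omega^j$-statement). Since the two exact sequences only relate neighbouring terms one must start the induction explicitly: $\bb LZ\Omega^0$ is the Frobenius twist $(-)^{(1)}$ and $\bb LB\Omega^0=0=B\Omega^0$ can be computed by hand, while at the other end $\Omega^j_{A/I^s}=0$ for $j>d$ whenever $\Omega^1_A$ is generated by $d$ elements (true here, $A$ being F-finite), so the pro-systems are eventually pro-zero. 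One also needs to check, and this is the point I expect to cost the most work, that the derived fibre sequences genuinely pro-refine the underived ones: on a singular ring such as $A/I^s$ the zeroth homology of $\bb LZ\Omega^j$ or $\bb LB\Omega^j$ need not literally be $Z\Omega^j$ or $B\Omega^j$ — e.g. $H_0(\bb LZ\Omega^0_R)=R^{(1)}$ surjects onto $Z\Omega^0_R=R^p$ with kernel the $p$-nilpotents of $R$ — but all such discrepancies, and the higher homology of $\bb LB\Omega^j$ and $\bb LZ\Omega^j$, are controlled by pro-zero systems (the conormal systems $\{I^s/I^{2s}\}_s$, and systems such as $\{x\in A/I^s:x^p=0\}_s$, which are pro-zero because $A$ is reduced). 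This pro-bookkeeping, together with the pro-vanishing of $\{\bb L_{(A/I^s)/A}\}_s$ and its stability under $L\wedge^i$, is the crux; both are carried out in the style of — and may largely be quoted from — \cite{Morrow_pro_GL2}.
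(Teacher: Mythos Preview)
Your approach is essentially the same as the paper's: first settle $\bb L\Omega^j$ via the transitivity filtration for $\bb F_p\to A\to A/I^s$ together with the pro-vanishing of $\{\bb L\Omega^i_{(A/I^s)/A}\}_s$ for $i>0$ (the paper cites Andr\'e \cite[Prop.~X.12]{Andre1974} for this, which is precisely the ``stability under $L\wedge^i$'' you invoke), then run the two-step induction on $j$ using the fibre sequences coming from $0\to Z\Omega^j\to\Omega^j\to B\Omega^{j+1}\to 0$ and $0\to B\Omega^j\to Z\Omega^j\to\Omega^j\to 0$.

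The one place where the paper is sharper than your sketch is the ``pro-bookkeeping'' you flag in your last paragraph. Rather than tracking individual discrepancies such as $R^{(1)}$ versus $R^p$ and arguing each is pro-zero, the paper simply quotes \cite[Thm.~2.2]{Morrow_pro_GL2}: the inverse Cartier map $C^{-1}:\{\Omega^j_{A/I^s}\}_s\to\{H^j(\Omega^\bullet_{A/I^s})\}_s$ is a pro-isomorphism for all $j$. This single statement says exactly that the underived sequence $0\to B\Omega^j_{A/I^s}\to Z\Omega^j_{A/I^s}\to\Omega^j_{A/I^s}\to 0$ is pro-exact, so the comparison of derived and underived fibre sequences goes through cleanly (exactly as in \cite[Prop.~2.5]{KellyMorrow2018a}) with no further case analysis. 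I would recommend replacing your final paragraph by a direct appeal to this pro-Cartier isomorphism.
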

\begin{proof}
By ``equivalences of pro complexes'' we simply mean in each case that $\{H_n(-)\}_s=0$ for $n>0$, and that the pro abelian group $\{H_0(-)\}_s$ identifies with target pro abelian group.

Since $H^0(\bb L\Omega^j_{A/I^s})=\Omega^j_{A/I^s}$ for each $s$, in the first case it remains to prove that $\{H_n(\bb L\Omega^j_{A/I^s})\}=0$ for $n>0$. From the transitivity sequence for $\bb F_p\to A\to A/I^s$ one knows that $\bb L\Omega^j_{A/I^s}$ has a natural filtration with graded pieces $\bb L\Omega^i_A\otimes_A\bb L\Omega^{j-i}_{(A/I^s)/A}$ for $i=1,\dots,j$; but a classical argument of M.~Andr\'e \cite[Prop.~X.12]{Andre1974} (see also \cite[Thm.~4.4(i)]{Morrow_pro_H_unitality}) shows that $\{H_n(\bb L\Omega^{j-i}_{(A/I^s)/A})\}_s=0$ unless $n=j-i=0$. Since also $\bb L\Omega^i_A\simeq\Omega_A^i$ as $A$ is geometrically regular over $\bb F_p$, the spectral sequence associated to the filtration (i.e., the spectral sequence of Kassel--Sletsj\o e \cite{Kassel1992}) now yields the desired vanishing.

The pro equivalences for $B\Omega^j$ and $Z\Omega^j$ now follow by the usual increasing induction on $j$, just as in the Cartier smooth case (see the first three paragraphs of the proof of \cite[Prop.~2.5]{KellyMorrow2018a}), since the inverse Cartier map $C^{-1}:\{\Omega^j_{A/I^s}\}_s\to\{H^j(\Omega^\blob_{A/I^s})\}_s$ is known to be an isomorphism of pro abelian groups for all $j\ge0$ \cite[Thm.~2.2]{Morrow_pro_GL2}.
\end{proof}

We may now present the main theorem of the subsection, describing the pro system of Milnor $K$-groups of the successive quotients of a regular, local $\bb F_p$-algebra:

\begin{theorem}\label{theorem_pro_BKG}
Let $A$ be an F-finite, regular, Noetherian, local $\bb F_p$-algebra, and $I\subseteq A$ any ideal. Then the symbol map induces an isomorphism of pro abelian groups $\{\hat K_j^M(A/I^s)/p^r\}_s\isoto\{W_r\Omega^j_{A/I^s,\sub{log}}\}_s$ for any $r,j\ge0$.
\end{theorem}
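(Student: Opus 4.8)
The plan is to deduce Theorem \ref{theorem_pro_BKG} from Theorem \ref{theorem_BKG} by the same left Kan extension machinery used elsewhere in the paper, combined with the pro base-change lemmas just established. More precisely, surjectivity of the pro symbol map is automatic since each $\hat K_j^M(A/I^s)/p^r\to W_r\Omega^j_{A/I^s,\sub{log}}$ is already surjective by Remark \ref{remark_WrOmegalog}. For injectivity, the idea is to realise both pro systems as $H_0$ of left Kan extensions from $\op{Sm}_{\bb F_p}^\Sigma$ and to check that the left-derived comparison map is a pro equivalence, so that passing to $H_0$ gives the result on the nose.

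First I would reduce to the case $r=1$: using Lemma \ref{lemma_pro_Illusie_sequence} on the target side and the analogous Bockstein sequence $0\to \hat K_j^M/p^{r-1}\to \hat K_j^M/p^r\to \hat K_j^M/p\to 0$ (which holds after applying $\{\,\}_s$ once we know the $r=1$ case, by a standard snake-lemma induction — this is the usual dévissage) the general $r$ follows from $r=1$. Next, as in the proof of Theorem \ref{theorem_BKG}, I may assume $A$ has big residue field, since otherwise one passes to a finite étale local extension $A'\supseteq A$ of degree prime to $p$, notes $(A/I^s)'=A'/I^sA'$ retains F-finiteness and regularity, and uses the norm map to see $\{\hat K_j^M(A/I^s)/p^r\}_s\to\{\hat K_j^M(A'/I^sA')/p^r\}_s$ is (pro-)injective. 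Under the big residue field hypothesis $\hat K_j^M$ may be replaced by $K_j^M$ throughout.

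Now view $K_j^M(-)/p$ and $\Omega^j_{(-),\sub{log}}$ as functors on $\bb F_p$-algebras and form their left Kan extensions $\bb L(K_j^M/p)$ and $\bb L(\Omega^j_\sub{log})$ from $\op{Sm}_{\bb F_p}^\Sigma$. By Proposition \ref{proposition_LKE_Milnor}, $H_0(\bb L(K_j^M/p)(B))=K_j^M(B)/p$ for every local $\bb F_p$-algebra $B$, in particular for each $B=A/I^s$. On the target, the short exact sequence $0\to \Omega^j_\sub{log}\to \Omega^j\xto{1-C^{-1}}\Omega^j/d\Omega^{j-1}\to 0$ of \'etale sheaves expresses $\Omega^j_{B,\sub{log}}$ (resp.\ its derived analogue $\bb L(\Omega^j_\sub{log})$) in terms of $\Omega^j$, $B\Omega^j$, $Z\Omega^j$ (resp.\ their left Kan extensions), and Lemma \ref{lemma_LKE_of_BZ} says precisely that $\{\bb LB\Omega^j_{A/I^s}\}_s\isoto\{B\Omega^j_{A/I^s}\}_s$ and similarly for $Z\Omega^j$ and $\Omega^j$. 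Combining these, the comparison map $\{\bb L(\Omega^j_\sub{log})(A/I^s)\}_s\to\{\Omega^j_{A/I^s,\sub{log}}\}_s$ is a pro equivalence; in particular $\{H_n(\bb L(\Omega^j_\sub{log})(A/I^s))\}_s=0$ for $n>0$ and $\{H_0\}_s=\{\Omega^j_{A/I^s,\sub{log}}\}_s$. On the other hand the symbol map is a natural transformation $K_j^M(-)/p\to \Omega^j_{(-),\sub{log}}$ which is an isomorphism on essentially smooth local $\bb F_p$-algebras with big residue field (Bloch--Kato--Gabber, Remark \ref{remark_WrOmegalog}), hence induces an equivalence of left Kan extensions $\bb L(K_j^M/p)\isoto\bb L(\Omega^j_\sub{log})$. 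Evaluating at each $A/I^s$, taking $H_0$, and using the two identifications above yields $\{K_j^M(A/I^s)/p\}_s\isoto\{\Omega^j_{A/I^s,\sub{log}}\}_s$, as desired.

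The main obstacle is the bookkeeping in the target identification: one must make sure that Lemma \ref{lemma_LKE_of_BZ}'s pro-equivalences for $\Omega^j$, $B\Omega^j$, $Z\Omega^j$ actually assemble, compatibly with the inverse Cartier operator and the Artin--Schreier-type map $1-C^{-1}$, into a pro-equivalence for the (derived) logarithmic forms — i.e.\ that forming the fibre of $1-C^{-1}$ commutes with the relevant left Kan extensions up to pro isomorphism. This is where F-finiteness is genuinely used, via the pro Cartier isomorphism $C^{-1}:\{\Omega^j_{A/I^s}\}_s\isoto\{H^j(\Omega^\bullet_{A/I^s})\}_s$ of \cite[Thm.~2.2]{Morrow_pro_GL2}; once that is in hand the argument is the pro analogue of the computation in Proposition \ref{proposition_LKE_Omegalog2}, with $A/I^s$ playing the role of a Cartier smooth algebra in the diagonal pro sense. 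The remaining points — the Bockstein dévissage in $r$ and the norm reduction to big residue field — are routine.
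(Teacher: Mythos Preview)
Your approach is essentially the same as the paper's: reduce to big residue field by a norm trick, compare the left Kan extensions of $K_j^M/p^r$ and $W_r\Omega^j_{\sub{log}}$ from smooth local $\bb F_p$-algebras (where they agree by Bloch--Kato--Gabber), and then verify that the co-unit map on the target is a pro equivalence by d\'evissage to $r=1$ and reduction to the pro equivalences for $\Omega^j$, $B\Omega^j$ in Lemma~\ref{lemma_LKE_of_BZ}.

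There is one genuine inconsistency you should fix: you form the left Kan extensions $\bb L(K_j^M/p)$ and $\bb L(\Omega^j_\sub{log})$ from $\op{Sm}_{\bb F_p}^\Sigma$ (finitely generated polynomial algebras), but then invoke Proposition~\ref{proposition_LKE_Milnor} (which left Kan extends from \emph{local} ind-smooth algebras) and Bloch--Kato--Gabber (which is stated for \emph{local} rings). Polynomial algebras are not local, so neither reference applies directly to your $\bb L$. The paper handles this by left Kan extending from essentially smooth \emph{local} $\bb F_p$-algebras throughout (this is the $L^\sub{sm}$ in the proof), and then notes separately, before Lemma~\ref{lemma_LKE_of_BZ}, that for the functors $\Omega^j$, $B\Omega^j$, $Z\Omega^j$ this agrees with left Kan extending from $\op{Sm}_{\bb F_p}^\Sigma$. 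You should do the same. Relatedly, when you pass from the \'etale short exact sequence for $\Omega^j_\sub{log}$ to a statement about abelian groups, the sequence is no longer short exact: the cokernel is $\tilde\nu(j)$, and one must use that $\tilde\nu(j)(-)$ is rigid (hence already left Kan extended) to finish the comparison. The paper makes this explicit via the fibre sequence $\Omega^j_\sub{log}\to[\Omega^j\xto{C^{-1}-1}\Omega^j/d\Omega^{j-1}]\to\tilde\nu(j)[-1]$; your ``main obstacle'' paragraph gestures at this but does not quite name the missing ingredient.
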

\begin{proof}
Morally one would like to say that this theorem is a special case of Theorem \ref{theorem_BKG}, as the results of \cite[\S2]{Morrow_pro_GL2} implicitly show that the pro ring $\{A/I^s\}_s$ satisfies a pro analogue of the definition of Cartier smoothness. But we provide the details of the proof.

The symbol map is automatically surjective (even for each fixed level $s$), and for injectivity we perform the same trick as in Theorem \ref{theorem_BKG} to henceforth assume that $A$ has big residue field. For any essentially smooth, local $\bb F_p$-algebra $R$ with big residue field, the symbol map $K_j^M(R)/p^r\to W_r\Omega^j_{R,\sub{log}}$ is an isomorphism as recalled in Remark \ref{remark_WrOmegalog}. Left Kan extending from such $R$ and appealing to Proposition~\ref{proposition_LKE_Milnor}, the theorem reduces to checking that the co-unit map $H^0((L^\sub{sm}W_r\Omega^j_{\sub{log}})(A/I^s))\to W_r\Omega^j_{A/I^s,\sub{log}}$ induces an isomorphism of pro abelian groups over $s$.

This in turn reduces to the case $r=1$ by comparing the fibre sequences \[(L^\sub{sm}W_{r-1}\Omega^j_{\sub{log}})(A/I^s)\xto{p}(L^\sub{sm}W_r\Omega^j_{\sub{log}})(A/I^s)\to (L^\sub{sm}\Omega^j_{\sub{log}})(A/I^s)\] (induced by Illusie's short exact sequence $0\to W_{r-1}\Omega^j_{R,\sub{log}}\xto{p}W_{r}\Omega^j_{R,\sub{log}}\to \Omega^j_{R,\sub{log}}\to 0$ for each essentially smooth, local $\bb F_p$-algebra \cite[\S I.5.7]{Illusie1979}) to the short exact sequence of Lemma \ref{lemma_pro_Illusie_sequence}. We then consider the fibre sequence $\Omega^j_{-,\sub{log}}\to[\Omega_-^j\xto{C^{-1}-1}\Omega^j_-/d\Omega^{j-1}_-]\to\tilde\nu(j)(-)[-1]$ on arbitrary $\bb F_p$-algebras: left Kan extending it from essentially smooth, local $\bb F_p$-algebras, evaluating on $A/I^s$, comparing to the original sequence for $A/I^s$ itself, and using that $\tilde\nu(j)(-)$ is left Kan extended from smooth algebras, the problem is reduced to checking that the co-unit maps $L^\sub{sm}\Omega^j_{A/I^s}\to \Omega^j_{A/I^s}[0]$ and  $L^\sub{sm}B\Omega^j_{A/I^s}\to B\Omega^j_{A/I^s}[0]$ induce equivalences of pro complexes. But this is exactly what we checked in Lemma \ref{lemma_LKE_of_BZ}.
\end{proof}

\begin{corollary}
Let $A$ be an F-finite, regular, Noetherian, local $\bb F_p$-algebra, and $I\subseteq A$ an ideal such that $A$ is $I$-adically complete. Then the canonical map $K_j^M(A)/p^r\to\projlim_sK_j^M(A/I^s)/p^r$ is an isomorphism for any $r\ge1$.
\end{corollary}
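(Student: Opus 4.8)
The plan is to deduce the corollary from the pro Bloch--Kato--Gabber theorem (Theorem \ref{theorem_pro_BKG}), the ordinary Bloch--Kato--Gabber isomorphism for the regular ring $A$ (Remark \ref{remark_WrOmegalog}), and a continuity property of logarithmic de Rham--Witt forms. First I would reduce to the case that $A$ has big residue field by the usual norm argument on improved Milnor $K$-theory, exactly as in the proofs of Theorems \ref{theorem_BKG} and \ref{theorem_pro_BKG}: for a suitably large prime-to-$p$ power $q=p^\ell$ the ring $A':=A\otimes_{\bb F_p}\bb F_q$ is again local, F-finite, regular, Noetherian and $I$-adically complete, is finite \'etale of degree $\ell$ over $A$ with big residue field, and satisfies $A'/I^sA'=(A/I^s)\otimes_{\bb F_p}\bb F_q$; since Kerz' norm maps (which exist for local finite \'etale extensions, satisfy $N\circ\mathrm{res}=$ multiplication by $\ell$, and are compatible with the reduction maps for this split extension) exhibit the canonical map of the corollary for $A$ as a retract, compatibly with $\projlim_s$, of the corresponding map for $A'$, it suffices to treat $A'$. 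Thus we may assume $A$ has big residue field, so that $K_j^M$ agrees with $\hat K_j^M$ for $A$ and for each $A/I^s$ (all of which share the same, big, residue field), and the claim becomes that $\hat K_j^M(A)/p^r\to\projlim_s\hat K_j^M(A/I^s)/p^r$ is an isomorphism.

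Now Theorem \ref{theorem_pro_BKG} provides an isomorphism of pro abelian groups $\{\hat K_j^M(A/I^s)/p^r\}_s\isoto\{W_r\Omega^j_{A/I^s,\sub{log}}\}_s$, and applying $\projlim_s$ --- a functor on the pro category, hence carrying pro-isomorphisms to isomorphisms --- yields $\projlim_s\hat K_j^M(A/I^s)/p^r\isoto\projlim_s W_r\Omega^j_{A/I^s,\sub{log}}$. On the other hand, $A$ is a regular Noetherian local $\bb F_p$-algebra (hence ind-smooth over $\bb F_p$), so the symbol map $\hat K_j^M(A)/p^r\isoto W_r\Omega^j_{A,\sub{log}}$ is an isomorphism by Remark \ref{remark_WrOmegalog}. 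By naturality of the symbol, the composite $\hat K_j^M(A)/p^r\to\projlim_s\hat K_j^M(A/I^s)/p^r\isoto\projlim_s W_r\Omega^j_{A/I^s,\sub{log}}$ equals $\hat K_j^M(A)/p^r\isoto W_r\Omega^j_{A,\sub{log}}\to\projlim_s W_r\Omega^j_{A/I^s,\sub{log}}$, the second arrow being the canonical one. Hence the corollary is equivalent to the statement that this canonical map $W_r\Omega^j_{A,\sub{log}}\to\projlim_s W_r\Omega^j_{A/I^s,\sub{log}}$ is an isomorphism.

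This last point --- a continuity property of logarithmic de Rham--Witt forms for the F-finite, regular, Noetherian, $I$-adically complete ring $A$ --- is the step I expect to be the main obstacle. I would prove it via the short exact sequence $0\to W_r\Omega^j_{\sub{log}}\to W_r\Omega^j\xto{\res F-1}W_r\Omega^j/dV^{r-1}\Omega^{j-1}\to 0$ of \'etale sheaves (as in Proposition \ref{proposition_LKE_Omegalog2}), which realises $W_r\Omega^j_{-,\sub{log}}$ on affines as the kernel of a natural transformation between the functors $W_r\Omega^j$ and $W_r\Omega^j/dV^{r-1}\Omega^{j-1}$; since $A$ is F-finite and Noetherian, these take values in finitely generated modules over $W_r(A)$, which is Noetherian and $I$-adically complete, so that $W_r\Omega^j_A$ and $W_r\Omega^j_A/dV^{r-1}\Omega^{j-1}_A$ are already $I$-adically complete and the pro-systems $\{W_r\Omega^j_{A/I^s}\}_s$ and $\{W_r\Omega^j_{A/I^s}/dV^{r-1}\Omega^{j-1}_{A/I^s}\}_s$ are pro-isomorphic, up to a harmless re-indexing, to the $I$-adic towers of these complete modules, hence have them as inverse limits with vanishing $\projlim^1$; as the transition maps on $\{W_r\Omega^j_{A/I^s,\sub{log}}\}_s$ are surjective, taking kernels commutes with $\projlim_s$ and the claim follows. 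This continuity is essentially contained in the de Rham--Witt continuity results of \cite{Morrow_pro_GL2} on which Theorem \ref{theorem_pro_BKG} already relies, and in the write-up I would simply invoke those.
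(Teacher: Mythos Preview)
Your approach is essentially the same as the paper's: reduce via the classical and pro Bloch--Kato--Gabber isomorphisms to the continuity statement $W_r\Omega^j_{A,\sub{log}}\isoto\projlim_s W_r\Omega^j_{A/I^s,\sub{log}}$, and then invoke \cite[Corol.~4.11]{Morrow_pro_GL2} for the latter. Your explicit reduction to big residue field and your sketch of the de Rham--Witt continuity argument are extra details the paper omits (it simply cites the reference), but the skeleton of the proof is identical.
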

\begin{proof}
In light of the classical Bloch--Kato--Gabber theorem and Theorem \ref{theorem_pro_BKG}, it is equivalent to prove that the map $W_r\Omega^j_{A,\sub{log}}\to\projlim_sW_r\Omega^j_{A/I^s,\sub{log}}$ is an isomorphism. This was proved in \cite[Corol.~4.11]{Morrow_pro_GL2}.
\end{proof}

\begin{remark}
Note that, combined with \cite[Thm.~5.27]{ClausenMathewMorrow2020}, Theorem \ref{theorem_pro_BKG} shows that the canonical map $\{\hat K_j^M(A/I^s)/p^r\}_s\isoto  \{K_j(A/I^s)/p^r\}_s$ is an isomorphism.

Moreover, just as explained in \cite[Rem.~5.8]{Morrow_pro_GL2}, the implicit bounds appearing in the isomorphisms of pro systems are uniform when localising (or more generally passing to \'etale algebras). Therefore, for any F-finite, regular, Noetherian $\bb F_p$-scheme $X$, and closed subscheme $Y\into X$, the canonical maps of (Zariski, Nisnevich, or \'etale) sheaves on $Y$ \[\cal K_{j,Y_s}/p^r\longleftarrow\hat{\cal K}_{j,Y_s}^M/p^s\xto{\dlog}W_r\Omega^j_{Y_s,\sub{log}}\] becomes an isomorphism of pro sheaves after applying $\{\}_s$.
\end{remark}

\section*{Appendix: A Gersten injectivity result of Gabber}
The following injectivity result of Gabber was required in the proof of Theorem \ref{theorem_BK_unram}: 

\begin{theorem}[Gabber]\label{theorem_Gabber_injectivity}
Let $V$ be a mixed characteristic discrete valuation ring and $R$ a local, ind-smooth $p$-henselian $V$-algebra; let $R_{\frak pR}^h$ be the henselisation of the discrete valuation ring $R_{\frak pR}$, and $F:=R_{\frak pR}^h[\tfrac1p]$ its field of fractions. Let $\cal F$ be a torsion \'etale sheaf on $\Spec R[\tfrac1p]$ which is pulled back from $\Spec V[\tfrac1p]$. Then the canonical map \[H^n_\sub{\'et}(R[\tfrac1p],\cal F)\To H^n_\sub{\'et}(F,\cal F)\] is injective for all $n\ge0$.
\end{theorem}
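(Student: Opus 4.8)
The plan is to combine the two results of Gabber invoked in the proof of Theorem~\ref{theorem_BK_unram}: his affine analogue of the proper base change theorem (already used in Corollary~\ref{corollary_BCM}) and the Gersten conjecture for \'etale cohomology of smooth schemes over a field. Write $\frak j\colon\Spec R[\tfrac1p]\into\Spec R$ and $\frak i\colon Y:=\Spec R/\frak mR\into\Spec R$ as usual, and note that $R$ being $p$-henselian is equivalent to $(\Spec R,Y)$ being a Henselian pair (since $\sqrt{pR}=\frak mR$). By a filtered colimit argument I would first reduce to the case in which $R$ is the $p$-henselisation of a local, essentially smooth $V$-algebra; then $Y$ is a regular local ring, essentially smooth over the field $V/\frak m$, and its generic point $\eta_Y=\Spec\Frac(R/\frak mR)$ has Henselian local ring $R_{\frak mR}^h$ inside $\Spec R$. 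A dévissage on $\cal F$ — which, being pulled back from $\Spec V[\tfrac1p]$, may be assumed after a finite base change to be one of the sheaves $\mu_n^{\otimes m}$ — reduces us to $\cal F=\mu_n^{\otimes m}$.

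The core of the argument is then the chain of identifications
\[
R\Gamma_\sub{\'et}(R[\tfrac1p],\cal F)=R\Gamma_\sub{\'et}(\Spec R,R\frak j_*\cal F)\ \simeq\ R\Gamma_\sub{\'et}(Y,\frak i^*R\frak j_*\cal F),
\]
the equivalence being Gabber's affine proper base change applied to the torsion complex $R\frak j_*\cal F$ (for the injectivity of a single $H^n$ one may replace $R\frak j_*\cal F$ by $\tau^{\le n}R\frak j_*\cal F$ and invoke the bounded-torsion form recalled in the proof of Corollary~\ref{corollary_BCM}). On the right-hand side, proper base change for the Henselian local ring $R_{\frak mR}^h$ shows that the pullback of $\frak i^*R\frak j_*\cal F$ to $\eta_Y$ computes $R\Gamma_\sub{\'et}(R_{\frak mR}^h[\tfrac1p],\cal F)=R\Gamma_\sub{\'et}(F,\cal F)$; hence the restriction map $H^n_\sub{\'et}(Y,\frak i^*R\frak j_*\cal F)\to H^n_\sub{\'et}(\eta_Y,(\frak i^*R\frak j_*\cal F)|_{\eta_Y})$ is, through these identifications, precisely the map of the theorem. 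It therefore suffices to prove that restriction to the generic point is injective on $H^n_\sub{\'et}(Y,-)$ applied to the complex $\frak i^*R\frak j_*\cal F$ on the regular local ring $Y$.

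This last point I would deduce from the Gersten conjecture via the coniveau spectral sequence of $R\Gamma_\sub{\'et}(Y,\frak i^*R\frak j_*\cal F)$: if the Gersten complex of each cohomology sheaf $\frak i^*R^q\frak j_*\mu_n^{\otimes m}$ on the essentially smooth local ring $Y$ is exact, then $E_2^{p,q}=0$ for $p>0$, the spectral sequence degenerates, and the edge map exhibits $H^n_\sub{\'et}(Y,\frak i^*R\frak j_*\cal F)$ as a subgroup of $H^n_\sub{\'et}(\eta_Y,(\frak i^*R\frak j_*\cal F)|_{\eta_Y})$. When $n$ is invertible in $R$ the sheaves $\frak i^*R^q\frak j_*\mu_n^{\otimes m}$ are locally constant (smooth base change, absolute purity), so the needed exactness is the Bloch--Ogus theorem as extended by Gabber. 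When $n=p^r$, Bloch--Kato's structure theorem \cite[Corol.~1.4.1]{Bloch1986} presents each $\frak i^*R^q\frak j_*\mu_{p^r}^{\otimes m}$ as an iterated extension of the sheaves $\Omega^\bullet_Y$, $W_r\Omega^\bullet_{Y,\sub{log}}$ and the associated $\tilde\nu$-sheaves, for which the Gersten conjecture is classical (the de Rham Gersten resolution, and Gros--Suwa \cite{GrosSuwa1988} for the logarithmic forms); alternatively, Sato's construction of the $p$-adic vanishing-cycle complex \cite{Sato2007} produces its Gersten resolution directly.

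The hard part will be the very last step in the wildly ramified case $n=p^r$: carrying the dévissage cleanly through Bloch--Kato's filtration, whose length is governed by $\tfrac{ep}{p-1}$, and checking that restriction to $\eta_Y$ annihilates the positive-coniveau contributions uniformly across all steps of the filtration. The tamely ramified case, and the ``short filtration'' case relevant to Theorem~\ref{theorem_BK_unram} ($V$ absolutely unramified, $\cal F=\mu_p^{\otimes j}$), are comparatively routine once the framework above is in place.
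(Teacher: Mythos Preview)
Your overall architecture coincides with the paper's: reduce by a filtered colimit to the $p$-henselisation of an essentially smooth local $V$-algebra, apply Gabber's affine proper base change twice (once for $R$, once for $R_{\frak mR}^h$) to transport the question to the special fibre, and then invoke a Gersten-type injectivity on the regular local ring $Y=\Spec S_{\frak q}/\frak mS_{\frak q}$.

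Where you diverge is in the final step. The paper does not attempt any d\'evissage on $\cal F$ or on the nearby cycles sheaves; it simply quotes Gabber's paper \cite{Gabber1994}, whose main result is precisely a Gersten resolution for the complex $i^*Rj_*\cal F$ on the special fibre of a smooth scheme over a discrete valuation ring, valid for \emph{any} torsion sheaf $\cal F$ pulled back from the base. The injectivity
\[
H^n_\sub{\'et}(\Spec S_\frak q/\frak mS_\frak q,\,i^*Rj_*\cal F)\hookrightarrow H^n_\sub{\'et}(\Spec S_{\frak mS}/\frak mS_{\frak mS},\,i^*Rj_*\cal F)
\]
is then literally the degree-zero part of Gabber's resolution, and the proof is finished.

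Your proposed route---split $\cal F$ into $\mu_n^{\otimes m}$, decompose each $\frak i^*R^q\frak j_*\mu_{p^r}^{\otimes m}$ via the Bloch--Kato filtration, and appeal to Gersten for $\Omega^\bullet$, $W_r\Omega^\bullet_\sub{log}$, and Bloch--Ogus for the locally constant pieces---can be pushed through for the specific sheaf $\mu_p^{\otimes j}$ with $V$ unramified that Theorem~\ref{theorem_BK_unram} actually needs, but it is substantially more work and has two soft spots in the general statement. First, your reduction to $\mu_n^{\otimes m}$ ``after a finite base change'' does not by itself yield injectivity for the original $\cal F$ over $V$: a transfer argument only recovers it when the degree of the base change is prime to the torsion order, which you cannot arrange for arbitrary torsion $\cal F$. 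Second, the Bloch--Kato filtration is tied to the twist $m=q$; for other twists (and in the ramified case you flag) the bookkeeping becomes delicate, and you have not indicated how to control it. None of this is fatal for the application, but it explains why the paper prefers to invoke \cite{Gabber1994} as a black box: Gabber's argument runs Quillen's deformation-to-the-normal-cone method directly on $i^*Rj_*\cal F$ and never needs to know its internal structure.
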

\begin{proof}
Since both sides commute with filtered colimits in $R$, we may assume that $R$ is the henselisation along $\frak pS_\frak q$ of $S_\frak q$, where $S$ is a smooth $V$-algebra and $\frak q\subseteq S$ is some prime ideal containing $\frak pS$; note that then $R_{\frak pR}^h=S_{\frak pS}^h$, where the latter denotes the henselisation of the discrete valuation ring $S_{\frak pS}$.

The beginning of Gabber's Gersten resolution \cite[Eqn.~($\ast$)]{Gabber1994}, at the point $\frak q$ of $\Spec S$ (Gabber's scheme $M$), asserts that the map \[H^n_\sub{\'et}(\Spec S_\frak q/\frak p S_\frak q,i^*Rj_*\cal F)\To H^n_\sub{\'et}(\Spec S_{\frak pA}/\frak p S_{\frak pA},i^*Rj_*\cal F)\] is injective, where $i,j$ are the usual closed and open inclusions $\Spec S/\frak pS\stackrel{i}{\into}\Spec S\xleftarrow{j}\Spec S[\tfrac1p]$, and we suppress the additional pullbacks along $\Spec S_{\frak pR}/\frak p S_{\frak pS}\to \Spec S_\frak q/\frak p S_\frak q\to\Spec S/\frak pS$ from the notation.

Noting that $R/\frak pR=S_\frak q/\frak pS_\frak q$, Gabber's affine analogue of the proper base change theorem \cite{} implies that the canonical map \[H^n_\sub{\'et}(\Spec R[\tfrac1p],\cal F)=H^n_\sub{\'et}(\Spec R,Rj_*\cal F)\To H^n_\sub{\'et}(\Spec S_\frak q/\frak pS_\frak q,i^*Rj_*\cal F)\] is an isomorphism. The analogous assertion is equally true for the henselian surjection $R_{\frak pR}^h\to R_{\frak pA}/\frak pR_{\frak pA}=S_{\frak pA}/\frak pS_{\frak pA}$, thereby completing the proof.
\end{proof}

\def\cprime{$'$}

\noindent
\parbox{0.5\linewidth}{
\noindent
Morten L\"uders \\IMJ-PRG,\\
SU -- 4 place Jussieu,\\
Case 247,\\
75252 Paris\\
{\tt morten.luders@imj-prg.fr}
}
\parbox{0.4\linewidth}{
Matthew Morrow\\
CNRS \& IMJ-PRG,\\
SU -- 4 place Jussieu,\\
Case 247,\\
75252 Paris\\
{\tt matthew.morrow@imj-prg.fr}
}

\end{document}